\documentclass{preprint} 
\usepackage{amsmath}
\usepackage{times} 
\usepackage{mhequ} 
\usepackage{mhfig} 
\usepackage{ScriptFonts} 
\usepackage{Symbols}
\usepackage{mathrsfs}
\usepackage{mhenvs}

\makeatletter
\newtheorem{assumption}{Assumption} 
\def\SetAssumptionCounter#1{\def\asscount{#1}\setcounter{assumption}{0}}
\def\asscount{A}

\makeatother 

\newtheorem{metatheorem}[lemma]{Meta-Theorem}

\newcommand{\E}{\mathbf{E}}
\renewcommand{\rho}{\orho}
\def\ass#1{\ref{ass:basic}.\ref{ass:#1}}

\def\hf{{\textstyle{1\over 2}}}
\def\Osc{\mathrm{Osc}_W}

\def\Lip{\mathrm{Lip}} 

\def\Wien{\mathrm{Wien}} 
\def\WL{W_{\! L}}
\def\ad{\mathrm{ad}}
\def\Sup{\mathrm{Sup}} 
\def\TV{{\mathrm{TV}}}

\renewcommand{\div}{\mop{div}}  
\newcommand{\curl}{\mop{curl}} 
\newcommand{\Ric}{\mop{Ric}}

\renewcommand{\P}{\mathbf{P}}
\newcommand{\EE}{\mathbf{E}} 
\newcommand{\cP}{\mathcal{P}}

\newcommand{\cF}{\mathcal{F}}

\newcommand{\ccdot}{\,\cdot\,}
\renewcommand{\d}{\partial}

\newcommand{\ip}[2]{{\langle#1,#2\rangle}}

\newcommand{\Range}{\text{Range}} 

\newcommand{\SPAN}{\text{span}} 
\newcommand{\Poly}{\mop{{Poly}}}

\newcommand{\U}{U}

\newcommand{\Norm}[1]{\vert\!\vert\!\vert #1 \vert\!\vert\!\vert}
\newcommand{\norm}[1]{\| #1 \|_{\mathrm{HS}}}
\def\II{{I_\delta}} \def\/{|\!|\!|}

\newcommand{\RM}{\CR}
\newcommand{\MM}{\CM}
\newcommand{\DM}{\CD}
\newcommand{\AM}{\CA}
\newcommand{\DF}{\mathrm D}
\renewcommand{\AA}{\mathrm A}
\newcommand{\AAt}{\widetilde{\AA}}

\setcounter{tocdepth}{2}

\newcommand{\CamM}{{\CC\!\CM}}
\newcommand{\Cam}{{\CC\!\CM'}}
\begin{document}

\date{} \title{A Theory of Hypoellipticity and Unique Ergodicity
  \\[2mm] for Semilinear 
  Stochastic PDEs} \author{Martin~Hairer$^1$\thanks{Work supported by
    an EPSRC Advanced Research Fellowship, grant number EP/D071593/1},
  Jonathan~ C.~Mattingly$^2$\thanks{Work supported by an NSF PECASE
    awawrd (DMS-0449910) and a Sloan foundation fellowship.}}
\institute{$^1$Mathematics Institute, The University of Warwick, CV4
  7AL, UK
  \\ \email{M.Hairer@Warwick.ac.uk}\\
  $^2$Mathematics Department, Center of Theoretical and Mathematical
  Science, and Center of Nonlinear and Complex Systems, Duke
  University, Durham NC, USA \\
  \email{jonm@math.duke.edu}} \titleindent=0.65cm

\maketitle
\thispagestyle{empty}

\begin{abstract}\parindent1em
  We present a theory of hypoellipticity and unique ergodicity for
  semilinear parabolic stochastic PDEs with ``polynomial''
  nonlinearities and additive noise, considered as abstract evolution
  equations in some Hilbert space.  It is shown that if H\"ormander's
  bracket condition holds at every point of this Hilbert space, then a
  lower bound on the Malliavin covariance operator $\CM_t$ can be
  obtained. Informally, this bound can be read as ``Fix any
  finite-dimensional projection $\Pi$ on a subspace of sufficiently
  regular functions. Then the eigenfunctions of $\CM_t$ with small
  eigenvalues have only a very small component in the image of
  $\Pi$.''
  
  We also show how to use a priori bounds on the solutions to the
  equation to obtain good control on the dependency of the bounds on
  the Malliavin matrix on the initial condition. These bounds are
  sufficient in many cases to obtain the asymptotic strong Feller
  property introduced in \cite{HairerMattingly06AOM}.

  One of the main novel technical tools is an almost sure bound from
  below on the size of ``Wiener polynomials,'' where the coefficients
  are possibly non-adapted stochastic processes satisfying a Lipschitz
  condition. By exploiting the polynomial structure of the equations,
  this result can be used to replace Norris' lemma, which is
  unavailable in the present context.

  We conclude by showing that the two-dimensional stochastic
  Navier-Stokes equations and a large class of reaction-diffusion
  equations fit the framework of our theory.
\end{abstract}

\tableofcontents

\section{Introduction}
\label{sec:motiv-disc}

The overarching goal of this article is to prove the unique
ergodicity of a class of nonlinear stochastic partial differential
equations (SPDEs) driven by a finite number of Wiener processes. The
present greatly extends the articles
\cite{MatPar06:1742,HairerMattingly06AOM,Yuri} allowing one to
consider general polynomial nonlinearities and more general
forcing. To the best of our knowledge, this is the first infinite-dimensional
generalization of H\"ormander's ``sum of squares'' hypoellipticity
theorem for a general class of  parabolic SPDEs.   Our goal is not to present any particularly compelling
examples from the applied perspective, but rather give a sufficiently
general framework which can be applied in many settings. At the end,
we do give some examples to serve as roadmaps for the application of
the results in this article.  In this section, we give an overview of
the setting and the results to come later without descending into all
of the technical assumptions required to make everything precise. This
imprecision will be rectified starting with
Section~\ref{sec:PDEbounds} where the setting and basic assumptions
will be detailed.

In this article we will investigate  non-linear equations of the form
\begin{equ}[e:SPDEintro]
  \partial_t u(x,t) + L u(x,t) = N\big(u\big)(x,t) + \sum_{k=1}^d
  g_k(x) \dot W_k(t)\,.
\end{equ}
Here $L$ will be some positive selfadjoint operator. 
Typical examples arising in applications are $L=-\Delta$ or
$L=\Delta^2$. $N$ will be assumed to be a ``polynomial'' nonlinearity
in the sense that $N(u) = \sum_{k=1}^m N_k(u)$, where $N_k$ is
$k$-multilinear.  Examples of admissible nonlinearities are the
Navier-Stokes nonlinearity $(u\cdot \nabla)u$ or a reaction term such
as $u-u^3$. The $g_k$ are a collection of smooth, time independent
functions which dictate the ``directions'' in which the randomness is
injected. The $\{ \dot W_k : k=1,\ldots,d\}$ are a collection of
mutually independent one-dimensional white noises which are understood
as the formal derivatives of independent Wiener processes through the
It\^o calculus. We assume that the possible loss of regularity due to
the nonlinearity is controlled by the smoothing properties of the
semigroup generated by $L$. See Assumption~\ref{ass:basic} below for a
precise meaning.

On one hand, our concentration on a finite number of driving Wiener
processes avoids technical difficulties generated by spatially rough
solutions since $W(x,t)= \sum g_k(x) W_k(t)$ has the same regularity
in $x$ as the $g_k$ which we take to be relatively smooth. On the
other hand, the fact that $W$ contains only a finite number of Wiener
processes means that our dynamic is very far from being uniformly
elliptic in any sense since for fixed $t$, $u(\ccdot,t)$ is an
infinite-dimensional random variable and the noise acts only onto a
finite number of degrees of freedom.  To prove an ergodic theorem, we
must understand how the randomness injected by $W$ in the directions
$\{g_k : k=1,\ldots,d\}$ spreads through the infinite dimensional
phase space. To do this, we prove the non-degeneracy of the Malliavin
covariance matrix under an assumption that the linear span of the
successive Lie brackets\footnote{Recall that, when it is defined, the
  Lie bracket $[G,H](u)=(\DF H)(u)G(u) -(\DF G)(u)H(u)$ for two
  functions $G,H$ from the ambient Hilbert space $\CH$ to itself. Here
  $\DF$ is the Fr\'echet derivative.} of vector fields associated to
$N$ and the $g_k$ is dense in the ambient (Hilbert) space at each
point. This is very reminiscent of the condition in the ``weak''
version of H\"ormander's ``sum of squares'' theorem. It ensures that
the randomness spreads to a dense set of direction despite being
injected in only a finite number of directions. This is possible since
although the randomness is injected in a finite number of directions
it is injected over the entire interval of time from zero to the
current time. The conditions which ensure the spread of randomness is
closely related to Chow's theorem and controllability, open or
solid. As such Section~\ref{sec:mallSpec} is related to recent work on
controllability of projections of PDEs studied in
\cite{AgrachevSarychev05,agrachev_sarychev_2007} and results proving
the existence of densities for projections in \cite{AKSS07} which build on
these ideas. However, these results do not seem to be sufficient to
prove an ergodic result which is the principal aim of this work. One
seems to need quantitative control of the spectrum of the Malliavin
matrix (or the Gramian matrix in control theory terms).

In finite dimensions, bounds on the norm of the inverse of the
Malliavin matrix are the critical ingredient in proving ergodic
theorems for diffusions which are only hypoelliptic rather than
uniformly elliptic. This then shows that the system has a smooth
density with respect to Lebesgue measure. In infinite dimensions,
there is no measure which plays the ``universal'' role of Lebesgue
measure. One must therefore pass through a different set of ideas.
Furthermore, it is not so obvious how to generalise the notion of the
`inverse' of the Malliavin matrix. In finite dimension, a linear map
has dense range if and only if it admits a bounded inverse. In
infinite dimensions, these two notions are very far from equivalent
and, while it is possible in some cases to show that the Malliavin
matrix has dense range, it is hardly ever possible in a hypoelliptic
setting to show that it is invertible, or at least to characterise its
range in a satisfactory manner (See \cite{MatSuiVand07} for a linear
example in which it is possible).

The important fact which must be established is that nearby points act
similarly from a ``measure theoretic perspective.'' One classical way
to make this precise is to prove that the Markov process in question
has the strong Feller property. For a continuous time Markov process
this is equivalent to proving that the transition probabilities are
continuous in the total variation norm.  While this concept is useful
in finite dimensions, it is much less useful in infinite
dimensions. In particular, there are many natural infinite dimensional
Markov processes whose transition probabilities do \textit{not}
converge in total variation to the system's unique invariant
measure. (See examples 3.14 and 3.15 from \cite{HairerMattingly06AOM}
for more discussion of this point.) In these settings, this fact also
precludes the use of ``minorization'' conditions such as $\inf_{x \in
  \CC}\CP_t(x,\ccdot) \geq c \nu(\ccdot)$ for some fixed probability
measure $\nu$ and ``small set'' $\CC$. (see
\cite{b:MeTw93,b:GoldysMaslowski06} for more and examples were this
can be used.)

\subsection{Ergodicity in infinite dimensions and main result} 

In \cite{HairerMattingly06AOM}, the authors introduced the concept of
an \textit{Asymptotic Strong Feller} diffusion. Loosely speaking, it
ensures that transition probabilities are uniformly continuous in a
sequence of 1-Wasserstein distances which converge to the total
variation distance as time progresses. For the precise definitions, we
refer the reader to \cite{HairerMattingly06AOM}. For our present
purpose, it is sufficient to recall the following proposition:
\begin{proposition}[Proposition 3.12 from
  \cite{HairerMattingly06AOM}] \label{l:asfLipFunctions} Let $t_n$ and
  $\delta_n$ be two positive sequences with $\{t_n\}$ non-decrea\-sing
  and $\{\delta_n\}$ converging to zero.  A semigroup $\CP_t$ on a
  Hilbert space $\CH$ is asymptotically strong Feller if, for all
  $\phi:\CH\rightarrow \R$ with $\|\phi\|_\infty$ and $\|\DF
  \phi\|_\infty$ finite one has
  \begin{equ}[e:SF]
    \|\DF \CP_{t_n}\phi(u)\| \leq C(\|u\|) \bigl( \|\phi\|_\infty +
    \delta_n \|\DF \phi\|_\infty\bigr)
  \end{equ}
for all $n$ and $u \in \CH$, where $C:\R_+ \rightarrow \R$ is a fixed
non-decreasing function.
\end{proposition}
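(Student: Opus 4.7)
The plan is to construct a totally separating system of pseudo-metrics tailored to the sequence $\delta_n$ and then verify the definition of asymptotic strong Feller from \cite{HairerMattingly06AOM} by combining the gradient bound \eqref{e:SF} with the Kantorovich--Rubinstein duality.

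First I would define the pseudo-metrics
\begin{equ}
d_n(u,v) = 1 \wedge \frac{\|u-v\|}{\delta_n}\,,
\end{equ}
on $\CH$. Since $\delta_n \to 0$ and $d_n \leq 1$, this is a non-decreasing sequence of continuous pseudo-metrics, each bounded by $1$, that totally separates points in the sense required by the definition of asymptotic strong Feller. The associated $1$-Wasserstein distances $\|\mu-\nu\|_{d_n}$ then interpolate between the bounded-Lipschitz distance (for small $n$) and the total variation distance (as $n\to\infty$), which is the content needed to match the framework of \cite{HairerMattingly06AOM}.

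Next I would use the dual characterization
\begin{equ}
\|\CP_{t_n}(u,\ccdot)-\CP_{t_n}(v,\ccdot)\|_{d_n} = \sup\bigl\{|\CP_{t_n}\phi(u)-\CP_{t_n}\phi(v)| : \mathrm{Lip}_{d_n}(\phi)\leq 1\bigr\}\,.
\end{equ}
A test function $\phi$ with $\mathrm{Lip}_{d_n}(\phi)\leq 1$ automatically satisfies $\|\phi\|_\infty \leq 1$ (up to an irrelevant additive constant) and $\|\DF\phi\|_\infty \leq \delta_n^{-1}$, because $d_n(u,v)\leq \|u-v\|/\delta_n$ locally and $d_n\leq 1$ globally. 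For such a $\phi$, the hypothesis \eqref{e:SF} gives
\begin{equ}
\|\DF\CP_{t_n}\phi(w)\| \leq C(\|w\|)\bigl(\|\phi\|_\infty + \delta_n\|\DF\phi\|_\infty\bigr) \leq 2\,C(\|w\|)\,.
\end{equ}
Integrating along the straight-line path from $u$ to $v$ and using that $C$ is non-decreasing yields
\begin{equ}
|\CP_{t_n}\phi(u)-\CP_{t_n}\phi(v)| \leq 2\,C(\|u\|+\|u-v\|)\,\|u-v\|\,,
\end{equ}
uniformly in $n$. Taking the supremum over admissible $\phi$ gives the same bound for $\|\CP_{t_n}(u,\ccdot)-\CP_{t_n}(v,\ccdot)\|_{d_n}$.

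Finally, fixing $u\in\CH$ and letting $v$ range in a ball $B(u,\gamma)$, this bound shows
\begin{equ}
\limsup_{n\to\infty}\sup_{v\in B(u,\gamma)}\|\CP_{t_n}(u,\ccdot)-\CP_{t_n}(v,\ccdot)\|_{d_n} \leq 2\,C(\|u\|+\gamma)\,\gamma\,,
\end{equ}
which tends to zero as $\gamma\to 0$, verifying the asymptotic strong Feller property. The main point that deserves care is the first step: checking that $\{d_n\}$ indeed qualifies as a totally separating system of pseudo-metrics in the precise sense of \cite{HairerMattingly06AOM} and that the Kantorovich--Rubinstein duality applies to bounded Lipschitz test functions on $\CH$; the rest is essentially a mean-value argument once \eqref{e:SF} is in hand.
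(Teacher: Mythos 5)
Your argument is correct and is essentially the proof of Proposition~3.12 in \cite{HairerMattingly06AOM} (which this paper only cites, not reproves): one builds the totally separating system $d_n = 1 \wedge (\|u-v\|/\delta_n)$, notes that $d_n$-Lipschitz test functions have $\|\phi\|_\infty \le 1$ and $\|\DF\phi\|_\infty \le \delta_n^{-1}$, and feeds this into \eqref{e:SF} together with the duality and a mean-value argument. The only cosmetic difference is the normalisation of the pseudo-metric ($\delta_n^{-1}$ here versus $\delta_n^{-1/2}$ in the original, which is why \eqref{asfTodBound} carries a factor $1+\sqrt{\delta_n}$ instead of your $2$), and the two points you flag yourself (monotonicity of $d_n$ when $\delta_n$ is not monotone, and restricting the Kantorovich--Rubinstein supremum to Fr\'echet differentiable test functions) are indeed the only technicalities left to check.
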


The importance of the asymptotic strong Feller property is given by
the following result which states that in this case, any two distinct
ergodic invariant measures must have disjoint topological
supports. Recalling that $u$ belongs to the support of a measure $\mu$
(denoted $\supp(\mu)$) if $\mu(B_\delta(u)) >0$ for every $\delta
>0$\footnote{Here $B_\delta(u)=\{ v : \|u-v\|< \delta \}$}, we have:
\begin{theorem}[Theorem 3.16 from \cite{HairerMattingly06AOM}]
  \label{thm:suppDisjoint} Let $\CP_t$ be a Markov semigroup on a
  Polish space $\CX$ admitting two distinct ergodic invariant measures
  $\mu$ and $\nu$. If $\CP_t$ has the asymptotic strong Feller
  property, then $\supp(\mu) \cap \supp(\nu)$ is empty.
\end{theorem}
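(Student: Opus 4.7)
\emph{Proof plan.} The approach is by contradiction. Assume that some $u^\star$ lies in $\supp(\mu)\cap\supp(\nu)$. Since two distinct ergodic invariant measures of a Markov semigroup are mutually singular, there exist disjoint Borel sets $A_\mu,A_\nu\subset\CH$ with $\mu(A_\mu)=1=\nu(A_\nu)$. The strategy is to build a bounded, Fr\'echet-differentiable function $\phi$ that separates these two carriers, and then combine invariance of $\mu,\nu$ with Proposition~\ref{l:asfLipFunctions} on a small ball around $u^\star$ to produce a contradiction.

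For any $\varepsilon>0$, inner regularity of Borel probability measures on the Polish space $\CH$ yields compact sets $K_\mu\subset A_\mu$ and $K_\nu\subset A_\nu$ with $\mu(K_\mu),\nu(K_\nu)\geq 1-\varepsilon$. The $K$'s being disjoint and compact, $\rho:=d(K_\mu,K_\nu)>0$, and the truncated distance $\min(1,d(\cdot,K_\nu)/\rho)$ is Lipschitz, equal to $1$ on $K_\mu$ and $0$ on $K_\nu$. Smoothing it (for example by convolution with a non-degenerate Gaussian on $\CH$, or by first reducing to a cylindrical function on a finite-dimensional subspace carrying most of $\mu+\nu$) produces a Fr\'echet-$C^1$ function $\phi\colon\CH\to[0,1]$ with $\|\phi\|_\infty\leq 1$, $\|\DF\phi\|_\infty\leq M$ for some $M=M(\varepsilon)<\infty$, $\int\phi\,d\mu\geq 1-2\varepsilon$ and $\int\phi\,d\nu\leq 2\varepsilon$. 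By invariance, the same bounds persist with $\phi$ replaced by $\CP_{t_n}\phi$ for every $n$.

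Set $C_0:=C(\|u^\star\|+1)$ and fix $r\in(0,1)$ small enough that $2rC_0<1/4$. Let $D:=\min(\mu(B_r(u^\star)),\nu(B_r(u^\star)))>0$, positivity being the support hypothesis, and choose $\varepsilon<D/20$; this fixes $\phi$ and hence $M$. Finally take $n$ so large that $\delta_n M<1$. Proposition~\ref{l:asfLipFunctions} then gives $\|\DF\CP_{t_n}\phi(w)\|\leq C_0(1+\delta_n M)<2C_0$ for $w\in B_r(u^\star)$, so the mean value inequality yields
\[
  |\CP_{t_n}\phi(u)-\CP_{t_n}\phi(v)| \;\leq\; 2rC_0(1+\delta_n M) \;<\; 1/2, \qquad u,v\in B_r(u^\star).
\]
On the other hand $\int_{B_r(u^\star)}\CP_{t_n}\phi\,d\mu\geq\mu(B_r(u^\star))-2\varepsilon$, so the normalised $\mu$-average of $\CP_{t_n}\phi$ on $B_r(u^\star)$ is at least $1-2\varepsilon/D\geq 9/10$. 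Since $0\leq\CP_{t_n}\phi\leq 1$, an elementary Markov bound forces $\mu(\{w\in B_r(u^\star):\CP_{t_n}\phi(w)\geq 4/5\})>0$, and in particular there is $u\in B_r(u^\star)$ with $\CP_{t_n}\phi(u)\geq 4/5$. The symmetric argument applied to $\nu$ and $1-\CP_{t_n}\phi$ produces $v\in B_r(u^\star)$ with $\CP_{t_n}\phi(v)\leq 1/5$. Hence $|\CP_{t_n}\phi(u)-\CP_{t_n}\phi(v)|\geq 3/5$, contradicting the displayed inequality.

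The main obstacle is the interleaving of quantifiers: the Fr\'echet-derivative norm of the separating function behaves like $M\sim 1/\rho$ and blows up as $\varepsilon\downarrow 0$, while the lower bound requires $\varepsilon$ to be small relative to $\mu(B_r(u^\star))$ and $\nu(B_r(u^\star))$. The decisive feature of the ASF inequality is its additive split $\|\phi\|_\infty+\delta_n\|\DF\phi\|_\infty$: the first contribution is tamed by shrinking $r$ \emph{independently} of the not-yet-chosen $M$, while the second is defeated by letting $n\to\infty$ \emph{after} $M$ has been fixed. A subsidiary but genuine technicality is the construction of a globally Fr\'echet-differentiable $\phi$ on the infinite-dimensional Hilbert space $\CH$; this is standard via the smoothing procedures above, but should not be glossed over.
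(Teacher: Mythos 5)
Your argument is essentially correct, but note first that it proves a slightly different (and, for this paper, the relevant) statement: the theorem is asserted for an abstract Markov semigroup on a Polish space $\CX$ with the asymptotic strong Feller property, whereas your proof uses the concrete gradient bound \eref{e:SF} on a Hilbert space, which by Proposition~\ref{l:asfLipFunctions} is only a \emph{sufficient} condition for that property (and a Fr\'echet derivative does not even make sense on a general Polish space). Within that setting your route genuinely differs from the paper's. The paper (see the proof of Theorem~\ref{theo:uniquevar}, which is explicitly a variation on the proof of Theorem~3.16) works with the $1$-Wasserstein distances $d_n$ induced by the metrics $1\wedge(\sqrt{\delta_n}\|u-v\|)$, uses \eref{asfTodBound} to control $d_n(\CP_t^*\delta_{u_1},\CP_t^*\delta_{u_2})$ for nearby points, lets $d_n\to d_{\TV}$, and contradicts the mutual singularity $d_{\TV}(\mu,\nu)=1$; that packaging works for the abstract ASF definition and yields the quantitative separation of supports of Theorem~\ref{theo:uniquevar}. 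You instead fix a single smooth observable separating the singular carriers and exploit the additive structure of \eref{e:SF} directly --- shrinking the ball kills the $\|\phi\|_\infty$ contribution, letting $n\to\infty$ kills the $\delta_n\|\DF\phi\|_\infty$ contribution --- which is more elementary and self-contained but produces no quantitative information. Your quantifier bookkeeping ($r$ before $\epsilon$ before $M$ before $n$) is exactly right and is the heart of the matter; the averaging estimates on $B_r(u^\star)$ are all correct.

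One construction should be repaired: convolving with a Gaussian measure on an infinite-dimensional $\CH$ does \emph{not} produce Fr\'echet differentiability in all directions with a bounded derivative (one only gains differentiability along the Cameron--Martin space), so that variant of your smoothing step fails. The cylindrical alternative you mention does work: since $K_\mu\cup K_\nu$ is compact, a finite-rank orthogonal projection $\Pi$ can be chosen to move every point of it by less than $\rho/4$, so $\Pi K_\mu$ and $\Pi K_\nu$ remain at distance at least $\rho/2$ in the finite-dimensional range of $\Pi$, and one takes $\phi=\psi\circ\Pi$ with $\psi$ a smooth, globally Lipschitz separating function there. With that substitution the proof is complete.
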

To better understand how the asymptotic strong Feller property can be
used to connect topological properties and ergodic properties of
$\CP_t$, we introduce the following form of topological
irreducibility.
\begin{definition}
  We say that a Markov semigroup $\CP_t$ is \textit{weakly
    topologically irreducible} if for all $u_1, u_2 \in \CH$ there
  exists a $v \in \CH$ so that for any $A$ open set containing $v$
  there exists $t_1, t_2 > 0$ with $\CP_{t_i}(u_i,A) >0$.
\end{definition}
Also recall that $\CP_t$ is said to be \textit{Feller} if $\CP_t\phi$
is continuous whenever $\phi$ is bounded and continuous. We then have
the following corollary to Theorem~\ref{thm:suppDisjoint} whose proof
is given in Section~\ref{sec:ergodicResults}.
\begin{corollary}\label{cor:weakIrr}
  Any Markov semigroup $\CP_t$ on Polish space which is Feller, weakly
  topologically irreducible and asymptotically strong Feller admits at
  most one invariant probability measure.
\end{corollary}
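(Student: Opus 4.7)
The plan is a short proof by contradiction that uses Theorem~\ref{thm:suppDisjoint} as a black box. Suppose for a contradiction that $\CP_t$ admits two distinct invariant probability measures. By the ergodic decomposition theorem, which applies in this Feller/Polish setting, one can extract two distinct \emph{ergodic} invariant measures $\mu$ and $\nu$ (otherwise $\mu$ and $\nu$ would share the same ergodic decomposition and hence coincide). The asymptotic strong Feller hypothesis combined with Theorem~\ref{thm:suppDisjoint} immediately gives $\supp(\mu)\cap\supp(\nu)=\emptyset$, so the problem reduces to exhibiting a single point that belongs to both supports.

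Both supports are non-empty, so one can pick $u_1\in\supp(\mu)$ and $u_2\in\supp(\nu)$ and apply weak topological irreducibility to obtain a point $v\in\CH$ with the property that every open set $A$ containing $v$ satisfies $\CP_{t_1}(u_1,A)>0$ and $\CP_{t_2}(u_2,A)>0$ for suitable times $t_1,t_2>0$. I will then argue that $v\in\supp(\mu)\cap\supp(\nu)$, contradicting disjointness.

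The step that uses the Feller property is the upgrade from the pointwise positivity $\CP_{t_1}(u_1,A)>0$ to $\mu(A)>0$. Writing the open set $A$ as an increasing limit $\mathbf{1}_A=\sup_n\phi_n$ of bounded continuous functions (possible on a Polish space), the Feller property makes each $\CP_{t_1}\phi_n$ continuous, so $\CP_{t_1}(\,\cdot\,,A)=\sup_n \CP_{t_1}\phi_n$ is lower semi-continuous. Consequently the set $U:=\{w:\CP_{t_1}(w,A)>0\}$ is open and contains $u_1\in\supp(\mu)$, hence has strictly positive $\mu$-mass. Integrating the strictly positive function $\CP_{t_1}(\,\cdot\,,A)$ against $\mu$ over $U$ and using invariance yields
\[
\mu(A)\;=\;\int \CP_{t_1}(w,A)\,d\mu(w)\;\geq\;\int_U \CP_{t_1}(w,A)\,d\mu(w)\;>\;0,
\]
so $v\in\supp(\mu)$, and the symmetric argument with $\nu$ and $u_2$ gives $v\in\supp(\nu)$, producing the contradiction.

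The only conceptual obstacle is citing the ergodic decomposition cleanly; once two distinct ergodic invariant measures are in hand, Theorem~\ref{thm:suppDisjoint} does all of the hypoelliptic work and the remainder is a soft topological bookkeeping argument requiring no quantitative estimates on $\CP_t$.
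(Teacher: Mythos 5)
Your proof is correct and follows essentially the same route as the paper's: reduce to two distinct ergodic measures, use weak topological irreducibility to produce a common point $v$, upgrade the pointwise positivity $\CP_{t_i}(u_i,A)>0$ to $\mu_i(A)>0$ via the Feller property and invariance, and contradict Theorem~\ref{thm:suppDisjoint}. The only cosmetic difference is that you phrase the Feller step as lower semi-continuity of $w\mapsto\CP_{t_1}(w,A)$ for open $A$, whereas the paper phrases it as a uniform positive lower bound on a small ball around $u_i$ — these are the same fact used in the same way.
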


The discussion of this section shows that unique ergodicity can be
obtained for a Markov semigroup by showing that:
\begin{claim}
\item[1.] It satisfies the asymptotic strong Feller property.
\item[2.] There exists an ``accessible point'' which must belong to
  the topological support of every invariant probability measure.
\end{claim}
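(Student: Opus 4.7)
The plan is to verify the implication stated in the final claim: if a Markov semigroup $\CP_t$ on a Polish space satisfies (1) the asymptotic strong Feller property and (2) admits an accessible point belonging to the topological support of every invariant probability measure, then $\CP_t$ has at most one invariant probability measure. The argument is a short contradiction combining Theorem~\ref{thm:suppDisjoint} with the ergodic decomposition of invariant measures on Polish spaces.

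The core step goes as follows. Suppose for contradiction that $\CP_t$ admits two distinct ergodic invariant probability measures $\mu$ and $\nu$. By hypothesis (1), Theorem~\ref{thm:suppDisjoint} applies directly and yields $\supp(\mu) \cap \supp(\nu) = \emptyset$. On the other hand, hypothesis (2) provides an accessible point $v$ lying in $\supp(\mu)$ and, simultaneously, in $\supp(\nu)$, so $v \in \supp(\mu) \cap \supp(\nu)$, contradicting disjointness. Thus the set of ergodic invariant probability measures contains at most one element.

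To conclude, I would upgrade this "at most one ergodic" statement to "at most one invariant" by invoking the ergodic decomposition theorem for Markov semigroups on Polish spaces: every invariant probability measure can be represented as a barycenter (integral) of ergodic invariant measures, so if there is at most one ergodic invariant measure there is automatically at most one invariant probability measure overall. The main (and essentially only) technical point is to justify the ergodic decomposition in this level of generality, which is standard for Markov semigroups on Polish spaces; the rest of the argument is an immediate application of Theorem~\ref{thm:suppDisjoint} together with the defining property of an accessible point. I expect no real obstacle beyond this bookkeeping, since the hard analytic content has been absorbed into Theorem~\ref{thm:suppDisjoint}, whose proof is quoted from \cite{HairerMattingly06AOM}.
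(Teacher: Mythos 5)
Your argument is correct and is essentially the paper's own: the claim is justified in the surrounding text precisely by combining Theorem~\ref{thm:suppDisjoint} with the observation that an accessible point common to all supports would violate disjointness, and the reduction from general invariant measures to ergodic ones via the ergodic decomposition is the same step the paper uses at the start of the proof of Corollary~\ref{cor:weakIrr}. The only cosmetic difference is that the paper packages the existence of a common support point as a consequence of the Feller property plus weak topological irreducibility rather than assuming it outright, but the logical skeleton is identical.
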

It turns out that if one furthermore has some control on the speed at
which solution return to bounded regions of phase space, one can prove
the existence of spectral gaps in weighted Wasserstein-1 metrics
\cite{HaiMat08:??,HaiMatScheu,HaiMaj10}.

The present article will mainly concentrate on the first point. This
is because, by analogy with the finite-dimensional case, one can hope
to find a clean and easy way to verify condition along the lines of
H\"ormander's bracket condition that ensures a regularisation property
like the asymptotic strong Feller property.  Concerning the
accessibility of points however, although one can usually use the
Stroock-Varadhan support theorem to translate this into a
deterministic question of approximate controllability, it can be a
very hard problem even in finite dimensions.  While geometric
control theory can give qualitative information about the set of
reachable points \cite{Jurd97,AgrSac04}, the verification of the
existence of accessible points seems to rely in general on \textit{ad
  hoc} considerations, even in apparently simple finite-dimensional
problems.  We will however verify in Section~\ref{sec:accessibleGL}
below that for the stochastic Ginzburg-Landau equation there exist
accessible points under very weak conditions on the forcing.
 
With this in mind, the aim of this article is to prove the following type of
`meta-theorem':
 
\begin{metatheorem}\label{theo:main}
  Consider the setting of \eref{e:SPDEintro} on some Hilbert space
  $\CH$ and define a sequence of subsets of $\CH$ recursively by
  $\AA_0 = \{g_j\,:\, j=1,\ldots,d\}$ and
\begin{equ}
  \AA_{k+1} = \AA_k \cup \{N_m(h_1,\ldots,h_m)\,:\, h_j \in \AA_k\}\;.
\end{equ}
Under additional stability and regularity assumptions, if
 the linear span of $\AA_\infty \eqdef \bigcup_{n > 0} \AA_n$ is dense
in $\CH$, then the Markov semigroup $\CP_t$ associated to
\eref{e:SPDEintro} has the asymptotic strong Feller property.
\end{metatheorem}

The precise formulation of Meta-Theorem~\ref{theo:main} will be given in
Theorem~\ref{theo:general} below, which in turn will be a consequence
of the more general results given in Theorems~\ref{thm:asfEstimate} and \ref{theo:Malliavin}. Note
that our general results are slightly stronger than what is suggested in
Meta-Theorem~\ref{theo:main} since it also allows to consider arbitrary
``non-constant'' Lie brackets between the driving noises and the
drift, see \eref{eq:bracket} below. As further discussed in
Section~\ref{subsec:SatHor} or \ref{sec:poly}, $N_m(h_1,\ldots,h_m)$
is proportional to $\DF_{h_1}\cdots\DF_{h_m}N(u)$ where $\DF_h$ is the
Fr\'echet derivative in the direction $h$. In turn, this is equal to
the successive Lie-brackets of $N$ with the constant vector fields 
in the directions $h_1$ to $h_m$.

Under the same structural assumtpions as Meta-Theorem~\ref{theo:main}, the existence
of densities for the finite dimensional projections of
$\cP_t(x,\ccdot)$ was proven in \cite{Yuri}. The smoothness of these
densities was also discussed in \cite{Yuri}, but unfortunately there were
two errors in the proof of that article. While the arguments presented in the present article
are close in sprit to those in \cite{Yuri}, they diverge at the technical level. Our results
on the smoothness of densities
will be given in  Sections~\ref{sec:mallSpec} and \ref{sec:Wiener}.

The remainder of this section is
devoted to a short discussion of the main techniques used in the proof
of such a result and in particular on how to obtain a bound of the
type \ref{e:SF} for a parabolic stochastic PDE.

\subsection{A roadmap for the impatient}

Readers eager to get to the heart of this article but understandably
reluctant to dig into too many technicalities may want finish reading
Section 1, then jump directly to Section~\ref{sec:generalSmoothing}
and read up to the end of Section~\ref{sec:choiceh} to get a good
idea of how \eqref{e:SF} is obtained from bounds on the Malliavin
matrix.  Then they may want to go to the beginning of
Section~\ref{sec:mallSpec} and read to the end of
Section~\ref{sec:proofMall} to see how these bounds can be obtained.

\subsection{How to obtain a smoothing estimate} 

A more technical overview of the techniques will be given in Section
\ref{sec:general-framework} below. In a nutshell, our aim is to
generalise the arguments from \cite{HairerMattingly06AOM} and the type
of Malliavin calculus estimates developed in
\cite{MatPar06:1742} to a large class of semilinear parabolic SPDEs
with polynomial nonlinearities.  Both previous works relied on the
particular structure of the Navier-Stokes equations.  The technique of
proof can be interpreted as an ``infinitesimal'' version of techniques
developed in \cite{EMS,KS00} and extended in
\cite{BC,MY,MatNS,HExp02,MatMemory} combined with detailed lower
bounds on the Malliavin covariance matrix of the solution.

In \cite{EMS} the idea was the following: take two distinct initial
conditions $u_0$ and $u'_0$ for \eref{e:SPDEintro} and a realisation
$W$ for the driving noise.  Try then to find a shift $v$ belonging to
the Cameron-Martin space of the driving process and such that $\|u(t)
- u'(t)\|\to 0$ as $t \rightarrow\infty$, where $u'$ is the solution to
\eref{e:SPDEintro} driven by the shifted noise $W' = W + v$.
Girsanov's theorem then ensures that the two initial conditions induce
equivalent measures on the infinite future. This in turn implies the
unique ergodicity of the system. (See also \cite{saintFlourJCM08} for
more details.)

The idea advocated in \cite{HairerMattingly06AOM} is to consider an
infinitesimal version of this construction.  Fix again an initial
condition $u_0$ and Wiener trajectory $W$ but consider now an
\textit{infinitesimal} perturbation $\xi$ to the initial condition
instead of considering a second initial condition at distance
$\CO(1)$.  This produces an infinitesimal variation in the solution
$u_t$ given by its Fr\'echet derivative $\DF_\xi u_t$ with respect to
$u_0$. Similarly to before, one can then consider the ``control
problem'' of finding an \textit{infinitesimal} variation of the Wiener
process in a direction $h$ from the Cameron-Martin space which, for
large times $t$, compensates the effect of the variation $\xi$. Since
the effect on $u_t$ of an infinitesimal variation in the Wiener
process is given by the Malliavin derivative of $u_t$ in the direction
$h$, denoted by $\DM_h u_t$, the problem in this setting is to find an
$h(\xi,W) \in L^2([0,\infty],\R^d)$ with
\begin{equation}
  \label{eq:variatonToZero}
\EE\|\DF_\xi u_t - \DM_h u_t\| \rightarrow 0\text{ as }t \rightarrow  
\infty\;, 
\end{equation}
and such that the expected ``cost'' of $h_t$ is finite. Here, the
Malliavin derivative $\DM_h u_t$ is given by the derivative in $\eps$
at $\eps = 0$ of $u_t(W+ \eps v)$, with $v(t) = \int_0^t h(s)\,ds$.
If $h$ is adapted to the filtration generated by $W$, then the
expected cost is simply $\int_0^\infty \EE\|h_s\|^2 ds$. If it is not
adapted, one must estimate directly $\limsup \EE\| \int_0^t h_s
dW_s\|$ where the integral is a Skorokhod integral.
As will be explained in detail in Section \ref{sec:general-framework},
once one establishes \eqref{eq:variatonToZero} with a finite expected
cost $h$, the crucial estimate given in \eref{e:SF} (used to prove the
asymptotic strong Feller property) follows by a fairly general procedure.

As this discussion makes clear, one of our main tasks will be to
construct a shift $h$ having the property \eref{eq:variatonToZero}. We
will distinguish three cases of increasing generality (and technical
difficulty). In the first case, which will be referred to as
\textit{strongly contracting} (see
Section~\ref{sec:stronglyDissipative}), the linearised dynamics
contracts pathwise without modification (all Lyapunov exponents are
negative). Hence $h$ can be taken to be identically zero.  The next
level of complication comes when the system possesses a number of
directions which are unstable on average. The simplest way to deal
with this assumption is to assume that the complement of the span of
the forced directions (the $g_k$'s) is contracting on average. This
was the case in \cite{EMS,KS00,BC,MY,MatNS,HExp02,MatMemory}. We refer to this
as the ``essentially elliptic'' setting since the directions essential
to determine the system's long time behavior, the unstable directions,
are directly forced. This is a reflection of the maxim in dynamical
systems that the long time behavior is determined by the behavior in
the unstable directions. Since the noise affects all of these
directions, it is not surprising that the system is uniquely ergodic,
see Section~4.5 of \cite{HairerMattingly06AOM} for more details.

The last case (i.e.\ when the set of forced directions does not
contain all of the unstable directions) is the main concern of the
present paper.  In this setting, we study the interaction between the
drift and the forced directions to understand precisely how randomness
spreads to the system. The condition ensuring that one can gain
sufficient control over the unstable directions, requires that the
$g_k$ together with a collection of Lie brackets (or commutators) of
the form
\begin{equation}\label{eq:bracket}
  [F,g_k],\, [[F,g_k],g_j],\, [[F,g_k],F],\,  [[[F,g_k],g_j],g_l],\cdots  
\end{equation}
span all of the unstable direction. This condition will be described
more precisely in Section \ref{sec:Hormander} below. In finite
dimensions, when this collection of Lie brackets spans the entire
tangent space at every point, the system is said to satisfy the ``weak
H\"ormander'' condition. When this assumption holds for the unstable
directions (along with some additional technical assumptions), we can
ensure that the noise spreads sufficiently to the unstable directions
to find a $h$ capable of counteracting the expansion in the unstable
directions and allowing one to prove \eqref{eq:variatonToZero} with a
cost whose expectation is finite.

We will see however that the control $h$ used will not be adapted to
the filtration generated by the increments of the driving Wiener
process, thus causing a number of technical difficulties. This stems
from the seemingly fundamental fact that because we need some of the
``bracketed directions'' \eref{eq:bracket} in order to control the
dynamic, we need to work on a time scale longer than the instantaneous
one. In the ``essentially elliptic'' setting, on the other hand, we
were able to work instantaneously and hence obtain an adapted control
$h$ and avoid this technicality.

\subsection{The role of the Malliavin matrix}

Since the Malliavin calculus was developed in the 1970's and 1980's
mainly to give a probabilistic proof of H\"ormander's ``sum of
squares'' theorem under the type of bracket conditions we consider, it
is not surprising that the Malliavin matrix $\CM_t = \DM u_t \DM
u_t^*$ plays a major role in the construction of the variation $h$ in
the ``weak H\"ormander'' setting. A rapid introduction to Malliavin
calculus in our setting is given in Section \ref{sec:Malliavin}.  In
finite dimensions, the key to the proof of existence and smoothness of
densities is the finiteness of moments of the inverse of the Malliavin
matrix. This estimate encapsulates the fact the noise effects all of
the directions with a controllable cost.  In infinite dimensions while
it is possible to prove that the Malliavin matrix is almost surely
non-degenerate it seems very difficult to characterise its
range. (With the exception of the linear case \cite{ZDP}. See also
\cite{DaPElwZab95:35,FM,CerrRD,EH3} for situations where the Malliavin
matrix can be shown to be invertible on the range of the Jacobian.)
However, in light of the preceding section, it is not surprising that
we essentially only need the invertibility of the Malliavin matrix on
the space spanned by the unstable directions, which is finite
dimensional in all of our examples. More precisely, we need
information about the likelihood of eigenvectors with sizable
projections in the unstable directions to have small
eigenvalues. Given a projection $\Pi$ whose range includes the
unstable directions we will show that the Malliavin matrix $\MM_t$
satisfies an estimate of the form
\begin{equ}[e:boundMM]
  \P\Big( \inf_{\substack{\phi \in \CH\\ \|\Pi \phi\| \geq \frac12
      \|\phi\|}} \scal{\MM_t \phi,\phi} > \eps \|\phi\|^2\Big) =
  o(\eps^p)
\end{equ}
for all $p\geq 1$. Heuristically, this means we have control of the
probabilistic cost to create motion in all of the directions in the
range of $\Pi$ without causing a too large effect in the complementary
directions.  We will pair such an estimate with the assumption that the
remaining directions are stable in that the Jacobian (the linearization
of the SPDE about the trajectory $u_t$) satisfies a contractive
estimate for the directions perpendicular to the range of
$\Pi$. Together, these assumptions will let us build an infinitesimal
Wiener shift $h$ which approximately compensates for the component of
the infinitesimal shift caused by the variation in the initial
condition in the unstable directions. Once the variation in the
unstable directions have been decreased, the assumed contraction in
the stable directions will ensure that the variation in the stable
directions will also decrease until it is commiserate in size with the
remaining variation in the unstable directions. Iterating this
argument we can drive the variation to zero.

Note that one feature of the bound \eref{e:boundMM} is that all the
norms and scalar products appearing there are the same. This is a
strengthening of the result from \cite{MatPar06:1742} which fixes an
error in \cite{HairerMattingly06AOM}, see Section~\ref{sec:mallSpec}
for more details.

The basic structure of the sections on Malliavin calculus follows the
presentation in \cite{Yuri} which built on the ideas and techniques
from \cite{MatPar06:1742,Oco88:288}. As in all three of these works,
as well as the present article, the time reversed adjoint
linearization is used to develop an alternative representation of the
Malliavin Covariance matrix. In \cite{Oco88:288}, only the case of
linear drift and linear multiplicative noise was considered. In
\cite{MatPar06:1742}, a nonlinear equation with a quadratic
nonlinearity and additive noise was considered. In \cite{Yuri}, the
structure was abstracted and generalized so that it was amenable to
general polynomial nonlinearities. We follow that structure and basic
line of argument here while strengthening the estimates and correcting
some important errors.

Most existing bounds on the inverse of the Malliavin matrix in a
hypoelliptic situation make use of some version of Norris' lemma
\cite{KSAMI,KSAMII,Norr,MatPar06:1742,BauHai07:373}. In its form taken
from \cite{Norr}, it states that if a semimartingale $Z(t)$ is small
and one has some control on the roughness of both its bounded
variation part $A(t)$ and its quadratic variation process $Q(t)$, then
both $A$ and $Q$ taken separately must be small. While the versions of
Norris' lemma given in \cite{MatPar06:1742,Yuri,BauHai07:373} are not
precisely of this form (in both cases, one cannot reduce the problem
to semimartingales, either because of the infinite-dimensionality of
the problem or because one considers SDEs driven by processes that are
not Wiener processes), they have the same flavour in that they state
that if a process is composed of a ``regular'' part and an
``irregular'' part, then these two parts cannot cancel each
other. This harkens back to the more explicit estimates based on
estimates of modulus of continuity found in
\cite{KusuokaStroockII,StroockSaintFlour}. The replacement for Norris'
lemma used in the present work covers the case where one is given a
finite collection of Wiener process $W_j$ and a collection of not
necessarily adapted Lipschitz continuous processes $A_\alpha(t)$ (for
$\alpha$ a multi-index) and considers the process
\begin{equ}
  Z(t) = A_{\emptyset}(t) + \sum_{\ell=1}^M \sum_{|\alpha| = \ell}
  A_\alpha(t) W_{\alpha_1}(t)\cdots W_{\alpha_\ell}(t)\;.
\end{equ}
It then states that if $Z$ is small, this implies that all of the
$A_\alpha$'s with $|\alpha| \le M$ are small.  For a precise
formulation, see Section~\ref{sec:Wiener} below.  It is in order to be
able to use this result that we are restricted to equations with
polynomial nonlinearities. This result on Wiener polynomials is a
descendant of the result proven in \cite{MatPar06:1742} for
polynomials of degree one. In \cite{Yuri}, a result for general
Wiener polynomials was also proven. Is was show there that if $Z(t) = 0$ for
$t \in [0,T]$ then then $A_\alpha(t) =0$ for $t \in [0,T]$. This was
used to prove the existence of a density for the finite dimensional
projections of the transition semigroup.  In the same article, the
same quantitative version of this result as proven in the present article was
claimed. Unfortunately, there was a error in the proof. Nonetheless
the techniques used here are built on  and refine those developed in
\cite{Yuri}.

\subsection{Satisfying the H\"ormander-like assumption}
\label{subsec:SatHor}
At first glance the condition that the collection of functions given
in \eqref{eq:bracket} are dense in our state space may seem hopelessly
strong.  However, we will see that it is often not difficult to
ensure. Recall that the nonlinearity $N$ is a polynomial of order $m$,
and hence, it has a leading order part which is $m$-homogeneous. We
can view this leading order part as a symmetric $m$-linear map which
we will denote by $N_m$. Then, at least formally, the Lie bracket of
$N$ with $m$ constant vector fields is proportional to $N_m$,
evaluated at the constant vector fields, that is $N_m(h_1,\cdots,h_m)
\propto [\cdots[[F,h_1],\cdots],h_m]$, which is again a constant
vector field. While the collection of vector fields generated by
brackets of this form are only a subset of the possible brackets, it
is often sufficient to obtain a set of dense vector fields. For
example, if $N(u)=u-u^3$ then $N_3(v_1,v_2,v_3)=v_1v_2v_3$ and if the
forced directions $\{g_1, \cdots,g_d\}$ are $\CC^\infty$ then
$N_3(h_1,h_2,h_3)\in \CC^\infty$ for $h_i \in \{g_1, \cdots,g_d\}$.
As observed in \cite{Yuri}, to obtain a simple sufficient criteria for
the brackets to be dense, suppose that $\Lambda \subset \CC^\infty$ is
a finite set of functions that generates, as a multiplicative algebra,
a dense subset of the phase space. Then, if the forced modes
$\AA_0=\{g_1, \cdots,g_d\}$ contain the set $\{ h, h\bar h : h, \bar h
\in \Lambda\}$, the set $\AA_\infty$ constructed as in
Meta-Theorem~\ref{theo:main} will span a dense subset of phase space.

\subsection{Probabilistic and dynamical view of smoothing}
\label{sec:prob-dynam-view}
Implicit in \eqref{eq:variatonToZero} is the ``transfer of variation''
from the initial condition to the Wiener path. This is the heart of
``probabilistic smoothing'' and the source of ergodicity when it is
fundamentally probabilistic in nature. The unique ergodicity of a
dynamical system is equivalent to the fact that it ``forgets its
initial condition'' with time. The two terms appearing on the
right-hand side of \eqref{e:SF} represent two different sources of
this loss of memory. The first is due to the randomness entering the
system. This causes nearby points to end up at the same point at a
later time because they are following different noise
realisations. The fact that different stochastic trajectories can
arrive at the same point and hence lead to a loss of information is
the hallmark of diffusions and unique ergodicity due to
randomness. From the coupling point of view, since different
realizations lead to the same point yet start at different initial
conditions, one can couple in finite time.

The second term in
\eqref{e:SF} is due to ``dynamical smoothing'' and is one of the sources
of unique ergodicity in deterministic contractive dynamical systems.  If two
trajectories converge towards each other over time then the level of
precision needed to determine which initial condition corresponds to
which trajectory also increases with time. This is another type of
information loss and equally leads to unique ergodicity. However,
unlike ``probabilistic smoothing'', the information loss is never
complete at any finite time. Another manifestation of this fact is
that the systems never couples in finite time, only at infinity. In
Section~\ref{sec:stronglyDissipative} about the strongly dissipative
setting, the case of pure dynamical smoothing is considered. In
this case one has \eqref{e:SF} with only the second term present. When
both terms exist, one has a mixture of probabilistic and dynamical
smoothing leading to a loss of information about the initial
condition. In Section~2.2 of \cite{HaiMat08:??} it is shown how
\eqref{e:SF} can be used to construct a coupling in which nearby
initial conditions converge to each other at time infinity. The
current article takes a ``forward in time'' perspective, while
\cite{EMS,MatMemory} pull the initial condition back to minus
infinity. The two points of view are essentially equivalent. One
advantage to moving forward in time is that it makes proving a spectral
gap for the dynamic more natural. We provide such an estimate in
Section~\ref{sec:accessibleGL} for the stochastic Ginzburg-Landau
equation.

\subsection{Structure of the article}

The structure of this article is as follows. In
Section~\ref{sec:ergodicResults}, we give a few abstract ergodic
results both proving the results in the introduction and expanding
upon them.  In Section~\ref{sec:PDEbounds}, we introduce the
functional analytic setup in which our problem will be
formulated. This setup is based on Assumption~\ref{ass:basic} which
ensures that all the operations that will be made later
(differentiation with respect to initial condition, representation for
the Malliavin derivative, etc) are well-behaved. 
Section~\ref{sec:Malliavin} is a follow-up section where we define the Malliavin
matrix and obtain some simple upper bounds on it.
We then introduce
some additional assumptions in Section~\ref{sec:boundsdyn} which
ensure that we have suitable control on the size of the solutions and
on the growth rate of its Jacobian.

In Section~\ref{sec:generalSmoothing}, we obtain the asymptotic strong
Feller property under a partial invertibility assumption on the
Malliavin matrix and some additional partial contractivity assumptions
on the Jacobian. Section~\ref{sec:proofMalliavin} then contains the
proof that assumptions on the Malliavin matrix made in
Section~\ref{sec:generalSmoothing} are justified and can be verified
for a large class of equations under a H\"ormander-type condition. The
main ingredient of this proof, a lower bound on Wiener polynomials, is
proved in Section~\ref{sec:Wiener}. Finally, we conclude in
Section~\ref{sec:examples} with two examples for which our conditions
can be verified. We consider the Navier-Stokes equations on the
two-dimensional sphere and a general reaction-diffusion equation in
three or less dimensions.

\subsection*{Acknowledgements}

{\small
We are indebted to Hakima Bessaih who pushed us to give a clean formulation
of Theorem~\ref{theo:general}.
}

\section{Abstract ergodic results}
\label{sec:ergodicResults}

We now expand upon the abstract ergodic theorems mentioned in the
introduction which build on the asymptotic strong Feller property. We
begin by giving the proof of Corollary~\ref{cor:weakIrr} from the
introduction and then give a slightly different result (but with the
same flavour) which will be useful in the investigation of the
Ginzburg-Landau equation in Section~\ref{sec:accessibleGL}.
Throughout this section, $\cP_t$ will be a Markov semigroup on a
Hilbert space $\CH$ with norm $\|\ccdot\|$.

\begin{proof}[of Corollary~\ref{cor:weakIrr}]
Since $\CP_t$ is Feller, we know that for any $u \in \CH$
  and open set $A$ with $\CP_t(u,A)>0$ there exists an open set $B$
  containing $u$ so that
  \begin{align*}
    \inf_{u \in B} \CP_t(u,A) >0 \,.
  \end{align*}

  Combining this fact with the weak topological irreducibility, we
  deduce that for all $u_1, u_2 \in \CH$ there exists $v \in \CH$ so
  that for any $\epsilon >0$ there exists a $\delta, t_1, t_2 > 0$ with
  \begin{equation}\label{eq:openSetIrrduce}
    \inf_{ z \in B_\delta(u_i)} \CP_{t_i}(z, B_\epsilon(v)) >0 
  \end{equation}
  for $i=1,2$.

  Now assume by contradiction that we can find two distinct invariant
  probability measures $\mu_1$ and $\mu_2$ for $\CP_t$. Since any
  invariant probability measure can be written as a convex combination
  of ergodic measures, we can take them to be ergodic without loss of
  generality.  Picking $u_i \in \supp(\mu_i)$, by assumption there
  exists a $v$ so that for any $\epsilon >0$ there exists $t_1$, $t_2$
  and $\delta>0$ so that \eqref{eq:openSetIrrduce} holds. Since $u_i
  \in \supp(\mu_i)$ we know that $\mu_i(B_\delta(u_i)) >0$ and hence
  \begin{align*}
    \mu_i(B_\epsilon(v))&= \int_\CH \CP_{t_i}(z, B_\epsilon(v))
    \mu_i(dz)
    \geq  \int_{B_\delta(u_i)} \CP_{t_i}(z, B_\epsilon(v)) \mu_i(dz)\\
    &\geq \mu_i(B_\delta(u_i))\inf_{ z \in B_\delta(u_i)} \CP_{t_i}(z,
    B_\epsilon(v)) >0\,.
  \end{align*}
  Since $\epsilon$ was arbitrary, this shows that $v \in
  \supp(\mu_1)\cap \supp(\mu_2)$, which by
  Theorem~\ref{thm:suppDisjoint} gives the required contradiction.
\end{proof}

We now give a more quantitative version of
Theorem~\ref{thm:suppDisjoint}. It shows that if one has access to the
quantitative information embodied in \eqref{e:SF}, as opposed to only
the asymptotic strong Feller property, then not only are the supports
of any two ergodic invariant measures disjoint but they are actually
separated by a distance which is directly related to the function $C$
from \eqref{e:SF}.

\begin{theorem}\label{theo:uniquevar}
  Let $\{\CP_t\}$ be a Markov semigroup on a separable 
  Hilbert space $\CH$ such that \eref{e:SF} holds for some
  non-decreasing function $C$. Let $\mu_1$ and $\mu_2$ be two distinct
  ergodic invariant probability measures for $\CP_t$. Then, the bound
  $\|u_1 - u_2\| \ge 1/C(\|u_1\| \vee \|u_2\|)$ holds for any pair of
  points $(u_1, u_2)$ with $u_i \in \supp \mu_i$.
\end{theorem}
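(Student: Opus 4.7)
The argument is by contradiction. Suppose there exist $u_1 \in \supp\mu_1$ and $u_2 \in \supp\mu_2$ with $\|u_1-u_2\| < 1/C(\|u_1\|\vee\|u_2\|)$; the plan is to deduce $\mu_1 = \mu_2$, which is ruled out by distinctness. First, integrating the gradient bound \eref{e:SF} along the straight-line segment from $u_2$ to $u_1$, which by convexity of the Hilbert norm is contained in the closed ball of radius $\|u_1\|\vee\|u_2\|$, and using monotonicity of $C$, gives, for every $\phi:\CH\to\R$ with $\|\phi\|_\infty \le 1$ and $\|\DF\phi\|_\infty<\infty$,
\begin{equ}[e:plan1]
|\CP_{t_n}\phi(u_1) - \CP_{t_n}\phi(u_2)| \le C(\|u_1\|\vee\|u_2\|)\,\|u_1 - u_2\|\,(1 + \delta_n\|\DF\phi\|_\infty)\,.
\end{equ}
For any \emph{fixed} $\phi$ the bracketed factor tends to $1$ as $n\to\infty$.

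The crux is to upgrade \eref{e:plan1} to a comparison between the invariant measures themselves. Invariance of $\mu_i$ under $\CP_{t_n}$ combined with Fubini yields
\begin{equ}[e:plan1b]
\int\phi\,d\mu_1 - \int\phi\,d\mu_2 = \iint \bigl[\CP_{t_n}\phi(v_1) - \CP_{t_n}\phi(v_2)\bigr]\,d\mu_1(v_1)\,d\mu_2(v_2)\,,
\end{equ}
but substituting \eref{e:plan1} directly on the right-hand side only produces the $\mu_1\otimes\mu_2$-averaged distance, which need not be small. To localize the estimate near the specific pair $(u_1, u_2)$, I plan to replace $\mu_i$ in \eref{e:plan1b} by the conditional probability measures $\mu_i^\rho := \mu_i(\,\cdot\,\cap B_\rho(u_i))/\mu_i(B_\rho(u_i))$, which are well-defined positive probabilities since $u_i\in\supp\mu_i$, and to analyse the Ces\`aro-averaged forward iterates $\frac{1}{N}\sum_{n=1}^N\mu_i^\rho\CP_{t_n}$. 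Because $\mu_i^\rho$-almost every point is Birkhoff-generic for $\mu_i$ by ergodicity, a Krylov-Bogolyubov argument identifies any weak limit as $N\to\infty$ with $\mu_i$ itself. Combining this with \eref{e:plan1} applied to $v_i\in B_\rho(u_i)$ (which inflates the constant only by a factor $1+O(\rho)$), exploiting that $\frac{1}{N}\sum_{k=1}^N\delta_k\to 0$ eliminates the derivative term under averaging, and finally sending $\rho\to 0$, produces
\begin{equ}[e:plan2]
\bigl|\int\phi\,d\mu_1 - \int\phi\,d\mu_2\bigr| \le C(\|u_1\|\vee\|u_2\|)\,\|u_1 - u_2\|\,\|\phi\|_\infty
\end{equ}
for every bounded Lipschitz $\phi$.

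To close the argument, Theorem~\ref{thm:suppDisjoint} guarantees $\supp\mu_1\cap\supp\mu_2=\emptyset$, so $\mu_1\perp\mu_2$ and there is a Borel set $A$ with $\mu_1(A)=1$, $\mu_2(A)=0$. Since every finite Borel measure on the Polish space $\CH$ is Radon, $\mathbf{1}_A$ can be approximated in $L^1(\mu_1+\mu_2)$ by bounded Lipschitz functions with values in $[0,1]$; inserting such approximants into \eref{e:plan2} and passing to the limit gives $1 = |\mu_1(A)-\mu_2(A)| \le C(\|u_1\|\vee\|u_2\|)\|u_1-u_2\|$, contradicting the assumption. The hard step is the localization in the middle paragraph: \eref{e:plan1} is only available at the discrete times $t_n$, and invariance alone produces merely a global bound on $\mu_1-\mu_2$; converting it to a local bound depending only on the specific distance $\|u_1-u_2\|$ is what forces one to work with the conditional measures $\mu_i^\rho$, to identify their Ces\`aro-averaged iterates with $\mu_i$ via ergodicity, and to exploit $\delta_n\to 0$ so that the derivative term disappears under time averaging.
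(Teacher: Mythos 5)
Your opening and closing steps are sound: integrating \eref{e:SF} along the segment does give \eref{e:plan1}, and a bound of the form $d_{\TV}(\mu_1,\mu_2)<1$ does contradict the mutual singularity of two distinct ergodic invariant measures. The gap is the middle ``localization'' step, and it is the heart of the matter. You need to identify the weak limit of $\frac1N\sum_{n\le N}\mu_i^\rho\CP_{t_n}$ with $\mu_i$, but no ergodic theorem delivers this: $\{t_n\}$ is only assumed non-decreasing (it may be bounded, or grow so fast that your average runs over a sparse subsequence about which Birkhoff's theorem says nothing), and even in the friendly case $t_n=n$, ergodicity of $\mu_i$ for the continuous-time semigroup does not imply ergodicity for the time-one skeleton $\CP_1$; the $\mu_i^\rho$-generic points may then equidistribute along different ergodic components of $\CP_1$ with weights differing from those assigned by $\mu_i$, so the Ces\`aro limit is a different mixture and \eref{e:plan2} does not follow. (A secondary issue: \eref{e:SF} requires $\|\DF\phi\|_\infty<\infty$, so your final approximation of $\one_A$ must be by Fr\'echet-differentiable test functions rather than merely Lipschitz ones, which is itself delicate in infinite dimensions.)

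The missing idea --- which makes the localization unnecessary --- is to work with the truncated metrics $d_n(u,v)=1\wedge(\sqrt{\delta_n}\,\|u-v\|)$ and the associated Wasserstein-$1$ distances, which are bounded by $1$ and converge to $d_{\TV}$ as $n\to\infty$. Because of the cap at $1$, one need not show that \emph{all} of the mass of $\mu_1$ and $\mu_2$ can be matched: it suffices to couple the fraction $\alpha=\min_i\mu_i(A)>0$ of mass that each measure places on a small set $A$ containing $u_1$ and $u_2$ (positive precisely because $u_i\in\supp\mu_i$), estimating that piece via \eref{e:plan1} and bounding the remaining $1-\alpha$ trivially by $1$. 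This yields $d_n(\mu_1,\mu_2)\le 1-\alpha\bigl(1-\beta(1+\sqrt{\delta_n})\bigr)$ with $\beta<1$, hence $d_{\TV}(\mu_1,\mu_2)\le 1-\alpha(1-\beta)<1$ in the limit --- the desired contradiction --- without ever pushing the conditional measures forward to recover $\mu_i$.
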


\begin{proof}
  The proof is a variation on the proof of Theorem~3.16 in
  \cite{HairerMattingly06AOM}. We begin by defining for $u,v \in \CH$
  the distance $d_n(u,v)=1 \wedge (\sqrt{\delta_n}\,\|u-v\|)$ where
  $\delta_n$ is the sequence of positive numbers from \eqref{e:SF}. As
  shown in the proof of Theorem~3.12 in \cite{HairerMattingly06AOM},
  one has
  \begin{align}\label{asfTodBound}
    d_n( \cP_t^*\delta_{u_1} ,\cP_t^*\delta_{u_2}) \leq \|u_1-u_2\|
    C(\|u_1\| \vee \|u_2\|)(1+ \sqrt{\delta_n})
  \end{align}
  where $d_n$ is the 1-Wasserstein
  distance\footnote{$d_n(\nu_1,\nu_2)= \sup \int \phi d\nu_1 - \int
    \phi d\nu_2$ where the supremum runs over functions $\phi:\CH
    \rightarrow \R$ which have Lipschitz constant one with respect to
    the metric $d_n$.}  on probability measures induced by the metric
  $d_n$.  Observe that for all $u,v \in \CH$, $d_n(u,v) \leq 1$ and
  $\lim d_n(u,v)=\one_{\{u\}}(v)$. Hence by in Lemma~3.4 of
  \cite{HairerMattingly06AOM}, for any probability measures $\mu$ and
  $\nu$, $\lim_{n \to \infty} d_n(\mu,\nu)=d_{\TV}(\mu,\nu)$ where
  $d_{\TV}(\mu,\nu)$ is the total variation
  distance\footnote{Different communities normalize the total
    variation distance differently.  Our $d_\TV$ is half of the total
    variation distance as defined typically in analysis.  The
    definition we use is common in probability as it is normalised in
    such a way that $d_\TV(\mu,\nu)=1$ for mutually singular
    probability measures.}.

  Let $\mu_1$ and $\mu_2$ be two ergodic invariant measures with
  $\mu_1 \neq \mu_2$. By Birkhoff's ergodic theorem, we know that they
are mutually  singular and thus $d_{\TV}(\mu_1,\mu_2)=1$.  We now
  proceed by contradiction. We assume that there exists a pair of
  points $(u_1,u_2)$ with $u_i \in \supp(\mu_i)$ such that $\|u_1
  -u_2\| < C(\|u_1\| \vee \|u_2\|)$. We will conclude by showing that
  this implies that $d_{\TV}(\mu_1,\mu_2)<1$ and hence $\mu_1$ and
  $\mu_2$ are not singular which will be a contradiction.  

  Our assumption on $u_1$ and $u_2$ implies that there exists a set
  $A$ containing $u_1$ and $u_2$ such that
  $\alpha\eqdef\min(\mu_1(A),\mu_2(A)) >0$ and $\beta\eqdef\sup\{ \|u -v \|
  : u,v \in A\}C(\|u_1\| \vee \|u_2\|)< 1$. As shown in the proof of
  Theorem~3.16 in \cite{HairerMattingly06AOM}, for any $n$ one has
  \begin{align*}
    d_n(\mu_1,\mu_2) &\leq 1-\alpha \big(1- \sup_{v_i \in A} d_n(
    \cP_t^*\delta_{v_1} ,\cP_t^*\delta_{v_2}) \big)\\
    &\leq 1- \alpha \big(1-\beta (1+ \sqrt{\delta_n})\big) 
  \end{align*}
  where the last inequality used the bound in
  equation~\eqref{asfTodBound}. Taking $n \rightarrow \infty$ produces
  $d_{\TV}(\mu_1,\mu_2)\leq 1- \alpha(1-\beta)$. Since $\alpha \in
  (0,1)$ and $\beta<1$ we concluded that $d_{\TV}(\mu_1,\mu_2) <
  1$. This implies a contradiction since $\mu_1$ and $\mu_2$ are
  mutually singular measures.
\end{proof}

Paired with this stronger version of Theorem~\ref{thm:suppDisjoint},
we have the following version of Corollary~\ref{cor:weakIrr} which
uses an even weaker form of irreducibility. This is a general
principle.  If one has a stronger from of the asymptotic strong Feller
property, one can prove unique ergodicity under a weaker form of
topological irreducibility. The form of irreducibility used in
Corollary~\ref{cor:uniqueIM} allows the point where two trajectories
approach to move about, depending on the degree of closeness
required. To prove unique ergodicity, the trade-off is that one needs
some control of the ``smoothing rate'' implied by asymptotic strong
Feller at different points in phase space.

\begin{corollary}\label{cor:uniqueIM}
  Let $\{\CP_t\}$ be as in Theorem~\ref{theo:uniquevar}.  Suppose
  that, for every $R_0>0$, it is possible to find $R>0$ and $T>0$ such
  that, for every $\eps > 0$, there exists a point $v$ with $\|v\| \le
  R$ such that $\CP_T(u,\CB_\eps(v)) > 0$ for every $\|u\| \le
  R_0$. Then, $\CP_t$ can have at most one invariant probability
  measure.
\end{corollary}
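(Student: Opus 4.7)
The plan is to argue by contradiction along the same lines as the proof of Corollary~\ref{cor:weakIrr}, but replacing Theorem~\ref{thm:suppDisjoint} by the quantitative statement of Theorem~\ref{theo:uniquevar}. Suppose $\CP_t$ admits two distinct invariant probability measures; by the ergodic decomposition we may assume they are two distinct ergodic measures $\mu_1,\mu_2$. Since each $\mu_i$ is a probability measure, we can find $R_0>0$ large enough that $\mu_i(\CB_{R_0}(0))>0$ for $i=1,2$, and hence choose $u_i\in\supp(\mu_i)$ with $\|u_i\|\le R_0$. Apply the hypothesis with this $R_0$ to obtain $R,T>0$ such that, for each $\eps>0$, there is $v_\eps\in\CH$ with $\|v_\eps\|\le R$ and $\CP_T(u_i,\CB_\eps(v_\eps))>0$ for $i=1,2$.

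Next I would use the Feller property (exactly as in the opening paragraph of the proof of Corollary~\ref{cor:weakIrr}) to promote each bound $\CP_T(u_i,\CB_\eps(v_\eps))>0$ to an open neighbourhood $U_i$ of $u_i$ on which $\inf_{z\in U_i}\CP_T(z,\CB_\eps(v_\eps))>0$. Since $u_i\in\supp(\mu_i)$, $\mu_i(U_i)>0$, and invariance gives
\begin{equation*}
\mu_i(\CB_\eps(v_\eps))=\int_\CH \CP_T(z,\CB_\eps(v_\eps))\,\mu_i(dz)\ge \mu_i(U_i)\inf_{z\in U_i}\CP_T(z,\CB_\eps(v_\eps))>0.
\end{equation*}
Consequently $\supp(\mu_i)\cap\CB_\eps(v_\eps)\neq\varnothing$, so we may pick $w_i^\eps\in\supp(\mu_i)$ with $\|w_i^\eps-v_\eps\|<\eps$. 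The triangle inequality then yields $\|w_1^\eps-w_2^\eps\|<2\eps$ and $\|w_i^\eps\|\le R+\eps$.

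Finally, I would apply Theorem~\ref{theo:uniquevar} to the pair $(w_1^\eps,w_2^\eps)$. Since $C$ is non-decreasing, this gives
\begin{equation*}
2\eps>\|w_1^\eps-w_2^\eps\|\ge \frac{1}{C(\|w_1^\eps\|\vee\|w_2^\eps\|)}\ge \frac{1}{C(R+1)}
\end{equation*}
for every $\eps\in(0,1)$. Choosing $\eps<\min\{1,\,1/(2C(R+1))\}$ contradicts this bound, proving that at most one invariant probability measure can exist.

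The only subtle point is the handling of the $\eps$-dependence of $v_\eps$: we cannot hope to pass to a limit of $v_\eps$ in the norm topology of the Hilbert space, but this is not needed—Theorem~\ref{theo:uniquevar} furnishes a \emph{uniform} positive lower bound on the distance between the supports of $\mu_1$ and $\mu_2$ in a ball of radius $R+1$, and this uniform lower bound is what collides with $\|w_1^\eps-w_2^\eps\|<2\eps$ as $\eps\downarrow 0$. That this quantitative separation replaces the qualitative disjointness of supports used in Corollary~\ref{cor:weakIrr} is exactly what allows us to weaken the irreducibility assumption to one in which the ``common approach point'' $v$ is permitted to depend on the precision~$\eps$.
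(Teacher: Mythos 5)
Your proof is correct and follows exactly the route of the paper's own (much terser) argument: use the accessibility hypothesis together with invariance and the continuity of $\CP_T$ to place a point of each $\supp\mu_i$ inside $\CB_\eps(v_\eps)$, note that $\|v_\eps\|\le R$ uniformly in $\eps$, and let $\eps\downarrow 0$ to contradict the uniform separation of supports furnished by Theorem~\ref{theo:uniquevar}. The only cosmetic difference is that you invoke the Feller property explicitly, which the paper also uses implicitly via its reference to the proof of Corollary~\ref{cor:weakIrr}.
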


\begin{proof}
  Assume by contradiction that there exist two ergodic invariant
  probability measures $\mu_1$ and $\mu_2$ for $\CP_t$. Then, choosing
  $R_0$ large enough so that the open ball of radius $R_0$ intersects
  the supports of both $\mu_1$ and $\mu_2$, it follows form the
  assumption, by similar reasoning as in the proof of
  Corollary~\ref{cor:weakIrr}, that $\supp \mu_i$ intersects
  $\CB_\eps(v)$. Since $\|v\|$ is bounded uniformly in $\eps$, making
  $\eps$ sufficiently small yields a contradiction with
  Theorem~\ref{theo:uniquevar} above.
\end{proof}

\section{Functional analytic setup}
\label{sec:PDEbounds}
In this section, we introduce the basic function analytic set-up for the
rest of the paper. We will develop the needed existence and regularity
theory to place the remainder of the paper of a firm foundation.
We consider semilinear stochastic evolution
equations with additive noise in a Hilbert space $\CH$ (with norm
$\|\,\cdot\,\|$ and innerproduct $\ip{\,\cdot\,}{\,\cdot\,}$ ) of the form
\begin{equ}[e:SPDE]
  du = - Lu\,dt + N(u)\, dt + \sum_{k=1}^d g_k\, dW_k(t)\;,\quad u_0
  \in \CH\;.
\end{equ}
Here, the $W_k$ are independent real-valued standard Wiener processes
over some probability space $(\Omega, \P, \CF)$. Our main standing
assumption throughout this article is that $L$ generates an analytic
semigroup and that the nonlinearity $N$ results in a loss of
regularity of $a$ powers of $L$ for some $a < 1$. More precisely, we
have:
\begin{assumption}\label{ass:basic}\tag{A}
  There exists $a \in [0,1)$ and $\gamma_\star, \beta_\star > -a$
  (either of them possibly infinite) with $\gamma_\star + \beta_\star
  > -1$ such that:
  \begin{enumerate}
  \item The operator $L\colon \CD(L)\to \CH$ is selfadjoint and
    satisfies $\scal{u, Lu} \ge \|u\|^2$. We denote by $\CH_\alpha$,
    $\alpha \in \R$ the associated interpolation spaces (i.e.
    $\CH_\alpha$ with $\alpha > 0$ is the domain of $L^\alpha$ endowed
    with the graph norm and $\CH_{-\alpha}$ is its dual with respect
    to the pairing in $\CH$).  Furthermore, $\CH_\infty$ is the
    Fr\'echet space $\CH_\infty = \bigcap_{\alpha > 0} \CH_\alpha$ and
    $\CH_{-\infty}$ is its dual. \label{ass:L}
  \item There exists $n \ge 1$ such that the nonlinearity $N$ belongs
    to $\Poly^n(\CH_{\gamma + a}, \CH_{\gamma})$ for every $\gamma \in
    [-a,\gamma_\star)$ (see the definition of $\Poly$ in Section~\ref{sec:poly}
    below).  In particular, from the definition of
    $\Poly(\CH_{\gamma+a}, \CH_{\gamma})$, it follows that it is
    continuous from $\CH_\infty$ to $\CH_\infty$.\label{ass:N}
  \item For every $\beta \in [-a, \beta_\star)$ there exists $\gamma
    \in[0, \gamma_\star + 1)$ such that the adjoint (in $\CH$) $DN^*(u)$  of the
    derivative $DN$ of $N$ at $u$ (see again the definition in
    Section~\ref{sec:poly} below) can be extended to a continuous map
    from $\CH_\gamma$ to $\CL(\CH_{\beta + a}, \CH_{\beta})$.
    \label{ass:DN}
  \item One has $g_k \in \CH_{\gamma_\star + 1}$ for every
    $k$. \label{ass:g}
  \end{enumerate}
\end{assumption}

\begin{remark}
  If $\gamma_\star \ge 0$, then the range $\beta \in [-a, 0]$ for
  Assumption~\ass{DN} follows directly from Assumption~\ass{N}, since
  \ass{DN} simply states that for $u \in \CH_\gamma$, $DN(u)$ is a continuous
  linear map from $\CH_{-\beta}$ to $\CH_{-\beta-a}$.
\end{remark}

\begin{remark}
  The assumption $\scal{u, Lu} \geq \|u\|^2$ is made only for
  convenience so that $L^\gamma$ is well-defined as a positive
  selfadjoint operator for every $\gamma \in \R$. It can always be
  realized by subtracting a suitable constant to $L$ and adding it to
  $N$.

  Similarly, non-selfadjoint linear operators are allowed if the
  antisymmetric part is sufficiently ``dominated'' by the symmetric
  part, since one can then consider the antisymmetric part as part of
  the nonlinearity $N$.
\end{remark}

\begin{remark}\label{rem:interpolation}
  It follows directly from the Calder\'on-Lions interpolation theorem
  \cite[Appendix to~IX.4]{RS} that if $N \in \Poly(\CH_{0}, \CH_{-a})
  \cap \Poly(\CH_{\gamma_\star+a}, \CH_{\gamma_\star})$ for some
  $\gamma_\star > -a$, then $N \in \Poly(\CH_{\gamma+a},
  \CH_{\gamma})$ for every $\gamma \in [-a,\gamma_\star]$. This can be
  seen by interpreting $N$ as a sum of \textit{linear} maps from
  $\CH_{\gamma+a}^{\otimes n}$ to $\CH_{\gamma}$ for suitable values
  of $n$.
\end{remark}

It will be convenient in the sequel to define $F$ by
\begin{equ}[e:defF]
  F(u) = -L u + N(u)\;.
\end{equ}
Note that $F$ is in $\Poly^n(\CH_{\gamma+1}, \CH_{\gamma})$ for every
$\gamma \in [-1,\gamma_\star)$.  We also define a linear operator $G
\colon \R^d \to \CH_\infty$ by
\begin{equ}
  G v = \sum_{k=1}^d v_k g_k\;,
\end{equ}
for $v = (v_1,\ldots,v_d) \in \R^d$. With these notations, we will
sometimes rewrite \eref{e:SPDE} as
\begin{equ}[e:SPDE2]
  du = F(u)\, dt + G\, dW(t)\;,\quad u_0 \in \CH\;,
\end{equ}
for $W=(W_1,\ldots,W_d)$ a standard $d$-dimensional Wiener process.

\subsection{Polynomials}
\label{sec:poly}
We now describe in what sense we mean that $N$ is a ``polynomial''
vector field.
Given a Fr\'echet space $X$, we denote by $\CL_s^n(X)$ the space of
continuous symmetric $n$-linear maps from $X$ to itself. We also
denote by $\CL(X,Y)$ the space of continuous linear maps from $X$ to
$Y$.  For the sake of brevity, we will make use of the two equivalent
notations $P(u)$ and $P(u^{\otimes n})$ for $P \in \CL^n(X)$.

Given $Q \in \CL_s^k$, its \textit{derivative} is given by the
following $n-1$-linear map from $X$ to $\CL(X,X)$:
\begin{equ}
  DQ(u) v = k Q\bigl(u^{\otimes (k - 1)} \otimes v\bigr)\;.
\end{equ}
We will also use the notation $DQ^* \colon X \to \CL(X', X')$ for the
dual map given by
\begin{equ}
  \scal{w,DQ^*(u) v} = \scal{v, DQ(u)w} = k \scal[b]{v,
    Q\bigl(u^{\otimes (k - 1)} \otimes w\bigr)}\;.
\end{equ}

Given $P \in \CL_s^k$ and $Q \in \CL_s^\ell$, we define the derivative
$DQ\, P$ of $Q$ in the direction $P$ as a continuous map from $X\times
X$ to $X$ by
\begin{equ}
  DQ(u)\, P(v) = \ell Q\bigl(u^{\otimes (\ell - 1)} \otimes
  P(v)\bigr)\;.
\end{equ}
Note that by polarisation, $u \mapsto DQ(u) P(u)$ uniquely defines an
element on $\CL_s^{k+\ell-1}$.  This allows us to define a ``Lie
bracket'' $[P,Q] \in \CL_s^{k+\ell-1}$ between $P$ and $Q$ by
\begin{equ}{} [P,Q](u) = DQ(u)\, P(u) - DP(u)\, Q(u)\;.
\end{equ}
We also define $\Poly^n(X)$ as the set of continuous maps $P \colon X
\to X$ of the form
\begin{equ}
  P(u) = \sum_{k=0}^n P^{(k)}(u)\;,
\end{equ}
with $P^{(k)} \in \CL_s^k(X)$ (here $\CL_s^0(X)$ is the space of
constant maps and can be identified with $X$). We also set $\Poly(X) =
\bigcup_{n \ge 0} \Poly^n(X)$.  The Lie bracket defined above extends
to a map from $\Poly(X) \times \Poly(X) \to \Poly(X)$ by linearity.

\subsubsection{Polynomials over $\CH$}

We now specialize to polynomials over $\CH$. We begin by choosing $X$
equal to the Fr\'echet space $\CH_\infty$, the intersection of $\CH_a$
over all $a > 0$.  Next we define the space $\Poly(\CH_a,\CH_b)
\subset \Poly(\CH_\infty)$ as the set of polynomials $P\in
\Poly(\CH_\infty)$ such that there exists a continuous map $\hat P
\colon \CH_a \to \CH_b$ with $\hat P(u) = P(u)$ for all $u \in
\CH_\infty$. Note that in general (unlike $\Poly(\CH_\infty)$), $P,
Q\in \Poly(\CH_a,\CH_b)$ does not necessarily imply $[P,Q] \in
\Poly(\CH_a,\CH_b)$.  We will make an abuse of notation and use the
same symbol for both $P$ and $\hat P$ in the sequel.

\subsubsection{Taylor expansions and Lie brackets}

We now consider the Taylor expansion of a polynomial $Q$ in a
direction $g$ belonging to $\SPAN\{g_1,\cdots,g_d\} \subset \CH_{\gamma_\star+1}$.
Fix $Q \in \Poly^m(\CH_{\gamma}, \CH_\beta)$ for some $\gamma \le
\gamma_\star + 1$ and any $\beta \in \R$. For $v \in \CH_\gamma$ and
$w = (w_1,\ldots,w_d) \in \R^d$, observe that there exist
polynomials $Q_\alpha$ such that
\begin{equ}[e:TaylorQ]
  Q\Bigl(v+\sum_{k=1}^d g_k w_k\Bigr) = \sum_\alpha Q_\alpha(v)
  w_\alpha\;,\quad w_\alpha = w_{\alpha_1}\cdots w_{\alpha_\ell}\;,
\end{equ}
where the summation runs over all multi-indices $\alpha =
(\alpha_1,\ldots,\alpha_\ell)$, $\ell \ge 0$ with values in the index
set $\{1,\ldots,d\}$. It can be checked that the polynomials $Q_\alpha \in
\Poly^{m-|\alpha|}(\CH_{\gamma}, \CH_\beta)$ are given by the formula
\begin{equ}[e:repeatedBrakets]
  Q_\alpha(v) = {1\over \alpha!} \bigl[ [\ldots [Q,
  g_{\alpha_1}]\ldots ], g_{\alpha_\ell} \bigr] = {1\over
    \alpha!}D^{|\alpha|}Q(v) (g_{\alpha_1}, \ldots, g_{\alpha_{\ell}})\;.
\end{equ}
Here, $\alpha!$ is defined by $\alpha! = \alpha(1)! \cdots \alpha(d)!$, where $\alpha(j)$ counts
the number of occurences of the index $j$ in $\alpha$. (By convention, we
set $Q_\emptyset = Q$ and $Q_\alpha = 0$ if $|\alpha| > m$.)

We emphasize that multi-indices are \textit{unordered} collections of
$\{1,\ldots,d\}$ where repeated elements are allowed. As such, the
union of two multi-indices is a well-defined operation, as is the
partial ordering given by inclusion.\footnote{To be precise, one could
  identify a multi-index with its counting function $\alpha\colon
  \{1,\ldots,d\} \to \N$. With this identification, the union of
  two multi-indices corresponds to the sums of their counting
  functions, while $\alpha \subset \beta$ means that $\alpha(k) \le
  \beta(k)$ for every $k$.}

\subsection{\textit{A priori} bounds on the solution}

This section is devoted to the proof that Assumption~\ref{ass:basic}
is sufficient to obtain not only unique solutions to \eref{e:SPDE}
(possibly up to some explosion time), but to obtain further regularity
properties for both the solution and its derivative with respect to
the initial condition. We do not claim that the material presented in
this section is new, but while similar frameworks can be found in
\cite{ZDP1,Flandoli95}, the framework presented here does not seem to appear
in this form in the literature. Since the proofs are rather
straightforward, we choose to present them for the sake of
completeness, although in a rather condensed form.

We first start with a local existence and uniqueness result for the
solutions to \eref{e:SPDE}:

\begin{proposition}\label{prop:Picard}
  For every initial condition $u_0 \in \CH$, there exists a stopping
  time $\tau > 0$ such that \eref{e:SPDE} has a unique mild solution
  $u$ up to time $\tau$, that is to say $u$ almost surely satisfies
  \begin{equ}[e:mild]
    u_t = e^{-Lt} u_0 + \int_0^t e^{-L(t-s)} N(u_s)\, ds + \int_0^t
    e^{-L(t-s)} G\, dW(s)\;,
  \end{equ}
  for all stopping times $t$ with $t \le \tau$. Furthermore $u$ is
  adapted to the filtration generated by $W$ and is in $C([0,\tau),
  \CH)$ with probability one.
\end{proposition}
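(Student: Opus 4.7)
My plan is to follow the standard recipe for semilinear SPDEs with analytic generators: decouple the stochastic and deterministic parts by subtracting off the stochastic convolution, then solve the remaining random integral equation by a pathwise Picard iteration in $C([0,\tau],\CH)$ for a random time $\tau>0$.

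First I would show that the stochastic convolution
\begin{equ}
  Z_t \;=\; \int_0^t e^{-L(t-s)} G\, dW(s)
\end{equ}
admits a version with almost surely continuous sample paths in $\CH$. Since $g_k \in \CH_{\gamma_\star+1}$ by Assumption~\ass{g} and $\gamma_\star+1 > 1-a > 0$, the operator $G$ is Hilbert--Schmidt into $\CH_\delta$ for some $\delta>0$, so the factorization method of Da Prato--Kwapień--Zabczyk applies: combined with analyticity of the semigroup generated by $L$, one obtains $Z \in C([0,\infty),\CH_\delta)$ almost surely, in particular continuous into $\CH$. This step is standard and causes no real trouble.

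Next I would set $v = u - Z$, so that \eref{e:mild} is equivalent to the deterministic-looking fixed point equation
\begin{equ}[e:fixedpoint]
  v_t \;=\; e^{-Lt} u_0 + \int_0^t e^{-L(t-s)} N\bigl(v_s + Z_s\bigr)\, ds \;\eqdef\; (\Phi v)(t)\;,
\end{equ}
to be solved pathwise in $\omega$. Taking $\gamma = -a$ in Assumption~\ass{N} gives $N \in \Poly^n(\CH, \CH_{-a})$, so $N$ is locally Lipschitz from $\CH$ to $\CH_{-a}$ with Lipschitz constant controlled by a polynomial in the $\CH$-norm of its argument. Analyticity of the semigroup yields the smoothing estimate $\|e^{-Lt}\|_{\CH_{-a}\to\CH} \le C t^{-a}$, and since $a<1$ the kernel $t^{-a}$ is integrable. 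Fix $\omega$, let $R = 1+\|u_0\| + \sup_{s\le 1}\|Z_s(\omega)\|$, and consider the closed ball $\cB_R \subset C([0,\tau],\CH)$ of radius $R$ around $t\mapsto e^{-Lt}u_0$. A direct computation then shows that for $\tau$ small enough (depending on $R$) one has
\begin{equ}
  \|\Phi v - \Phi v'\|_{C([0,\tau],\CH)} \;\le\; C_R\, \tau^{1-a}\, \|v-v'\|_{C([0,\tau],\CH)}\;,
\end{equ}
and similarly $\Phi$ maps $\cB_R$ into itself. Banach's fixed point theorem then produces a unique $v$ solving \eref{e:fixedpoint} on $[0,\tau]$, and $u = v + Z$ is the desired mild solution.

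To turn $\tau$ into a stopping time, I would take the supremum over all $\tau$ for which such a local solution exists and exploit that the construction is purely pathwise given $Z_{\cdot}$; alternatively one defines $\tau$ as the first exit of $\|u_t\|$ from a large ball, which is a stopping time since $u$ is continuous and adapted, and the Picard argument guarantees that $\tau > 0$ almost surely. Adaptedness of $u$ to the filtration generated by $W$ is inherited from the fact that each Picard iterate $v^{(n)} = \Phi v^{(n-1)}$ is adapted (since $Z$ is) and uniform convergence on $[0,\tau]$ preserves adaptedness. The only genuine subtlety is that $N$ is merely locally Lipschitz (polynomial), so contraction holds only on balls and $\tau$ must depend on $\|u_0\|$ and on the path of $Z$; this is precisely why $\tau$ is a stopping time rather than a deterministic time, but causes no obstruction to the argument.
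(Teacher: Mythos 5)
Your proposal is correct and follows essentially the same route as the paper: a pathwise Banach fixed-point argument in $C([0,T],\CH)$ using $N \in \Poly(\CH,\CH_{-a})$, the smoothing bound $\|e^{-Lt}\|_{\CH_{-a}\to\CH} \le Ct^{-a}$ with $a<1$, and a contraction factor $T^{1-a}$ on a ball, with the stopping-time and adaptedness claims handled exactly as you describe. The only cosmetic difference is that you solve for $v = u - Z$ while the paper keeps $u$ as the unknown and treats the stochastic convolution as a fixed additive path $\xi$ in the fixed-point map; these are the same argument up to a translation.
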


\begin{remark}
Since we assume that $N$ is locally Lipschitz continuous from $\CH$ 
to $\CH_{-a}$ for some $a < 1$ and since the bound $\|e^{-Lt}\|_{\CH_{-a} \to \CH} \le C t^{-a}$
holds for $t \le T$, the first integral appearing in \eref{e:mild} does converge in $\CH$. Therefore
the right hand side of \eref{e:mild} makes sense for every continuous $\CH$-valued process $u$.
\end{remark}

For notational convenience, we denote by $\WL(s,t) = \int_s^t
e^{-L(t-r)} G\, dW(r)$ the ``stochastic convolution.'' Since we
assumed that $g_k \in \CH_{\gamma_\star + 1}$, it is possible to
obtain bounds on all exponential moments of $\sup_{0 \le s < t \le T}
\|\WL(s,t)\|_\gamma$ for every $T>0$ and every $\gamma \le
\gamma_\star + 1$.

\begin{proof}
  Given a function $\xi\colon \R_+ \to \CH$ and a time $T>0$, define a
  map $\Phi_{T,\xi} \colon \CH\times \CC([0,T],\CH)\to \CC([0,T],\CH)$
  (endowed with the supremum norm) by
  \begin{equ}[e:PicardMap]
    \bigl(\Phi_{T,\xi}(u_0, u)\bigr)_t = e^{-Lt} u_0 + \xi(t) +
    \int_0^t e^{-L(t-s)} N(u_s)\, ds \;.
  \end{equ}
  Since $N \in \Poly(\CH, \CH_{-a})$ by setting $\gamma = -a$ in
  Assumption~\ass{N}, and suppressing the dependence on $u_0$, there
  exists a positive constant $C$ such that
  \begin{equs}
    \bigl\|\Phi_{T,\xi}(u) - \Phi_{T,\xi}(\tilde u)\bigl\| &\le
    \sup_{t \in [0,T]}
    C\int_0^t (t-s)^{-a} \|u_s - \tilde u_s\| \bigl(1+\|u_s\| + 
    \|\tilde u_s\|\bigr)^{n-1}\, ds \\
    &\le C \|u - \tilde u\| \bigl(1+\|u\| + \|\tilde u\|\bigr)^{n-1}
    T^{1-a}\;.
  \end{equs}
  Recall that $n$ is the degree of the polynomial nonlinearity $N$.
  It follows that, for every $\xi$, there exists $T>0$ and $R>0$ such
  that $\Phi_{T,\xi}(u_0, \cdot\,)$ is a contraction in the ball of
  radius $R$ around $e^{-Lt} u_0 + \xi(t)$. Setting $\xi(t) =
  \WL(0,t)$, this yields existence and uniqueness of the solution to
  \eref{e:mild} for almost every noise path $\WL(0,t)$ by the Banach fixed point theorem.  The largest such
  $T$ is a stopping time since it only depends on the norm of $u_0$
  and on $\xi$ up to time $T$. It is clear that $\Phi_{T,\xi}(u_0,
  u)_t$ only depends on the noise $\WL$ up to time $t$, so that the solution is adapted to the
  filtration generated by $W$, thus concluding the proof of the
  proposition.
\end{proof}

The remainder of this section is devoted to obtaining further
regularity properties of the solutions. 

\begin{proposition}\label{prop:bootstrapping}
  Fix $T > 0$. For every $\gamma
  \in [0, \gamma_\star + 1)$ there exist exponents $p_\gamma \ge 1$
  and $q_\gamma \ge 0$ and a constant $C$ such that
  \begin{equ}[e:boundugamma]
    \|u_t\|_\gamma \le C t^{-q_\gamma} \bigl(1 + \sup_{s \in [{t\over
        2},t]}\|u_s\| + \sup_{{t\over 2} \le s < r \le t}
    \|\WL(s,r)\|_\gamma \bigr)^{p_\gamma}
  \end{equ}
  for all $t \in (0,T\wedge \tau]$, where $\tau = \sup\{t>0\,:\, \|u_t\| < \infty\}$.  In particular, if $\gamma = \sum_{j=0}^k
  \delta_j$ for some $k \in \N$ and $\delta_j \in (0,1-a)$ then
  $q_\gamma \leq \sum_{j=1}^{k} \delta_j n^{j-1}$.
\end{proposition}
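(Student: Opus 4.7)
The plan is to bootstrap regularity in the mild formulation \eref{e:mild}: starting from the $\CH_0$-control provided by $M \eqdef 1 + \sup_{s \in [t/2,t]} \|u_s\| + \sup_{t/2 \le s < r \le t} \|\WL(s,r)\|_\gamma$, I iteratively upgrade to $\CH_\gamma$ in $k$ steps, at each step gaining $\delta_j$ units of Sobolev regularity and raising the previously obtained bound to an $n$-th power because of the polynomial nonlinearity.

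Concretely, I decompose $\gamma = \delta_1 + \cdots + \delta_k$ with $\delta_j \in (0, 1-a)$, set $\alpha_j = \delta_1 + \cdots + \delta_j$ (so that $\alpha_0 = 0$ and $\alpha_k = \gamma$), and pick a sequence $t/2 = t_0 < t_1 < \cdots < t_k = t$ with $t_j - t_{j-1} \ge c\,t$ (uniform spacing will do). The inductive claim is that there are exponents $q^{(j)} \ge 0$, $p^{(j)} \ge 1$ and constants $C_j$ with
\[
\sup_{r \in [t_j, t]} \|u_r\|_{\alpha_j} \le C_j\, t^{-q^{(j)}} M^{p^{(j)}},
\]
the base case $j=0$ being trivial with $q^{(0)} = 0$, $p^{(0)} = 1$. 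For the induction step, I apply \eref{e:mild} on $[t_{j-1}, t']$ for $t' \in [t_j, t]$, and combine the analytic semigroup estimate $\|e^{-Ls}\|_{\CH_\beta \to \CH_{\beta+\delta}} \le C s^{-\delta}$ with the polynomial bound $\|N(u)\|_{\alpha_{j-1} - a} \le C(1 + \|u\|_{\alpha_{j-1}})^n$ supplied by Assumption~\ass{N}. The constraint $\delta_j < 1-a$ makes the singular integral $\int_{t_{j-1}}^{t'} (t'-r)^{-\delta_j - a}\,dr$ convergent, and inserting the inductive bound yields a bound of the same form at level $j$ with updated exponents $q^{(j)} \le q^{(j-1)} n + \delta_j$ and $p^{(j)} = p^{(j-1)} n$. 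Iterating gives $q^{(k)} \le \sum_{i=1}^k \delta_i n^{k-i}$, which matches the claimed bound after the obvious reindexing.

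The main technical hurdle is the exponent bookkeeping at each induction step: one must verify that both the linear contribution $t^{-\delta_j - q^{(j-1)}}$ (from applying $e^{-L(t'-t_{j-1})}$ to $u_{t_{j-1}}$) and the nonlinear contribution $t^{-(\delta_j + a - 1) - q^{(j-1)} n}$ (from the Duhamel integral combined with the $n$-fold polynomial growth of $N$) are dominated by $t^{-q^{(j)}}$ with $q^{(j)} = q^{(j-1)} n + \delta_j$; since $a < 1$ and $q^{(j-1)} \ge 0$, this is immediate. A secondary point is to ensure that each intermediate regularity $\alpha_{j-1}$ lies in the admissible range $[0, \gamma_\star + a)$ of Assumption~\ass{N}; this can always be arranged, since $\gamma < \gamma_\star + 1$ allows one to choose $\delta_k$ so that $\alpha_{k-1} = \gamma - \delta_k < \gamma_\star + a$, and one can then work recursively downward. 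Finally, the Wiener term $\|\WL(t_{j-1}, t')\|_{\alpha_j}$ is absorbed into the $M$ factor at every step since $\alpha_j \le \gamma$, so no new stochastic quantity enters.
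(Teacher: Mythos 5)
Your proposal is correct and follows essentially the same route as the paper: a bootstrap on the regularity index via the mild formulation, gaining $\delta<1-a$ derivatives per step from the analytic semigroup and updating the exponents by $p\mapsto np$, $q\mapsto nq+\delta$, which yields the stated geometric bound on $q_\gamma$. The only cosmetic difference is that you use a fixed partition $t/2=t_0<\cdots<t_k=t$ whereas the paper shrinks the reference interval geometrically ($\alpha\mapsto\alpha^2$); both handle the admissibility constraint $\alpha_{j-1}<\gamma_\star+a$ and the absorption of the stochastic convolution in the same way.
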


\begin{proof}
  The proof follows a standard ``bootstrapping argument'' on $\gamma$
  in the following way.  The statement is obviously true for $\gamma =
  0$ with $p_\gamma = 1$ and $q_\gamma = 0$.  Assume that, for some
  $\alpha = \alpha_0 \in [1/2, 1)$ and for some $\gamma = \gamma_0 \in
  [0,\gamma_\star+a)$, we have the bound
  \begin{equ}[e:boundugammaalpha]
    \|u_t\|_\gamma \le C t^{-q_\gamma} \bigl(1 + \sup_{s \in [\alpha
      t,t]}\|u_s\| + \sup_{\alpha t \le s < r \le t}
    \|\WL(s,r)\|_\gamma \bigr)^{p_\gamma}\;,
  \end{equ}
  for all $t \in (0,T]$.

  We will then argue that, for any arbitrary $\delta \in (0, 1-a)$,
  the statement \eref{e:boundugammaalpha} also holds for $\gamma =
  \gamma_0 + \delta$ (and therefore also for all intermediate values of $\gamma$)
  and $\alpha = \alpha_0^2$. Since it is possible to go from $\gamma =
  0$ to any value of $\gamma < \gamma_\star + 1$ in a finite number of
  steps (making sure that $\gamma \le 1+a$ in every intermediate step)
  and since we are allowed to choose $\alpha$ as close to $1$ as we
  wish, the claim follows at once.

  Using the mild formulation \eref{e:mild}, we have
  \begin{equ}
    u_t = e^{-(1-\alpha) L t} u_{\alpha t} + \int_{\alpha t}^t
    e^{-L(t-s)} N(u_s)\, ds + \WL(\alpha t,t)\;.
  \end{equ}
  Since $\gamma \in [0, \gamma_\star + a)$, one has $N \in
  \Poly(\CH_\gamma, \CH_{\gamma-a})$ by Assumption~\ass{N}.  Hence,
  for $t \in (0,T]$,
  \begin{equs}
    \|u_t\|_{\gamma+\delta} &\le C t^{-\delta} \|u_{\alpha t}\|_\gamma
    + \|\WL(\alpha t,t)\|_\gamma + C\int_{\alpha t}^t (t-s)^{-(\delta
      +
      a)} (1+\|u_s\|_\gamma)^n\, ds\\
    &\le C \bigl(t^{-\delta} + t^{1-\delta - a}\bigr)\sup_{{\alpha t}
      \le s \le t}\bigl(1+\|u_s\|_\gamma^n\bigr) + \|\WL(\alpha
    t,t)\|_\gamma \\
    &\le C t^{-\delta}\sup_{{\alpha t} \le s \le
      t}\bigl(1+\|u_s\|_\gamma^n\bigr) + \|\WL(\alpha t,t)\|_\gamma\;.
  \end{equs}
  Here, the constant $C$ depends on everything but $t$ and $u_0$.
  Using the induction hypothesis, this yields the bound
  \begin{equ}
    \|u_t\|_{\gamma+\delta} \le C t^{-\delta - n q_\gamma} \bigl(1 +
    \sup_{s \in [\alpha^2 t,t]}\|u_s\| + \sup_{\alpha^2 \le s < r \le
      t} \|\WL(s,r)\|_\gamma \bigr)^{n p_\gamma} + \|\WL(\alpha
    t,t)\|_\gamma \;,
  \end{equ}
  thus showing that \eref{e:boundugammaalpha} holds for $\gamma =
  \gamma_0 + \delta$ and $\alpha = \alpha_0^2$ with $p_{\gamma +
    \delta} = n p_\gamma$ and $q_{\gamma + \delta} = \delta +
  nq_{\gamma}$. This concludes the proof of
  Proposition~\ref{prop:bootstrapping}.
\end{proof}

\subsection{Linearization and its adjoint}
\label{sec:lin}

In this section, we study how the solutions to \eref{e:SPDE} depend on
their initial conditions. Since the map from \eref{e:PicardMap} used
to construct the solutions to \eref{e:SPDE} is Fr\'echet
differentiable (it is actually infinitely differentiable) and since it
is a contraction for sufficiently small values of $t$, we can apply
the implicit functions theorem (see for example \cite{RenRog} for a
Banach space version) to deduce that for every realisation of the
driving noise, the map $u_s \mapsto u_t$ is Fr\'echet differentiable,
provided that $t>s$ is sufficiently close to $s$.

Iterating this argument, one sees that, for any
 $s \le t < \tau$, the map
$u_s \mapsto u_t$ given by the solutions to \eref{e:SPDE} 
is Fr\'echet differentiable in $\CH$. Inspecting the expression for the derivative given by the
implicit functions theorem, we conclude that the derivative $J_{s,t}\phi$ in the
direction $\phi \in \CH$ satisfies the following random linear equation in its mild
formulation:
\begin{equ}[e:Jacobian]
  \d_t J_{s,t} \phi = -L J_{s,t} \phi + DN(u_t) J_{s,t} \phi\;,\quad
  J_{s,s}\phi = \phi\;.
\end{equ}
Note that, by the properties of monomials, it follows from
Assumption~\ass{N} that
\begin{equ}
  \|DN(u) v\|_{\gamma} \le C(1+ \|u\|_{\gamma+a})^{n-1}
  \|v\|_{\gamma+a}\;,
\end{equ}
for every $\gamma \in [-a, \gamma_\star)$. A fixed point argument
similar to the one in Proposition~\ref{prop:Picard} shows that the
solution to \eref{e:Jacobian} is unique, but note that it does not
allow us to obtain bounds on its moments. We only have that for any $T$ smaller
than the explosion time to the solutions of \eref{e:SPDE},
there exists a (random) constant $C$ such that
\begin{equ}[e:Jfinite]
  \sup_{0\leq s<t<T} \sup_{\|\phi\| \le 1}  \|J_{s,t} \phi\| \leq C\;.
\end{equ}
The constant $C$ depends exponentially on the size of the solution $u$
in the interval $[0,T]$. However, if we obtain better control on $J_{s,t}$ by some
means, we can then use the following bootstrapping argument:

\begin{proposition}\label{prop:bootstrapJ}
  For every $\gamma \in (0, \gamma_\star + 1)$, there exists an exponent $\tilde q_\gamma \geq 0$, 
  and constants $C>0$ and $\gamma_0
  < \gamma$ such that we have the bound
  \begin{equ}[e:boundKst]
    \|J_{t,t+s}\phi\|_\gamma \le C s^{-\gamma} \sup_{r \in
      [{s\over 2},s]} \bigl(1 + \|u_{t+r}\|_{\gamma_0}\bigr)^{\tilde
      q_\gamma}\|J_{t,t+r} \phi\|\;,
  \end{equ}
  for every $\phi \in \CH$ and every $t,s > 0$. If
  $\gamma < 1-a$, then one can choose $\gamma_0 = 0$
  and $\tilde q_\gamma=n-1$.
\end{proposition}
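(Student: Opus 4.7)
I would adapt the bootstrap argument from Proposition~\ref{prop:bootstrapping} to the linear mild equation for $J_{t,t+\cdot}\phi$. The workhorse identity, obtained from the flow property $J_{t,t+s}=J_{t+\alpha s,t+s}\,J_{t,t+\alpha s}$ and the mild formulation of \eref{e:Jacobian} over $[t+\alpha s,t+s]$, reads
\begin{equation*}
J_{t,t+s}\phi = e^{-(1-\alpha)Ls}J_{t,t+\alpha s}\phi + \int_{\alpha s}^s e^{-L(s-r)}DN(u_{t+r})J_{t,t+r}\phi\,dr,
\end{equation*}
for any $\alpha\in[1/2,1)$. I combine this with the smoothing bound $\|e^{-Lr}\|_{\CH_\mu\to\CH_\nu}\le Cr^{\mu-\nu}$ for $\nu\ge\mu$ (on bounded time intervals) and with the bound $\|DN(u)v\|_{\gamma'}\le C(1+\|u\|_{\gamma'+a})^{n-1}\|v\|_{\gamma'+a}$ from Assumption~\ass{DN}, valid for $\gamma'\in[-a,\gamma_\star)$.

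\textbf{Base case.} For $\gamma\in(0,1-a)$, apply \ass{DN} with $\gamma'=-a$ and the smoothing estimates with exponent pairs $(0,\gamma)$ and $(-a,\gamma)$ to the two summands. Since $s^{1-a}$ is bounded on bounded time intervals, this immediately produces an $\alpha$-parametrized form of \eref{e:boundKst} (with $[\alpha s,s]$ in place of $[s/2,s]$) for any $\alpha\in[1/2,1)$, with $\gamma_0=0$ and $\tilde q_\gamma=n-1$. Choosing $\alpha=1/2$ settles the second claim of the proposition.

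\textbf{Inductive step.} Suppose the $\alpha$-parametrized bound holds for some $\gamma\in[0,\gamma_\star+a)$ with $\gamma_0<\gamma$ and exponent $\tilde q_\gamma$. To reach $\gamma+\delta$ for arbitrary $\delta\in(0,1-a)$, apply the same identity, now using \ass{DN} at $\gamma'=\gamma-a$ and smoothing estimates designed to land in $\CH_{\gamma+\delta}$. This yields
\begin{align*}
\|J_{t,t+s}\phi\|_{\gamma+\delta} &\le Cs^{-\delta}\|J_{t,t+\alpha s}\phi\|_\gamma \\
&\quad + Cs^{1-\delta-a}\sup_{r\in[\alpha s,s]}(1+\|u_{t+r}\|_\gamma)^{n-1}\|J_{t,t+r}\phi\|_\gamma.
\end{align*}
I then bound each $\|J_{t,t+r}\phi\|_\gamma$ on the right by the induction hypothesis, noting that for $r\in\{\alpha s\}\cup[\alpha s,s]$ one has $r^{-\gamma}\le Cs^{-\gamma}$ and $[\alpha r,r]\subseteq[\alpha^2 s,s]$, and merge the two $(1+\|u\|_\cdot)$ factors into a single supremum by choosing the new $\gamma_0^{\text{new}}=\gamma<\gamma+\delta$. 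This gives the $\alpha$-parametrized bound at $\gamma+\delta$ with $\alpha$ replaced by $\alpha^2$ and an enlarged exponent $\tilde q_{\gamma+\delta}$.

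\textbf{Covering the range and main obstacle.} Any target $\gamma<\gamma_\star+1$ is reachable: each step takes $\gamma\in[0,\gamma_\star+a)$ to any point in $(\gamma,\gamma+1-a)\subset(0,\gamma_\star+1)$, so finitely many steps $k$ cover the full range. Starting the induction with $\alpha_0\in[1/2,1)$ close enough to $1$ that $\alpha_0^{2^k}\ge 1/2$ keeps the shrinking supremum interval inside $[s/2,s]$, and the stated form of \eref{e:boundKst} follows from the $\alpha$-parametrized version by trivial enlargement of the supremum domain. The principal subtlety is that one \emph{cannot} insist on $\gamma_0=0$ uniformly through the induction: reducing $\|u_{t+r}\|_\gamma$ to a power of $\|u_{t+r}\|$ via Proposition~\ref{prop:bootstrapping} would reintroduce a singular prefactor $(t+r)^{-q_\gamma}$ that has no counterpart in the statement. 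Accepting that $\gamma_0$ grows along the induction, staying strictly below the current target as the proposition permits, sidesteps this; bookkeeping of $\tilde q_\gamma$ and the shrinking $\alpha$ are the remaining routine details.
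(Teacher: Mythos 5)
Your argument is correct and is essentially the paper's own: the paper does not write out a proof of Proposition~\ref{prop:bootstrapJ} but defers to the ``almost identical'' bootstrap used for Proposition~\ref{prop:bootstrapK}, which is exactly the induction on $\gamma$ via the mild formulation, semigroup smoothing, and Assumption~\ass{N}/\ass{DN} that you carry out. Your handling of the shrinking interval via $\alpha\mapsto\alpha^2$ and of the growing $\gamma_0$ matches the scheme of Proposition~\ref{prop:bootstrapping}, and your base case reproduces the stated $\gamma_0=0$, $\tilde q_\gamma=n-1$ claim for $\gamma<1-a$.
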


Since an almost identical argument will be used in the proof of
Proposition~\ref{prop:bootstrapK} below, we refer the reader there for
details. We chose to present that proof instead of this one because
the presence of an adjoint causes slight additional complications.

For $s \le t$, let us define operators $K_{s,t}$ via the solution to
the (random) PDE
\begin{equ}[e:defKst]
  \d_s K_{s,t} \phi = L K_{s,t}\phi - DN^*(u_s) K_{s,t}\phi\;,\quad
  K_{t,t}\phi = \phi\;,\quad \phi \in \CH\;.
\end{equ}
Note that this equation runs \textit{backwards} in time and is random through
the solution $u_t$ of \eref{e:SPDE}.  Here, $DN^*(u)$ denotes the
adjoint in $\CH$ of the operator $DN(u)$ defined earlier. Fixing the
terminal time $t$ and setting $\phi_s = K_{t-s,t}\phi$, we obtain a
more usual representation for $\phi_s$:
\begin{equ}[e:defPhis]
  \d_s \phi_s = - L \phi_s + DN^*(u_{t-s}) \phi_s\;.
\end{equ}
The remainder of this subsection will be devoted to obtaining
regularity bounds on the solutions to \eref{e:defKst} and to the proof
that $K_{s,t}$ is actually the adjoint of $J_{s,t}$. We start by
showing that, for $\gamma$ sufficiently close to (but less than)
$\gamma_\star+1$, \eref{e:defKst} has a unique solution for every path
$u\in \CC(\R, \CH_\gamma)$ and $\phi\in\CH$.

\begin{proposition}\label{prop:PicardKst}
  There exists $\gamma < \gamma_\star + 1$ such that, for every $\phi \in \CH$, equation
  \eref{e:defKst} has a unique continuous $\CH$-valued solution for every $s < t$ and every
  $u\in \CC(\R, \CH_\gamma)$. Furthermore, $K_{s,t}$ depends only on
  $u_r$ for $r \in [s,t]$ and the map $\phi \mapsto K_{s,t}\phi$ is linear and bounded.
\end{proposition}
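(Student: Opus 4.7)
The strategy is to rewrite \eref{e:defKst} in mild form via the time reversal $\phi_\sigma = K_{t-\sigma,t}\phi$ (so that $\phi_\sigma$ solves \eref{e:defPhis} with $\phi_0=\phi$), and then solve the resulting forward integral equation by a Banach fixed-point argument analogous to the one in the proof of Proposition~\ref{prop:Picard}, the only essential new point being the choice of function spaces in which $DN^*(u)$ acts nicely.

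First I would apply Assumption~\ass{DN} with $\beta=-a$: this yields some $\gamma\in[0,\gamma_\star+1)$ such that $DN^*(v)$ extends to a continuous linear map from $\CH$ into $\CH_{-a}$ whenever $v\in\CH_\gamma$. Since $DN^*$ depends polynomially of degree $\le n-1$ on $v$, this continuity upgrades to a bound of the form
\begin{equ}[e:DNstarbound]
\|DN^*(v)\|_{\CH\to\CH_{-a}} \le C\bigl(1+\|v\|_\gamma\bigr)^{n-1}\;.
\end{equ}
For this choice of $\gamma$ and any $u\in\CC(\R,\CH_\gamma)$, $s<t$, define on the interval $[0,t-s]$ the Picard map
\begin{equ}
(\Psi \psi)(\sigma) = e^{-L\sigma}\phi + \int_0^\sigma e^{-L(\sigma-r)}\,DN^*(u_{t-r})\,\psi(r)\,dr\;,
\end{equ}
acting on $\CC([0,T_0],\CH)$ endowed with the supremum norm, for $T_0\in(0,t-s]$ to be chosen. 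Using \eref{e:DNstarbound} together with the analytic-semigroup bound $\|e^{-L\sigma}\|_{\CH_{-a}\to\CH}\le C\sigma^{-a}$, one gets
\begin{equ}
\|\Psi\psi_1-\Psi\psi_2\|_{L^\infty([0,T_0],\CH)} \le C\bigl(1+\sup_{r\in[s,t]}\|u_r\|_\gamma\bigr)^{n-1} T_0^{1-a}\,\|\psi_1-\psi_2\|_{L^\infty([0,T_0],\CH)}\;,
\end{equ}
which, since $a<1$, is a contraction for $T_0$ sufficiently small, depending only on $\sup_{r\in[s,t]}\|u_r\|_\gamma$. This yields existence and uniqueness of $\psi\in\CC([0,T_0],\CH)$ solving $\Psi\psi=\psi$.

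Because the equation is \emph{linear} in $\psi$, and because the admissible step size $T_0$ depends only on $\sup_{r\in[s,t]}\|u_r\|_\gamma<\infty$, a finite number of iterations extends this local solution to all of $[0,t-s]$; reverting the time change gives $K_{s,t}\phi$. Linearity of $\phi\mapsto K_{s,t}\phi$ is inherited from linearity of $\Psi$ in both $\phi$ and $\psi$, and a Gronwall argument applied to the mild form, using again \eref{e:DNstarbound}, produces a deterministic bound
\begin{equ}
\sup_{\sigma\in[0,t-s]}\|\phi_\sigma\|\le C\bigl(t-s,\sup_{r\in[s,t]}\|u_r\|_\gamma\bigr)\,\|\phi\|\;,
\end{equ}
giving the claimed boundedness. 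The fact that $K_{s,t}\phi$ depends only on $u_r$ for $r\in[s,t]$ is immediate from the integral formulation, since the integral in $\Psi$ only involves $u_{t-r}$ for $r\in[0,t-s]$.

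The only genuine difficulty is the first step: choosing $\gamma<\gamma_\star+1$ so that Assumption~\ass{DN} applies with $\beta=-a$ and upgrading the continuity to the polynomial bound \eref{e:DNstarbound}. Once this has been done, the rest is a standard linear-equation Picard argument, with the integrable singularity $(\sigma-r)^{-a}$ being harmless thanks to $a<1$.
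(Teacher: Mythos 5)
Your proposal is correct and follows essentially the same route as the paper: both apply Assumption~\ass{DN} with $\beta=-a$ to obtain $\gamma<\gamma_\star+1$ with $DN^*(u)\colon\CH\to\CH_{-a}$ bounded for $u\in\CH_\gamma$, and then run the same Picard/contraction argument as in Proposition~\ref{prop:Picard} using the $\|e^{-L\sigma}\|_{\CH_{-a}\to\CH}\le C\sigma^{-a}$ smoothing bound. Your write-up merely makes explicit a few steps the paper leaves implicit (the polynomial operator-norm bound, the iteration to cover $[s,t]$, and the Gronwall bound giving boundedness of $\phi\mapsto K_{s,t}\phi$).
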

\begin{proof}
  As in Proposition~\ref{prop:Picard}, we define a map $\Phi_{T,u}
  \colon \CH \times \CC([0,T],\CH) \to \CC([0,T],\CH)$ by
  \begin{equ}
    \bigl(\Phi_{T,u}(\phi_0, \phi)\bigr)_t = e^{-Lt} \phi_0 +
    \int_0^t e^{-L(t-s)} \bigl(DN^*(u_s)\bigr) \phi_s \, ds \;.
  \end{equ}
  It follows from Assumption~\ass{DN} with $\beta = -a$ that there
  exists $\gamma < \gamma_\star + 1$ such that $DN^*(u) \colon \CH \to
  \CH_{-a}$ is a bounded linear operator for every $u \in \CH_\gamma$.
  Proceeding as in the proof of Proposition~\ref{prop:Picard}, we see
  that $\Phi$ is a contraction for sufficiently small $T$.
\end{proof}

Similarly to before, we can use a bootstrapping argument to show that
$K_{s,t}\phi$ actually has more regularity than stated in Proposition~\ref{prop:PicardKst}.

\begin{proposition}\label{prop:bootstrapK}
  For every $\beta \in (0, \beta_\star+1)$, there exists $\gamma <
  \gamma_\star + 1$, an exponent $\bar q_\beta > 0$, and a
  constant $C$ such that
  \begin{equ}[e:boundKst2]
    \|K_{t-s,t}\phi\|_\beta \le C s^{-\beta} \sup_{r \in
      [{s\over 2},s]} \bigl(1 + \|u_{t-r}\|_\gamma\bigr)^{\bar
      q_\beta}\|K_{t-r,t} \phi\|\;,
  \end{equ}
  for every $\phi \in \CH$, every $t,s > 0$, and every $u \in \CC(\R,
  \CH_\gamma)$.
\end{proposition}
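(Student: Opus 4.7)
The plan is to adapt the bootstrap argument of Proposition~\ref{prop:bootstrapping} to the mild formulation of the backward equation \eref{e:defKst}. Setting $\phi_s = K_{t-s,t}\phi$, we have
\begin{equ}
\phi_s = e^{-L(s-r)}\phi_r + \int_r^s e^{-L(s-\sigma)}\, DN^*(u_{t-\sigma})\phi_\sigma\, d\sigma
\end{equ}
for every $0 \le r \le s$. The base case $\beta = 0$ is trivial with $\bar q_0 = 0$. We then show that if an estimate of the form \eref{e:boundKst2} holds at some level $\beta_0 \in [0, \beta_\star+1)$, with the supremum running over $[\alpha_0 s, s]$ for some $\alpha_0 \in [1/2, 1)$ in place of $[s/2, s]$, then it also holds at every level in the range $[\beta_0, \beta_0 + \delta]$ with the supremum replaced by one over $[\alpha_0^2 s, s]$, provided $\delta \in (0, 1-a)$ is small enough. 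A finite number $N$ of such steps reaches any target $\beta \in (0, \beta_\star+1)$, and by starting with $\alpha_0$ close enough to $1$ we can ensure $\alpha_0^{2^N} \ge 1/2$. The final $\gamma$ is the maximum of the finitely many values produced by Assumption~\ass{DN} along the way, which still satisfies $\gamma < \gamma_\star + 1$.

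To carry out the inductive step, set $r = \alpha_0 s$ in the mild formula and take the $\|\cdot\|_{\beta_0 + \delta}$-norm. The analytic semigroup bound yields $\|e^{-L(1-\alpha_0)s}\phi_{\alpha_0 s}\|_{\beta_0 + \delta} \le C s^{-\delta}\|\phi_{\alpha_0 s}\|_{\beta_0}$, which is absorbed by the inductive hypothesis. For the integral term, choose $\beta' < \beta_\star$ such that $\beta' + a \le \beta_0$ and $\beta_0 + \delta - \beta' < 1$; one may take $\beta' = \beta_0 - a$ when $\beta_0 < \beta_\star + a$, and pick $\beta' \in [\max(\beta_0 + \delta - 1, -a), \beta_\star)$ otherwise, an interval that is non-empty precisely because $\beta_0 + \delta < \beta_\star + 1$. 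Assumption~\ass{DN} at level $\beta'$ provides a $\gamma < \gamma_\star + 1$ such that $DN^*(u)$ extends to a bounded map $\CH_{\beta'+a} \to \CH_{\beta'}$ with operator norm $\le C(1+\|u\|_\gamma)^{n-1}$, the polynomial dependence on $u$ being a direct consequence of the polynomial structure of $N$. Combined with the semigroup estimate $\|e^{-L(s-\sigma)}\|_{\CH_{\beta'}\to\CH_{\beta_0+\delta}} \le C(s-\sigma)^{-(\beta_0+\delta-\beta')}$, the integrand is dominated by
\begin{equ}
C(s-\sigma)^{-(\beta_0+\delta-\beta')}\,(1+\|u_{t-\sigma}\|_\gamma)^{n-1}\,\|\phi_\sigma\|_{\beta'+a}\;.
\end{equ}
Since $\beta'+a \le \beta_0$, the inductive hypothesis controls $\|\phi_\sigma\|_{\beta'+a}$, and the condition $\beta_0+\delta-\beta' < 1$ makes the $\sigma$-integral converge. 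The resulting net power of $s$ is $s^{1-\beta_0-\delta-a}$, which is dominated by $C s^{-\beta_0-\delta}$ on any bounded interval $(0,T]$ since $a < 1$. This produces the bound at level $\beta_0+\delta$ with $\bar q_{\beta_0+\delta} = \bar q_{\beta_0} + n - 1$.

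The main technical obstacle, relative to Proposition~\ref{prop:bootstrapping}, is the reversed direction of the regularity loss of $DN^*$ compared to $N$: $DN^*$ maps a higher-regularity input to a lower-regularity output, rather than the other way around as in the forward equation. Consequently, the choice of the reference level $\beta'$ at which Assumption~\ass{DN} is invoked must balance two competing constraints: the input norm $\|\phi_\sigma\|_{\beta'+a}$ must remain within the range already covered by the inductive hypothesis, which pushes $\beta'$ downwards, while the semigroup kernel $(s-\sigma)^{-(\beta_0+\delta-\beta')}$ must remain integrable, which pushes $\beta'$ upwards. The hypothesis $\beta_0 + \delta < \beta_\star + 1$ is exactly what guarantees that both constraints can be met simultaneously at every step; once they are, the rest of the argument proceeds along the same lines as the proof of Proposition~\ref{prop:bootstrapping}.
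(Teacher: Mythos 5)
Your proof is correct and follows essentially the same route as the paper's: both write the backward equation for $\phi_s = K_{t-s,t}\phi$ in mild form, bootstrap in $\beta$ with increments $\delta < 1-a$, invoke Assumption~\ass{DN} (at level $\beta' = \beta_0 - a$, exactly as in the paper) together with the polynomial bound $\|DN^*(u)\|\le C(1+\|u\|_\gamma)^{n-1}$, and absorb the loss via the analytic-semigroup smoothing $(s-\sigma)^{-(\delta+a)}$. You are merely more explicit than the paper about the shrinking window $[\alpha_0^{2^N}s,s]$ and the final step past $\beta_\star + a$, details the paper delegates to ``iterating as in Proposition~\ref{prop:bootstrapping}.''
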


\begin{proof}
  Fix $\beta < \beta_\star + a$ and $\delta \in (0,1-a)$ and assume
  that the bound \eref{e:boundKst2} holds for $\|K_{s,t}\phi\|_\beta$.
  Since we run $s$ ``backwards in time'' from $s = t$, we consider
  again $t$ as fixed and use the notation $\phi_s = K_{t-s,t}\phi$.
  We then have, for arbitrary $\alpha \in (0,1)$,
  \begin{equ}
    \|\phi_s\|_{\beta+\delta} \le C s^{-\delta} \|\phi_{\alpha
      s}\|_\beta+ C\int_{\alpha s}^s (s-r)^{-(\delta + a)}
    \|DN^*(u_{t-r}) \phi_r\|_{\beta - a}\, dr\;,
  \end{equ}
  provided that $\gamma$ is sufficiently close to $\gamma_\star + 1$
  such that $DN^* \colon \CH_\gamma \to \CL(\CH_{\beta},
  \CH_{\beta-a})$ by Assumption~\ass{DN}.  Furthermore, the operator
  norm of $DN^*(v)$ is bounded by $C(1+\|v\|_{\gamma})^{n-1}$,
  yielding
  \begin{equs}
    \|\phi_s\|_{\beta+\delta} &\le C s^{-\delta} \|\phi_{\alpha
      s}\|_\beta+ C s^{-(\delta + a-1)} \sup_{r \in [\alpha s,
      s]}(1+\|u_r\|_\gamma)^{n-1} \|\phi_r\|_{\beta}\\
    &\le C s^{-\delta} \sup_{r \in [\alpha s,
      s]}(1+\|u_r\|_\gamma)^{n-1} \|\phi_r\|_{\beta}\;.
  \end{equs}
  Iterating these bounds as in Proposition~\ref{prop:bootstrapping}
  concludes the proof.
\end{proof}
The following lemma appears also in \cite{MatPar06:1742,Yuri}. It
plays a central role in establishing the representation of the
Malliavin matrix given in \eqref{e:Mscalar} on which this article as
well as \cite{MatPar06:1742,Yuri} rely  heavily.
\begin{proposition}\label{e:JKadjoint}
  For every $0 \le s < t$, $K_{s,t}$ is the adjoint of $J_{s,t}$ in $\CH$,
  that is $K_{s,t} = J_{s,t}^*$.
\end{proposition}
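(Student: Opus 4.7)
The natural strategy is to show that the pairing $r \mapsto \scal{K_{r,t}\phi,\, J_{s,r}\psi}$ is constant on $[s,t]$ for sufficiently smooth $\phi$ and $\psi$, and then extend to all of $\CH$ by density. Indeed, evaluating this pairing at the endpoints $r=t$ and $r=s$ gives $\scal{\phi, J_{s,t}\psi}$ and $\scal{K_{s,t}\phi, \psi}$ respectively, which is exactly the desired identity.

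More concretely, I would first restrict to $\phi,\psi \in \CH_\infty$. Proposition~\ref{prop:bootstrapJ} (applied to $J$) and Proposition~\ref{prop:bootstrapK} (applied to $K$) then show that for every $r \in (s,t)$ both $J_{s,r}\psi$ and $K_{r,t}\phi$ lie in $\CH_\alpha$ for all $\alpha$, so the differential equations \eref{e:Jacobian} and \eref{e:defKst} can be read as genuine $\CH$-valued ODEs rather than just mild equations. One can then differentiate in $r$ to obtain
\begin{equs}
  \frac{d}{dr}\scal{K_{r,t}\phi,\, J_{s,r}\psi}
  &= \scal[b]{LK_{r,t}\phi - DN^*(u_r)\,K_{r,t}\phi,\; J_{s,r}\psi} \\
  &\quad {}+ \scal[b]{K_{r,t}\phi,\; -L J_{s,r}\psi + DN(u_r)\,J_{s,r}\psi}\;.
\end{equs}
The $L$-terms cancel since $L$ is selfadjoint on its domain and both arguments are in $\CD(L)$, and the $DN$-terms cancel by the very definition of $DN^*$ as the adjoint of $DN$. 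Hence the pairing is constant on $(s,t)$; combining this with the continuity in $r$ up to the endpoints (also a consequence of the propositions above, since both $\phi$ and $\psi$ are in $\CH$) yields $\scal{K_{s,t}\phi,\psi} = \scal{\phi, J_{s,t}\psi}$ for all $\phi, \psi \in \CH_\infty$.

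The extension to $\phi,\psi \in \CH$ is then a density argument: $\CH_\infty$ is dense in $\CH$, and both $J_{s,t}$ and $K_{s,t}$ are bounded linear operators on $\CH$ by Proposition~\ref{prop:PicardKst} (for $K$) and the Fr\'echet differentiability of the flow together with \eref{e:Jfinite} (for $J$), so passing to the limit in both slots of the inner product gives the identity on all of $\CH$.

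The main technical point to be careful about is the justification that one may differentiate $r \mapsto \scal{K_{r,t}\phi, J_{s,r}\psi}$ and use the strong forms of \eref{e:Jacobian}--\eref{e:defKst} without boundary issues at $r=s$ and $r=t$: for $\phi,\psi \in \CH$ (not $\CH_\infty$) neither $J_{s,r}\psi$ nor $K_{r,t}\phi$ lies in $\CD(L)$ near the endpoints, which is precisely why the density reduction to smooth test elements is essential. An alternative, if one wishes to avoid even this mild regularity issue, is to iterate the mild formulations for $J$ and $K$ into their Picard (Neumann) series and check term by term that the two series obtained by inserting the mild expansions into $\scal{K_{s,t}\phi,\psi}$ and $\scal{\phi, J_{s,t}\psi}$ agree, using only selfadjointness of $e^{-Lt}$ and the adjoint relationship between $DN(u_r)$ and $DN^*(u_r)$, together with Fubini to reorder the nested time integrals.
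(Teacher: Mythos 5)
Your strategy is identical to the paper's: show that $r \mapsto \scal{K_{r,t}\phi, J_{s,r}\psi}$ is constant for $\phi,\psi\in\CH_\infty$ by differentiating and cancelling, evaluate at the endpoints, and extend by density. However, there is a gap in your justification of the cancellation step. You claim that Propositions~\ref{prop:bootstrapJ} and \ref{prop:bootstrapK} put $J_{s,r}\psi$ and $K_{r,t}\phi$ in $\CH_\alpha$ for \emph{all} $\alpha$, hence in $\CD(L)$, so that the $L$-terms cancel by selfadjointness of $L$ on its domain. This is not what those propositions give: they cap the regularity at $\CH_\gamma$ for $\gamma<\gamma_\star+1$ and $\CH_\beta$ for $\beta<\beta_\star+1$ respectively, and this cap is genuine even for $\psi\in\CH_\infty$, because the term $DN(u_r)J_{s,r}\psi$ inherits the (limited) regularity of the solution $u_r$ itself. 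Since Assumption~\ref{ass:basic} only requires $\gamma_\star,\beta_\star>-a$ with $\gamma_\star+\beta_\star>-1$, it is entirely possible that neither $J_{s,r}\psi$ nor $K_{r,t}\phi$ lies in $\CD(L)=\CH_1$, and your argument as written breaks down in that regime.

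The fix — which is what the paper actually does — is to replace ``both arguments in $\CD(L)$'' by the duality between the interpolation spaces: one chooses $\gamma<\gamma_\star$ and $\beta<\beta_\star$ with $\gamma+\beta\ge-1$, notes that $r\mapsto J_{s,r}\psi$ is differentiable with values in $\CH_\gamma$ and continuous with values in $\CH_{\gamma+1}$ (and symmetrically for $K$ with $\beta$), and interprets $\scal{LJ_{s,r}\psi,K_{r,t}\phi}=\scal{J_{s,r}\psi,LK_{r,t}\phi}$ as a pairing between $\CH_\gamma$ and $\CH_{-\gamma}\supset\CH_{\beta+1}$. This is precisely where the standing hypothesis $\gamma_\star+\beta_\star>-1$ is used; the fact that your argument never invokes it is the tell-tale sign that a step is being glossed over. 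Your closing remark about boundary issues at $r=s,t$ is well taken but addresses a different (and more benign) point, and your alternative via the Picard series would also work but is not needed once the duality pairing is set up correctly.
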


\begin{proof}
  Fixing $0 \le s < t$ and $\phi, \psi \in \CH_\infty$, we claim that the
  expression
  \begin{equ}[e:duality]
    \scal{J_{s,r} \phi, K_{r,t} \psi}\;,
  \end{equ}
  is independent of $r \in [s,t]$. Evaluating \eref{e:duality} at both
  $r = s$ and $r = t$ then concludes the proof.

  We now prove that \eref{e:duality} is independent of $r$ as
  claimed. It follows from \eref{e:defKst} and
  Proposition~\ref{prop:bootstrapping} that, with probability one, the
  map $r \mapsto K_{r,t} \phi$ is continuous with values in
  $\CH_{\beta+1}$ and differentiable with values in $\CH_{\beta}$,
  provided that $\beta < \beta_\star$.  Similarly, the map $r \mapsto
  J_{s,r}\psi$ is continuous with values in $\CH_{\gamma+1}$ and
  differentiable with values in $\CH_\gamma$, provided that $\gamma <
  \gamma_\star$. Since $\gamma_\star + \beta_\star > -1$ by
  assumption, it thus follows that \eref{e:duality} is differentiable
  in $r$ for $r\in (s,t)$ with
  \begin{equs}
    \d_r \scal{J_{s,r} \phi, K_{r,t} \psi} &= \scal{\bigl(L +
      DN(u_r)\bigr) J_{s,r} \phi, K_{r,t} \psi} \\
    &\quad - \scal{J_{s,r} \phi, \bigl(L + DN^*(u_r)\bigr)K_{r,t}
      \psi} = 0\;.
  \end{equs}
  Since furthermore both $r \mapsto K_{r,t} \phi$ and $r \mapsto
  J_{s,r}\psi$ are continuous in $r$ on the closed interval, the proof
  is complete. See for example \cite[p.~477]{DauLio5} for more details.
\end{proof}

\subsection{Higher order variations}

We conclude this section with a formula for the higher-order variations
of the solution. This will mostly be useful in Section~\ref{sec:examples} in order to
obtain the smoothness of the density for finite-dimensional
projections of the transition probabilities.

For integer $n \geq 2$, let $\phi=(\phi_1,\cdots,\phi_n) \in
\CH^{\otimes n}$ and
$s=(s_1,\cdots,s_n) \in [0,\infty)^n$ and define $\lor s= s_1 \lor
\cdots \lor s_n$. We will now define the $n$-th variation of the
equation $ J^{(n)}_{s,t}\phi$ which intuitively is the cumulative effect on
$u_t$ of varying the value of $u_{s_k}$ in the direction $\phi_k$.

If  $I=\{n_1<\ldots<n_{|I|}\}$ is an ordered subset of $\{1,\ldots,n\}$
(here $|I|$ means the number of elements in $I$), we introduce the notation
$s_I =(s_{n_1},\ldots,s_{n_{|I|}})$ and $\phi_I = (\phi_{n_1},\ldots,\phi_{n_{|I|}})$.
Now the  $n$-th variation of the
equation $ J^{(n)}_{s,t}\phi$ solves
\begin{align}\label{eq:higher_variations}
 \partial_t J^{(n)}_{s,t}\phi &= -L
 J^{(n)}_{s,t}\phi +    DN(u(t))  J^{(n)}_{s,t}\phi 
 +\CG^{(n)}_{s,t}(u(t),\phi),\quad t>\lor s,\\
 J^{(n)}_{s,t}\phi &= 0, \quad t\leq \lor s,\notag
\end{align}
where
\begin{equ}
 \label{eq:Gn}
 \CG^{(n)}_{s,t}(u,\phi)=\sum_{\nu=2}^{m\land n}\sum_{I_1,\ldots,I_\nu}
 D^{(\nu)}N(u)\left(J_{s_{I_1},t}^{(|I_1|)}\phi_{I_1},\ldots,J_{s_{I_\nu},t}^{(|I_\nu|)}\phi_{I_\nu}\right)\;,
\end{equ}
and the second sum runs over all partitions of $\{1,\ldots,n\}$ into
disjoint, ordered non-empty sets  $I_1,\ldots,I_\nu$. 

The variations of constants formula then implies that
\begin{equ}[e:exprJn]
  J^{(n)}_{s,t}\phi = \int_0^t   J_{r,t}
  \CG^{(n)}_{s,r}(u_r,\phi) dr\;,
\end{equ}
see also \cite{Yuri}. We obtain the following bound on the higher-order variations:

\begin{proposition}\label{prop:boundvarn}
If $\beta_\star > a-1$ then there exists $\gamma < \gamma_\star+1$ such that, for every $n>0$, there exist exponents $N_n$ and $M_n$ such that
\begin{equ}
\| J^{(n)}_{s,t}\phi\| \le C \sup_{r \in [0,t]} (1+\|u_r\|_\gamma)^{N_n} \sup_{0 \le u < v \le t} (1+\|J_{u,v}\|)^{M_n}\;,
\end{equ}
uniformly over all $n$-uples $\phi$ with $\|\phi_k\|\le 1$ for every $k$.
\end{proposition}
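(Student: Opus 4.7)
The proof proceeds by strong induction on $n$. The base case $n=1$ reduces, by identifying $J^{(1)}_{s,t}$ with $J_{s,t}$, to the trivial bound $\|J_{s,t}\phi\| \le \|J_{s,t}\|\,\|\phi\|$, so the statement holds with $N_1 = 0$ and $M_1 = 1$.

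For $n \ge 2$, suppose the bound has been established for all integers smaller than $n$. The representation \eref{e:exprJn} suggests estimating $J^{(n)}_{s,t}\phi$ by moving the $\CH$-norm inside the integral, but this is too crude: a glance at \eref{eq:Gn} combined with Assumption~\ass{N} shows that $\CG^{(n)}_{s,r}(u_r,\phi)$ only takes values in $\CH_{-a}$, while the operator $J_{r,t}$ is only known to act on $\CH$. The natural fix is to pass to the dual formulation. Using Proposition~\ref{e:JKadjoint} and the fact that $J^{(n)}_{s,t}\phi$ vanishes for $t \le \lor s$, for any $\psi \in \CH_\infty$ with $\|\psi\| \le 1$ one has
\begin{equ}[e:dualforJn]
\scal{J^{(n)}_{s,t}\phi,\psi} = \int_{\lor s}^t \scal{\CG^{(n)}_{s,r}(u_r,\phi),\,K_{r,t}\psi}\,dr\;,
\end{equ}
interpreted as the duality pairing between $\CH_{-a}$ and $\CH_a$.

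The two factors in the integrand are then bounded separately. For the first, the polynomial structure of $N$ together with Assumption~\ass{N} (specialised to $\gamma=-a$) yields $\|D^{(\nu)}N(u)\|_{\CH^{\otimes\nu}\to\CH_{-a}} \le C(1+\|u\|)^{m-\nu}$, so that after summing over the finite family of partitions,
\begin{equ}
\|\CG^{(n)}_{s,r}(u_r,\phi)\|_{-a} \le C\sum_{\nu,I_1,\ldots,I_\nu}(1+\|u_r\|)^{m-\nu}\prod_{k=1}^\nu \|J^{(|I_k|)}_{s_{I_k},r}\phi_{I_k}\|\;.
\end{equ}
Since each $|I_k|<n$, the inductive hypothesis bounds every factor $\|J^{(|I_k|)}_{s_{I_k},r}\phi_{I_k}\|$ by a constant multiple of $\sup_\sigma(1+\|u_\sigma\|_\gamma)^{N_{|I_k|}}\sup_{u<v}(1+\|J_{u,v}\|)^{M_{|I_k|}}$, uniformly in $\phi$ with $\|\phi_k\|\le 1$. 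For the second factor, we invoke Proposition~\ref{prop:bootstrapK} with $\beta = a$ --- admissible precisely because the hypothesis $\beta_\star > a-1$ ensures $a \in (0,\beta_\star+1)$ --- and use $K_{r',t} = J_{r',t}^*$ (Proposition~\ref{e:JKadjoint}) to obtain, for some $\gamma < \gamma_\star + 1$ independent of $n$,
\begin{equ}
\|K_{r,t}\psi\|_a \le C(t-r)^{-a}\sup_\sigma(1+\|u_\sigma\|_\gamma)^{\bar q_a}\sup_{u<v}\|J_{u,v}\|\;.
\end{equ}

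Inserting the two estimates into \eref{e:dualforJn}, the integrable singularity $(t-r)^{-a}$ (finite because $a<1$) yields
\begin{equ}
|\scal{J^{(n)}_{s,t}\phi,\psi}| \le C\sup_\sigma(1+\|u_\sigma\|_\gamma)^{N_n}\sup_{u<v}(1+\|J_{u,v}\|)^{M_n}\;,
\end{equ}
with $N_n$ and $M_n$ assembled from the inductive exponents together with $\bar q_a$ and $m$. Taking the supremum over $\psi\in\CH_\infty$ with $\|\psi\|\le 1$ and invoking the density of $\CH_\infty$ in $\CH$ closes the induction. The main technical point is choosing a single $\gamma<\gamma_\star+1$ that simultaneously serves the hypothesis of Proposition~\ref{prop:bootstrapK} and dominates every $\|u_r\|$ entering the bound on $D^{(\nu)}N$; this is harmless since $\|\cdot\|\le C\|\cdot\|_\gamma$, but some bookkeeping is required to consolidate the many exponents generated by the partition sum in $\CG^{(n)}$.
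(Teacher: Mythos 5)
Your proof is correct and follows essentially the same route as the paper's: induction using the Duhamel representation \eref{e:exprJn}, a bound on $\|\CG^{(n)}\|_{-a}$ from the polynomial structure of $N$ and the inductive hypothesis, and the duality $K_{r,t}=J_{r,t}^*$ together with Proposition~\ref{prop:bootstrapK} applied with $\beta=a$ to handle the integrable singularity. The only cosmetic difference is that you make the $\CH_a$/$\CH_{-a}$ duality explicit by pairing against test functions $\psi$, whereas the paper writes the same step as the operator-norm identity $\|J_{r,t}\|_{-a\to 0}=\|K_{r,t}\|_{0\to a}$.
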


\begin{proof}
We proceed by induction.
As a shorthand, we set
\begin{equ}
\CE(M,N) =  \sup_{r \in [0,t]} (1+\|u_r\|_\gamma)^{N} \sup_{0 \le u < v \le t} (1+\|J_{u,v}\|)^{M}\;.
\end{equ}
The result is trivially true for $n = 1$ with $M_1 = 1$ and $N_1 = 0$. For $n > 1$, we
combine \eref{e:exprJn} and \eref{eq:Gn}, and we use Assumption~\ref{ass:basic}, part 2., to obtain
\begin{equs}
\| J^{(n)}_{s,t}\phi\| &\le  C\int_0^t  \|J_{r,t}\|_{-a \to 0}
  \Bigl(1 + \|u_r\|^n + \sum_{I} \|J^{|I|}_{s_I,r}\phi_I\|^n\Bigr) \,dr \\
  &\le C \CE\bigl(n M_{n-1}, n(N_{n-1}+1)\bigr) \int_0^t  \|K_{r,t}\|_{0\to a} \,dr\;.
\end{equs} 
To go from the first to the second line, we used the induction hypothesis, the fact that 
$K_{r,t} = J_{r,t}^*$, and the duality between $\CH_a$ and $\CH_{-a}$.

It remains to apply Proposition~\ref{prop:bootstrapK} with $\beta = a$ to obtain the required bound.
\end{proof}

\section{Malliavin calculus}
\label{sec:Malliavin}

In this section, we show that the solution to the SPDE \eref{e:SPDE}
has a Malliavin derivative and we give an expression for it. Actually, since we
are dealing with additive noise, we show the stronger result that the solution
is Fr\'echet differentiable with respect to the driving noise.
In this section, we will make the standing assumption that the explosion time $\tau$
from Proposition~\ref{prop:Picard} is infinite.  

\subsection{Malliavin derivative}

In light of Proposition~\ref{prop:Picard}, for fixed initial condition $u_0 \in \CH$
there exists an ``It\^o map'' $\Phi_t^{u_0}:
\CC([0,t],\R^d) \rightarrow \CH$ with $u_t = \Phi_t^{u_0}(W)$. We have:

\begin{proposition}\label{prop:exprMall}
For every $t > 0$ and every $u \in \CH$, the map $\Phi_t^u$ is Fr\'echet differentiable
and its Fr\'echet derivative  $\DF \Phi_t^u v$ in the direction $v \in \CC(\R_+,\R^d)$ 
satisfies the equation
\begin{equ}[e:equMall]
d\,\DF \Phi_t^u v = -L \DF \Phi_t^u v\, dt + DN(u_t) \DF \Phi_t^u v\, dt + G dv(t)
\end{equ}
in the mild sense.
\end{proposition}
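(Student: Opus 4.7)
The plan is to exploit the fact that, because the noise is additive, the driving Wiener path enters the mild equation only through the stochastic convolution $\xi_W(t) = \int_0^t e^{-L(t-s)} G\,dW(s)$. The Itô map $\Phi_t^{u_0}$ therefore factors as $\Phi_t^{u_0}(W) = \Psi_t^{u_0}(\xi_W)$ where $\Psi_t^{u_0}$ is a deterministic map that depends on its argument in a Fréchet-differentiable way. Differentiability of $\Phi_t^{u_0}$ is then reduced to differentiability of $\Psi_t^{u_0}$ via the chain rule, plus the observation that $v \mapsto \xi_v \eqdef \int_0^\cdot e^{-L(\cdot-s)} G\,dv(s)$ is itself a bounded linear map from $\CC([0,T],\R^d)$ to $\CC([0,T],\CH)$.

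First I would check that $v \mapsto \xi_v$ is a well-defined continuous linear map. Since $g_k \in \CH_{\gamma_\star+1}$ with $\gamma_\star+1 > 0$, integration by parts gives
\begin{equ}
  \xi_v(t) = G v(t) - \int_0^t L e^{-L(t-s)} G v(s)\, ds\;,
\end{equ}
and the singularity of $\|L e^{-L(t-s)} G\|_{\CH \to \CH}$ near $s=t$ is of order $(t-s)^{-(1-\gamma_\star-1)}$, which is integrable because $\gamma_\star > -1$. Hence $\xi_v$ depends continuously and linearly on $v \in \CC([0,T],\R^d)$.

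Next I would fix a time $T > 0$ smaller than the local existence time from Proposition~\ref{prop:Picard} and apply the Banach-space implicit function theorem to the fixed point equation $u = \Phi_{T,\xi_W + \xi_v}(u_0, u)$ for the map defined in \eref{e:PicardMap}. The map $(\xi, u) \mapsto \Phi_{T,\xi}(u_0, u)$ is Fréchet differentiable (it is actually polynomial up to the affine shift by $\xi$), and on the ball where the solution lives, $u \mapsto \Phi_{T,\xi_W}(u_0, u)$ is a strict contraction, so $I - D_u \Phi_{T,\xi_W}(u_0,u)$ is invertible. The implicit function theorem therefore yields that $v \mapsto u$ is Fréchet differentiable at $v = 0$, and the derivative $J^v \in \CC([0,T],\CH)$ satisfies
\begin{equ}
  J^v_t = \int_0^t e^{-L(t-s)} DN(u_s)\, J^v_s\, ds + \xi_v(t)\;,
\end{equ}
which is precisely the mild form of \eref{e:equMall}.

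Finally, to extend from a small time interval to arbitrary $t>0$ below the (assumed infinite) explosion time, I would iterate the previous step: patch together successive short intervals $[kT, (k+1)T]$ using the flow property of mild solutions, and note that on each patch the new initial condition depends smoothly on the noise by the previous step, while the new noise increment also shifts the stochastic convolution affinely. The composition of Fréchet-differentiable maps is again Fréchet differentiable, and the corresponding derivative still satisfies the mild equation for $\DF \Phi_t^u v$. The only delicate point is verifying that the patching interval $T$ can be chosen uniformly on bounded time windows, which follows from the \textit{a priori} bound of Proposition~\ref{prop:bootstrapping} together with exponential bounds on $\sup \|\WL(s,t)\|_\gamma$; this is the main (and essentially only) technical obstacle in an otherwise soft argument.
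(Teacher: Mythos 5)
Your proposal is correct and follows essentially the same route as the paper: the paper likewise invokes the integration-by-parts identity for $\int_0^t e^{-L(t-s)}G\,dv(s)$ (its Remark~\ref{rem:bounddv}) to see that the noise enters the Picard map affinely and continuously, then applies the Banach-space implicit function theorem to the fixed-point map \eref{e:PicardMap} on a short time interval, and iterates to reach arbitrary $t$. The extra details you supply (invertibility of $I - D_u\Phi$ from the contraction property, the explicit mild equation for the derivative) are exactly what the paper leaves implicit by referring back to Section~\ref{sec:lin}.
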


\begin{remark}\label{rem:bounddv}
  Note that \eref{e:equMall} has a unique $\CH$-valued mild solution
  for every continuous function $v$ because it follows from our
  assumptions that $Gv \in \CC(\R_+,\CH_\gamma)$ for some $\gamma>0$
  and therefore $\int_0^t e^{-L(t-s)}G\,dv(s) = Gv(t) - e^{-Lt}Gv(0) -
  \int_0^t Le^{-L(t-s)} Gv(s)\,ds$ is a continuous $\CH$-valued
  process.
\end{remark}

\begin{proof}[of Proposition~\ref{prop:exprMall}]
The proof works in exactly the same way as the arguments presented in Section~\ref{sec:lin}: it follows
from Remark~\ref{rem:bounddv} that for any given $u_0 \in \CH$ and $t > 0$, the map 
\begin{equ}
  (W,u) \mapsto e^{-Lt} u_0 + \int_0^t e^{-L(t-s)} N(u(s))\, ds + \int_0^t e^{-L(t-s)}G\,dW(s)
\end{equ}
is Fr\'echet differentiable in $\CC([0,t],\R^d)\times
\CC([0,t],\CH)$. Furthermore, for $t$ sufficiently small (depending on
$u$ and $W$), it satisfies the assumptions of the implicit functions
theorem, so that the claim follows in this case.  The claim for
arbitrary values of $t$ follows by iterating the statement.
\end{proof}

As a consequence, it follows from Duhamel's formula and the fact that $J_{s,t}$ is the
unique solution to \eref{e:Jacobian} that

\begin{corollary}\label{cor:FrechetDiff}
If $v$ is absolutely continuous and of bounded variation, then 
\begin{equ}[e:reprMall]
  \DF \Phi_t^{u} v = \int_0^t J_{s,t} G dv(s)\;, 
\end{equ}
where the integral is to be understood as a Riemann-Stieltjes integral and
the Jacobian $J_{s,t}$ is as in \eref{e:Jacobian}.
\end{corollary}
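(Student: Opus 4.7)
The plan is to verify that the right-hand side of \eref{e:reprMall} satisfies the same mild equation as $\DF\Phi_t^u v$, and then invoke a uniqueness argument. By Proposition~\ref{prop:exprMall}, the Fr\'echet derivative $y_t \eqdef \DF\Phi_t^u v$ satisfies, in mild form,
\begin{equ}[e:planmild]
  y_t = \int_0^t e^{-L(t-s)} DN(u_s)\, y_s\, ds + \int_0^t e^{-L(t-s)} G\, dv(s)\;,
\end{equ}
where the second integral is well-defined as a Bochner integral via the integration-by-parts identity of Remark~\ref{rem:bounddv}, using that $v$ is absolutely continuous and of bounded variation and that $Gv$ takes values in $\CH_{\gamma_\star+1}$. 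The linearity of the equation together with the fixed-point argument of Proposition~\ref{prop:Picard} (applied now to a linear equation) shows that \eref{e:planmild} has a unique continuous $\CH$-valued solution on any finite interval.

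First I would introduce the candidate $z_t \eqdef \int_0^t J_{s,t} G\, dv(s)$. Because $v$ is of bounded variation, $dv$ defines a finite signed $\R^d$-valued measure on $[0,t]$, and because $s \mapsto J_{s,t}G$ is continuous into $\CH$ (this follows from Proposition~\ref{prop:bootstrapJ} together with the regularity of $G$), the Riemann--Stieltjes integral is well-defined and agrees with the Bochner integral against $dv$. Next I would substitute the mild formulation of the Jacobian equation \eref{e:Jacobian},
\begin{equ}
  J_{s,t}\phi = e^{-L(t-s)}\phi + \int_s^t e^{-L(t-r)} DN(u_r)\, J_{s,r}\phi\, dr\;,
\end{equ}
into the definition of $z_t$ with $\phi = G$, obtaining
\begin{equ}
  z_t = \int_0^t e^{-L(t-s)} G\, dv(s) + \int_0^t \Bigl(\int_s^t e^{-L(t-r)} DN(u_r)\, J_{s,r} G\, dr\Bigr) dv(s)\;.
\end{equ}

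Then I would apply a Fubini-type interchange in the double integral. This is justified because $dv$ is a finite signed measure, $dr$ is Lebesgue measure on $[0,t]$, and the integrand $(r,s)\mapsto e^{-L(t-r)} DN(u_r) J_{s,r} G$ is (pathwise) continuous in $s$ and integrable in $r$ thanks to the bound $\|e^{-L(t-r)} DN(u_r)\|_{\CH \to \CH} \le C(t-r)^{-a}$ with $a<1$ from Assumption~\ass{N} combined with the pathwise boundedness of $\|u_r\|$ and of $J_{s,r}G$ on the interval. After swapping, the inner integral is precisely $z_r$, so
\begin{equ}
  z_t = \int_0^t e^{-L(t-s)} G\, dv(s) + \int_0^t e^{-L(t-r)} DN(u_r)\, z_r\, dr\;.
\end{equ}
Thus $z_t$ solves the same linear mild equation \eref{e:planmild} as $y_t$, and by the uniqueness mentioned above we conclude $y_t = z_t$, which is exactly \eref{e:reprMall}.

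The only point that requires some care is the Fubini step, since the kernel $e^{-L(t-r)}$ is only bounded into $\CH$ with an integrable singularity at $r=t$; the rest of the argument is routine given the preceding propositions. I would handle this either by first proving the identity for smooth (say $\CC^1$) $v$ with $v(0)=0$, where $dv(s) = \dot v(s)\, ds$ and Fubini is immediate, and then extending by density and continuity of both sides in $v$ with respect to the total-variation norm, using Corollary~\ref{cor:FrechetDiff}'s hypotheses only at the level of the final formula.
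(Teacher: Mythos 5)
Your argument is correct and is essentially the paper's own proof spelled out in detail: the paper simply invokes ``Duhamel's formula and the fact that $J_{s,t}$ is the unique solution to \eref{e:Jacobian},'' which is exactly your verification that $z_t=\int_0^t J_{s,t}G\,dv(s)$ satisfies the same linear mild equation as $\DF\Phi_t^u v$ together with uniqueness. The Fubini justification (or the fallback via smooth $v$ and density) that you supply is a legitimate filling-in of the step the paper leaves implicit.
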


In particular, \eref{e:reprMall} 
holds for every $v$ in the \textit{Cameron-Martin space}
\begin{equ}
  \CamM=\big\{v : \d_t v \in L^2([0, \infty), \R^d), \quad v(0)=0\big\}\;,
\end{equ}
which is a Hilbert space endowed with the norm $\|v\|_\CamM^2 =
\int_0^\infty |\d_t v(t)|_{\R^d}^2\, dt\eqdef\Norm{\d_t v}^2$.  Obviously,
$\CamM$ is isometric to $\Cam = L^2([0, \infty), \R^d)$, so we will in
the sequel use the notation
\begin{equ}[e:reprDPhi]
  \DM_h \Phi_t^{u} \eqdef \DF \Phi_t^{u}  v = \int_0^t J_{s,t} G dv(s) = \int_0^t J_{s,t} G h(s)\,ds \;,\quad \text{if}\;\d_t v = h\;.
\end{equ}
The representation \eref{e:reprMall} is still valid for arbitrary
stochastic processes $h$ such that $h \in \Cam$ almost surely. 

Since $G:\R^d \rightarrow \CH_{\gamma_*+1}$ is a bounded operator
whose norm we denote $\|G\|$, we obtain the bound
\begin{equ}
  \|\DM_h \Phi_t^{u}\| \leq  \|G\| \int_0^t \|J_{s,t}\|\,|h(s)|\,ds \leq
  C\|J_{\cdot,t}\|_{L^2(0,t,\CH)}  \Norm{h}\;,
\end{equ}
valid for every $h \in \Cam$. In particular, by Riesz's representation theorem, 
this shows that there exists a (random) element $\DM \Phi_t^{u}$
of $\Cam \otimes \CH$ such that 
\begin{equ}[e:DMeqJ]
\DM_h \Phi_t^{u} = \scal{\DM \Phi_t^{u}, h}_{\Cam}= \int_0^\infty \DM_s \Phi_t^{u} h(s)\,ds\;,
\end{equ}
for every $h \in \Cam$. This abuse of notation is partially justified by
the fact that, at least formally, $\DM_s \Phi_t^u= \DM_h \Phi_t^u$
with $h(r)=\delta(s-r)$.
In our particular case, it follows from \eref{e:reprMall} that one has
\begin{equ}
  \DM_s \Phi_t^{u} = J_{s,t} G \in \R^d \otimes \CH\;, \quad t>s\;,
\end{equ}
and $\DM_s \Phi_t^{u} = 0$ for $s > t$.  With this notation, the identity
  \eref{e:reprMall} can be rewritten as $\DM_h u_t=\int_0^t \DM_s
  u_t\,h(s)ds$.
It follows from the theory of
Malliavin calculus, see for example \cite{MalliavinBook,Nualart} that, for any Hilbert space $\CH$, 
there exists a closed unbounded linear operator $\DM\colon L^2(\Omega,\R)\otimes \CH \to L^2_\ad(\Omega,\CF_t, \Cam)
\otimes \CH$ such that $\CD \Phi_t$ coincides with the object described above whenever $\Phi_t$ is the solution
map to \eref{e:SPDE}. Here, $\cF_t$ is the $\sigma$-algebra generated by the
increments of $W$ up to time $t$ and $L^2_\ad$ denotes the space of $L^2$ functions
adapted to the filtration $\{\cF_t\}$. 

The operator $\DM$ simply acts as the
identity on the factor $\CH$, so that we really interpret it as an
operator from $L^2(\Omega,\R)$ to $L^2(\Omega, \Cam)$.
The operator $\DM$ is called the ``Malliavin derivative.''

We define a family of random linear operators $\AM_t\colon \Cam
\rightarrow \CH$ (depending also on the initial condition $u_0 \in
\CH$ for \eref{e:SPDE}) by $h \mapsto \scal{\DM\Phi_t^{u},h}$.  It
follows from \eref{e:reprDPhi} that their adjoints $\AM_t^*\colon\CH
\rightarrow \Cam$ are given for $\xi \in \CH$ by
\begin{equation}\label{eq:Aadj}
  (\AM_t^*\xi)(s)=
  \begin{cases}
    G^*J_{s,t}^* \xi = G^*K_{s,t}\xi& \text{ for } s  \leq t\;, \\
    0 &\text{ for } s > t\;.
  \end{cases}
\end{equation}
Similarly, we define $\AM_{s,t}\colon\Cam \rightarrow \CH$ by
$\AM_{t,s}h \eqdef \AM_t(h\one_{[t,s]})= \scal{\DM u_t, h\one_{[t,s]}}= \int_s^t J_{r,t}G h_r dr$.
Observe that $\AM_{s,t}^*\colon\CH \rightarrow \Cam$ is given for $\xi
\in \CH$ by $(\AM_{s,t}^*\xi)(r) = G^*J^*_{r,t}\xi = G^*K_{r,t}\xi$ for
$r \in [s,t]$ and zero otherwise.

Recall that the Skorokhod integral $h \mapsto \int_0^t h(s)\cdot dW(s)
\eqdef \DM^*h$ is \textit{defined} as the adjoint of the Malliavin
derivative operator (or rather of the part acting on
$L^2(\Omega,\CF_t,\R)$ and not on $\CH$).  In other words, one has the
following identity between elements of $\CH$:
\begin{equ}[e:IBP]
  \E \DM_h \Phi_t^u = \E \scal{\DM \Phi_t^u,h}= \E \Bigl(\Phi_t^u
  \int_0^t h(s)\cdot dW(s)\Bigr)\;,
\end{equ}
for every $h \in L^2(\Omega, \Cam)$ belonging to the domain of
$\DM^*$.

It is well-established \cite[Ch.~1.3]{Nualart} that the Skorokhod integral
has the following two important properties:
\begin{claim}
\item[1.] Every adapted process $h$ with $\E \Norm{h}^2<
  \infty$ belongs to the domain of $\DM^*$ and the Skorokhod integral
  then coincides with the usual It\^o integral.
\item[2.] For non-adapted processes $h$, if $h(s)$ belongs to the
  domain of $\DM$ for almost every $s$ and is such that
\begin{equ}
  \E \int_0^t \int_0^t |\DM_s h(r)|_{\R^d}^2 \, ds\, = \E \int_0^t
  \Norm{\DM_s h}^2 \,ds\,   < \infty\;,
\end{equ}
then one has the following modification of the It\^o isometry:
\begin{multline}\label{e:genIto}
  \E \Bigl(\int_0^t h(s)\cdot dW(s)\Bigr)^2 = \E \int_0^t |h(s)|_{\R^d}^2\, ds
  \\+ \E \int_0^t \int_0^t \tr{\DM_s h(r) \DM_r h(s)} \, ds\, dr\;.
\end{multline}
Note here that since $h(s) \in \R^d$, we interpret $\DM_r h(s)$ as a
$d \times d$ matrix.
\end{claim}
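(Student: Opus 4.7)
Both properties are classical facts about the divergence operator on Wiener space, so the plan is to verify each identity on a dense class of smooth integrands where every quantity reduces to a finite-dimensional Gaussian computation, and then extend the result by the $L^2$-closedness of $\DM^*$.

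For property~1, I would start from simple adapted step integrands $h(s) = \sum_i F_i \mathbf{1}_{(t_i,t_{i+1}]}(s)$ with each $F_i \in \R^d$ smooth, cylindrical, and $\CF_{t_i}$-measurable. For a smooth cylindrical test random variable $\Phi$, Gaussian integration by parts on the finitely many Wiener increments involved yields
\begin{equation*}
\E\Bigl[\Phi\,\sum_i F_i \cdot (W(t_{i+1}) - W(t_i))\Bigr] = \E \scal{\DM \Phi,\, h}_{\Cam}\;,
\end{equation*}
the decisive input being that $\DM_s F_i = 0$ for $s > t_i$ because $F_i$ is $\CF_{t_i}$-measurable. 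Comparing with~\eref{e:IBP} identifies the It\^o integral of $h$ with its Skorokhod integral on this dense class; since both maps are $L^2$-isometries on the adapted $L^2$-processes, the identification extends to all such integrands by closure.

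For property~2, I would pick a dense class of (possibly non-adapted) integrands of the form $h(s) = \sum_k F_k \phi_k(s)$ with $\phi_k \in L^2([0,t],\R^d)$ deterministic and $F_k$ smooth cylindrical, so that every Malliavin derivative below is literally a finite Gaussian computation. Applying duality~\eref{e:IBP} with $F = \DM^* h$ gives
\begin{equation*}
\E(\DM^* h)^2 = \E \scal{\DM(\DM^* h),\, h}_{\Cam}\;.
\end{equation*}
Substituting the commutation relation $\DM_s(\DM^* h) = h(s) + \DM^*(\DM_s h)$ (where $\DM_s h$ is the $d\times d$ matrix-valued process $r \mapsto \DM_s h(r)$) splits the right-hand side into two pieces: the first yields $\E\int_0^t |h(s)|_{\R^d}^2\,ds$, and the second is a cross term to which a second application of~\eref{e:IBP}, performed at each fixed $s$ with $F = h_i(s)$ and integrand $(\DM_s h)_{i,\cdot}$, produces exactly $\E\int_0^t\!\int_0^t \tr\big(\DM_s h(r)\,\DM_r h(s)\big)\,ds\,dr$ after swapping dummy indices. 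The integrability assumption in item~2 provides the uniform control needed to close the density argument via $L^2$-closedness of $\DM^*$.

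The main obstacle is the rigorous justification of the commutation relation $\DM_s\DM^* = I + \DM^*\DM_s$: morally it is merely ``differentiation under the stochastic integral,'' but making it rigorous requires working inside a Malliavin--Sobolev space of Wiener functionals and performing a careful graph-closure argument that does not have an adapted shortcut. All of this is carried out in detail in \cite[Ch.~1.3]{Nualart}, which I would invoke rather than reproduce. My own contribution would then be limited to tracking the $\R^d$-indices carefully so that the trace correction appears in the form stated.
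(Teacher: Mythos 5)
The paper does not prove this statement at all: it is quoted verbatim as a well-established fact with a pointer to \cite[Ch.~1.3]{Nualart}, and your sketch is precisely the standard argument carried out in that reference (duality on simple adapted integrands plus closedness of $\DM^*$ for item~1, and the commutation relation $\DM_s\DM^* = I + \DM^*\DM_s$ applied twice via \eref{e:IBP} for item~2). Your outline is correct, and deferring the rigorous proof of the commutation relation to Nualart is consistent with what the paper itself does wholesale.
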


\subsection{Malliavin derivative of the Jacobian}

By iterating the implicit functions theorem, we can see that the map
that associates a given realisation of the Wiener process $W$ to the
Jacobian $J_{s,t} \phi$ is also Fr\'echet (and therefore Malliavin)
differentiable. Its Malliavin derivative $\DM_h J_{s,t}\phi$ in the
direction $h\in \Cam$ is given by the unique solution to
\begin{equ}
  \d_t \DM_h J_{s,t}\phi = - L \DM_h J_{s,t}\phi + DN(u_t) \DM_h
  J_{s,t}\phi + D^2N(u_t) \bigl(\DM_h u_t,J_{s,t}\phi\bigr)\;,
\end{equ}
endowed with the initial condition $\DM_h J_{s,s}\phi = 0$. Just as
the Malliavin derivative of the solution was related to its derivative
with respect to the initial condition, the Malliavin derivative of
$J_{s,t}$ can be related to the second derivative of the flow with
respect to the initial condition in the following way. Denoting by
$J^{(2)}_{s,t}(\phi, \psi)$ the second derivative of $u_t$ with
respect to $u_0$ in the directions $\phi$ and $\psi$, we see that
as in \eref{eq:higher_variations},
$J^{(2)}_{s,t}(\phi, \psi)$ is the solution to
\begin{equ}
  \d_t J^{(2)}_{s,t}(\phi, \psi) = - L J^{(2)}_{s,t}(\phi, \psi) +
  DN(u_t) J^{(2)}_{s,t}(\phi, \psi)+ D^2N(u_t) \bigl(J_{s,t}\psi,
  J_{s,t}\phi\bigr)\;,
\end{equ}
endowed with the initial condition $J^{(2)}_{s,s}(\phi, \psi) = 0$.

Assuming that $h$ vanishes outside of the interval $[s,t]$ and using
the identities $J_{r,t} J_{s,r} = J_{s,t}$ and $\DM_h u_t = \int_s^t
J_{r,t} G h(r)\, dr$, we can check by differentiating both sides and
identifying terms that one has the identity
\begin{equ}[e:exprMallJac]
\DM_h J_{s,t}\phi = \int_s^t J^{(2)}_{r,t} (G h(r), J_{s,r}\phi)\, dr\;,
\end{equ}
which we can rewrite as
\begin{equ}[e:repJ2]
\DM_r J_{s,t}\phi = J^{(2)}_{r,t} (G , J_{s,r}\phi)
\end{equ}
This identity is going to be used in Section~\ref{sec:generalSmoothing}.

\subsection{Malliavin covariance matrix}

We now define and explore the properties of the Malliavin covariance
matrix, whose non-degeneracy is central to our constructions.

\begin{definition} Assume that the explosion time $\tau = \infty$ for
  every initial condition in $\CH$.  Then, for any $t >0$, the
  Malliavin matrix $\MM_t \colon \CH \rightarrow \CH$ is the linear
  operator defined by
  \begin{equ}[e:defM]
    \MM_t \phi = \sum_{k=1}^d \int_0^t \scal{J_{s,t} g_k,\phi}
    J_{s,t}g_k\,ds\;.
  \end{equ}
\end{definition}
Observe that this is equivalent to
\begin{align*}
  \MM_t=\AM_t\AM^*_t = \int_0^t J_{s,t} G G^* J_{s,t}^*\, ds =
  \int_0^t J_{s,t} G G^* K_{s,t}\, ds\;,
\end{align*}
thus motivating the definition $\MM_{s,t}=\AM_{s,t}\AM_{s,t}^*$ for
arbitrary time intervals $0\le s < t$. From this it is clear that
$\MM_{s,t}$ is a symmetric positive operator with
\begin{equ}[e:Mscalar]
  \scal{\MM_t \phi,\phi} = \sum_{k=1}^d \int_0^t \scal{J_{s,t}
    g_k,\phi}^2\,ds=\sum_{k=1}^d \int_0^t \scal{
    g_k,K_{s,t}\phi}^2\,ds
\end{equ}
for all $\phi \in \CH$.



The meaning of the Malliavin covariance matrix defined in
\eref{e:defM} is rather intuitive, especially for the diagonal
elements $\scal{\MM_t \phi,\phi}$. If $\scal{\MM_t \phi,\phi} >0$ then
there exists some variation in the Wiener process on the time interval
$[0,t]$ which creates a variation of $u_t$ in the direction $\phi$.

It is also useful to understand on what spaces the operator norm of
$\MM_t$ is bounded. As a simple consequence of Proposition~\ref{prop:bootstrapJ}, we have:
\begin{proposition}
 For every $T >0$ and $\gamma \in [0,(1-a)\wedge {1\over 2})$, 
 $\MM_T$ can be extended to a bounded
  (random) linear operator from $\CH_{-\gamma}$ to $\CH_\gamma$ with probability
  one. In particular, $\MM_T$ is almost surely a positive,
  self-adjoint linear operator on $\CH$ such that the bound
  \begin{equ}
    \sup_{\substack{\phi,\psi \in \CH_{-\gamma} \\
        \|\phi\|_{-\gamma}=\|\psi\|_{-\gamma}=1}} \scal{\MM_T \phi,\psi}
    \leq T\,C \sup_{0\leq s< t\leq T} \sup_k\;(1+\|u_t\|)^{2n-2}
    \|J_{s,t}g_k\|^2
  \end{equ}
  holds with some deterministic constant $C$.
\end{proposition}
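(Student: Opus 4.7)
The plan is to start from the explicit formula
\begin{equ}
  \scal{\MM_T \phi,\psi} = \sum_{k=1}^d \int_0^T \scal{J_{s,T} g_k,\phi}\scal{J_{s,T}g_k,\psi}\,ds\;,
\end{equ}
which follows from \eref{e:defM} (and the polarisation identity applied to \eref{e:Mscalar}). Symmetry and positivity of $\MM_T$ as an operator on $\CH$ are then immediate from this representation, since it writes $\MM_T = \AM_T \AM_T^*$. Thus the whole statement reduces to showing that the displayed bilinear form is continuous on $\CH_{-\gamma}\times \CH_{-\gamma}$ with the announced bound on its norm.

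To estimate a single term, I would use the duality between $\CH_\gamma$ and $\CH_{-\gamma}$ to write
\begin{equ}
  |\scal{J_{s,T} g_k,\phi}| \leq \|J_{s,T}g_k\|_\gamma\,\|\phi\|_{-\gamma}\;,
\end{equ}
and similarly for $\psi$. The task is then to control $\|J_{s,T}g_k\|_\gamma$ in terms of $\|J_{s,T}g_k\|$. For this I invoke Proposition~\ref{prop:bootstrapJ} applied with the exponent $\gamma \in [0,1-a)$, for which one is entitled to take $\gamma_0 = 0$ and $\tilde q_\gamma = n-1$; with $t = s$ and time-length $T-s$ this gives
\begin{equ}
  \|J_{s,T}g_k\|_\gamma \leq C(T-s)^{-\gamma}\sup_{r \in [(T-s)/2,T-s]}\bigl(1+\|u_{s+r}\|\bigr)^{n-1}\|J_{s,s+r}g_k\|\;.
\end{equ}
Squaring and dominating the right-hand side by the suprema appearing in the statement yields
\begin{equ}
  \|J_{s,T}g_k\|_\gamma^2 \leq C(T-s)^{-2\gamma}\!\!\sup_{0\leq u<v\leq T}\sup_k(1+\|u_v\|)^{2n-2}\|J_{u,v}g_k\|^2\;.
\end{equ}

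Inserting these bounds into the expression for $\scal{\MM_T\phi,\psi}$, taking $\phi,\psi$ of unit $\CH_{-\gamma}$-norm, and summing over $k$ produces
\begin{equ}
  \scal{\MM_T\phi,\psi} \leq C\Bigl(\int_0^T (T-s)^{-2\gamma}\,ds\Bigr)\sup_{0\leq u<v\leq T}\sup_k\bigl(1+\|u_v\|\bigr)^{2n-2}\|J_{u,v}g_k\|^2\;.
\end{equ}
The condition $\gamma < 1/2$ is exactly what is needed for the time integral to converge, and it evaluates to $T^{1-2\gamma}/(1-2\gamma) \leq C'T$ for $T$ in any bounded range, which gives the factor of $T$ in the stated bound (absorbing $T^{-2\gamma}$ together with other constants into $C$). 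The extension from $\CH$ to $\CH_{-\gamma}$ is then standard by density, since the bound just proved controls the bilinear form on a dense subset.

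There is no real obstacle here: the key inputs (the formula \eref{e:Mscalar} for $\MM_T$, the duality between $\CH_{\pm\gamma}$, and the smoothing estimate of Proposition~\ref{prop:bootstrapJ}) are all already in place. The only point to watch is the interplay between the two constraints $\gamma < 1-a$ (to use the explicit form of $\tilde q_\gamma$ and $\gamma_0$ in Proposition~\ref{prop:bootstrapJ}) and $\gamma < 1/2$ (integrability of $(T-s)^{-2\gamma}$); the hypothesis $\gamma \in [0,(1-a)\wedge\frac12)$ accommodates both.
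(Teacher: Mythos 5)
Your proposal is correct and follows essentially the same route as the paper: bound the bilinear form by $\sum_k\int_0^T\|J_{s,T}g_k\|_\gamma^2\,ds$ via duality, control $\|J_{s,T}g_k\|_\gamma$ with Proposition~\ref{prop:bootstrapJ} (using $\gamma_0=0$, $\tilde q_\gamma=n-1$ since $\gamma<1-a$), and use $\gamma<\frac12$ for integrability of the singularity at $s=T$. The only difference is that you spell out the details the paper leaves implicit.
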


\begin{remark}
If the linear operator $L$ happens to have compact resolvent, which will be the case in most 
of the examples to which our theory applies, then the operator $\CM_T$ is automatically compact, since
the embedding $\CH_\gamma \hookrightarrow \CH$ is then compact for every $\gamma > 0$.
\end{remark}

\begin{proof}
 From \eref{e:defM} we have that
 \begin{equ}
   \sup_{\substack{\phi,\psi \in \CH_{-\gamma}\\
       \|\phi\|_{-\gamma}=\|\psi\|_{-\gamma}=1}} \scal{\MM_t \phi,\psi} \leq
   \sum_{k=1}^d \int_0^t \|J_{s,t} g_k\|_\gamma^2 ds \;.
 \end{equ}
Since the $g_k$ belong to $\CH$ by assumption,
the required bound now follows from Proposition~\ref{prop:bootstrapJ},
noting that the singularity at $s = t$ is integrable by the assumption $\gamma < {1\over 2}$.
\end{proof}

\section{Smoothing in infinite dimensions}
\label{sec:generalSmoothing}




We now turn our study of \eref{e:SPDE2} to one of the principal goals
of this article. As in the preceding section, we shall assume that all solutions are
global in time and that the standing assumptions from Assumption
\ref{ass:basic} continue to hold.  The aim of this section is to prove
``smoothing'' estimates for the corresponding Markov semigroup $\CP_t$
whose action on bounded test functions $\phi:\CH \rightarrow \R$ is
defined by
\begin{equ}
  \CP_t \phi(v) = \E_v \phi(u_t)\;.
\end{equ}
Here, the subscript in the expectation refers to the initial condition
for the solution $u_t$ to \eref{e:SPDE2}.  We begin with a brief
discussion of the type of estimates we will prove and the ideas used
in their proof. A long discussion on this can be found in
\cite{HairerMattingly06AOM} in which a number of the tools of this
paper were developed or \cite{saintFlourJCM08} which has a longer
motivating discussion.

Recall also that the Malliavin covariance matrix $\MM_t \colon \CH \to
\CH$ for the solution to \eref{e:SPDE2} was defined in \eref{e:defM}
as $\MM_t=\AM_t\AM_t^*$ and that it is a random, self-adjoint
operator on $\CH$. Since $\CH$ is assumed to be infinite-dimensional,
$\MM_t$ will in general not be invertible. However as discussed in the
introduction we will only need it to be ``approximately invertible''
on some subspace paired with a assumption that the dynamics is
counteractive off this subspace. The assumption of ``approximate
invertibility'' on some subspace is formulated in
Assumption~\ref{ass:Malliavin}  below and the contractivity assumption is
formulated in Assumption~\ref{ass:smoothing}. These are the two
fundamental structural assumptions needed for this theory.  In between
the statement of these two assumption two other assumptions are
given. They are more technical in nature and ensure that we can
control various quantities.

\SetAssumptionCounter{B}

\begin{assumption}[Malliavin matrix]\label{ass:Malliavin}
  There exists a function $\U \colon \CH \to [1,\infty)$ and an orthogonal
  projection operator $\Pi \colon \CH \to \CH$ such that, for every
  $\alpha > 0$, the bound
  \begin{equ}[e:Malliavin]
    \P \Bigl(\inf_{\|\Pi \phi\| \ge \alpha \|\phi\|} {\scal{\phi,
      \MM_1\phi} \over \|\phi\|^2} \le \eps \Bigr) \le
    C(\alpha,p)\, \U^p(u_0)\, \eps^p\;,
  \end{equ}
  holds for every $\eps \le 1$, $p \geq 1$ and $u_0 \in \CH$. Furthermore for some
  $\bar q\geq 2$, there exist a constant $C_U$ so that for every initial
  condition $u_0 \in \CH$, the bound
  \begin{equ}
    \E U^{\bar q}(u_n) \leq C_U^{\bar q}U^{\bar q}(u_0)\;,
  \end{equ}
holds uniformly in $n \ge 0$.
\end{assumption}

We are also going to assume in this section that the solutions to \eref{e:SPDE2}
have the following Lyapunov-type  structure, which is stronger than
Assumption~\ref{ass:global} used in the previous section:

\begin{assumption}[Lyapunov structure]\label{ass:Lyapunov}
  Equation~\eref{e:SPDE2} has global solutions for every initial condition.
  Furthermore, there exists a function $V \colon \CH \to \R_+$ such that there
  exist constants $C_L>0$ and $\eta' \in [0,1)$ such that
  \begin{equ}[e:Lyap]
    \E \exp\bigl(V(u_1)\bigr) \le \exp\bigl(\eta' V(u_0) +
    C_L\bigr)\;.
  \end{equ}
\end{assumption}

\begin{assumption}[Jacobian]\label{ass:Jacobian}
  The Jacobian $J_{s,t}$ and the second variation $J_{s,t}^{(2)}$
  satisfy the bounds
  \begin{equs}
    \E \|J_{s,t}\|^{\bar p} &\le \exp\bigl(\bar p\eta V(u_0) + \bar p C_J\bigr)\;, \\
    \E \|J^{(2)}_{s,t}\|^{\bar p} &\le \exp\bigl(\bar p\eta V(u_0) +
    \bar p
    C_J^{(2)}\bigr)\;,
  \end{equs}
  for all $0 \le s \le t \le 1$ and for some constants $\bar p \geq 10$
  and $\eta> 0$ with $\bar p\eta < 1- \eta'$ and $2/\bar q + 10/\bar p
  \leq 1$, where $\eta'$ is the constant from
  Assumption~\ref{ass:Lyapunov} and $\bar q$ the constant from
  Assumption~\ref{ass:Malliavin}.
\end{assumption}

\begin{remark}
  When we write $\|J^{(2)}\|$ we mean the operator norm from $\CH
  \otimes \CH \rightarrow \CH$, namely $\sup_{\phi,\psi \in \CH}
  \|J^{(2)}(\phi,\psi)\|/(\|\phi\|\|\psi\|)$.
\end{remark}


We
finally assume that the Jacobian of the solution has some ``smoothing
properties'' in the sense that if we apply it to a function that
belongs to the image of the orthogonal complement $\Pi^\perp = 1 -
\Pi$ of the projection operator $\Pi$ then, at least for short times,
its norm will on average be reduced:

\begin{assumption}[Smoothing]\label{ass:smoothing}
  One has the bound
  \begin{equ}[e:smoothingJ]
    \E \|J_{0,1} \Pi^\perp\|^{\bar p} \le \exp(\bar p\eta V(u_0) -
    \bar p C_\Pi)\;,
  \end{equ}
  for some constant $C_\Pi$ such that $C_\Pi - C_J > 2\kappa
  C_L$ where $\kappa=\eta/(1-\eta')$.  The constants $\eta$ and $\bar p$ appearing in this
  bound are the same as the ones appearing in
  Assumption~\ref{ass:Jacobian}, the constant $\eta'$ is the same as
  the one appearing in Assumption~\ref{ass:Lyapunov}, and the
  projection $\Pi$ is the same as the one appearing in
  Assumption~\ref{ass:Malliavin}.
\end{assumption}

\begin{remark}
  The condition $C_\Pi - C_J > 2 \kappa C_L$ may seem
  particularly unmotivated. In the next section, we try to give some
  insight into its meaning.
\end{remark}

\begin{remark}
We will see in the proof of Theorem~\ref{theo:general} below that if we assume that 
the linear operator $L$ has compact resolvent, then Assumption~\ref{ass:smoothing}
can always be satisfied by taking for $\Pi$ the projection onto a sufficiently large number
of eigenvectors of $L$.
\end{remark}

\begin{remark}
  Notice that if $\Range(\Pi) \subset \SPAN\{g_1,\dots,g_d\}$, then in
  light of the last representation in \eref{e:Mscalar} it is
  reasonable to expect \eref{e:Malliavin} to hold as long as one has
  some control over moments the modulus of continuity of $s
  \mapsto K_{s,t}$.  (This is made more precise in Lemma
  \ref{lem:primer}.) We refer to such an assumption on the range as
  the ``essentially elliptic'' setting since all of the directions
  whose (pathwise) dynamics are not controlled by
  Assumption~\ref{ass:smoothing} are directly forced.
\end{remark}

Under these assumptions we have the following result which is the
fundamental ``smoothing'' estimate of this paper. It is the linchpin
on which all of the ergodic results rest. 
\begin{theorem}\label{thm:asfEstimate} Let
  Assumptions~\ref{ass:basic} and \ref{ass:Malliavin}-\ref{ass:smoothing} hold. Then for
  any $\zeta \in [0,(C_\Pi - C_J)/2 - \kappa C_L)$ there a
  exist positive constants $C$ such that for all $n \in
  \N$ and measurable $\phi\colon\CH \rightarrow \R$ 
  \begin{align}\label{eq:asfSmooth}
    \|\DF(\CP_{2n}\phi)(u)\| \leq e^{4\kappa V(u_0)}\Big( CU^2(u_0)
    \sqrt{(\CP_{2n} \phi^2)(u)} + \gamma ^{2n}\sqrt{(\CP_{2n}
      \|\DF\phi\|^2)(u)}\Big)\;
\end{align}
where $\gamma=\exp(-\zeta)$.
\end{theorem}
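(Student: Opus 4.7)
The proof adapts the strategy of \cite{HairerMattingly06AOM} to the present semilinear setting. Fixing $u_0\in\CH$ and $\xi\in\CH$ with $\|\xi\|=1$, the plan is to construct a (generally non-adapted) Malliavin shift $h\in L^2(\Omega;\Cam)$ supported on $[0,2n]$ such that the residual $\rho_t\eqdef J_{0,t}\xi-\DM_h u_t$ satisfies $(\E\|\rho_{2n}\|^2)^{1/2}\le C\gamma^{2n}e^{4\kappa V(u_0)}$ while the Skorokhod integral cost satisfies $(\E|\DM^*h|^2)^{1/2}\le CU^2(u_0)e^{4\kappa V(u_0)}$, uniformly in $n$. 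Once this is achieved, the Malliavin integration-by-parts identity
\begin{equ}
  \DF_\xi\CP_{2n}\phi(u_0) = \E\bigl[\phi(u_{2n})\DM^*h\bigr] + \E\scal{\DF\phi(u_{2n}),\rho_{2n}},
\end{equ}
combined with Cauchy--Schwarz and the Jensen bound $\E|\psi(u_{2n})|^2\le(\CP_{2n}\psi^2)(u_0)$, directly yields \eqref{eq:asfSmooth}.

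To build $h$, I partition $[0,2n]$ into blocks $[2k,2(k+1)]$, each split into a \emph{correction} half $[2k,2k+1]$ and a \emph{smoothing} half $[2k+1,2(k+1)]$. On the correction half I set $h(s)=G^*K_{s,2k+1}v_k$ with $v_k=(\MM_{2k,2k+1}+\beta_k)^{-1}\Pi J_{2k,2k+1}\rho_{2k}$, for a regularisation parameter $\beta_k>0$ to be chosen, and I set $h\equiv 0$ on the smoothing half. Using $\MM_{2k,2k+1}=\AM_{2k,2k+1}\AM_{2k,2k+1}^*$ and \eqref{eq:Aadj}, a direct computation gives the recursion
\begin{equ}
  \rho_{2k+1} = \Pi^\perp J_{2k,2k+1}\rho_{2k} + \beta_k(\MM_{2k,2k+1}+\beta_k)^{-1}\Pi J_{2k,2k+1}\rho_{2k},\quad \rho_{2(k+1)}=J_{2k+1,2(k+1)}\rho_{2k+1}.
\end{equ}

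I then bound the residual inductively in $L^{\bar p}(\Omega)$. The first summand of $\rho_{2k+1}$ lies in $\Range(\Pi^\perp)$, so after propagation by $J_{2k+1,2(k+1)}$ Assumption~\ref{ass:smoothing} supplies a contraction of order $e^{-C_\Pi}e^{\eta V(u_{2k+1})}$, which combined with the Jacobian growth $e^{C_J}e^{\eta V(u_{2k})}$ on $[2k,2k+1]$ from Assumption~\ref{ass:Jacobian} produces the per-block contraction $e^{-(C_\Pi-C_J)}$. The regularisation summand is controlled via Assumption~\ref{ass:Malliavin}: on the event $\{\MM_{2k,2k+1}\ge\beta_k\text{ on }\Range(\Pi)\}$, whose complement has probability at most $CU^p(u_{2k})\beta_k^p$ for any $p\ge 1$, the operator norm $\|\beta_k(\MM_{2k,2k+1}+\beta_k)^{-1}\Pi\|$ is bounded by $1/2$, while on the complement it is trivially bounded by $1$. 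Choosing $\beta_k=\gamma^{ck}$ for a suitably small $c>0$ and iterating with H\"older's inequality --- whose exponents close thanks to $\bar p\eta<1-\eta'$ and $2/\bar q+10/\bar p\le 1$ in Assumption~\ref{ass:Jacobian}, together with the iterated Lyapunov bound $\E\exp(q\sum_kV(u_k))\le Ce^{\kappa qV(u_0)}$ for $q\le 1-\eta'$ coming from Assumption~\ref{ass:Lyapunov} --- yields the advertised residual bound for every $\zeta<(C_\Pi-C_J)/2-\kappa C_L$.

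For the Skorokhod cost I use the generalised It\^o isometry \eref{e:genIto}: $\E|\DM^*h|^2 = \E\|h\|_\Cam^2 + \E\int\!\!\int\tr{\DM_s h(r)\DM_r h(s)}\,ds\,dr$. The diagonal part on block $k$ equals $\E\scal{v_k,\MM_{2k,2k+1}v_k}\le \beta_k^{-1}\E\|\Pi J_{2k,2k+1}\rho_{2k}\|^2$, which is summable in $k$ precisely when $\beta_k^{-1}\gamma^{4k}$ decays geometrically (hence the strict inequality in the admissible range of $\zeta$). The cross term is handled by using \eqref{e:repJ2} to express $\DM_rh(s)$ through the second variation $J^{(2)}$, whose moments are supplied by Assumption~\ref{ass:Jacobian}. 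All resulting $U$-factors from Assumption~\ref{ass:Malliavin} and $e^{\eta V}$-factors from Assumptions~\ref{ass:Lyapunov}--\ref{ass:Jacobian} are consolidated through Young's inequality into the single prefactor $U^2(u_0)e^{4\kappa V(u_0)}$ in \eqref{eq:asfSmooth}. The chief technical obstacle is the balancing act in choosing the $\beta_k$: they must be small enough that Assumption~\ref{ass:Malliavin}'s good event has overwhelming probability, yet large enough to keep $\beta_k^{-1}\E\|\rho_{2k}\|^2$ summable, and the bookkeeping of $U$- and $V$-moments across blocks is exactly what dictates the structural condition $C_\Pi-C_J>2\kappa C_L$ in Assumption~\ref{ass:smoothing}.
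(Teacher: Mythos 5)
Your overall architecture coincides with the paper's: the same integration-by-parts identity with a residual $\rho$, the same block structure alternating a correction half-interval with a free half-interval on which Assumption~\ref{ass:smoothing} acts, the regularised inverse $(\MM+\beta)^{-1}$ in place of a true inverse, and the generalised It\^o isometry \eref{e:genIto} together with \eref{e:repJ2} to control the Skorokhod cost of the non-adapted, block-adapted control. (Your insertion of $\Pi$ into the control, $v_k=(\MM_{2k}+\beta_k)^{-1}\Pi J_{2k}\rho_{2k}$, is a harmless variant of \eref{e:defh}: the resulting error operator $\beta_k(\MM_{2k}+\beta_k)^{-1}\Pi$ has the same norm as the paper's $\Pi \RM_{2k}^{\beta_{2k}}$ by self-adjointness, so the two decompositions of $\Xi_{2k+2}$ are equivalent.)

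The genuine gap is your treatment of exactly that operator, which the paper calls the crux of the matter. You claim that on the event $\{\MM_{2k}\ge\beta_k\text{ on }\Range(\Pi)\}$ one has $\|\beta_k(\MM_{2k}+\beta_k)^{-1}\Pi\|\le 1/2$. This is false: a lower bound for $\scal{\MM\phi,\phi}$ on $\Range(\Pi)$ does not control $\|\beta(\MM+\beta)^{-1}\Pi\|$, because $(\MM+\beta)^{-1}$ does not preserve $\Range(\Pi)$. For instance, in $\R^2$ with $\Pi$ the projection onto $e_1$ and $\MM=\beta\bigl(\begin{smallmatrix}1 & 1\\ 1 & 1\end{smallmatrix}\bigr)$ one has $\scal{\MM e_1,e_1}=\beta$ yet $\|\beta(\MM+\beta)^{-1}\Pi\|=\sqrt 5/3>1/2$. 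The correct statement is Lemma~\ref{lem:MBound}: if $\scal{\MM\xi,\xi}\ge\gamma\|\xi\|^2$ on the \emph{cone} $\Lambda_\delta=\{\|\Pi\xi\|\ge\delta\|\xi\|\}$ --- which is what Assumption~\ref{ass:Malliavin} actually provides --- then $\|\Pi R\|\le\delta\vee\sqrt{\beta/\gamma}$, proved by splitting according to whether $R\xi$ itself lies in $\Lambda_\delta$. Moreover, even a correct bound of $1/2$ would not close the induction: the regularisation branch of your recursion is propagated by the full Jacobian on both half-intervals and so carries a factor of order $e^{2C_J}$ with no smoothing, so its prefactor must be driven below roughly $C_\eps^{-1}e^{-C_J-C_\Pi}$, i.e.\ an arbitrarily small $\delta$ chosen after $\eps$, $C_J$ and $C_\Pi$ --- this is the role of the free parameter $\delta$ in Corollary~\ref{lem:Rsmall} and Lemma~\ref{lem:contract}, and it is also why the paper takes $\beta_k\propto\delta^3/\U(u_k)$ ($\CF_{2k}$-measurable and essentially constant in $k$, which keeps the conditional probability of the bad event uniformly small) rather than your deterministic $\beta_k=\gamma^{ck}$. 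With these repairs your argument reduces to the paper's proof.
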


\begin{remark}
By $(\CP_t \|\DF\phi\|^2)(u)$, we simply mean
$  \EE_u \bigl(\sup_{\| \xi \| =1 } \big|(\DF\phi)(u_t)\xi\big|^2\bigr)$.
\end{remark}
\begin{remark}\label{rem:getInfinityNorm} If $\|\phi\|_\infty$ or
  $\|D\phi\|_\infty$ are bounded by one then the 
  corresponding terms under the square root are bounded by one. Furthermore, in
  light of Assumption~\ref{ass:Lyapunov}, if $\phi(u)^2 \leq
  \exp(V(u))$, then
  \begin{equ}
    \sqrt{\CP_{2n} \phi^2(u)} \leq \|\phi\|_\infty \sqrt{\E
      \exp(V(u_{2n}))} \leq \|\phi\|_\infty\exp(\eta' V(u_0)/2 + C_L/(2-2\eta'))\;.
  \end{equ}
  Of course, the same bound for holds for $\sqrt{(\CP_{2n}
    \|\DF\phi\|^2)(u)}$, provided that one has an estimate of the type
  $\|D\phi\|^2(u) \leq \exp(V(u))$.
\end{remark}

\subsection{Motivating discussion}

We now discuss in what sense \eqref{eq:asfSmooth} implies
smoothing. When the term ``smoothing'' is used in the mathematics
literature to describe a linear operator $T$, it usually means that
$T\phi$ belongs to a smoother function space than $\phi$. This usually means
that $T\phi$ is ``more differentiable'' then $\phi$. A convenient way
to express this fact analytically would be an estimate of the form
\begin{equation}\label{eq:inftyOnly}
  \|\DF(T\phi)(u)\| \leq C(u)\|\phi\|_\infty\;.
\end{equation}
(Of course the ``smoothing'' property may improve the smoothness by
less than a whole derivative, or one may consider functions $\phi$
that are not bounded, but let us consider \eref{eq:inftyOnly} just for
the sake of the argument.)  This shows in a quantitative way that
$T\phi$ is differentiable while $\phi$ need not be. In light of Remark
\ref{rem:getInfinityNorm}, this is in line with the first term on the
right hand side of \eqref{eq:asfSmooth}.

The second term on the right hand side of \eqref{eq:asfSmooth} embodies
smoothing of a different type. Suppose that $T$ satisfies the estimate
\begin{equation}\label{eq:LY}
  \|\DF (T\phi)\|_\infty \leq C \|\phi\|_\infty + \gamma \|\DF\phi\|_\infty 
\end{equation}
for some positive $C$ and some $\gamma \in (0,1)$. (Note that this is
a variation of what is usually referred to as the Lasota-Yorke
inequality \cite{LY73OTE,Liv03IMA} or the Ionescu-Tulcea-Marinescu
inequality \cite{IonMar52TEP}.) Though \eqref{eq:LY} does not imply
that $T\phi$ belongs to a smoother function space then $\phi$, it does
imply that the gradients of $T\phi$ are smaller then those of $\phi$,
at least as long as the gradients of $\phi$ are sufficiently steep.
This is in line with a more colloquial idea of smoothing, though not
in line with the traditional mathematical definition used.


\subsubsection{Strongly dissipative setting}\label{sec:stronglyDissipative}

Where does the assumption $C_\Pi > C_J + 2\kappa C_L$ come
from?  This is easy to understand if we consider the ``trivial'' case
$\Pi = 0$. In this case, Assumption~\ref{ass:Malliavin} is empty
and the projection $\Pi^\perp$ is the identity. Therefore, the left
hand sides from Assumptions~\ref{ass:Jacobian} and \ref{ass:smoothing}
coincide, so that one has $C_J = - C_\Pi$ and our restriction becomes
$C_J + \kappa C_L < 0$.

This turns out to be precisely the right condition to impose if one
wishes to show that $\E \|J_{0,n}\| \to 0$ at an exponential rate:
\begin{proposition}\label{prop:JboundIn_n} 
  Let Assumptions~\ref{ass:Lyapunov} and \ref{ass:Jacobian}
  hold. Then, for any $p \in [0,\bar p/2]$, one has the bound
  \begin{equ}
    \E \|J_{0,n}\|^p \le \exp \bigl(p\kappa V(u_0) + pC_Tn\bigr)\;,
  \end{equ}
  with $\kappa = \eta / (1-\eta')$ and $C_T = C_J + \kappa C_L$.
\end{proposition}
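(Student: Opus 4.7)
The plan is to exploit the cocycle property of the Jacobian, namely $J_{0,n} = J_{n-1,n} \circ J_{n-2,n-1} \circ \cdots \circ J_{0,1}$, which gives the submultiplicative bound $\|J_{0,n}\| \le \prod_{k=0}^{n-1} \|J_{k,k+1}\|$. Taking $p$-th powers and using the tower property of conditional expectation relative to the filtration $\{\CF_k\}$ generated by the Wiener process at integer times, this reduces the problem to iteratively controlling one-step Jacobian moments and one-step Lyapunov moments. By the Markov property, Assumption~\ref{ass:Jacobian} applied with initial condition $u_k$ yields $\E[\|J_{k,k+1}\|^{\bar p} \mid \CF_k] \le \exp(\bar p \eta V(u_k) + \bar p C_J)$, and since $p \le \bar p/2 \le \bar p$, Jensen's inequality gives
\begin{equ}
  \E[\|J_{k,k+1}\|^p \mid \CF_k] \le \exp\bigl(p\eta V(u_k) + p C_J\bigr)\;.
\end{equ}

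Peeling off the factors one at a time starting from $k=n-1$, this leads to
\begin{equ}
  \E \|J_{0,n}\|^p \le e^{p n C_J}\,\E \exp\Bigl(p\eta \sum_{k=0}^{n-1} V(u_k)\Bigr)\;,
\end{equ}
so the remaining task is to control the exponential moment of the ergodic sum of $V$. For this I would iterate Assumption~\ref{ass:Lyapunov} conditionally from the outside in: conditioning on $\CF_{n-2}$ and using Jensen (valid because the exponent $p\eta$ and all the subsequent accumulated exponents will turn out to be at most $1$), one obtains $\E[\exp(p\eta V(u_{n-1})) \mid \CF_{n-2}] \le \exp(p\eta \eta' V(u_{n-2}) + p\eta C_L)$, which when absorbed into the sum bumps the coefficient of $V(u_{n-2})$ from $p\eta$ to $p\eta(1+\eta')$. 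Repeating this inductively, after $j$ steps the coefficient in front of $V(u_{n-1-j})$ has grown to $p\eta\,s_{j+1}$ with $s_j = (1-\eta'^j)/(1-\eta')$, and a constant $\exp\!\bigl(p\eta C_L \sum_{i=1}^{j} s_i\bigr)$ has been accumulated.

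After all $n$ iterations one arrives at
\begin{equ}
  \E \exp\Bigl(p\eta \sum_{k=0}^{n-1} V(u_k)\Bigr) \le \exp\Bigl(p\eta s_n V(u_0) + p\eta C_L \sum_{j=1}^n s_j\Bigr) \le \exp\bigl(p\kappa V(u_0) + p\kappa C_L n\bigr)\;,
\end{equ}
using $s_j \le 1/(1-\eta')$ and $\kappa = \eta/(1-\eta')$. Multiplying by $e^{pnC_J}$ yields the claimed bound with $C_T = C_J + \kappa C_L$. The one point that needs care is the repeated use of Jensen, which requires all the intermediate exponents $p\eta s_j$ to be at most $1$; this is precisely guaranteed by the compatibility condition $\bar p \eta < 1 - \eta'$ in Assumption~\ref{ass:Jacobian}, since it forces $p\kappa \le (\bar p/2)\eta/(1-\eta') < 1/2$. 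This is the main --- and essentially only --- subtle step; the rest is bookkeeping.
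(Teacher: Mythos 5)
Your second half --- reducing to the exponential moment of the ergodic sum $\sum_k V(u_k)$ and iterating the Lyapunov bound with the geometric series $s_j$ --- is correct (it is essentially Proposition~\ref{prop:Lyop}, and your check that all intermediate exponents stay below $1$ is the right one). The genuine gap is in the step you call bookkeeping: the passage from $\E\prod_{k=0}^{n-1}\|J_{k,k+1}\|^p$ to $e^{pnC_J}\,\E\exp\bigl(p\eta\sum_{k=0}^{n-1}V(u_k)\bigr)$. After the first peel you are left with $\E\bigl(e^{p\eta V(u_{n-1})}\prod_{k=0}^{n-2}\|J_{k,k+1}\|^p\bigr)$, and the next peel requires a bound on $\E\bigl(e^{p\eta V(u_{n-1})}\,\|J_{n-2,n-1}\|^p\,\big|\,\CF_{n-2}\bigr)$. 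Both factors depend on the noise over $[n-2,n-1]$ and are correlated, so this conditional expectation does not split; the implicit inequality $\E[XY\mid\CF]\le Y'\,\E[X\mid\CF]$ when $\E[Y\mid\CF]\le Y'$ is false in general, and here a large one-step Jacobian and a large $V(u_{n-1})$ typically occur together. The tower property alone therefore does not deliver your intermediate inequality, and this decoupling is precisely why the hypotheses $p\le\bar p/2$ and $\bar p\eta<1-\eta'$ are calibrated as they are.

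The paper resolves this by running the recursion on the weighted quantity $\E\bigl(e^{p\kappa V(u_n)}\|J_{0,n}\|^p\bigr)$ and applying H\"older with exponents $\bar p/p$ and $\bar p/(\bar p-p)$ under $\E(\cdot\mid\CF_{n-1})$ to separate $\|J_{n-1,n}\|^p$ from the weight; Assumption~\ref{ass:Jacobian} handles the first factor, the Lyapunov bound at the inflated exponent $p\bar p\kappa/(\bar p-p)\le\bar p\kappa\le 1$ handles the second, and the identity $\eta+\kappa\eta'=\kappa$ regenerates the weight $e^{p\kappa V(u_{n-1})}$ one step earlier. Your route can be repaired in the same spirit with a single Cauchy--Schwarz using a compensator: with $Z=\prod_k\|J_{k,k+1}\|^p$ write
\begin{equ}
\E Z\le \Bigl(\E\Bigl[\prod_{k=0}^{n-1}\|J_{k,k+1}\|^{2p}e^{-2p\eta V(u_k)}\Bigr]\Bigr)^{1/2}\Bigl(\E\, e^{2p\eta\sum_{k=0}^{n-1} V(u_k)}\Bigr)^{1/2}\;;
\end{equ}
the first factor now peels cleanly since $e^{-2p\eta V(u_k)}$ is $\CF_k$-measurable and cancels the conditional Jacobian bound at exponent $2p\le\bar p$, giving $e^{pnC_J}$, while the second is your ergodic-sum estimate at exponent $2p$ (this is where $2p\kappa\le 1$, i.e.\ $p\le\bar p/2$, is used). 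Without one of these devices the proof as written is incomplete.
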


\begin{proof}
  Using the fact that $\|J_{0,n}\| \le \|J_{n-1,n}\| \|J_{0,n-1}\|$,
 we have the following recursion relation:
  \begin{equs}
    \E \bigl( &\exp\bigl(p \kappa V(u_n)\bigr) \|J_{0,n}\|^p \bigr) \le
    \E\Bigl( \E\bigl( \exp\bigl(p \kappa V(u_n)\bigr) \|J_{n-1,n}\|^p\,|\, \CF_{n-1} \bigr)  \|J_{0,n-1}\|^p\Bigr) \\
 &\le   \E\Bigl( \Bigl(\E\bigl(\|J_{n-1,n}\|^{\bar p}\,|\, \CF_{n-1} \bigr)\Bigr)^{p \over \bar p}
\Bigl(\E\Bigl( \exp\Bigl({p \bar p \over \bar p - p} \kappa V(u_n)\Bigr)\,\Big|\, \CF_{n-1} \Bigr)\Bigr)^{\bar p - p \over \bar p}  \|J_{0,n-1}\|^p\Bigr)\\
&\le    e^{p C_T} \E \bigl( \exp\bigl(p\kappa V(u_{n-1})\bigr) \|J_{0,n-1}\|^p
    \bigr)\;,
  \end{equs}
where we made use of Assumptions~\ref{ass:Lyapunov} and \ref{ass:Jacobian} in
the second inequality.
  It now suffices to apply this $n$ times and to use the fact that
  $\|J_{0,0}\| = 1$. The assumptions $\bar p \kappa < 1$ and $p \le \bar p/2$ ensure that
  $p \bar p \le \bar p - p$ so that the bound  \eref{e:Lyap} can be used.
\end{proof}

We now use this estimate to prove a version of
Theorem~\ref{thm:asfEstimate} when the system is strongly
dissipative:

\begin{proposition}\label{prop:strongDissASF}
Let  Assumptions~\ref{ass:Lyapunov} and \ref{ass:Jacobian} hold and
set $C_T = C_J + \kappa C_L$ with $\kappa = \eta/(1-\eta')$ as before. 
Then, for any $\phi:\CH \rightarrow \R$ and $n \in \N$ one has
  \begin{align*}
    \|\DF(\CP_n\phi)(u)\| \leq \gamma^n e^{\kappa V(u)} \sqrt{\CP_n \|\DF\phi\|^2(u)} \;.
  \end{align*}
with $\gamma=e^{C_T}$. In particular, the semigroup $\CP_t$ has the asymptotic strong Feller property whenever
$C_T < 0$.
\end{proposition}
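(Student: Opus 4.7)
The plan is to combine the chain rule representation of $\DF(\CP_n \phi)$ via the Jacobian with the moment estimate already proved in Proposition~\ref{prop:JboundIn_n}, then deduce the asymptotic strong Feller property from the resulting bound together with Proposition~\ref{l:asfLipFunctions}.

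\medskip

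\noindent\textbf{Step 1: Differentiation under the expectation.} For a Fr\'echet-differentiable test function $\phi$ with $\|\DF\phi\|$ at most of polynomial growth in $\|u\|$, the chain rule together with the bound \eref{e:Jfinite} (which justifies passing the derivative inside the expectation by dominated convergence) yields
\begin{equ}
  \DF(\CP_n\phi)(u)\,\xi \;=\; \EE_u\bigl[ (\DF\phi)(u_n)\, J_{0,n}\xi\bigr]\;,\qquad \xi \in \CH\;.
\end{equ}
Taking absolute values, using $\|J_{0,n}\xi\|\le \|J_{0,n}\|\,\|\xi\|$, and applying the Cauchy--Schwarz inequality with respect to the probability measure on path space gives
\begin{equ}[e:plan-CS]
  |\DF(\CP_n\phi)(u)\,\xi|\;\le\; \sqrt{\EE_u\|(\DF\phi)(u_n)\|^2}\,\sqrt{\EE_u\|J_{0,n}\|^2}\,\|\xi\|\;.
\end{equ}
The first factor is precisely $\sqrt{\CP_n\|\DF\phi\|^2(u)}$.

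\medskip

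\noindent\textbf{Step 2: Applying the Jacobian moment bound.} Since $\bar p\ge 10$, the exponent $p=2$ lies in $[0,\bar p/2]$, so Proposition~\ref{prop:JboundIn_n} gives
\begin{equ}
  \EE_u\|J_{0,n}\|^2 \;\le\; \exp\bigl(2\kappa V(u) + 2C_T n\bigr)\;,
\end{equ}
and hence $\sqrt{\EE_u\|J_{0,n}\|^2}\le \gamma^n e^{\kappa V(u)}$ with $\gamma = e^{C_T}$. Plugging this into \eref{e:plan-CS} and taking the supremum over unit $\xi$ produces the claimed bound
\begin{equ}
  \|\DF(\CP_n\phi)(u)\| \;\le\; \gamma^n e^{\kappa V(u)} \sqrt{\CP_n\|\DF\phi\|^2(u)}\;.
\end{equ}

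\medskip

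\noindent\textbf{Step 3: Asymptotic strong Feller.} When $C_T<0$ we have $\gamma<1$, so for any $\phi$ with $\|\phi\|_\infty$ and $\|\DF\phi\|_\infty$ finite, the estimate above gives
\begin{equ}
  \|\DF(\CP_n\phi)(u)\| \;\le\; \gamma^n e^{\kappa V(u)}\, \|\DF\phi\|_\infty \;\le\; C(u)\bigl(\|\phi\|_\infty + \delta_n\|\DF\phi\|_\infty\bigr)
\end{equ}
with $C(u)=e^{\kappa V(u)}$ and $\delta_n = \gamma^n \downarrow 0$. This is exactly the hypothesis of Proposition~\ref{l:asfLipFunctions} (modulo expressing $C(u)$ as a non-decreasing function of $\|u\|$, which follows if $V$ grows with $\|u\|$, or equivalently by replacing $C(\|u\|)$ by $C(u)$ in the obvious way), so $\CP_t$ is asymptotically strong Feller.

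\medskip

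The argument is essentially bookkeeping once Proposition~\ref{prop:JboundIn_n} is in hand; the only mild subtlety is the justification of differentiation under the expectation, which is standard given the almost-sure bound \eref{e:Jfinite} and the assumption that $\phi$ is Fr\'echet differentiable.
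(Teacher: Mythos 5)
Your proof is correct and follows exactly the same route as the paper's: differentiate under the expectation via the Jacobian, apply Cauchy--Schwarz, and invoke Proposition~\ref{prop:JboundIn_n} with $p=2$. The paper's own proof is just a two-line version of your Steps 1--2; your additional remarks on dominated convergence and on feeding the bound into Proposition~\ref{l:asfLipFunctions} are sound elaborations of what the paper leaves implicit.
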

\begin{proof}
  Fixing any $\xi \in \CH$ with $\|\xi\|=1$, observe that
  \begin{align*}
    \DF(\CP_t\phi)(u)\xi = \E_u (\DF\phi)(u_t)J_{0,t}\xi \leq
    \sqrt{\E\|J_{0,t}\|^2}
    \sqrt{\E\|\DF\phi\|^2(u_t)}\;.
  \end{align*}
Applying Proposition~\ref{prop:JboundIn_n} completes the proof.
\end{proof}

Comparing this result to the bound \eref{eq:asfSmooth} stated in Theorem~\ref{thm:asfEstimate}
shows that, the combination of the smoothing Assumption~\ref{ass:smoothing}
with Assumption~\ref{ass:Malliavin} on the Malliavin matrix
allows us to consider the system as if its Jacobian was contracting at an
average rate $(C_\Pi - C_J)/2$ instead of expanding at a rate
$C_J$. This is precisely the rate that one would obtain by projecting the
Jacobian with $\Pi^\perp$ at every second step.
The additional term containing $\CP_{2n}\phi^2$ appearing in the right hand side
of \eref{eq:asfSmooth} should then be interpreted as the probabilistic ``cost'' of performing
that projection. Since this ``projection'' will be performed by using an approximate inverse
to the Malliavin matrix, it makes sense that the larger the lower bound on $\CM_t$ is, the lower
the corresponding probabilistic cost.

\begin{remark}
  It is worth mentioning, that nothing in this section required that
  the number of Wiener process be finite. Hence one is free to take
  $d=\infty$, as long as all of the solutions and linearization are
  well defined (which places conditions on the $g_k$).
\end{remark}
\subsection{Transfer of variation}
\label{sec:general-framework}

Having analyzed the strongly dissipative setting, we now turn to the
general setting. We would like to mimic the calculation used in
Proposition~\ref{prop:strongDissASF}, but we do not want to require
the system to be ``contractive'' in the sense of being strongly
dissipative. However, in settings where one can prove
\eqref{eq:inftyOnly} there is usually no requirement of strong
dissipativity but rather an assumption of hypoellipticity. This is
because the variation in the initial condition is transferred to a
variation in the Wiener space. Mirroring the discussion in
\cite{saintFlourJCM08,HairerMattingly06AOM} (where more details can be
found), we begin sketching a proof of \eqref{eq:inftyOnly} and then
show how to modify it to obtain \eqref{eq:LY}. The central idea is to
compensate as much as possible the effect of an infinitesimal
perturbation in the initial condition to an infinitesimal variation in
the driving Wiener process. In short, to transfer one type of
variation to another.

Denoting by $\CS=\{ \xi \in \CH: \|\xi\|=1\}$ the set of possible
directions in $\CH$, let there be given a map from $\CS \times
\CC([0,\infty), \R^d) \rightarrow \Cam$ denoted by $(\xi,W) \mapsto
h^{\xi}(W)$, mapping variations in the initial condition $u$ to
variations in the Wiener path $W$. We will worry about constructing a
suitable map in the next sections; for the moment we just explore
which properties of $h^\xi$ might be useful. Fixing $t$, let us begin
by assuming that the following identity holds:
\begin{equation}\label{eq:compensateVariation}
 \DF_\xi\Phi_t^u(W)=\scal{\DF\Phi_t^u(W),\xi} = \scal{\DM\Phi_t^u(W),h^\xi(W)}=\DM_{h^\xi}\Phi_t^u(W)\;.
\end{equation}
(The first and last equalities are just changes in notation.)  Here, $\DF_\xi$ denotes derivative with respect to the
initial condition in the direction $\xi \in \CH$, while $\DM$ denotes the (Malliavin) 
derivative with respect to the noise. In words,
the middle equality states that the variation in $u_t(W)$ caused by an
infinitesimal shift in the initial condition in the direction $\xi$ is
equal to the variation in $u_t$ caused by an infinitesimal shift of the
Wiener process $W$ in the direction $h^\xi(W)$. This is the basic
reasoning behind smoothness estimates proved by Malliavin calculus. We
begin as in the proof of Proposition~\ref{prop:strongDissASF}. For any
$\xi \in \CS$, one has that
\begin{equ}\label{eq:exactIBP}
 \DF_\xi\big(\phi(\Phi_t^u)\big)=(\DF\phi)(\Phi_t^u)\DF_\xi\Phi_t^u 
=(\DF\phi)(\Phi_t^u)\DM_{h^\xi}\Phi_t^u= \DM_{h^\xi}\big(\phi(\Phi_t^u)\big)\;.
\end{equ}
Taking expectations and using the Malliavin integration by parts
formula \eref{e:IBP} to obtain the last equality yields
\begin{equ}
  \DF_\xi\CP_t \phi(u)=\EE
  \DF_\xi\big(\phi(\Phi_t^u)\big)=\EE\DM_{h^\xi}\big(\phi(\Phi_t^u)\big)=\E_u
  \phi(\Phi_t^u) \int_0^t h_s^\xi\cdot dW(s)\;.
\end{equ}
Applying the Cauchy-Schwartz inequality to the last term
produces a term of the form of the first term on the right-hand side of
\eqref{eq:asfSmooth} provided $\EE |\int_0^t h_s^\xi\cdot dW(s)|^2 <
\infty$. Taken alone, provided one can find a mapping $(\xi,W) \mapsto
h^{\xi}(W)$ satisfying \eqref{eq:compensateVariation} with $\EE|
\int_0^t h_s^\xi\cdot dW(s) | < \infty$, we have proven an
inequality of the form \eqref{eq:inftyOnly}.

In the infinite-dimensional SPDE setting of this
paper, finding a map $(\xi,W) \mapsto h^{\xi}(W)$ satisfying
\eqref{eq:compensateVariation} seems hopeless, unless the noise is infinite-dimensional
itself and acts in a very non-degenerate way on the equation, see \cite{Mas89:210,DaPElwZab95:35,EH3}
or the monograph \cite{ZDP} for some results in this direction. 
Instead, we only ``approximately
compensate'' for the variation due to differentiating in the initial
direction $\xi$ with a shift in the Wiener process. As such, given an
mapping $(\xi,W) \mapsto h^{\xi}(W)$, we replace the requirement in
\eqref{eq:compensateVariation} with the definition
\begin{equation}
  \label{eq:aproxCompVar}
  \rho_t(W)= \DF_\xi\Phi_t^u(W)-\DM_{h^\xi}\Phi_t^u(W) 
\end{equation}
and hope that we can choose $h^\xi$ in such a way that $\rho_t \rightarrow
0$ as $t \rightarrow \infty$. As before, we postpone choosing a
mapping $(\xi,W) \mapsto h^{\xi}(W)$ until the next section. For the
moment we are content to explore the implications of finding such a
mapping with desirable properties.

Returning to \eqref{eq:exactIBP} but using
\eqref{eq:aproxCompVar}, we now have
\begin{equation}\label{eq:aproxIBP}
  \begin{aligned}
    \DF_\xi\big(\phi(\Phi_t^u)\big)&=(\DF\phi)(\Phi_t^u)\DF_\xi\Phi_t^u
    =(\DF\phi)(\Phi_t^u)\DM_{h^\xi}\Phi_t^u + (\DF\phi)(\Phi_t^u)\rho_t\\
    &= \DM_{h^\xi}\big(\phi(\Phi_t^u)\big)+ (\DF\phi)(\Phi_t^u)\rho_t\;.
  \end{aligned}  
\end{equation}
Taking expectations of both sides and applying the Malliavin integration by parts the first term on the right-hand side  produces
\begin{equation*}
 \DF_\xi\CP_t \phi(u)=\EE
  \DF_\xi\big(\phi(\Phi_t^u)\big)=\EE\phi(\Phi_t^u)\int_0^t h^\xi_s\cdot dW(s)+
 \EE (\DF\phi)(\Phi_t^u) \rho_t 
\end{equation*}
which in turn, after application of the Cauchy-Schwartz inequality twice,  yields
\begin{align}\label{eq:asfPrototype}
  \|\DF\CP_t \phi(u)\| \leq  C(t)  \sqrt{(\CP_t \phi^2)(u)} + \Gamma(t) \sqrt{(\CP_t \|\DF\phi\|^2)(u)}
\end{align}
with $C(t)=\sqrt{\EE\big|\int_0^t
h^\xi_s\cdot dW(s)\big|^2}$ and $\Gamma(t)=\sqrt{\EE
|\rho_t|^2}$.  Observe that provided that
\begin{equ}[e:boundCGamma]
  \limsup_{n \in\N} C(n) < \infty \quad\text{and}\quad \limsup_{n \in
    \N} {\Gamma(n)}{\gamma^{-n}} < \infty
\end{equ}
for some $\gamma \in (0,1)$ we will have proved
Theorem~\ref{thm:asfEstimate}. Choosing a mapping $(\xi,W) \mapsto
h^{\xi}(W)$ so that these two conditions hold is the topic of the next four sections.



\subsection[Choosing a variation $h_t^\xi$]{Choosing a variation $\pmb{h_t^\xi}$} \label{sec:choiceh}

As discussed in \cite{HairerMattingly06AOM} and at length in
\cite{saintFlourJCM08}, if one looks for the variation $h^\xi$ such
that \eqref{eq:compensateVariation} holds and $\int_0^t|h_s^\xi|^2 ds$
is minimized, then the answer is $h_s^\xi = (\AM_{t}^* \MM_{t}^{-1}
J_t\xi)(s)$ which by the observation in \eqref{eq:Aadj} is simply
$h_s^\xi = G^*K_{s,t}\MM_{t}^{-1} J_t\xi$. While this is not quite the
correct optimisation problem to solve since its solution $h^\xi$ is
not adapted to $W$ and hence $\EE |\int_0^t h_s^\xi\cdot dW(s)|^2 \neq
\int_0^t \EE|u_s|^2 ds$, it is in general a good enough choice.

A bigger problem is that the space on which $\MM_t$ can be inverted is
far from evident. If the range of $G$ was dense in $\CH$ (which
requires infinitely many driving Wiener processes), then there is some
chance that $\Range(J_t) \subset \Range(\MM_t)$ and the above formula
for $h_t$ could be used. This is in fact the case where the
Bismut-Elworthy-Li formula is often used and which might be refereed
to as ``truly elliptic.'' It this case the system \textit{is} in fact
strong Feller. We are precisely interested in the case when only a
finite number of directions are forced (or the variance decays so
fast that this is effectively true). One of the fundamental ideas used
in this article is that we need only effective control of the system on a finite
dimensional subspace since the dynamic pathwise control embodied in
Assumption~\ref{ass:smoothing} can control the remaining degrees of freedom.

 While Theorem~\ref{theo:Malliavin} of the next section gives conditions
that ensure that $\MM_t$ is almost surely non-degenerate, it does not
give much insight into the structure of the range since it only deals
with finite dimensional projections. However,
Assumption~\ref{ass:Malliavin} ensures that it is unlikely the
eigenvectors with sizable projection in $\Pi \CH$ have small
eigenvalues. As long as this is true, the ``regularised inverse''
$(\MM_t +\beta)^{-1}$, which always exists since $\MM_t$ is positive
definite, will be a ``good inverse'' for $\MM_t$, at least on $\Pi
\CH$. This suggests that we make the choice $h_s^\xi =
G^*K_{s,t}(\MM_{t}+ \beta)^{-1} J_t\xi$ for some very small
$\beta>0$. Observe that
\begin{equ}[e:exprRho]
\DF_\xi u_t- \DM_{h^\xi} u_t = J_{t}\xi -
\MM_t(\MM_{t}+ \beta)^{-1} J_t\xi= \beta(\MM_{t}+ \beta)^{-1} J_t\xi\;,
\end{equ}
which will be expected to be small as long as $J_t\xi$ has small projection (relative
to the size of $\beta$) in $\Pi^\perp \CH$. But in any case, the norm of the right hand side in \eref{e:exprRho}
will never exceed the norm of $J_t\xi$, so that for small values of $\beta$,
$\|\DF_\xi u_t- \DM_{h^\xi} u_t\|$ is expected to behave like $\|\Pi^\perp J_t\xi\|$.

Assumption~\ref{ass:smoothing} precisely states that if one projects the Jacobian onto
$\Pi^\perp\CH$, then the system behaves as if it was ``strongly dissipative'' as in
Section~\ref{sec:stronglyDissipative}. All together, this motivates
alternating between choosing $h^\xi =\AM_{n,n+1}^*
(\MM_{n,n+1}+\beta_n)^{-1}J_{n,n+1}\rho_n$ for even $n$ and $h^\xi\equiv 0$ on
$[n,n+1]$ for odd $n$.

Since we will split time into intervals of length one, we introduce the
following notations:
\begin{equ}
  J_n = J_{n, n+1}\;,\quad \AM_n = \AM_{n,n+1}\;,\quad \MM_n = \MM_{n,n+1}\;.
\end{equ}
We then define the map $(\xi,W) \mapsto h^{\xi}(W)$ recursively by
\begin{equ}[e:defh]
  h_s^\xi =
  \begin{cases}
    (\AM_{2n}^* (\beta_{2n}+ \MM_{2n})^{-1}
J_t\rho_{2n})(s) & \text{for } s \in [ 2n, 2n + 1) \text{ and } n \in \N\;,\\
0 &   \text{for } s \in [ 2n-1, 2n) \text{ and } n \in \N\;.\\
  \end{cases}
\end{equ}
Here, as before, $\rho_0 = \xi$, $\rho_t= J_{0,t}\xi - \AM_{0,t}h_s^\xi = \DF_\xi u_t -
\DM_{h^\xi} u_t$, and $\beta_n$ is a sequence of positive random
numbers measurable with respect to $\CF_n$ which will be
chosen later. 

Observe that these definitions are not circular since the construction
of $h^\xi_s$ for $s \in [n,n+1)$ only requires the knowledge of $\rho_n$, which
in turn depends only on $h^\xi_s$ for $s \in [0,n)$. 
The remainder of this section is devoted to showing that this particular choice of
$h^\xi$ is ``good'' in the sense that it allows to satisfy \eref{e:boundCGamma}.
We are going to assume throughout this section that Assumptions~\ref{ass:basic} and
\ref{ass:Malliavin}-\ref{ass:smoothing} hold, so that we are in the setting of Theorem~\ref{thm:asfEstimate},
and that $h^\xi$ is defined
as in \eref{e:defh}.

\subsection{Preliminary bounds and  definitions}
\label{sec:boundDef}

We start by a stating a few straightforward consequences of Assumption~\ref{ass:Lyapunov}:
\begin{proposition}\label{prop:Lyop}
For any $\alpha \leq 1$, one has the bound
  \begin{equ}
    \E \exp\bigl(\alpha V(u_1)\bigr) \le \exp\bigl(\alpha \eta' V(u_0) +
    \alpha C_L\bigr)\;.
\end{equ}
Furthermore, for $\eta>0$ and $p>0$ such that $\eta p \le 1$, one has 
\begin{equ}
    \E \exp( \eta p V(u_n)) \leq \exp( p \eta (\eta')^n V(u_0) +
    p\kappa C_L)\;.
\end{equ}
Finally, setting $\kappa = \eta/(1-\eta')$ as before, one has the bound
\begin{equ}
    \E \exp\Big( \eta p \sum_{k=0}^n V(u_k)\Big) 
    \leq \exp( p\kappa V(u_0) + p \kappa C_L n)\;,
\end{equ}
provided that $\kappa p \le 1$.
\end{proposition}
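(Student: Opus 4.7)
The three bounds all follow from Assumption~\ref{ass:Lyapunov} by combining Jensen's inequality with the tower property of conditional expectation, the only subtlety being the bookkeeping of the coefficients that arise when one iterates.

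For the first bound, since $\alpha \le 1$ the function $x \mapsto x^\alpha$ is concave on $\R_+$, so Jensen's inequality gives
\begin{equ}
\E \exp\bigl(\alpha V(u_1)\bigr) = \E\bigl(\exp V(u_1)\bigr)^\alpha \le \bigl(\E \exp V(u_1)\bigr)^\alpha \le \exp\bigl(\alpha \eta' V(u_0) + \alpha C_L\bigr)\;,
\end{equ}
where the last step uses Assumption~\ref{ass:Lyapunov}.

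For the second bound, we apply the first with $\alpha = \eta p \le 1$ conditionally on $\CF_{n-1}$, using the Markov property of $u_k$:
\begin{equ}
\E\bigl(\exp(\eta p V(u_n))\,\big|\,\CF_{n-1}\bigr) \le \exp\bigl(\eta p \eta' V(u_{n-1}) + \eta p C_L\bigr)\;.
\end{equ}
Taking expectations and iterating, we note that the coefficient in front of $V$ is multiplied by $\eta'$ at each step and remains $\le 1$, so the same inequality may be re-applied with $\alpha_k = \eta p (\eta')^{n-k} \le \eta p \le 1$. After $n$ iterations this yields
\begin{equ}
\E \exp(\eta p V(u_n)) \le \exp\Big(\eta p (\eta')^n V(u_0) + \eta p C_L \sum_{k=0}^{n-1} (\eta')^k\Big) \le \exp\bigl(p\eta (\eta')^n V(u_0) + p\kappa C_L\bigr)\;,
\end{equ}
since $\sum_{k \ge 0} (\eta')^k = 1/(1-\eta') = \kappa/\eta$.

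For the third (and main) bound, set $S_n = \sum_{k=0}^n V(u_k)$ and process the terms one by one from the last. I would show by induction on $k \in \{0,\ldots,n\}$ the estimate
\begin{equ}
\E \exp\bigl(\eta p S_n\bigr) \le \E \exp\Bigl(\eta p \sum_{j=0}^{n-k-1} V(u_j) + \eta p\, b_k\, V(u_{n-k})\Bigr)\, \exp\bigl(\eta p C_L\, k\bigr)\;,
\end{equ}
with $b_0 = 1$ and $b_{k+1} = 1 + \eta' b_k$. The induction step is carried out by conditioning on $\CF_{n-k-1}$ and applying the first bound with $\alpha = \eta p\, b_k$, which is legitimate provided $\eta p\, b_k \le 1$. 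Since $b_k = 1 + \eta' + \cdots + (\eta')^k \le 1/(1-\eta') = \kappa/\eta$, the hypothesis $\kappa p \le 1$ guarantees $\eta p\, b_k \le \eta p \cdot \kappa/\eta = \kappa p \le 1$ uniformly in $k$, which is exactly where the assumption is used. At $k = n$ the only remaining term is $\eta p\, b_n V(u_0) \le p \kappa V(u_0)$, and the accumulated constant is $\eta p C_L n \le p \kappa C_L n$, giving the announced bound.

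The main (modest) obstacle is identifying the correct invariant for the induction in the third part; once the recursion $b_{k+1} = 1 + \eta' b_k$ is written down, the geometric series forces the appearance of $\kappa$ and the hypothesis $\kappa p \le 1$ is precisely what keeps every conditional application of Jensen's inequality in force.
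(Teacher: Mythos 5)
Your approach is exactly the paper's: the first bound is Jensen's inequality applied to \eref{e:Lyap}, and the other two follow by conditioning on $\CF_{n-1}$ and iterating. The first two bounds are handled correctly.

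In the third part there is a small bookkeeping slip: when you condition on $\CF_{n-k-1}$ and apply the first bound with $\alpha = \eta p\, b_k$, the constant produced is $\eta p\, b_k C_L$, not $\eta p\, C_L$, so the factor $\exp(\eta p C_L\, k)$ in your stated invariant is too small and the induction as written does not close. The correct invariant carries $\exp\bigl(\eta p C_L \sum_{j=0}^{k-1} b_j\bigr)$; since each $b_j = \sum_{i=0}^{j}(\eta')^i \le 1/(1-\eta') = \kappa/\eta$, this accumulated constant is still bounded by $\exp(p\kappa C_L\, k)$, which is all that is needed for the announced estimate. With that one-line correction your argument is complete, and your identification of where $\kappa p \le 1$ enters (keeping every conditional Jensen step legitimate) matches the role it plays in the paper.
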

\begin{proof}
The first bound follows immediately from Jensen's inequality.
  The second and third inequalities are shown by rewriting the estimate
  from Assumption~\ref{ass:Lyapunov} as
  \begin{equ}
    \E\big( \exp(\eta p V(u_n) )| \CF_{n-1}\big) \leq \exp\big( \eta
    p\eta' V(u_{n-1}) +\eta p C_L\big)\;,
  \end{equ}
and iterating it.
\end{proof}

Similarly, we obtain a bound on the Jacobian and on the Malliavin derivative $\AM_n$ 
of the solution flow between times $n$ and $n+1$: 
\begin{proposition}\label{prop:Jcontrol}
For any $p \in [0,\bar p]$, one has
  \begin{equs}
    \sup_{n \le s < t \le n+1} \E \|J_{s,t}\|^p &\leq \exp
    \bigl(p(\eta')^{n} \eta V(u_0) + pC_J + p \kappa C_L \bigr) \label{e:boundJn}\\
    \E\|\AM_{n}\|^p &\leq \|{G}\|^p\exp(p\eta
    (\eta')^{n} V(u_{0}) + p\kappa C_L+ p C_J)\;. \label{e:boundAn}
  \end{equs}
Furthermore, \eref{e:boundJn} also holds for $J_{s,t}^{(2)}$ with $C_J$ replaced by $C_J^{(2)}$.  
\end{proposition}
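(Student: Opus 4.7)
\textbf{Plan for Proposition~\ref{prop:Jcontrol}.}

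The two estimates follow from Assumption~\ref{ass:Jacobian}, the Markov property at integer times, and the exponential moment bound on $V(u_n)$ from Proposition~\ref{prop:Lyop}. First, I would establish the bound on $\E\|J_{s,t}\|^p$. The key point is that Assumption~\ref{ass:Jacobian} is stated on the time interval $[0,1]$ with initial condition $u_0$; by the Markov property applied at time $n$ (with $u_n$ playing the role of the initial condition), this becomes
\begin{equ}
 \E\bigl(\|J_{s,t}\|^{\bar p}\,\big|\,\CF_n\bigr) \le \exp\bigl(\bar p\,\eta\, V(u_n) + \bar p C_J\bigr)\;,
\end{equ}
for all $n \le s \le t \le n+1$. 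Jensen's inequality with the concave function $x\mapsto x^{p/\bar p}$ (valid since $p \le \bar p$) reduces this to
\begin{equ}
 \E\bigl(\|J_{s,t}\|^{p}\,\big|\,\CF_n\bigr) \le \exp\bigl(p\,\eta\, V(u_n) + p C_J\bigr)\;.
\end{equ}
Taking expectations and invoking the second estimate of Proposition~\ref{prop:Lyop} with the same $p$ and $\eta$ (the hypothesis $p\eta \le \bar p\eta < 1-\eta' \le 1$ allows its application) gives exactly the claimed bound on $\E\|J_{s,t}\|^p$.

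For the bound on $\AM_n$, I would use the integral representation $\AM_n h = \int_n^{n+1} J_{r,n+1} G\, h(r)\,dr$ together with Cauchy--Schwarz in $r$ to obtain the pathwise bound
\begin{equ}
\|\AM_n\|^2 \le \|G\|^2 \int_n^{n+1} \|J_{r,n+1}\|^2\,dr\;.
\end{equ}
Raising to the power $p/2$, I would split into two cases. For $p \ge 2$, convexity of $x\mapsto x^{p/2}$ combined with the fact that $[n,n+1]$ has unit length lets me pull the power inside the integral (Jensen), so that $\E\|\AM_n\|^p \le \|G\|^p \sup_{r\in[n,n+1]}\E\|J_{r,n+1}\|^p$. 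For $p \le 2$, concavity of $x\mapsto x^{p/2}$ lets me pull the expectation inside the power instead, yielding $\E\|\AM_n\|^p \le \|G\|^p(\sup_r \E\|J_{r,n+1}\|^2)^{p/2}$. In both cases the first part of the proposition supplies the desired bound, since $(\exp(2\eta(\eta')^n V(u_0)+2\kappa C_L+2C_J))^{p/2}$ equals the target exponential.

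Finally, the statement for $J^{(2)}_{s,t}$ is proved by the same two-step argument (Markov property at time $n$ together with Jensen on $p\le\bar p$), using the second inequality in Assumption~\ref{ass:Jacobian} with $C_J^{(2)}$ in place of $C_J$. There is no real obstacle anywhere; the only mildly delicate point is making sure the exponents line up, which is handled by the hypotheses $\bar p\eta < 1-\eta'$ (so that Proposition~\ref{prop:Lyop} applies for every $p \le \bar p$) and $p \le \bar p$ (so that Jensen's inequality in the Markov-property step is in the right direction).
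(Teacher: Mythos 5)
Your proposal is correct and follows essentially the same route as the paper: Assumption~\ref{ass:Jacobian} via the Markov property at time $n$, combined with the second estimate of Proposition~\ref{prop:Lyop}, gives \eref{e:boundJn}, and Cauchy--Schwarz in $r$ plus Jensen over the unit interval reduces \eref{e:boundAn} to the first bound. The only cosmetic difference is that the paper proves everything for $p=\bar p$ and deduces smaller $p$ by a single application of Jensen at the end, which lets it avoid your case split between $p\ge 2$ and $p\le 2$ in the $\AM_n$ estimate.
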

\begin{proof}
  We only need to show the bound for $p= \bar p$, since lower values follow again from
  Jensen's inequality. The bound \eref{e:boundJn} is an immediate consequence of 
  Assumption~\ref{ass:Lyapunov} and Proposition~\ref{prop:Lyop}. The second bound
  follows by writing
  \begin{equs}
    \|\AM_{n}h\|^p &= \Bigl\|\int_n^{n+1} J_{r,n+1}G h_r dr\Bigr\|^p \\
    & \leq
    \|{G}\|^p \Big(\int_n^{n+1}\|J_{r,n+1}\|^2dr\Big)^{\frac{p}2}
    \Big(\int_n^{n+1}|h_r|^2 dr\Big)^{\frac{p}2}\\
    &\leq\|{G}\|^p\Big(\int_n^{n+1}\|J_{r,n+1}\|^{p}dr\Big)
    \Norm{h}_n^p\;,
  \end{equs}
and then applying the first bound.
\end{proof}
In addition to these first Malliavin derivatives, we will need the
control of the derivative of various objects involving the Malliavin
derivative. The following lemma gives control over two objects
related to the second Malliavin derivative:
\begin{lemma}\label{lem:DJ_DA_Control} For all $p \in [0,\bar p/2]$, one has the bounds
  \begin{align*}
    \sup_{s,r \in [n,n+1]}\E\|\DM_s^i J_{r,n+1}\|^p& \leq \exp(2p\eta (\eta')^{n} V(u_{0}) +
    2p\kappa C_L + p C_J + pC_J^{(2)})\;, \\
      \sup_{s \in [n,n+1]}\E\|\DM_s^i \AM_{n}\|^p &\leq \Norm{G}^p\exp(2p\eta (\eta')^{n} V(u_{0}) +
    2p\kappa C_L + p C_J + pC_J^{(2)})\;.
  \end{align*}
\end{lemma}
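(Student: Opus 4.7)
My plan is to reduce both bounds to the moment estimates on $J$ and $J^{(2)}$ already supplied by Proposition~\ref{prop:Jcontrol} (together with the last sentence of that proposition that gives the analogous estimate for $J^{(2)}$), using the representation of the Malliavin derivative of the Jacobian in terms of the second variation provided by \eref{e:repJ2}.

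First, for $\DM_s^i J_{r,n+1}$ with $s,r \in [n,n+1]$, I would observe that $J_{r,n+1}$ depends on the noise only through $W|_{[r,n+1]}$, so $\DM_s^i J_{r,n+1} = 0$ for $s<r$, while for $s \ge r$ the identity \eref{e:repJ2} gives, applied in the direction $\phi \in \CH$,
\begin{equ}
\bigl(\DM_s^i J_{r,n+1}\bigr)\phi = J^{(2)}_{s,n+1}\bigl(Ge_i,\,J_{r,s}\phi\bigr)\;,
\end{equ}
where $e_i$ denotes the $i$-th standard basis vector of $\R^d$. This immediately yields the pointwise bound
\begin{equ}
\|\DM_s^i J_{r,n+1}\| \le \|G\|\,\|J^{(2)}_{s,n+1}\|\,\|J_{r,s}\|\;.
\end{equ}
Taking $p$-th powers, applying Cauchy--Schwarz on the probability space, and invoking Proposition~\ref{prop:Jcontrol} (for both $J$ and $J^{(2)}$) with exponent $2p \le \bar p$ produces the stated bound, with the $\eta V(u_0)$ and $\kappa C_L$ contributions each picked up twice (once from each factor), while $C_J$ and $C_J^{(2)}$ each appear only once because of the $\tfrac{1}{2}$-exponent from Cauchy--Schwarz.

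For the second bound, I would expand $\AM_n h = \int_n^{n+1} J_{r,n+1}Gh(r)\,dr$ and differentiate under the integral, using that $\DM_s^i h(r)$ contributes nothing when we interpret $\DM_s^i \AM_n$ as the Malliavin derivative of the random \emph{operator}. This gives
\begin{equ}
\bigl((\DM_s^i \AM_n)h\bigr) = \int_n^s J^{(2)}_{s,n+1}\bigl(Ge_i,\,J_{r,s}Gh(r)\bigr)\,dr\;,
\end{equ}
so that by Cauchy--Schwarz in the $r$-integral,
\begin{equ}
\|\DM_s^i \AM_n\|_{\Cam \to \CH} \le \|G\|^{2}\,\|J^{(2)}_{s,n+1}\|\,\Bigl(\int_n^s \|J_{r,s}\|^{2}\,dr\Bigr)^{1/2}\;.
\end{equ}
Taking $p$-th powers, applying Cauchy--Schwarz on $\Omega$, and using Jensen (to pull the outer $p$-th power inside the $dr$-integral) reduces everything to $\sup_{n \le r \le s \le n+1}\E\|J_{r,s}\|^{2p}$ and $\E\|J^{(2)}_{s,n+1}\|^{2p}$, both of which are controlled by Proposition~\ref{prop:Jcontrol} since $2p \le \bar p$; the algebra gives the advertised exponent. (The statement absorbs the extra factor $\|G\|$ into the constant $\|G\|^p$, as the quoted Proposition~\ref{prop:Jcontrol} also does.)

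There is essentially no serious obstacle here: the only point requiring a little care is the bookkeeping of constants through Cauchy--Schwarz, in particular checking that the hypothesis $p \le \bar p/2$ is exactly what lets us apply the $J$ and $J^{(2)}$ moment bounds at exponent $2p$, and that this doubling is the source of the factor $2$ in front of $\eta V(u_0)$ and $\kappa C_L$ while $C_J$ and $C_J^{(2)}$ remain with coefficient $1$.
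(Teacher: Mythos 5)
Your overall strategy --- reduce everything to the representation \eref{e:repJ2} of the Malliavin derivative of the Jacobian via the second variation, then apply Cauchy--Schwarz and Proposition~\ref{prop:Jcontrol} at exponent $2p \le \bar p$ --- is exactly the paper's argument, and your bookkeeping of the constants (the doubling of $\eta V(u_0)$ and $\kappa C_L$, the single copies of $C_J$ and $C_J^{(2)}$) is right. However, there is one genuine error: the claim that $\DM_s^i J_{r,n+1} = 0$ for $s < r$ because ``$J_{r,n+1}$ depends on the noise only through $W|_{[r,n+1]}$.'' This is false. The Jacobian $J_{r,n+1}$ solves $\d_t J_{r,t}\phi = -LJ_{r,t}\phi + DN(u_t)J_{r,t}\phi$, and the coefficient $DN(u_t)$ depends on the solution $u_t$, which depends on $u_r$ and hence on the noise over all of $[0,r]$. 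Perturbing $W$ at time $s<r$ shifts $u_r$ by $J_{s,r}g_i$ and therefore shifts $J_{r,n+1}$; the correct identity in this regime is $\DM_s^i J_{r,n+1}\xi = J^{(2)}_{r,n+1}(J_{s,r}g_i,\xi)$, which the paper records alongside the $r\le s$ case. Fortunately, this expression is again a product of one second variation and one Jacobian, so the identical Cauchy--Schwarz step yields the same bound, and the supremum over \emph{both} orderings of $s$ and $r$ in the statement is justified.

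The same oversight propagates into your second estimate: since the integrand $\DM_s^i J_{r,n+1}Gh(r)$ does not vanish for $r>s$, the integral must run over all of $[n,n+1]$ rather than $[n,s]$. Once both regimes of the first bound are in place, the paper simply estimates $\E\|\DM_s^i\AM_n\|^p \le \|G\|^p\int_n^{n+1}\E\|\DM_s^i J_{r,n+1}\|^p\,dr$ and concludes; your more elaborate Cauchy--Schwarz in $r$ is not needed. So the fix is local --- add the $s\le r$ branch of \eref{e:repJ2} and extend the domain of integration --- but as written the argument rests on a false adaptedness claim and does not cover half of the supremum in the statement.
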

\begin{proof}
 For this, we note that by \eref{e:repJ2} one has the identities
  \begin{equs}
    \DM_s^i J_{r,n+1}\xi &= \left\{\begin{array}{rl} J_{s,n+1}^{(2)}(J_{r,s}\xi, g_i) & \text{for $r \le s$,} \\[0.5em] J_{r,n+1}^{(2)}(J_{s,r}g_i, \xi)
    & \text{for $s \le r$,} \end{array}\right. \\
    \qquad \DM_s^i \AM_{n} v &= \int_{n}^{n+1} \DM_s^i
    J_{r,n+1}Gv_r\, dr\;.
  \end{equs}
  Hence if $p \in [0,\bar p/2]$ (which by the way also ensures that $2p \kappa <
  1$) it follows from Proposition~\ref{prop:Jcontrol} that
\begin{align*}
  \E\|\DM_s^i J_{r,n+1}\|^p& \leq \bigl(\E\|J_{s,n+1}^{(2)}\|^{2p}\,
\E\|J_{r,s}\|^{2p}\bigr)^{\frac{1}{2}}
  \leq \E \exp(2p \eta V(u_{n}) + p C_J + pC_J^{(2)})\\
  & \leq \exp(2p\eta (\eta')^{n} V(u_{0}) + 2p\kappa C_L + p C_J +
  pC_J^{(2)})
\end{align*}
for $r \le s$ and similarly for $s \le r$. Since, for $p \ge 1$, we can write
\begin{align*}
  \E\|\DM_s^i \AM_n\|^p &\leq\|{G}\|^p \int_{n}^{n+1} \E\|\DM_s^i J_{r,n+1}\|^{p}
  dr\;,
\end{align*}
the second estimate then follows from the first one.
\end{proof}

\subsection[Controlling the error term $\rho_t$]{Controlling the error
  term $\pmb{\rho_t}$}

The purpose of this section is to show that the ``error term''
$\rho_t= \DF_\xi u_t - \DM_{h^\xi} u_t$ goes the zero as $t
\rightarrow \infty$, provided that the ``control'' $h^\xi$ is chosen
as explained in Section~\ref{sec:choiceh}.  We begin by observing that
for even integer times, $\rho_n$ is given recursively by
\begin{equ}[e:defrho]
  \rho_{2n+2} = J_{2n+1} \rho_{2n+1} = J_{2n+1} \RM^{\beta_{2n}}_{2n}
  J_{2n} \rho_{2n}\;,
\end{equ}
where $\RM_k^\beta$ is the operator
\begin{equ}
  \RM_k^\beta \eqdef 1 - \MM_{k} ( \beta + \MM_{k})^{-1}= \beta
  (\beta + \MM_{k})^{-1}\;.
\end{equ}
Observe that $\RM_k^\beta$ measures the error between $\MM_{k} (
\beta + \MM_{k})^{-1}$ and the identity, which we will see is small
for $\beta$ very small.  This recursion is of the form $\rho_{2n+2} =
\Xi_{2n+2} \rho_{2n}$, with the (random) operator $\Xi_{2n+2} \colon \CH
\to \CH$ defined by $\Xi_{2n+2}= J_{2n+1} \RM^{\beta_{2n}}_{2n} J_{2n}
$. Notice that $\Xi_{2n}$ is $\cF_{2n}$-measurable and that $\Xi_k$ is defined only for even integers $k$. 
Define the $n$-fold
product of the $\Xi_{2k}$ by
\begin{align*}
  \Xi^{(2n)}= \prod_{k=1}^n \Xi_{2k}\;,
\end{align*}
so that $\rho_{2n} = \Xi^{(2n)} \xi$.

It is our aim to show that it is possible under the assumptions of
Section~\ref{sec:generalSmoothing} to choose the sequence $\beta_n$ in
an adapted way such that for a sufficiently small constant $\bar \eta$
and $p \in [0,\bar p/2]$ one has
\begin{equ}[e:boundXi]
  \EE\|\rho_{2n}\|^p\leq\E\bigl( \|\Xi^{(2n)}\|^{p}\bigr)\|\rho_0\|^p
  \le \exp\bigl(p\bar \eta V(u_{0}) - p n
  \tilde\kappa\bigr)\|\rho_0\|^p \;.
\end{equ}
for some $\tilde \kappa >0$. This will give the needed control over
the last term in \eqref{eq:asfPrototype}.

By Assumption~\ref{ass:Malliavin}, we have a bound on the Malliavin
covariance matrix of the form
\begin{equ}[e:Malliavin2]
  \P \Bigl(\inf_{\|\Pi \phi\| \ge \alpha \|\phi\|} \scal{\phi,
    \MM_k\phi} \le \eps \|\phi\|^2 \,\Big|\, \CF_k\Bigr) \le C(\alpha,
  p)\, \U^p(u_k)\, \eps^p\;. 
\end{equ}
Here, by the Markov property, the quantities $\eps$ and $\alpha$ do not 
necessarily need to be constant, but are allowed to be
$\cF_k$-measurable random variables.

In order to obtain \eref{e:boundXi}, the idea is to decompose
$\Xi_{2n+2}$ as
  \begin{equs}[e:decompos]
    \Xi_{2n+2} &= J_{2n+1} \RM^{\beta_{2n}}_{2n} J_{2n} = \bigl(J_{2n+1}
    \Pi^\perp\bigr)\RM^{\beta_{2n}}_{2n} J_{2n}+J_{2n+1} \bigl(\Pi
    \RM^{\beta_{2n}}_{2n}\bigr) J_{2n}\qquad \\&\eqdef I_{2n+2,1} + I_{2n+2,2}\;.
  \end{equs}
  The crux of the matter is controlling the term $\Pi
  \RM^{\beta_{2n}}_{2n}$ since $J_{2n+1} \Pi^\perp$ is controlled by
  Assumption \ref{ass:smoothing} and we know that
  $\|\RM^{\beta_{2n}}_{2n}\| \le 1$.  To understand and control the
  $I_{2n+2,2}$ term, we explore the properties of a general operator of
  the form of $\RM_{2n}^\beta$.

\begin{lemma}\label{lem:MBound}
  Let $\Pi$ be an orthogonal projection on $\CH$ and $M$ be a
  self-adjoint, positive linear operator on $\CH$  satisfying for some
  $\gamma >0$ and $\delta \in (0,1]$
  \begin{equ}[eq:smallEV]
    \inf_{\xi \in \Lambda_\delta} {\scal{M \xi,\xi} \over \|\xi\|^2}
    \geq \gamma \;,
    \end{equ}
    where $\Lambda_\delta =\{ \xi : \|\Pi \xi \| \geq \delta
    \|\xi\|\}$.  Then, defining $R=1 - M(\beta +M)^{-1}= \beta (\beta
    + M)^{-1}$ for some $\beta >0$, one has $\|\Pi R\| \leq \delta
    \vee \sqrt{\beta/\gamma}$.
\end{lemma}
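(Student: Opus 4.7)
The plan is to apply the bound pointwise: fix an arbitrary $\phi \in \CH$ with $\|\phi\|=1$, set $\eta = R\phi$, and show directly that $\|\Pi \eta\| \le \delta \vee \sqrt{\beta/\gamma}$. Everything rests on the algebraic identity
\begin{equ}
 M\eta = \bigl((\beta + M) - \beta\bigr)\eta = \beta(\phi - \eta)\;,
\end{equ}
which follows from the definition of $R$, together with the contractivity $\|R\|\le 1$ (immediate from the functional calculus since $M$ is self-adjoint and positive).

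I would then split into two cases according to whether $\eta$ lies in $\Lambda_\delta$ or not. If $\eta \notin \Lambda_\delta$, then by definition $\|\Pi \eta\| < \delta \|\eta\| \le \delta$, which already gives the desired estimate. The interesting case is $\eta \in \Lambda_\delta$: there the hypothesis \eref{eq:smallEV} applies and gives $\gamma \|\eta\|^2 \le \scal{M\eta,\eta}$. Combining this with the identity above yields
\begin{equ}
 \gamma \|\eta\|^2 \;\le\; \scal{M\eta,\eta} \;=\; \beta \scal{\phi,\eta} - \beta \|\eta\|^2 \;\le\; \beta \|\eta\| - \beta \|\eta\|^2\;,
\end{equ}
using Cauchy--Schwarz and $\|\phi\|=1$ in the last step. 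Rearranging gives $(\gamma+\beta)\|\eta\| \le \beta$, hence $\|\Pi \eta\| \le \|\eta\| \le \beta/(\gamma+\beta) \le \sqrt{\beta/\gamma}$, where the final inequality is the elementary estimate $\beta \gamma \le (\gamma+\beta)^2$.

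There is really no obstacle here; the only subtlety worth flagging is that one must verify that the self-adjointness and positivity of $M$ are enough to define $R=\beta(\beta+M)^{-1}$ as a contraction on all of $\CH$ (even though $M$ itself may be unbounded or only densely defined), which is standard spectral theory. Everything else is a one-line computation, and taking the supremum over $\phi$ with $\|\phi\|=1$ then yields the stated operator norm bound $\|\Pi R\| \le \delta \vee \sqrt{\beta/\gamma}$.
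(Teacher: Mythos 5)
Your proof is correct and follows essentially the same route as the paper's: both split according to whether $R\phi$ lies in $\Lambda_\delta$, use $\|R\|\le 1$ in the first case, and combine the identity $(\beta+M)R=\beta I$ with the hypothesis \eref{eq:smallEV} in the second. Your retention of the $-\beta\|\eta\|^2$ term even yields the marginally sharper intermediate bound $\beta/(\gamma+\beta)$ before weakening to $\sqrt{\beta/\gamma}$, but the argument is the same.
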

\begin{proof} Since $\|R\| \leq 1$, for $R\xi \in \Lambda_\delta^c$
  one has
  \begin{align*}
    \frac{\|\Pi R \xi\|}{\|\xi\|} \leq \frac{\|\Pi R \xi\|}{\| R
      \xi\|} \leq \delta\;.
  \end{align*}
Now for $R\xi \in \Lambda_\delta$, we have by assumption \eref{eq:smallEV}
\begin{align*}
  \gamma\frac{\|\Pi R\xi\|^2}{ \|\xi\|^2} \le \gamma\frac{\|R\xi\|^2}{
    \|\xi\|^2} \leq \frac{\scal{MR\xi,R\xi}}{\|\xi\|^2} \leq
  \frac{\scal{(M+\beta)R\xi,R\xi}}{\|\xi\|^2} = \beta
  \frac{\scal{\xi,R\xi}}{\|\xi\|^2} \leq \beta\;.
\end{align*}
Combining both estimates gives the required bound.
\end{proof}

This result can be applied almost directly to our setting in the
following way:

\begin{corollary}\label{lem:Rsmall}
  Let $M(\omega)$ be a random operator satisfying the conditions of
  Lemma \ref{lem:MBound} almost surely for some random variable
  $\gamma$. If we choose $\beta$ such that, for some (deterministic)
  $\delta \in (0,1)$ , $p\geq 1$ and $C >0$, one has the bound $\P (
  \beta \geq \delta^2 \gamma ) \leq C \delta^p$, then $\E \|\Pi R\|^p
  \leq(1+C) \delta^p$.
  
  In particular, for any $\delta \in (0,1)$, setting
  \begin{align}\label{e:betaChoice}
    \beta_k= \frac{\delta^3}{ \U(u_{k}) C(\delta, \bar p)^\frac1{\bar
        p}}\;,
  \end{align}
  where $C$ is the constant from \eref{e:Malliavin2}, produces the
  bound $\E \big( \|\Pi \RM_{2n}^{\beta_{2n}}\|^p | \CF_{2n}\big) \leq 2
  \delta^p$, valid for every $p \le \bar p$.
\end{corollary}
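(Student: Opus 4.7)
The proof falls naturally into two pieces, mirroring the two claims in the corollary.

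For the first assertion, my plan is to apply Lemma~\ref{lem:MBound} pathwise, using the trivial dichotomy suggested by the bound $\|\Pi R\| \le \delta \vee \sqrt{\beta/\gamma}$. Namely, on the event $\{\beta < \delta^2\gamma\}$ the lemma gives $\|\Pi R\| \le \delta$, while on the complementary event $\{\beta \ge \delta^2\gamma\}$ I use only the crude bound $\|\Pi R\| \le \|R\| \le 1$. Splitting the expectation accordingly yields
\begin{equ}
  \E\|\Pi R\|^p \le \delta^p\,\P(\beta < \delta^2\gamma) + \P(\beta \ge \delta^2\gamma) \le \delta^p + C\delta^p = (1+C)\delta^p\;,
\end{equ}
which is exactly the first claim.

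For the second assertion (the specific choice $\beta_k = \delta^3/(U(u_k)C(\delta,\bar p)^{1/\bar p})$), the strategy is to invoke Assumption~\ref{ass:Malliavin} with $\alpha = \delta$, $p = \bar p$, conditionally on $\CF_{2n}$. By the Markov property this gives, for the random $\gamma$ arising as the infimum in \eref{eq:smallEV} applied to $M = \MM_{2n}$,
\begin{equ}
  \P(\gamma \le \eps \mid \CF_{2n}) \le C(\delta,\bar p)\,U^{\bar p}(u_{2n})\,\eps^{\bar p}\;.
\end{equ}
Substituting $\eps = \beta_{2n}/\delta^2 = \delta/(U(u_{2n})C(\delta,\bar p)^{1/\bar p})$ and raising everything to the power $\bar p$, the factors $U^{\bar p}(u_{2n})$ and $C(\delta,\bar p)$ cancel exactly, leaving $\P(\beta_{2n} \ge \delta^2 \gamma \mid \CF_{2n}) \le \delta^{\bar p}$. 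Applying the conditional version of the first claim (with constant $C = \delta^{\bar p - p}$, say, or more simply using $\delta^{\bar p} \le \delta^p$ since $\delta \in (0,1)$ and $p \le \bar p$) produces the bound $\E(\|\Pi\RM_{2n}^{\beta_{2n}}\|^p \mid \CF_{2n}) \le \delta^p + \delta^{\bar p} \le 2\delta^p$.

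There is no real obstacle here; the proof is essentially bookkeeping on top of Lemma~\ref{lem:MBound}. The only point requiring a little care is writing the first step conditionally on $\CF_{2n}$ in the application to $\RM_{2n}^{\beta_{2n}}$, which is legitimate because $\beta_{2n}$ and $u_{2n}$ are both $\CF_{2n}$-measurable while the Malliavin matrix $\MM_{2n}$ over $[2n, 2n+1]$ is conditionally distributed according to the Markov kernel starting from $u_{2n}$; this is precisely the setting in which \eref{e:Malliavin2} was formulated.
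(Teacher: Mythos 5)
Your proof is correct and follows essentially the same route as the paper: the same dichotomy $\{\beta < \delta^2\gamma\}$ versus its complement combined with $\|\Pi R\|\le 1$ for the first claim, and the same substitution $\eps = \beta_{2n}/\delta^2$ into \eref{e:Malliavin2} (so that the factors $U^{\bar p}(u_{2n})$ and $C(\delta,\bar p)$ cancel) for the second. The only cosmetic difference is that you take $\gamma$ to be the actual infimum of the quadratic form, while the paper sets $\gamma$ equal to $\eps$ on the good event and $0$ on its complement; both choices verify the hypotheses of the first part with $C=1$ and $p=\bar p$, and then $\delta^{\bar p}\le\delta^p$ finishes the argument.
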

\begin{proof}
  To see the first part define $\Omega_0=\{ \omega : \beta(\omega)
  \leq \delta^2 \gamma(\omega) \}$. It the follows from Lemma
  \ref{lem:MBound}, the fact that $\|\Pi R\| \leq 1$ and the
  assumption $\P(\Omega_0^c) \leq C \delta^p$, that
  \begin{equ}[e:boundPiRgeneral]
    \EE \|\Pi R\|^p \leq \E \bigg(\Big(\delta \vee
    \sqrt\frac\beta\gamma\Big)^p\one_{\Omega_0} +\one_{\Omega_0^c} \bigg)
    \leq \delta^p + \P(\Omega_0^c) \leq(1+C)\delta^p\;,
  \end{equ}
  as required.

  To obtain the second statement, it is sufficient to consider
  \eref{e:Malliavin2} with $\eps = \beta_{2n}/\delta^2$, so that one can
  take for $\gamma$ the random variable equal to $\eps$ on the set for
  which the bound \eref{e:Malliavin2} holds and $0$ on its complement.
  It then follows from the choice \eref{e:betaChoice} for $\beta_{2n}$
  that the assumption for the first part are satisfied with $C = 1$
  and $p = \bar p$, so that the statement follows.
\end{proof}

We now introduce a ``compensator'' 
\begin{align*}
  \chi_{2n+2} = \exp\big(\eta V(u_{2n+1})+ \eta V(u_{2n})\big)\;,
\end{align*}
and, in analogy to before, we set $\chi^{(2n)}=\prod_{k=1}^n
\chi_{2k}$. Proposition \ref{prop:Lyop} implies that for any $p \in
[0,\bar p]$
\begin{align}
    \label{eq:XiBound}
    \E ( \chi^{(2n)})^p \leq \exp( p \kappa V(u_0) + p \kappa C_L 2n) \;,
  \end{align}
  where $\kappa = \eta/(1-\eta')$. Note that
  Assumption~\ref{ass:Jacobian} made sure that $\eta$ is sufficiently
  small so that $\kappa \bar p \le 1$.
  With these preliminaries complete, we now return to the analysis of
  \eref{e:decompos}.
  \begin{lemma}\label{lem:contract} For any $\eps >0$ and $p \in
    [0,\bar p/2]$, there exists a $\delta >0$ sufficiently small so
    that if one
  chooses $\beta_n$ as in Corollary \ref{lem:Rsmall} and $\eta$ such
  that $\kappa \bar p \le 1$, one has
  \begin{align*}
    \EE ( \|\Xi_{2n+2}\|^p \chi_{2n+2}^{-p} | \CF_{2n}) \leq \exp( p C_J
    - p C_\Pi + \eps p)\;.
  \end{align*}
\end{lemma}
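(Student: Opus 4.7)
The plan is to exploit the decomposition $\Xi_{2n+2} = I_{2n+2,1} + I_{2n+2,2}$ already written out in \eqref{e:decompos}, bound each summand separately in the conditional $L^p$-norm given $\CF_{2n}$, and combine the two bounds by Minkowski's inequality rather than the crude triangle inequality. This last point is essential: a prefactor such as $2^{p-1}$ arising from $(a+b)^p \leq 2^{p-1}(a^p+b^p)$ cannot be absorbed into the $e^{\epsilon p}$ slack when $\epsilon$ is small, whereas the $L^p$-triangle inequality produces no such loss.

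For $I_{2n+2,1} = (J_{2n+1}\Pi^\perp)\RM^{\beta_{2n}}_{2n}J_{2n}$ the central observations are that $\RM^{\beta_{2n}}_{2n}$ is a contraction (its spectrum is contained in $[0,1]$) and that, together with $J_{2n}$, it is $\CF_{2n+1}$-measurable. I would condition first on $\CF_{2n+1}$: Assumption~\ref{ass:smoothing} combined with Jensen's inequality (to pass from the exponent $\bar p$ down to $p$) bounds $\E(\|J_{2n+1}\Pi^\perp\|^p\mid\CF_{2n+1})$ by $\exp(p\eta V(u_{2n+1}) - pC_\Pi)$, which is cancelled exactly by the $\exp(-p\eta V(u_{2n+1}))$ piece of $\chi_{2n+2}^{-p}$. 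Conditioning then on $\CF_{2n}$ and applying Assumption~\ref{ass:Jacobian} to $\|J_{2n}\|^p$ cancels the remaining $\exp(-p\eta V(u_{2n}))$ piece of $\chi_{2n+2}^{-p}$, leaving $\E(\|I_{2n+2,1}\|^p\chi_{2n+2}^{-p}\mid\CF_{2n}) \leq e^{pC_J - pC_\Pi}$.

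For $I_{2n+2,2} = J_{2n+1}(\Pi\RM^{\beta_{2n}}_{2n})J_{2n}$ the same first conditioning on $\CF_{2n+1}$ now uses the full $\E(\|J_{2n+1}\|^p\mid\CF_{2n+1})$ bound from Assumption~\ref{ass:Jacobian} to kill the $\exp(-p\eta V(u_{2n+1}))$ factor. It remains to control $\E(\|\Pi\RM^{\beta_{2n}}_{2n}\|^p\|J_{2n}\|^p\mid\CF_{2n})$, and here the point is to split the two factors by Cauchy--Schwarz, which is permissible precisely because $2p \leq \bar p$: Corollary~\ref{lem:Rsmall} supplies $\E(\|\Pi\RM^{\beta_{2n}}_{2n}\|^{2p}\mid\CF_{2n}) \leq 2\delta^{2p}$, while Assumption~\ref{ass:Jacobian} controls $\E(\|J_{2n}\|^{2p}\mid\CF_{2n})$. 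Combining yields $\E(\|I_{2n+2,2}\|^p\chi_{2n+2}^{-p}\mid\CF_{2n}) \leq \sqrt{2}\,\delta^p\,e^{2pC_J}$.

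Putting the two estimates together through Minkowski's inequality (in the case $p \geq 1$; for $p\in[0,1)$ one uses instead the elementary $(a+b)^p \leq a^p + b^p$) one obtains
\begin{equation*}
\E(\|\Xi_{2n+2}\|^p\chi_{2n+2}^{-p}\mid\CF_{2n})^{1/p} \;\leq\; e^{C_J - C_\Pi} \;+\; 2^{1/(2p)}\,\delta\, e^{2C_J}\,,
\end{equation*}
after which one simply chooses $\delta$ small enough (depending on $\epsilon$, $p$, $C_J$ and $C_\Pi$) so that the second summand is dominated by $e^{C_J-C_\Pi}(e^{\epsilon}-1)$. The only substantive difficulty is the discipline of appealing to Minkowski rather than the triangle inequality, together with careful bookkeeping of which quantities are measurable with respect to $\CF_{2n}$, $\CF_{2n+1}$, and $\CF_{2n+2}$; once these are in hand, every remaining estimate is an immediate application of Assumptions~\ref{ass:Lyapunov}--\ref{ass:smoothing} and Corollary~\ref{lem:Rsmall}.
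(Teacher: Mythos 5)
Your proof is correct and follows essentially the same route as the paper: the same decomposition $\Xi_{2n+2}=I_{2n+2,1}+I_{2n+2,2}$, the contraction property of $\RM^{\beta_{2n}}_{2n}$ together with Assumption~\ref{ass:smoothing} for the first piece, and Cauchy--Schwarz combined with Corollary~\ref{lem:Rsmall} for the second, yielding the identical bounds $e^{pC_J-pC_\Pi}$ and $\sqrt2\,\delta^p e^{2pC_J}$. The only difference is the final combination step, where you use the $L^p$-triangle inequality while the paper uses $|x+y|^p\le e^{p\eps/2}|x|^p+C_\eps^p|y|^p$ (not the crude $2^{p-1}$ bound you warn against); both absorb the second term into the $e^{\eps p}$ slack by taking $\delta$ small, so this is immaterial.
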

\begin{proof}
  Since for every $\eps > 0$ there exists a constant $C_\eps$ such
  that $|x+y|^p \le e^{p\eps/2} |x|^p + C_\eps^p |y|^p$, recalling the
  definition of $I_{2n+2,1}$ and $I_{2n+2,2}$ from \eqref{e:decompos} we
  have that
\begin{equs}
  \EE ( \|\Xi_{2n+2}\|^p \chi_{2n+2}^{-p} | \CF_{2n}) &\leq e^{\eps p/2}
  \EE ( \|I_{2n+2,1}\|^p \chi_{2n+2}^{-p} | \CF_{2n}) \\
  &\qquad + C_\eps^p \EE (
  \|I_{2n+2,2}\|^p \chi_{2n+2}^{-p} | \CF_{2n})\;.
\end{equs}
We begin with the first term since it is the most
straightforward one. Using the fact that $\|\RM^{\beta_{2n}}_{2n}\| \leq 1$
and that $\bar p \eta < 1-\eta'$ by the assumption on $\eta$, we
obtain from Assumptions~\ref{ass:Lyapunov} and \ref{ass:Jacobian} that
\begin{align*}
  \EE ( \|I_{2n+2,1}\|^p \chi_{2n+2}^{-p} | \CF_{2n})
  \leq&\exp(-p\eta V(u_{2n})) \EE\Big(\EE\big(\|J_{2n+1} \Pi^\perp\|^{p}| \CF_{2n+1}\big)
  \\&\qquad  \times \exp(-p\eta
  V(u_{2n+1}))\|J_{2n}\|^{p} \Big|\CF_{2n}\Big)\\
  \leq & \exp( p C_J - p C_\Pi)\;.
\end{align*}
Turning to the second term, we obtain for any $\delta \in (0,1)$ the bound
\begin{equs}
  \EE ( \|I_{2n+2,2}\|^p &\chi_{2n+2}^{-p} | \CF_{2n}) \leq \exp(-p\eta V(u_{2n}))
  \sqrt{\EE \bigl(\|\Pi \RM^{\beta_{2n}}_{2n}\|^{2p} | \CF_{2n}\bigr)}\\&\quad
  \times \sqrt{\EE\Big(
    \EE\big( \|J_{2n+1}\|^{2p}| \CF_{2n+1}\big)\exp(-2p\eta
    V(u_{2n+1}))\|J_{2n}\|^{2p} \Big| \CF_{2n}\Big)}\\
  &\leq \exp( p 2C_J) \delta^p \sqrt{2}\;,
\end{equs}
provided that we choose $\beta_n$ as in Corollary \ref{lem:Rsmall}.
Choosing now $\delta$ sufficiently small (it suffices to choose it
such that $\delta^p \le {\eps p \over 2\sqrt 2} C_\eps^{-p} e^{-pC_J
  -p C_\Pi}$ for every $p \le \bar p/2$) we obtain the desired bound.
\end{proof}

Combining Lemma \ref{lem:contract} with \eqref{eq:XiBound}, we obtain
the needed result which ensures that the ``error term'' $\rho_t$ from
\eqref{eq:asfPrototype} goes to zero.
\begin{lemma}\label{lem:Rconstrol}
  For any $p \in [0,\bar p/4]$ and $\tilde \kappa \in [0,C_\Pi - C_J - 2
  \kappa C_L)$ there exists a choice of the $\beta_n$ of the form
  \eref{e:betaChoice} so that
  \begin{align*}
    \EE \|\Xi^{(2n+2)}\|^p \leq \exp\big( p \kappa V(u_0) - p\tilde
    \kappa n \big)\;,
  \end{align*}
for all $u_0 \in \CH$.
\end{lemma}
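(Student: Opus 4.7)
\medskip

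My plan is to bound $\|\Xi^{(2n+2)}\|^p$ by the product $\prod_{k=1}^{n+1}\|\Xi_{2k}\|^p$ (submultiplicativity of the operator norm), then absorb the bad ``expanding'' factor $e^{pC_J}$ from each $J_{2k}, J_{2k-1}$ via the compensator $\chi_{2k}$ whose moments are controlled by \eqref{eq:XiBound}, and finally use Cauchy--Schwarz to decouple the compensator from the iteration of the good conditional bound coming from Lemma~\ref{lem:contract}.

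Concretely, write
\begin{equ}
\|\Xi^{(2n+2)}\|^p \le \bigl(\chi^{(2n+2)}\bigr)^p\,\prod_{k=1}^{n+1}\bigl(\|\Xi_{2k}\|^p\,\chi_{2k}^{-p}\bigr)\;.
\end{equ}
Apply Cauchy--Schwarz in $\EE$ to get
\begin{equ}
\EE\|\Xi^{(2n+2)}\|^p \le \sqrt{\EE\bigl(\chi^{(2n+2)}\bigr)^{2p}}\,\cdot\,\sqrt{\EE\prod_{k=1}^{n+1}\bigl(\|\Xi_{2k}\|^{2p}\,\chi_{2k}^{-2p}\bigr)}\;.
\end{equ}
For the first factor, since $p \le \bar p/4$ gives $2p\kappa \le \bar p\kappa \le 1$, \eqref{eq:XiBound} applied with exponent $2p$ yields
\begin{equ}
\sqrt{\EE\bigl(\chi^{(2n+2)}\bigr)^{2p}} \le \exp\bigl(p\kappa V(u_0) + 2p\kappa C_L(n+1)\bigr)\;.
\end{equ}
For the second factor, I iterate the tower property, noting that $\Xi_{2k}$ and $\chi_{2k}$ are $\CF_{2k}$-measurable while all earlier factors are $\CF_{2k-2}$-measurable. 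Since $2p \le \bar p/2$, Lemma~\ref{lem:contract} applies with exponent $2p$: for any preassigned $\eps>0$, after choosing $\delta$ small enough (and $\beta_n$ as in \eqref{e:betaChoice} accordingly),
\begin{equ}
\EE\bigl(\|\Xi_{2k}\|^{2p}\chi_{2k}^{-2p}\,\big|\,\CF_{2k-2}\bigr) \le \exp\bigl(2p(C_J - C_\Pi + \eps)\bigr)\;,
\end{equ}
and iterating conditionally gives
\begin{equ}
\EE\prod_{k=1}^{n+1}\bigl(\|\Xi_{2k}\|^{2p}\chi_{2k}^{-2p}\bigr) \le \exp\bigl(2p(n+1)(C_J - C_\Pi + \eps)\bigr)\;.
\end{equ}

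Combining the two estimates,
\begin{equ}
\EE\|\Xi^{(2n+2)}\|^p \le \exp\bigl(p\kappa V(u_0) - p(n+1)(C_\Pi - C_J - 2\kappa C_L - \eps)\bigr)\;.
\end{equ}
Since $\tilde\kappa < C_\Pi - C_J - 2\kappa C_L$ by assumption, I choose $\eps$ so small that $C_\Pi - C_J - 2\kappa C_L - \eps \ge \tilde\kappa$; then the required bound follows from $(n+1)\tilde\kappa \ge n\tilde\kappa$. The corresponding choice of $\delta$ in Corollary~\ref{lem:Rsmall} pins down the sequence $\beta_n$.

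The only genuine constraint, and hence the only delicate point, is the bookkeeping of exponents: I need $2p$ (not just $p$) both in \eqref{eq:XiBound} and in Lemma~\ref{lem:contract}, which is precisely why the statement restricts to $p \le \bar p/4$. Everything else is an almost mechanical combination of submultiplicativity, Cauchy--Schwarz, and the tower property.
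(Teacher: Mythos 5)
Your proof is correct and follows essentially the same route as the paper's: Cauchy--Schwarz against the compensator $\chi^{(2n+2)}$, the moment bound \eqref{eq:XiBound} for the compensator, and an iteration of Lemma~\ref{lem:contract} via the tower property for the compensated product. The paper states this in two lines; you have merely spelled out the (correct) exponent bookkeeping.
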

\begin{proof} Since
  \begin{align*}
    \EE \|\Xi^{(2n+2)}\|^p \leq \Big( \EE \|\Xi^{(2n+2)}\|^{2p}
    (\chi^{(2n+2)})^{-2p}\Big)^\frac12 \Big(
    \EE(\chi^{(2n+2)})^{2p}\Big)^\frac12\;,
  \end{align*}
  the result follows by combining Lemma \ref{lem:contract} with
  \eqref{eq:XiBound}.
\end{proof}


\subsection[Controlling the size of the variation $h_t^\xi$]{Controlling the size of the variation $\pmb{h_t^\xi}$}

\label{sec:sizeVariation}
We now turn to controlling the size of
\begin{equation}
  \label{eq:costofH}
  \EE \Big| \int_0^{n} \ip{h_s^\xi}{dW_s} \Big|^2\;,
\end{equation}
uniformly as $n\rightarrow \infty$. We assume throughout this section that
$h_t^\xi$ was constructed as in Section~\ref{sec:choiceh} with $\beta_n$
as in \eref{e:betaChoice}.

Since our choice of $h_s^\xi$ is not adapted to the $W_s$, this does
\textit{not} follow from a simple application of It\^o's isometry.  However, the
situation is not as bad as it could be, since the control is ``block
adapted.'' By this we mean that $h_{n}$ is adapted to $\cF_{n}$ for
every integer value of $n$.  For non-integer values $t \in (n,n+1]$,
$h_t$ has no reason to be $\cF_t$-measurable in general, but it is
nevertheless $\cF_{n+1}$-measurable.  The stochastic integral in
\eref{eq:costofH} is accordingly not an It\^o integral, but a
Skorokhod integral. Hence to estimate \eqref{eq:costofH} we must use
its generalization given in \eref{e:genIto} which produces
\begin{equation}
  \label{eq:skorhodItoFormula}
  \EE\Big|\int_0^{{2n}} \ip{h_s^\xi}{dW_s}\Big|^2 \leq \EE
  \Norm{h^\xi}^2_{[0,2n]}  + \sum_{k=0}^{n-1} \int_{2k}^{2k+1}
  \int_{2k}^{2k+1}   \EE  \norm{\DM_s h_t}^2\,ds\,dt
\end{equation}
where $\Norm{f}^2_{I}=\int_I |f(s)|^2 \, ds$ and $\norm{M}$ denotes the
Hilbert-Schmidt norm on linear operators from $\R^d$ to $\R^d$. We see the
importance of the ``block adapted'' structure of $h_s$. If not for this
structure, the integrand appearing in the second term above would need
to decay both in $s$ and $t$ to be finite.

The main result of this section is

\begin{proposition}\label{prop:boundvar}
  Let Assumptions~\ref{ass:Malliavin}--\ref{ass:smoothing} hold.
  Then, if one chooses $\beta_n$ as in \eref{e:betaChoice}, there
  exists a constant $C>0$ such that
  \begin{equ}
    \lim_{n \to \infty} \EE \Big| \int_0^{n} \scal{{h_s^\xi},{dW_s}}
    \Big|^2 \le C \exp \bigl((8\eta + 2\kappa)
    V(u_0)\bigr)\U^2(u_0) \|\xi\|^2\;.
  \end{equ}
\end{proposition}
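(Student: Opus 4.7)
The starting point is the generalised It\^o isometry \eqref{eq:skorhodItoFormula}, which splits the Skorokhod integral into an ``adapted'' $L^2$ piece $\E\Norm{h^\xi}^2$ and a ``Malliavin correction'' $\sum_k \int_{2k}^{2k+1}\int_{2k}^{2k+1} \E\norm{\DM_s h_t^\xi}^2\,ds\,dt$. By the block structure of $h^\xi$ (defined as a $\cF_{2k+1}$-measurable expression on $[2k,2k+1]$ and vanishing on odd blocks), both contributions decouple into sums indexed by the dyadic block $[2k,2k+1]$. The whole point is to estimate each block so that it carries a geometric factor $e^{-c k}$ coming from the decay of $\rho_{2k}$ provided by Lemma~\ref{lem:Rconstrol}, making both sums converge uniformly in $n$.

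For the adapted piece I use the identity $\AM_{2k}\AM_{2k}^* = \MM_{2k}$ together with the monotonicity $\scal{\MM v,v}\le\scal{(\MM+\beta)v,v}$ to derive the elementary bound
\[
\Norm{h^\xi}^2_{[2k,2k+1]} \le \beta_{2k}^{-1}\|J_{2k}\rho_{2k}\|^2\,.
\]
Since $\beta_{2k}^{-1} \le C\,\U(u_{2k})$ by the choice \eqref{e:betaChoice}, this reduces matters to a three-factor H\"older estimate on $\E[\U(u_{2k})\|J_{2k}\|^2\|\rho_{2k}\|^2]$, whose factors are controlled by Assumption~\ref{ass:Malliavin} (moments of $\U$), Proposition~\ref{prop:Jcontrol} (moments of $\|J\|$), and Lemma~\ref{lem:Rconstrol} (geometric decay of $\|\rho_{2k}\|$). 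The exponent budget closes thanks to the inequality $2/\bar q + 10/\bar p \le 1$ in Assumption~\ref{ass:Jacobian}, and the resulting bound is summable in $k$.

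For the Malliavin correction I write $h_t^\xi = G^* K_{t,2k+1}(\beta_{2k}+\MM_{2k})^{-1}J_{2k}\rho_{2k}$ on $[2k,2k+1]$ and differentiate with the product rule. Crucially, for $s \ge 2k$ the $\cF_{2k}$-measurable factors $\rho_{2k}$ and $\beta_{2k}$ satisfy $\DM_s\rho_{2k}=0$ and $\DM_s\beta_{2k}=0$, leaving three terms carrying derivatives of $K_{t,2k+1}$, of $(\beta_{2k}+\MM_{2k})^{-1}$, and of $J_{2k}$ respectively. The derivatives of $K$ and $J$ are second-variation objects bounded by Lemma~\ref{lem:DJ_DA_Control}; for the middle term I use $\DM_s\MM_{2k} = \DM_s[\AM_{2k}]\AM_{2k}^* + \AM_{2k}(\DM_s[\AM_{2k}])^*$ together with the identity $\|\AM_{2k}^*(\beta_{2k}+\MM_{2k})^{-1}\| \le \beta_{2k}^{-1/2}$ (derived exactly as in the first-term estimate) to show that $\DM_s[(\beta_{2k}+\MM_{2k})^{-1}]$ carries only \emph{one} inverse power of $\beta_{2k}$ rather than the naive two.

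Each of the three terms is then pointwise of the schematic form $\|J\|\cdot\|J^{(2)}\|\cdot\beta_{2k}^{-1}\cdot\|J\|\cdot\|\rho_{2k}\|$; squaring and applying a five-factor H\"older inequality, with moments controlled by Proposition~\ref{prop:Jcontrol}, Lemma~\ref{lem:DJ_DA_Control}, Assumption~\ref{ass:Malliavin}, and Lemma~\ref{lem:Rconstrol} respectively, yields
\[
\E\norm{\DM_s h_t^\xi}^2 \le C\,\exp\bigl((8\eta+2\kappa)V(u_0)\bigr)\,\U^2(u_0)\,\|\xi\|^2\,e^{-2\tilde\kappa k}\,,
\]
where the coefficient $8\eta$ is the sum of $2\eta$ for each of three Jacobian factors plus $4\eta$ for one second-variation factor (Lemma~\ref{lem:DJ_DA_Control} costs a doubled $\eta$ because of the product $J\cdot J^{(2)}$ appearing in the Malliavin derivative of the flow). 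Summing in $k$ and combining with the first-term bound gives the stated limit. The principal obstacle is the H\"older budget: one must verify that the combined exponent constraints from $\|J\|$, $\|J^{(2)}\|$, $\U$, $\|\rho\|$ factors and the Lyapunov-type bounds on $V(u_{2k})$ can all be met simultaneously within the tolerances $2/\bar q + 10/\bar p \le 1$ and $\bar p\eta < 1-\eta'$; the cancellation producing only $\beta_{2k}^{-1/2}$ in $\AM^*(\beta+\MM)^{-1}$ is essential to achieve $\U^2(u_0)$ rather than $\U^4(u_0)$ in the final estimate.
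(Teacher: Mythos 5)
Your proposal follows the paper's proof essentially step for step: the same split via the generalised It\^o isometry \eqref{eq:skorhodItoFormula} exploiting the block-adapted structure, the same operator bounds $\Norm{\AM^*\widetilde\MM^{-1}}\le\beta^{-1/2}$ and $\Norm{\widetilde\MM^{-1/2}\AM}\le 1$, the same product-rule expansion of $\DM_s h$ using the $\CF_{2k}$-measurability of $\rho_{2k}$ and $\beta_{2k}$, and the same H\"older budget closed by $2/\bar q+10/\bar p\le 1$ with inputs from Proposition~\ref{prop:Jcontrol}, Lemma~\ref{lem:DJ_DA_Control}, Assumption~\ref{ass:Malliavin} and Lemma~\ref{lem:Rconstrol}. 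The only quibble is your bookkeeping of the exponent $8\eta$ (three factors of $2\eta$ plus one of $4\eta$ gives $10\eta$, whereas the actual count per term is at most $6\eta$), but since both tallies sit under the stated generous bound this is immaterial.
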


\begin{proof}[of Proposition \ref{prop:boundvar}] In the interest of
  brevity we will set $\widetilde \MM_n =\MM_{n} + \beta_n$ and $I_n =
  [n,n+1]$.  We will also write $\Norm{h}_{I}$ for the norm on $L^2(I,
  \R^d)$ viewed as a subset of $\Cam$ and we will use $\|\cdot\|$
  and $\Norm{\cdot}_I$ to denote respectively the induced operator
  norm on linear maps from $\CH$ to $\CH$ and $\Cam$ to
  $\CH$. Hopefully without too much confusion, we will also use
  $\Norm{\cdot}_I$ to denote the induced operator norm on linear maps
  from $\CH$ to $\Cam$. In all cases, we will further abbreviate $\Norm{h}_{I_n}$ to
  $\Norm{h}_{n}$.

Observe now that the definitions of $\widetilde\MM_n$ and $\AM_n$ imply
the following almost sure bounds:
  \begin{equ}\label{eq:AMBounds}
    \Norm{\widetilde\MM_n^{-1/2}\AM_n}_n  \leq 1\;, \qquad \Norm{\AM_n^*
    \widetilde\MM_n^{-1/2}}_n  \leq 1\;, \qquad \|\widetilde \MM_n^{-1/2}\| \leq
    \beta_n^{-1/2}\;.
  \end{equ}

We start by bounding the first term on
  the right hand-side of \eqref{eq:skorhodItoFormula}. Observe that
  \begin{equation}\label{eq:itoPartBound}
    \Norm{h}_{[0,2n]}^2  = \sum_{k=0}^n   \Norm{h}_{2k}^2\,.
  \end{equation}
  Using the bound on $\AM_{k}^*\widetilde\MM_{k}^{-1}$ from
  \eqref{eq:AMBounds}, we obtain
  \begin{align*}
    \Norm{h}_{2k}=\Norm{\AM_{{2k}}^*\widetilde\MM_{2k}^{-1} J_{2k}
    \rho_{2k}}_{2k} \leq \beta_{2k}^{-1/2} \|J_{2k}\| \|\rho_{2k}\|\;.
  \end{align*}
  By our assumption that $10/\bar p + 2/\bar q \leq 1$ we can find
  $1/q + 1/r + 1/p = 1$ with $q \leq \bar q$, $2r \leq \bar p$ and $2p\leq
  \bar p$. By the H\"older inequality we thus have
  \begin{align*}
    \EE \Norm{h}_{2k}^2\leq \big(\E \beta_{2k}^{-q}\big)^{1/q}
    \big(\E\|J_{2k}\|^{2r}\big)^{1/r} \big(\E\|\rho_{2k}\|^{2p}\big)^{1/p}\;.
  \end{align*}
  From Proposition~\ref{prop:Jcontrol},
  Assumption~\ref{ass:Malliavin} and Lemma~\ref{lem:Rconstrol}, we obtain the existence
  of a positive constant $C$ (depending only on the choice made for $\tilde \kappa$ and on the bounds
  given by our standing assumptions) such that one has the bounds 
\begin{equs}[e:boundsRhoJBeta]
\big(\E  \|J_{2k}\|^{2r}\big)^{1/r} &\leq \exp \bigl(2\eta (\eta')^{2k} V(u_0)
  + 2C_J + 2\kappa C_L \bigr)\;, \\
\big(\EE \beta_{2k}^{-q}\big)^{1/q} &\leq C \U(u_0)\;, \\
\big(\E\|\rho_{2k}\|^{2p}\big)^{1/p} &\le \exp(2 \kappa V(u_0) - 2\tilde \kappa k) \|\xi\|^2\;.
\end{equs}
combining these bounds and summing over $k$ yields
\begin{align}\label{eq:controlItoPart}
 \EE   \Norm{h}_{[0,2n]} ^2 \leq C \U(u_0) \exp(2 (\eta + \kappa) V(u_0)) \|\xi\|^2\;,
\end{align}
uniformly in $n \ge 0$.

We now turn to bound the second term on the right hand side of \eref{eq:skorhodItoFormula}.
  Since the columns of the matrix representation of the integrand are
  just $\DM_s^i$, the $i$th component of the Malliavin derivative, we
  have
  \begin{equation}\label{eq:nonAddPart}
    \int_{2k}^{2k+1}
    \int_{2k}^{2k+1}  \norm{\DM_s h_t}^2\,ds\,dt=
    \sum_{i=1}^m\int_{2k}^{2k+1} \Norm{\DM_s^i h}_{2k}^2\,ds\,.
  \end{equation}
  From the  definition of $h_t$, Lemma~\ref{lem:DJ_DA_Control}, the relation $\widetilde\MM_{2k} = \AM_{2k}\AM_{2k}^* + \beta_{2k}$, and the fact that both $\rho_{2k}$ and $\beta_{2k}$ are $\CF_{2k}$-measurable,
   we have that
    for fixed $s \in I_{2k}$, $ \DM_s^i h$ is an element of $L^2(I_{2k},\R) \subset \Cam$ with:
  \begin{align}\label{eq:DH}
    \DM_s^i h &= (\DM_s^i \AM^*_{2k})\widetilde\MM_{2k}^{-1} J_{2k}
    \rho_{2k} + \AM^*_{2k}\widetilde\MM_{2k}^{-1}
    (\DM_s^i J_{2k}) \rho_{2k}\\
    &- \AM^*_{2k}\widetilde \MM_{2k}^{-1}\big( (\DM_s^i \AM_{2k})
    \AM_{2k}^* +\AM_{2k}(\DM_s^i \AM_{2k}^*) \big)\widetilde \MM_{2k}^{-1}
    J_{2k} \rho_{2k}\,.\notag
  \end{align}
  For brevity we suppress the subscripts $k$ on the operators and
  norms for a moment. 
It then follows from \eref{eq:AMBounds} that one has the almost sure bounds
  \begin{equs}
    \Norm{\widetilde\MM^{-1}\AM} &\leq \| \widetilde\MM^{-1/2}\|
    \Norm{\widetilde\MM^{-1/2}\AM} \leq \beta^{-1/2}\;, \\
    \Norm{\AM^* \widetilde\MM^{-1}} & \leq \Norm{\AM^*
    \widetilde\MM^{-1/2}}\|
    \widetilde\MM^{-1/2}\| \leq \beta^{-1/2}\;, \\
    \Norm{(\DM_s^i \AM^*)\widetilde\MM^{-1} J}&\leq \Norm{\DM_s^i
    \AM^*}\|\widetilde\MM^{-1}\|
    \|J\|\leq \beta^{-1} \Norm{\DM_s^i \AM}\|J\|\;, \\
    \Norm{\AM^*\widetilde\MM^{-1} (\DM_s^i J
    )}&\leq\Norm{\AM^*\widetilde\MM^{-1}}\|\DM_s^i
    J\|\leq\beta^{-1/2}\|\DM_s^i J\|\;.
  \end{equs}
  In particular, this yields the bounds
  \begin{equs}
    \Norm{\AM^*\widetilde \MM^{-1} (\DM_s^i \AM) \AM^*\widetilde
      \MM^{-1} J} &\leq \Norm{\AM^*\widetilde \MM^{-1}}^2
    \Norm{\DM_s^i \AM}\|J\|
    \leq \beta^{-1} \Norm{\DM_s^i \AM} \|\| J\|\\
    \Norm{\AM^*\widetilde \MM^{-1}\AM(\DM_s^i \AM^*) \widetilde
      \MM^{-1} J} &\leq \Norm{\AM^*\widetilde
      \MM^{-1/2}}^2\Norm{\DM_s^i \AM^*}\| \widetilde
    \MM^{-1}\|\|J\| \\
    & \leq \beta^{-1}\Norm{\DM_s^i \AM} \|J \|\,.
  \end{equs}
  Applying all of these estimates to \eqref{eq:DH} we obtain the bound
  \begin{align*}
    \Norm{\DM_s^i h}_{2k} \leq 3 \beta_{2k}^{-1} \Norm{\DM_s^i \AM_{2k}}_{2k}
    \|J_{2k}\| \| \rho_{2k}\|+ \beta_{2k}^{-1/2}\|\DM_s^i J_{2k}\| \| \rho_{2k}\|\,.
  \end{align*}
  The assumption that $10/\bar p + 2/\bar q \leq 1$ ensures that we
  can find $q \leq \bar q/2$, $r \leq \bar p/2$ and $p \leq \bar p/4$ with
  $1/r + 2/p+1/q = 1$.  Applying H\"older's inequality to the
  preceding products yields: 
  \begin{align*}
    \EE \Norm{\DM_s^i h}_{2k}^2 \leq& 18 \bigl(\E
    \beta_{2k}^{-2q}\big)^{1/q}\bigl( \EE \Norm{\DM_s^i
      \AM_{2k}}_{2k}^{2p}\EE \|\rho_{2k}\|^{2p}\big)^{1/p} \bigl(\EE
    \|J_{2k} \|^{2r}\bigr)^{1/r} \\ &+ 2\bigl(\E
    \beta_{2k}^{-q}\bigl)^{1/q} \bigl(\EE\|\DM_s^i J_{2k}\|^{2p}
    \EE\|\rho_{2k}\|^{2p}\big)^{1/p}\;.
  \end{align*}
  We now use previous estimates to control each term. 
  From Lemma~\ref{lem:DJ_DA_Control} and Proposition~\ref{prop:Jcontrol}, we have the bounds 
  \begin{align*}
    \bigl( \EE\|\DM_s^i
    J_{2k}\|^{2p}\big)^{1/p}
    &\leq \exp(4\eta (\eta')^{2k} V(u_{0}) + 4\kappa C_L + 2C_J +  2C_J^{(2)})\;,\\
    \bigl( \EE \Norm{\DM_s^i \AM_{2k}}_{2k}^{2p}\big)^{1/p}
    &\leq \|G\|^2\exp(4\eta (\eta')^{2k} V(u_{0}) + 4\kappa C_L + 2C_J +  2C_J^{(2)})\;.
  \end{align*}
Recall furthermore the bounds on $\rho_{2k}$ and $J_{2k}$ already mentioned in \eref{e:boundsRhoJBeta}.
Lastly, from Assumption~\ref{ass:Malliavin} we have that, similarly as before, there exists a positive constant $C$
such that
\begin{align*}
  \big(\EE \beta_{2k}^{-q}\big)^{1/q} \leq  \big(\EE \beta_{2k}^{-2q}\big)^{1/q} \leq C \U^2(u_0)\;.
\end{align*}
Combining all of these estimates produces
\begin{align*}
   \sum_{i=1}^m\int_{2k}^{2k+1} \Norm{\DM_s^i h}_{2k}^2\,ds \leq C \exp((8 \eta + 2\kappa) V(u_0)) \U^2(u_0)\;,
\end{align*}
for some different constant $C$ depending only on $C_J,C_J^{(2)}, C_L,
\eta, \kappa,\tilde \kappa$ and the choice of $\delta$ in \eref{e:betaChoice}. 
Combining this estimate with \eqref{eq:controlItoPart} and \eqref{eq:skorhodItoFormula} concludes the proof.
 \end{proof}

\section{Spectral properties  of the Malliavin matrix}
\label{sec:mallSpec}
The results in this section build on the ideas and techniques from
\cite{MatPar06:1742} and \cite{Yuri}. In the first, the specific case
of the 2D-Navier Stokes equation was studied using similar ideas. The
time reversed representation of the Malliavin matrix used there is
also the basis of our analysis here (see also \cite{Oco88:288}). In
the context for the 2D-Navier Stokes equations, a result
analogous to Theorem~\ref{theo:Malliavin} was proven. As here, one of
the key results needed is a connection between the typical size of a
non-adapted Wiener polynomial and the typical size of its
coefficients. In \cite{MatPar06:1742}, since the non-linearity was
quadratic, only Wiener polynomials of degree one were considered
and the calculations and formulation were made a
coordinate dependent fashion. In \cite{Yuri}, the calculations were
reformulated in a basis free fashion which both made possible the
extension to more complicated non-linearities and the inclusion of
forcing which was not diagonal in the chosen basis. Furthermore in
\cite{Yuri}, a result close to Theorem~\ref{theo:Malliavin} was
claimed. Unfortunately, the auxiliary Lemma~9.12 in
that article contains a mistake, which left the proof of this result incomplete.  

That being
said, the techniques and presentation used in this and the next
section build on and refine those from \cite{Yuri}. One technical, but
important, distinction between Theorem~\ref{theo:Malliavin} and the
preceding versions is that Theorem~\ref{theo:Malliavin} allows for
rougher test functions. This is accomplished by allowing $K_{t,T}$ to
have a singularity in a certain interpolation norm as $t \rightarrow T$. See
equation \eqref{e:apadjnorm} for the precise form. This extension is
important in correcting an error in \cite{HairerMattingly06AOM} which
requires control of the Malliavin matrix of a type given by
Theorem~\ref{theo:Malliavin}, that is with test functions rougher than
those allowed in \cite{MatPar06:1742}. Indeed, the second inequality in equation
(4.25) of \cite{HairerMattingly06AOM} is not justified, since the operator $M_0$ is only selfadjoint
in $L^2$ and not in $H^1$. Theorem~\ref{theo:Malliavin}
rectifies the situation by dropping the requirement to work with $H^1$ completely.

\subsection{Bounds on the dynamic}
\label{sec:boundsdyn}

\SetAssumptionCounter{C}

As the previous sections have shown, it is sufficient to have control
on the moments of $u$ and $J$ in $\CH$ to control their moments in many
stronger norms. This motivates the next assumption. For the entirety of this section we fix a $T_0>0$.
\begin{assumption}\label{ass:global} 
  There exists a continuous function $\Psi_0\colon \CH \to [1,\infty)$
  such that, for every $T \in(0,T_0]$ and every $p \ge 1$ there exists a
  constant $C$ such that
   \begin{equs}
     \E \sup_{T \le t \le 2T} \|u_t\|^p &\le C\Psi_0^p(u_0)\;,\\
     \E \sup_{T \le s < t \le 2T} \|J_{s,t}\|^p &\le C\Psi_0^p(u_0)\;,
  \end{equs}
  for every $u_0 \in \CH$. Here, $\|J\|$ denotes the operator norm of
  $J$ from $\CH$ to $\CH$.
\end{assumption}
Under this assumption, we immediately obtain control over the adjoint $K_{s,t}$.
\begin{proposition}\label{lem:kbound}
  Under Assumption~\ref{ass:global} for every $T  \in(0,T_0]$
  and every $p \ge 1$ there exists a constant $C$ such that
  \begin{equ}
    \E \sup_{T \le s < t \le 2T} \|K_{s,t}\|^p \le C\Psi_0^p(u_0)\;,
  \end{equ}
  for every $u_0 \in \CH$.
\end{proposition}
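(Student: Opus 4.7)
The plan is to leverage the adjoint relation $K_{s,t} = J_{s,t}^*$ established in Proposition~\ref{e:JKadjoint} to reduce the claimed bound on $K$ to the bound on $J$ already given in Assumption~\ref{ass:global}.

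First, recall that by Assumption~\ref{ass:global}, $J_{s,t}$ extends almost surely to a bounded linear operator on $\CH$ for every $0 \le s < t$. By Proposition~\ref{e:JKadjoint}, $K_{s,t}$ coincides with its Hilbert-space adjoint $J_{s,t}^*$. Since for any bounded operator $A$ on a Hilbert space one has the elementary identity $\|A^*\|_{\CH \to \CH} = \|A\|_{\CH \to \CH}$, it follows pathwise that
\begin{equ}
\|K_{s,t}\| = \|J_{s,t}^*\| = \|J_{s,t}\|\;,
\end{equ}
for all $T \le s < t \le 2T$.

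Taking the supremum over $T \le s < t \le 2T$ (which is measurable by continuity of $(s,t) \mapsto J_{s,t}$ on the relevant region, inherited from the mild formulation of \eref{e:Jacobian}) and raising to the $p$-th power preserves this equality. Taking expectations and invoking the second bound in Assumption~\ref{ass:global} yields
\begin{equ}
\E \sup_{T \le s < t \le 2T} \|K_{s,t}\|^p = \E \sup_{T \le s < t \le 2T} \|J_{s,t}\|^p \le C\Psi_0^p(u_0)\;,
\end{equ}
as required.

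There is really no substantial obstacle here: the statement is essentially a direct consequence of Proposition~\ref{e:JKadjoint} combined with the isometry of the adjoint map on operator norms. The only mild subtlety worth flagging explicitly is measurability of the supremum, which is standard and follows from sample path continuity of $J_{s,t}$ in $(s,t)$.
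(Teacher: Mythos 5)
Your proof is correct and follows exactly the paper's own argument: invoke Proposition~\ref{e:JKadjoint} to identify $K_{s,t}$ with $J_{s,t}^*$, use the equality of operator norms under taking adjoints, and then apply the bound on $J_{s,t}$ from Assumption~\ref{ass:global}. The remark on measurability of the supremum is a harmless extra detail that the paper leaves implicit.
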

\begin{proof}
  By Proposition~\ref{e:JKadjoint} we know that $K_{s,t}$ is the
  adjoint of $J_{s,t}$ in $\CH$. Combined with
  Assumption~~\ref{ass:global} this implies the result.
\end{proof}
In the remainder of this section, we will study the solution
to \eref{e:SPDE} away from $t = 0$ and up to some terminal time
$T$ which we fix from now on. We also introduce the interval $\II = [{T\over 2},
T-\delta]$ for some $\delta \in (0, {T\over 4}]$ to be determined later.
Given $u_t$ a solution to \eref{e:SPDE}, we also define a process
$v_t$ by $v_t = u_t - GW(t)$, which is more regular in time.
Using Assumption~\ref{ass:global} and the \text{a priori} estimates
from the previous sections, we obtain:
\begin{proposition}\label{prop:apriori} Let
  Assumption~\ref{ass:global} hold and $\Psi_0$ be the function
  introduced there.  For any fixed $\gamma < \gamma_\star$ and $\beta
  < \beta_\star$ there exists a positive $q$ so that if
  $\Psi=\Psi_0^q$ then the solutions to \eref{e:SPDE} satisfy the
  following bounds for every initial condition $u_0 \in \CH$:
  \minilab{e:aprioriuv}
  \begin{equs}
    \EE \sup_{t\in\II} {\|u_t\|_{\gamma+1}^p} &\le {C_p \Psi^p(u_0)}\;,
    \label{e:apsol}\\
    \EE \sup_{t\in\II} \|\d_t v_t\|_{\gamma}^p &\le {C_p \Psi^p(u_0)}\;.
    \label{e:aplip}
  \end{equs}
  Furthermore, its linearization $J_{0,t}$ is bounded by  \minilab{e:aprioriJ}
  \begin{equs}
    \EE \sup_{t\in\II}  \sup_{\|\phi\| \le 1}{\|J_{0,t} \phi\|_{\gamma+1}^p} &\le {C_p
      \Psi^p(u_0)}\;,
    \label{e:Jsol}\\
    \EE \sup_{t\in\II}  \sup_{\|\phi\| \le 1}\|\d_t J_{0,t} \phi\|_{\gamma}^p &\le {C_p
      \Psi^p(u_0)}\;.
    \label{e:Jlip}
  \end{equs}
  Finally, the adjoint $K_{t,T}$ to the linearization satisfies the
  bounds \minilab{e:aprioriK}
  \begin{equs}
    \EE \sup_{t \in \II} \sup_{\|\phi\| \le 1}
    \|K_{t,T}\phi\|_{\beta+1}^p &\le {C_p\Psi^p(u_0) \over
      \delta^{\bar
        p_\beta p}}\;, \label{e:apadjnorm} \\
    \EE \sup_{t \in \II} \sup_{\|\phi\| \le 1} \|\d_t
    K_{t,T}\phi\|_\beta^p &\le {C_p\Psi^p(u_0) \over \delta^{\bar
        p_\beta p}}\;, \label{e:apadj}
  \end{equs}
 where $\bar p_\beta$ is as in Proposition~\ref{prop:bootstrapK}. In all these bounds,
 $C_p$ is a constant depending only on $p$ and on the details of the equation \eref{e:SPDE}.
\end{proposition}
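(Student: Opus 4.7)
\medskip
\noindent\textbf{Proof proposal.} The four pairs of estimates in Proposition~\ref{prop:apriori} are obtained by feeding the pathwise bootstrap results of Propositions~\ref{prop:bootstrapping}, \ref{prop:bootstrapJ} and \ref{prop:bootstrapK} into the base moment bounds supplied by Assumption~\ref{ass:global} and Proposition~\ref{lem:kbound}, together with Gaussian moment bounds on the stochastic convolution $\WL$ (valid in $\CH_{\gamma_\star+1}$ since each $g_k \in \CH_{\gamma_\star+1}$). In what follows I treat each pair in turn; in every case the exponent $q$ in the definition $\Psi = \Psi_0^q$ is chosen large enough at the end to absorb the various polynomial exponents that appear.

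For the $u$-bounds, I iterate Proposition~\ref{prop:bootstrapping} a finite number of times to reach the target index $\gamma+1 < \gamma_\star + 1$, producing a pathwise estimate of the form
\begin{equ}
\sup_{t \in \II}\|u_t\|_{\gamma+1} \le C\,T^{-q_{\gamma+1}}\Bigl(1 + \sup_{s \in [T/4,T]}\|u_s\| + \sup_{0\le s < r \le T}\|\WL(s,r)\|_{\gamma+1}\Bigr)^{p_{\gamma+1}}\;.
\end{equ}
Raising to the $p$-th power, expanding via $(1+A+B)^k \le 3^{k-1}(1+A^k+B^k)$, and taking expectations: the $A$-term is controlled by Assumption~\ref{ass:global}, and the $B$-term by the standard Gaussian moment estimate on $\|\WL\|_{\gamma+1}$, since the $g_k$ live in a strictly stronger space. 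This yields \eref{e:apsol}. The estimate \eref{e:aplip} then follows from the identity $\d_t v_t = -Lu_t + N(u_t)$ together with Assumption~\ass{N}: we have $\|\d_t v_t\|_\gamma \le \|u_t\|_{\gamma+1} + C(1+\|u_t\|_{\gamma+a})^n$, and both summands are controlled by \eref{e:apsol} (with $q$ enlarged by a factor $n$ to handle the polynomial growth).

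For the Jacobian, Proposition~\ref{prop:bootstrapJ} gives
\begin{equ}
\|J_{0,t}\phi\|_{\gamma+1} \le C\, t^{-(\gamma+1)} \sup_{r \in [t/2,t]}(1+\|u_r\|_{\gamma_0})^{\tilde q_{\gamma+1}}\|J_{0,r}\phi\|\;,
\end{equ}
so for $t \in \II$ the prefactor is bounded by $(T/2)^{-(\gamma+1)}$. Taking $p$-th moments, using H\"older's inequality, combining the already-established bound on $\|u_r\|_{\gamma_0}$ with the $\|J_{0,r}\|$-bound from Assumption~\ref{ass:global} gives \eref{e:Jsol}. Then \eref{e:Jlip} follows by differentiating the mild equation for $J$, which yields $\d_t J_{0,t}\phi = -LJ_{0,t}\phi + DN(u_t)J_{0,t}\phi$, and estimating each term in $\|\cdot\|_\gamma$ using Assumption~\ass{N} and what has already been proved.

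The adjoint bounds are treated identically with Proposition~\ref{prop:bootstrapK} taking the role of Proposition~\ref{prop:bootstrapJ}, except that we now parametrise as $s = T-t$ with $s \ge \delta$ for $t \in \II$. Applying the bootstrap up to index $\beta+1$ therefore produces the pathwise bound
\begin{equ}
\|K_{t,T}\phi\|_{\beta+1} \le C\,\delta^{-(\beta+1)} \sup_{r \in [\delta/2, T/2]}(1+\|u_{T-r}\|_\gamma)^{\bar q_{\beta+1}}\|K_{T-r,T}\phi\|\;,
\end{equ}
and taking $p$-th moments together with \eref{e:apsol} and Proposition~\ref{lem:kbound} gives \eref{e:apadjnorm}; the exponent $\bar p_\beta$ absorbs all the accumulated powers of $\delta^{-1}$. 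Finally, \eref{e:apadj} follows from $\d_s K_{s,T}\phi = LK_{s,T}\phi - DN^*(u_s) K_{s,T}\phi$: the first term is controlled by \eref{e:apadjnorm}, and the second by Assumption~\ass{DN}, which (for $\gamma$ close enough to $\gamma_\star+1$) extends $DN^*$ to a continuous map $\CH_\gamma \to \CL(\CH_{\beta+a}, \CH_\beta)$ with polynomial growth. The main genuine obstacle is simply the bookkeeping of the exponents $p_\gamma, q_\gamma, \tilde q_\gamma, \bar q_\beta$ through the successive bootstraps so that a single exponent $q$ in $\Psi = \Psi_0^q$ accommodates all four estimates for every fixed $p$; the singular behaviour of $K$ at $t=T$, which forces the restriction $t \le T-\delta$ and the $\delta^{-\bar p_\beta p}$ factor, is already encoded in Proposition~\ref{prop:bootstrapK} and needs no further work.
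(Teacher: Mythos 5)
Your proposal is correct and follows essentially the same route as the paper: the base moment bounds of Assumption~\ref{ass:global} (and Proposition~\ref{lem:kbound} for $K$) are combined with the pathwise bootstrap estimates of Propositions~\ref{prop:bootstrapping}, \ref{prop:bootstrapJ} and \ref{prop:bootstrapK}, with the Gaussian moments of $\WL$ in $\CH_{\gamma+1}$ handling the stochastic-convolution term and the time derivatives read off from the equations $\d_t v_t = -Lu_t + N(u_t)$, $\d_t J_{0,t} = -LJ_{0,t} + DN(u_t)J_{0,t}$ and $\d_t K_{t,T} = LK_{t,T} - DN^*(u_t)K_{t,T}$. The paper's own proof is just a terser version of exactly this argument, so no further comment is needed.
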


\begin{remark}
  One can assume without loss of generality, and we will do so from
  now on, that the exponent $q$ defining $\Psi$ is greater or equal to
  $n$, the degree of the nonlinearity. This will be useful in the
  proof of Lemma~\ref{lem:lipDeltaAdj} below.
\end{remark}

\begin{proof}
  It follows immediately from Assumption~\ref{ass:global} that
  \begin{equ}
    \E \sup_{t \in [T/4, T]} \|u_t\|^p \le C\Psi_0^{p}(u_0)\;.
  \end{equ}
  Combining this with Proposition~\ref{prop:bootstrapping} yields
  the first of the desired bounds with $q=p_\gamma$. Here, $\Psi_0$ is as
  in Assumption~\ref{ass:global} and $p_\gamma$ is as in
  Proposition~\ref{prop:bootstrapping}.

  Turning to the bound on $\d_t v_t$, observe that $v$ satisfies the
  random PDE
  \begin{equ}
    \d_t v_t = F\big(v_t + GW(t)\big) = F(u_t)\;,\quad v_0 = u_0\;.
  \end{equ}
  It follows at once from Proposition~\ref{prop:bootstrapping} and
  Assumption~\ass{N} that the quoted estimate holds with
  $q=p_{\gamma+1}$. More precisely, it follows from
  Proposition~\ref{prop:bootstrapping} that $u_t \in \CH_\alpha$ for
  every $\alpha < \gamma_\star + 1$.  Therefore, $Lu_t \in \CH_\gamma$
  for $\gamma < \gamma_\star$. Furthermore, $N \in
  \Poly(\CH_{\gamma+1}, \CH_\gamma)$ by Assumption~\ass{N}, so that
  $N(u_t) \in \CH_{\gamma}$ as well.  The claim then follows from the
  \textit{a priori} bounds obtained in
  Proposition~\ref{prop:bootstrapping}.

  Concerning the bound \eref{e:Jsol} on the linearization $J_{0,t}$, Proposition~\ref{prop:bootstrapJ}
  combined with Assumption~\ref{ass:global} proves the result with
  $q=\bar q_\gamma+1$.  The line of reasoning used to bound
  $\|\d_t v_t\|_\gamma$ also controls    $\|\d_t J_{s,t}\|_\gamma$ for $s<t$ and
  $s,t \in \II$, since $\d_t J_{s,t}= - L J_{s,t} + DN(u_t)
  J_{s,t}$.  

  Since Proposition~\ref{lem:kbound} give an completely analogous
  bound for $K_{s,t}$ in $\CH$ as for $J_{s,t}$ the results on $K$
  follow from the \textit{a priori} bounds in  Proposition~\ref{prop:bootstrapK}.
 \end{proof}

\subsection{A H\"ormander-like theorem in infinite dimensions}
\label{sec:Hormander}

In this section, we are going to formulate a lower bound on the
Malliavin covariance matrix $\MM_t$ under a condition that is very
strongly reminiscent of the bracket condition in H\"ormanders
celebrated ``sums of squares'' theorem \cite{Ho,H1}. The proof of the result presented in this
section will be postponed until Section~\ref{sec:proofMalliavin} and constitutes
the main technical result of this work.

Throughout all of this section and Section~\ref{sec:proofMalliavin}, we are going to make use of the bounds
outlined in Proposition~\ref{prop:apriori}.  We therefore now fix once
and for all some choice of constants
\begin{equ}[e:gammabeta]
  \gamma \in [-a,\gamma_\star) \quad\text{and}\quad \beta \in
  [-a,\beta_\star) \quad\text{satisfying}\quad \gamma + \beta \ge -1
  \;.
\end{equ}
From now on, we will only ever use Proposition~\ref{prop:apriori} with
this fixed choice for $\gamma$ and $\beta$. This is purely a
convenience for expositional clarity since we will need these bounds
only finitely many times.  As a side remark, note that one should think
of these constants as being arbitrarily close to $\gamma_\star$ and
$\beta_\star$ respectively.

With $\gamma$ and $\beta$ fixed as in \eref{e:gammabeta}, we introduce
the set
\begin{equ}[e:defPolygb]
  \Poly(\gamma,\beta) \eqdef \Poly(\CH_{\gamma},\CH_{-\beta-1}) \cap
  \Poly(\CH_{\gamma+1},\CH_{-\beta})
\end{equ}
for notational convenience. (For integer $m$, $\Poly^m(\gamma,\beta)$
is defined analogously.) A polynomial $Q \in \Poly(\gamma,\beta)$ is
said to be \textit{admissible} if
\begin{equ}{} [Q_\alpha, F_\sigma] \in \Poly(\gamma,\beta)\;,
\end{equ}
for \textit{every} pair of multi-indices $\alpha, \sigma$. Here, $Q_\alpha$ and
$F_\sigma$ are defined as in \eref{e:TaylorQ} and $F$ is the drift
term of the SPDE \eref{e:SPDE} defined in \eref{e:defF}.

This definition allows us to define a family of increasing subsets
$\AA_i \subset \Poly(\gamma,\beta)$ by the following recursion:
\begin{equs}
  \AA_1 &= \{g_k\;,\quad k=1,\ldots, d\} \subset \CH_{\gamma_\star + 1}  
 \approx  \Poly^0(\CH_{\gamma_\star+1}) \subset \Poly(\gamma,\beta)\;, \\
  \AA_{i+1} &= \AA_i \cup \{Q_\alpha,\; [F_\sigma, Q_\alpha]\;:\; Q\in \AA_i,\;
  \text{$Q$ admissible}, \text{ and } \alpha,\sigma \text{ multi-indices} \} \;.
\end{equs}
\begin{remark}
  Recall from \eref{e:repeatedBrakets} that $Q_\alpha$ is proportional
  to the iterated ``Lie bracket'' of $Q$ with $g_{\alpha_1}$,
  $g_{\alpha_2}$ and so forth. Similarly, $[F_\sigma, Q_\alpha]$ is
  the Lie bracket between two different iterated Lie brackets. As such,
  except for the issue of admissibility, the set of brackets considered here
  is exactly the same as in the traditional statement of H\"ormander's
  theorem, only the order in which they appear is slightly different.
\end{remark}
To each $\AA_N$ we associate a positive symmetric quadratic form-valued function
$\CQ_N$ by
\begin{equ}
  \scal{\phi, \CQ_N(u)\phi} = \sum_{Q \in \AA_N} \scal{\phi,
    Q(u)}^2\;.
\end{equ}
Lastly for $\alpha \in (0,1)$, and for a given orthogonal projection $\Pi\colon \CH \to \CH$,
we define $\CS_\alpha \subset \CH$ by
\begin{equ}[e:Sdef]
  \CS_\alpha = \{\phi \in \CH \setminus \{0\} \,:\, \|\Pi\phi\| \ge
  \alpha \|\phi\| \}\;.
\end{equ}
With this notation, we make the following non-degeneracy assumption:

\begin{assumption}\label{ass:Hormander}
  For every $\alpha>0$, there exists $N>0$ and a function
  $\Lambda_\alpha \colon \CH \to [0,\infty)$ such that
  \begin{equ}
    \inf_{\phi \in \CS_\alpha} {\scal{\phi, \CQ_N(u)\phi} \over
      \|\Pi\phi\|^2} \ge \Lambda_\alpha^2(u) \;,
  \end{equ}
  for every $u \in \CH_a$.  Furthermore, for every $p \ge 1$, $t >0$ and every
  $\alpha \in (0,1)$, there exists $C$ such that $\E\,
  \Lambda_\alpha^{-p}(u_t) \le C \Psi^p(u_0)$ for every initial
  condition $u_0 \in \CH$.
\end{assumption}

\begin{remark}
Assumption~\ref{ass:Hormander} is in some sense weaker than the usual non-degeneracy
condition of H\"ormander's theorem, since it only requires $\CQ_N$ to be sufficiently non-degenerate
on the range of $\Pi$. In particular, if $\Pi = 0$, then  Assumption~\ref{ass:Hormander} is void and always
holds with $\Lambda_\alpha = 1$, say. This is the reason why, by choosing for $\Pi$ a projector onto
some finite-dimensional subspace of $\CH$, one can expect Assumption~\ref{ass:Hormander} to hold
for a finite value of $N$, even in our situation where $\AA_N$ only contains finitely many elements.
\end{remark}

\begin{remark}
As will be seen in Section~\ref{sec:examples}, it is often possible to choose $\Lambda_\alpha$ to be
a constant, so that the second part of Assumption~\ref{ass:Hormander} is automatically satisfied.
\end{remark}

When Assumption~\ref{ass:Hormander}  holds, we have the following result whose proof
is given in Section~\ref{sec:proofMalliavin}.
\begin{theorem}\label{theo:Malliavin}
Consider an SPDE of the type \eref{e:SPDE} such that Assumptions~\ref{ass:basic} and \ref{ass:global} hold.
  Let furthermore the Malliavin matrix $\MM_t$ be defined as in \eref{e:defM} and
  $\CS_\alpha$ as in \eref{e:Sdef}. Let $\Pi$ be a finite rank orthogonal
  projection satisfying Assumption~\ref{ass:Hormander}. Then, there
  exists $\theta>0$ such that, for every $\alpha \in (0,1)$, every $p \ge 1$ and every $t>0$
  there exists a  constant $C$ such that the bound
  \begin{equ}
    \P \Bigl( \inf_{\phi \in \CS_\alpha} {\scal{\phi, \MM_t \phi}\over \|\phi\|^2} \le
    \eps \Bigr) \le C \Psi^{\theta p}(u_0) \eps^{p}\;,
  \end{equ}
holds for every $u_0 \in \CH$ and every $\eps \le 1$.
\end{theorem}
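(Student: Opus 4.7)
The plan is to establish the bound via an induction over the hierarchy $\AA_1 \subset \AA_2 \subset \cdots$, reducing the hypothesis $\scal{\MM_T\phi,\phi} \le \eps\|\phi\|^2$ first to uniform smallness of a family of scalar processes $Z^Q_s := \scal{Q(u_s), K_{s,T}\phi}$ on a sub-interval of $[T/2, T-\delta]$, and eventually to an inequality that will contradict Assumption~\ref{ass:Hormander} evaluated at $u_T$. The starting point is the representation $\scal{\MM_T\phi,\phi} = \sum_k \int_0^T \scal{g_k, K_{s,T}\phi}^2\,ds$ from \eref{e:Mscalar}: if the left-hand side is at most $\eps\|\phi\|^2$, then each $Z^{g_k}_s$ is small in $L^2(0,T)$. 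Using the H\"older regularity of $s \mapsto K_{s,T}\phi$ afforded by \eref{e:apadj} (whose constants are polynomially controlled by $\Psi(u_0)$), I will convert this $L^2$-smallness into a uniform smallness of each $Z^{g_k}_s$ on $\II$, at the cost of replacing $\eps$ by some fractional power $\eps^{\kappa_0}$ and accumulating a factor of $\Psi$.

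The inductive step is where the polynomial structure of the equation is crucial. For $Q \in \AA_i$ admissible, I will apply It\^o's formula to $Z^Q_s$ and exploit the backward equation \eref{e:defKst} for $K_{s,T}\phi$, together with the identity $LQ(u) - DN(u)Q(u) = -DF(u)Q(u)$ (which comes from $F = -L + N$), to obtain
\begin{equ}
  dZ^Q_s = \scal{[F,Q](u_s),K_{s,T}\phi}\,ds + \tfrac{1}{2}\sum_k \scal{D^2Q(u_s)(g_k,g_k),K_{s,T}\phi}\,ds + \sum_k \scal{DQ(u_s)g_k, K_{s,T}\phi}\,dW_k(s).
\end{equ}
Iterating this relation on a short sub-interval $[s,s+h]$ and Taylor-expanding $Q$ in the noise variables as in \eref{e:TaylorQ}--\eref{e:repeatedBrakets} yields a representation of the increment as a \emph{Wiener polynomial}
\begin{equ}
  Z^Q_{s+h} - Z^Q_s = \sum_{\alpha} A^Q_\alpha(s)\,W_{\alpha_1}(s,s+h)\cdots W_{\alpha_{|\alpha|}}(s,s+h) + R(s,h),
\end{equ}
whose coefficients are proportional (up to factorials) to $\scal{Q_\alpha(u_s), K_{s,T}\phi}$ and $\scal{[F_\sigma,Q_\alpha](u_s), K_{s,T}\phi}$, with Lipschitz constants in $s$ controlled by Proposition~\ref{prop:apriori}, and whose remainder $R$ is of higher order in $h$. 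Invoking the non-adapted Wiener polynomial lower bound of Section~\ref{sec:Wiener}, uniform smallness of the left-hand side on $[s,s+h]$ will force uniform smallness of every coefficient. Since admissibility of $Q$ guarantees both that the relevant $Q_\alpha$ and $[F_\sigma, Q_\alpha]$ remain in $\Poly(\gamma,\beta)$ and that, by construction, they belong to $\AA_{i+1}$, the inductive hypothesis will propagate (again at the price of a further fractional loss in the exponent of $\eps$).

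After at most $N$ iterations, one obtains that, on a set of probability at least $1 - C\Psi^{\theta p}(u_0)\eps^p$, the quantity $\sup_s |\scal{Q(u_s), K_{s,T}\phi}|$ is $O(\eps^{\kappa_N})$ uniformly over $Q \in \AA_N$ on a sub-interval accumulating at $T$. Letting $s \uparrow T$ using the continuity estimates in Proposition~\ref{prop:apriori} will translate this into $|\scal{Q(u_T),\phi}| = O(\eps^{\kappa_N})$ for every $Q \in \AA_N$, hence $\scal{\phi,\CQ_N(u_T)\phi} = O(\eps^{2\kappa_N})$. Combining this with Assumption~\ref{ass:Hormander}, which on $\CS_\alpha$ gives $\scal{\phi,\CQ_N(u_T)\phi} \ge \alpha^2\Lambda_\alpha^2(u_T)\|\phi\|^2$, forces $\Lambda_\alpha(u_T)$ to be small, and the moment bound $\E\Lambda_\alpha^{-p}(u_T) \le C\Psi^p(u_0)$ then closes the Chebyshev argument; the final exponent $\theta$ collects all the polynomial-in-$\Psi$ losses accumulated along the way. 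The main obstacle will be the inductive step: because $K_{s,T}\phi$ looks into the future, the coefficients $A^Q_\alpha(s)$ are \emph{not} adapted to $\CF_s$, so the classical Norris lemma is unavailable and one must rely on the more delicate non-adapted Wiener polynomial estimate of Section~\ref{sec:Wiener}; moreover, the loss of exponent at each of the $N$ iterations must be tracked carefully, together with polynomial control on the moments of all Lipschitz constants involved, to ensure that the final bound is genuinely polynomial in $\eps$.
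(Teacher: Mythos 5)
Your overall strategy coincides with the paper's: induct on the bracket hierarchy $\AA_1\subset\AA_2\subset\cdots$, replace Norris' lemma by the non-adapted Wiener-polynomial estimate of Theorem~\ref{theo:Norris}, and close with Chebyshev using the moment bounds on $\Psi$ and $\Lambda_\alpha^{-1}$. However, the inductive step as you state it hinges on ``applying It\^o's formula'' to $Z^Q_s=\scal{Q(u_s),K_{s,T}\phi}$, and that step fails: the putative martingale term $\sum_k\scal{DQ(u_s)g_k,K_{s,T}\phi}\,dW_k(s)$ has an anticipative integrand (through $K_{s,T}\phi$, which depends on the noise on $[s,T]$), so $Z^Q$ is not a semimartingale and the classical It\^o formula does not apply; a Skorokhod-type anticipative version would produce additional trace terms involving $\DM_s K_{s,T}\phi$ that your displayed identity omits. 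The paper never differentiates $Q(u_s)$ stochastically. It instead uses the purely algebraic Taylor expansion $Q(u_t)=\sum_\alpha Q_\alpha(v_t)W_\alpha(t)$ of \eref{e:TaylorQ} around the time-differentiable process $v_t=u_t-GW(t)$, applies Theorem~\ref{theo:Norris} to extract the coefficients $\scal{Q_\alpha(v_t),K_{t,T}\phi}$ (Lemma~\ref{lem:recursion0}), and then obtains the $[Q_\alpha,F_\sigma]$-brackets by \emph{ordinary pathwise} differentiation of those coefficients combined with the interpolation Lemma~\ref{lem:dtf} (a small $C^1$ function with H\"older-controlled derivative has a small derivative), see Lemmas~\ref{lem:BasicFinteration} and \ref{lem:BracketRep}. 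Your final Wiener-polynomial representation of the increment is the correct object, but the route to it must be this algebraic one, not It\^o.

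The second gap is at the terminal time. You propose to let $s\uparrow T$ and evaluate Assumption~\ref{ass:Hormander} at $u_T$ against $\phi$ itself. But the bounds of Proposition~\ref{prop:apriori} on $K_{s,T}$ carry a factor $\delta^{-\bar p_\beta p}$ that blows up as $s\to T$, and $K_{s,T}\phi\to\phi$ holds only in the weak norm $\|\cdot\|_{-1}$ at rate $\delta^{1-a}$ (Lemma~\ref{lem:lipDeltaAdj}); since $Q(u)$ only lives in a negative-order space, the pairing $\scal{Q(u_s),K_{s,T}\phi-\phi}$ does not pass to the limit for free. This is exactly why the paper works on $\II=[T/2,T-\delta]$ with the coupling $\delta=(T/4)\eps^r$, applies the H\"ormander condition at $u_{T-\delta}$ against the vector $K_{T-\delta,T}\phi$, and needs the separate ``closing'' Lemma~\ref{lem:closing} showing that $K_{T-\delta,T}\phi$ remains in a cone $\CS_{c\alpha}$ with $\|\Pi K_{T-\delta,T}\phi\|\ge\tfrac{\alpha}{2}\|\phi\|$, so that the contradiction $\alpha/2\le\eps^{q/4}$ can be reached without ever taking $s\to T$. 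Both gaps are repairable with the paper's devices, and the remaining bookkeeping you describe (the geometric loss of exponents at each of the $N$ stages and the accumulation of powers of $\Psi$) matches the paper's.
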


\begin{remark} If $\Pi$ is a finite rank orthogonal projection
  satisfying Assumption~\ref{ass:Hormander} then
  Theorem~\ref{theo:Malliavin} provides the critically ingredient to
  prove the smoothness of the density of $(\CP_t^*\delta_x)\Pi^{-1}$
  with respect to Lebesgue measure. Though \cite{Yuri} contains a few
  unfortunate errors, it still provides the framework needed to deduce
  smoothness of these densities from  Theorem~\ref{theo:Malliavin}. In
  particular, one needs to prove that $\Pi u_t$ is infinitely
  Malliavin differentiable. Section~5.1 of \cite{Yuri} shows how to
  accomplish this in a setting close to ours, see also \cite{MatPar06:1742}.     
\end{remark}
\subsection{Proof of Theorem~\ref{theo:Malliavin}}
\label{sec:proofMalliavin}

While the aim of this section is to prove
Theorem~\ref{theo:Malliavin}, we begin with some preliminary
definitions which will simplify its presentation.
Many of the arguments used will rely on the construction of
``exceptional sets'' of small probability outside of which certain
intuitive implications hold. This justifies the introduction of the
following notational shortcut:

\begin{definition}\label{def:negligible}
  Given a collection $H = \{H^{\eps}\}_{\eps \le 1}$ of subsets of the
  ambient probability space $\Omega$, we will say that ``$H$ is a
  family of negligible events'' if, for every $p \ge 1$ there exists a
  constant $C_p$ such that $\P(H^{\eps}) \le C_p\eps^p$ for every
  $\eps \le 1$.
 
  Given such a family $H$ and a statement $\Phi_\eps$ depending on a
  parameter $\eps>0$, we will say that ``$\Phi_\eps$ holds modulo $H$''
  if, for every $\eps \le 1$, the statement $\Phi_\eps$ holds on the
  complement of $H^\eps$.
 
  We will say that the family $H$ is ``universal'' if it does not depend
  on the problem at hand.  Otherwise, we will indicate which
  parameters it depends on.
\end{definition}

Given two families $H_1$ and $H_2$ of negligible sets, we write $H =
H_1 \cup H_2$ as a shortcut for the sentence ``$H^\eps = H_1^\eps \cup
H_2^\eps$ for every $\eps \le 1$.'' Let us state the following useful
fact, the proof of which is immediate:
 
\begin{lemma}\label{lem:sumup}
  Let $H^\eps_n$ be a collection of events with $n \in \{1,\ldots,
  C\eps^{-\kappa}\}$ for some arbitrary but fixed constants $C$ and
  $\kappa$ and assume that $\P(H^\eps_k) = \P(H^\eps_\ell)$ for any
  pair $(k,\ell)$.  Then, if the family $\{H^\eps_1\}$ is negligible,
  the family $\{H^\eps\}$ defined by $H^\eps = \bigcup_{n} H^\eps_n$
  is also negligible.
\end{lemma}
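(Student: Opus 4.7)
The plan is to prove the lemma by a straightforward union bound combined with a ``shift of exponent'' trick. First I would observe that, by the union bound and the hypothesis that all the $H^\eps_n$ have identical probability,
\begin{equ}
\P(H^\eps) \le \sum_{n=1}^{\lfloor C\eps^{-\kappa}\rfloor} \P(H^\eps_n) \le C\eps^{-\kappa}\,\P(H^\eps_1)\;.
\end{equ}

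Next, to verify negligibility of $\{H^\eps\}$, I need to produce, for each $p \ge 1$, a constant $C_p'$ with $\P(H^\eps) \le C_p'\eps^p$ for every $\eps \le 1$. Since $\{H^\eps_1\}$ is negligible by assumption, I may apply its defining bound with the shifted exponent $q = p + \kappa$, which certainly satisfies $q \ge 1$, to obtain a constant $C_q$ such that $\P(H^\eps_1) \le C_q\eps^{p+\kappa}$. Substituting into the union-bound estimate above gives $\P(H^\eps) \le C\,C_q\,\eps^p$, so the conclusion holds with $C_p' = C\,C_q$.

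There is no genuine obstacle here: both ingredients (union bound and the freedom to choose arbitrarily large exponents in the definition of negligibility) are immediate, and no probabilistic subtlety is involved. The only minor point to mention is that the number of events is bounded by $\lfloor C\eps^{-\kappa}\rfloor$, which is harmless since we only lose a factor $\eps^{-\kappa}$ that is absorbed by raising $p$ to $p+\kappa$ in the hypothesis on $H^\eps_1$.
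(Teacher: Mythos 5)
Your argument is correct and is precisely the union-bound-plus-exponent-shift reasoning the paper has in mind when it declares the proof ``immediate'' and omits it. Nothing further is needed.
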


\begin{remark}
  The same statement also holds of course if the equality between
  probabilities of events is replaced by two-sided bounds with
  multiplicative constants that do not depend on $k$, $\ell$, and $\eps$.
\end{remark}

An important particular case is when the family $H$ depends on the initial condition $u_0$ to
\eref{e:SPDE}. We will then say that $H$ is ``$\Psi$-controlled'' if the
constant $C_p$ can be bounded by $\tilde C_p \Psi^p(u_0)$, where
$\tilde C_p$ is independent of $u_0$. 

In this language, the conclusion of Theorem~\ref{theo:Malliavin} can
be restated as saying that there exists $\theta>0$ such that, for every $\alpha > 0$,
the event
\begin{equ}
  \inf_{\phi \in \CS_\alpha} \scal{\phi, \MM_T \phi} \le
  \eps\|\phi\|^2 
\end{equ}
is a $\Psi^{\theta}$-controlled family of negligible events.  Recall that the terminal time
$T$ was fixed once and for all and that the function $\Psi$ was defined in Proposition~\ref{prop:apriori}.
We
further restate this as an implication in the following theorem which
is easily seen to be equivalent to Theorem~\ref{theo:Malliavin}:

\begin{theorem}\label{thm:Malliavin2}
  Let $\Pi$ be a finite rank orthogonal projection satisfying
  Assumption~\ref{ass:Hormander}. Then, there exists $\theta>0$ such
  that  for every $\alpha \in (0,1)$, the implication
\begin{equ}
  \phi \in \CS_\alpha\qquad \Longrightarrow
  \qquad \scal{\phi, \MM_T \phi} > \eps \|\phi\|^2 
\end{equ}
holds modulo a $\Psi^{\theta}$-controlled family of negligible events.
\end{theorem}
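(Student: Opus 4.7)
The plan is to argue by contradiction: assume there exist $\phi \in \CS_\alpha$ with $\|\phi\|=1$ and $\scal{\phi, \MM_T\phi} \le \eps$, and show that this event sits inside a $\Psi^\theta$-controlled negligible family. The representation \eref{e:Mscalar} gives $\sum_k \int_0^T \scal{g_k, K_{s,T}\phi}^2\,ds \le \eps$, and restricting to $\II = [T/2, T-\delta]$ with $\delta$ a small power of $\eps$, an interpolation between this $L^2$-in-time bound and the H\"older/Lipschitz estimates \eref{e:apadjnorm}--\eref{e:apadj} on $s \mapsto K_{s,T}\phi$ (via a standard Chebyshev-plus-equicontinuity argument, modulo a $\Psi^\theta$-controlled negligible event) upgrades the integral bound to the pointwise one $\sup_{s\in\II} |\scal{g_k, K_{s,T}\phi}| \le \eps^{q_0}$ for some $q_0 > 0$ and all $k$.

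The core of the argument is an induction on the bracket generation $N$ that establishes, modulo an enlarged but still $\Psi^\theta$-controlled family of negligible events,
\begin{equ}[e:indGoal]
\sup_{s\in\II} |\scal{R(u_s), K_{s,T}\phi}| \le \eps^{q_N}\quad \text{for every } R \in \AA_N,
\end{equ}
with $q_N > 0$ possibly shrinking by a definite factor at each step. To go from $N$ to $N+1$, fix an admissible $Q \in \AA_N$, set $v_s = u_s - GW(s)$, and apply the Taylor expansion \eref{e:TaylorQ} to obtain
\begin{equ}
Z(s) := \scal{Q(u_s), K_{s,T}\phi} = \sum_\alpha A_\alpha(s)\, W_\alpha(s),\qquad A_\alpha(s) := \scal{Q_\alpha(v_s), K_{s,T}\phi},
\end{equ}
where $W_\alpha(s) = W_{\alpha_1}(s)\cdots W_{\alpha_{|\alpha|}}(s)$. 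The coefficients $A_\alpha$ are non-adapted (through $K_{s,T}\phi$) but Lipschitz in $s$ by Proposition~\ref{prop:apriori}; the admissibility hypothesis is exactly what guarantees that every pairing makes sense in the dual pair dictated by \eref{e:gammabeta}. The inductive hypothesis at stage $N$ gives $|Z(s)| \le \eps^{q_N}$, so the pathwise Wiener polynomial lower bound from Section~\ref{sec:Wiener}, applied on $\II$, converts this into a uniform bound $|A_\alpha(s)| \le \eps^{q_{N+1}}$ for every $\alpha$. Re-Taylor-expanding $Q_\alpha(v_s)$ in powers of $GW(s)$ and using the a.s.\ boundedness of Wiener increments on $\II$ then passes these bounds from $v_s$ to $u_s$, giving \eref{e:indGoal} for all the new elements $Q_\alpha \in \AA_{N+1}$. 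The Lie-bracket elements $[F_\sigma, Q_\alpha]$ emerge by differentiating $A_\alpha$ in time: using $\partial_s v_s = F(u_s)$ together with the backward equation $\partial_s K_{s,T}\phi = LK_{s,T}\phi - DN^*(u_s)K_{s,T}\phi$, one expresses $\partial_s A_\alpha$ as a second Wiener polynomial whose coefficients are precisely $\scal{[F_\sigma, Q_\alpha](v_s), K_{s,T}\phi}$ modulo terms already controlled by the induction, and a further application of the Wiener polynomial lemma concludes the inductive step.

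After $N$ generations~--- with $N$ being the integer from Assumption~\ref{ass:Hormander} corresponding to the given $\alpha$~--- the bound \eref{e:indGoal} yields $\scal{K_{s,T}\phi, \CQ_N(u_s) K_{s,T}\phi} \le C\eps^{2q_N}$ pointwise on $\II$, and Assumption~\ref{ass:Hormander} then gives $\|\Pi K_{s,T}\phi\| \le C\Lambda_\alpha^{-1}(u_s)\,\eps^{q_N}$. The moment bound on $\Lambda_\alpha^{-1}$ absorbs the inverse-Lambda factor into a $\Psi^\theta$-controlled negligible event, and the continuity of $s \mapsto \Pi K_{s,T}\phi$ (quantified by \eref{e:apadj}) combined with $K_{T,T}\phi = \phi$ and $\|\Pi\phi\| \ge \alpha$ forces $\|\Pi K_{s,T}\phi\| \ge \alpha/2$ throughout a left-neighbourhood of $T$ of length a negative power of $\Psi(u_0)$; choosing $\delta$ smaller than this scale yields the required contradiction and fixes $\theta$. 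The main obstacle is the inductive Wiener-polynomial step, for two reasons: the coefficients $A_\alpha$ are genuinely non-adapted because $K_{s,T}\phi$ depends on $u_r$ for $r \in [s,T]$, so Norris' lemma is not available and one must rely on the pathwise, non-adapted Wiener polynomial lower bound of Section~\ref{sec:Wiener}~--- which is precisely why the nonlinearity is required to be polynomial~--- and one must track at each generation that the shrinking exponents $q_N$, the enlarged family of negligible events, and the accumulated factors of $\Psi(u_0)$ coming from Proposition~\ref{prop:apriori} and Assumption~\ref{ass:Hormander} collapse into a single $\theta$ independent of $\eps$ and $u_0$.
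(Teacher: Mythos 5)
Your proposal follows essentially the same route as the paper's proof: the same contradiction structure, the same upgrade of the $L^2$-in-time bound to a supremum bound via interpolation with the H\"older norm of the derivative, the same induction over bracket generations alternating between the Taylor expansion in $W$ (to pass between $u_s$ and $v_s$), the non-adapted Wiener-polynomial lower bound of Section~\ref{sec:Wiener}, and time-differentiation of the coefficients to produce the brackets with $F_\sigma$, and the same conclusion via Assumption~\ref{ass:Hormander} and the moment bound on $\Lambda_\alpha^{-1}$. The one imprecision is in your closing step: the Lipschitz bound \eref{e:apadj} only controls increments of $K_{\cdot,T}\phi$ inside $\II=[T/2,T-\delta]$ and degenerates as $\delta\to0$, so it cannot by itself bridge the interval $[T-\delta,T]$; the paper instead proves the separate Duhamel estimate of Lemma~\ref{lem:lipDeltaAdj}, namely $\|K_{T-\delta,T}\phi-\phi\|_{-1}\lesssim \Psi^{n}(u_0)\,\delta^{1-a}$, and exploits the finite rank and smoothness of $\Pi$ (so that $\|\Pi\psi\|\le C\|\psi\|_{-1}$) to deduce $\|\Pi K_{T-\delta,T}\phi\|\ge \alpha\|\phi\|/2$ and $K_{T-\delta,T}\phi\in\CS_{c\alpha}$.
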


\subsection{Basic structure and idea of proof of
  Theorem~\ref{thm:Malliavin2}}
 \label{sec:proofMall}

We begin with an overly simplified version of the argument which
neglects some technical difficulties. The basic idea of the proof is to
argue that if $\scal{\MM_T \phi,\phi}$ is small then
$\scal{\CQ_k(u_T) \phi,\phi}$ must also be small (with high probability) for every $k > 0$. This
is proved inductively, beginning with the directions which are
directly forced, namely those belonging to $\AA_1$.
Assumption~\ref{ass:Hormander} then guarantees
in turn that $\|\Pi\phi\|$ must be
small with high probability.  On the other hand, since $\phi \in
\CS_\alpha$, we know for a fact that $\|\Pi \phi\| \geq \alpha \|\phi\|$ which is
not small. Hence one of the highly improbable events must have
occurred.

This sketch of proof is essentially the same as that of H\"ormander's theorem
in finite dimensions, see \cite{Malliavin,KSAMI,KSAMII,Norr,Nualart}. Trying to adapt this argument to the infinite-dimensional
case, one is rapidly faced with two major hurdles. First, processes of the form
$t \mapsto \scal{J_{t,T}g, \phi}$ appearing in the definition of $\MM_T$ are not adapted
to the filtration generated by the driving noise. In finite dimensions, this difficulty is overcome by
noting that
\begin{equ}
\MM_t = J_{0,t} \hat \MM_t J^*_{0,t}\;,\qquad \hat \MM_t = \int_0^t J_{0,s}^{-1} GG^* \bigl(J^*_{0,s}\bigr)^{-1}\,ds\;,
\end{equ}
and then working with $\hat \MM_t$ instead of $\MM_t$. ( $\hat \MM_t$
is often called the reduced Malliavin covariance matrix.) The
processes $t \mapsto \scal{J_{0,t}^{-1} g, \phi}$ appearing there are
now perfectly nice semimartingales and one can use Norris' lemma
\cite{Norr}, which is a quantitative version of the Doob-Meyer
decomposition theorem, to show inductively that if $\scal{\phi,
  \MM_T\phi}$ is small, then $t \mapsto \scal{J_{0,t}^{-1} Q(u_t),
  \phi}$ must be small for every vector field $Q \in \AA_k$.  In our
setting, unlike in some previous results for infinite-dimensional
systems \cite{BauTei05:1765}, the Jacobian $J_{0,t}$ is not
invertible. This is a basic feature of dissipative PDEs with a smoothing
linear term which is the dominating term on  the right hand side. Such
dynamical systems only generate semi-flows as opposed to invertible flows.  

   Even worse, there appears to be no good theory
characterising a large enough subset belonging to its range. The only
other situations to our knowledge where this has been overcome
previously are the linear case \cite{Oco88:288}, as well as the
particular case of the two-dimensional Navier-Stokes equations on the
torus \cite{MatPar06:1742} and in \cite{Yuri} for a setting close to
ours. As in those settings, we do not attempt to define something like
the operator $\hat \MM_t$ mentioned above but instead we work directly
with $\MM_t$, so that we do not have Norris' lemma at our disposal. It
will be replaced by the result from Section~\ref{sec:Wiener} on
``Wiener polynomials.''  This result states that if one considers a
polynomial where the variables are independent Wiener processes and
the coefficients are arbitrary (possibly non-adapted) Lipschitz
continuous stochastic processes, then the polynomial being small
implies that with high probability each individual monomial is small.
It will be shown in this section how it is possible to exploit the
polynomial structure of our nonlinearity in order to replace Norris'
lemma by such a statement.

Another slightly less serious drawback of working in an
infinite-dimensional setting is that we encounter singularities at
$t=0$ and at $t=T$ (for the operator $J_{t,T}$).  Recall the
definition of the time interval $\II = [{T\over 2}, T-\delta]$ from
Section~\ref{sec:PDEbounds}. We will work on this interval which is
strictly included in $[0,T]$ to avoid these singularities.  There will
be a trade-off between larger values of $\delta$ that make it easy to
avoid the singularity and smaller values of $\delta$ that make it
easier to infer bounds for $\scal{\CQ_k(u_T) \phi,\phi}$.

When dealing with non-adapted processes, it is typical to replace certain
standard arguments which hinge on adaptivity by arguments which use
local time-regularity properties instead. This was also the approach
used in \cite{MatPar06:1742,Yuri}. To this end we introduce the
following H\"older norms.
For $\theta \in (0,1]$, we define the H\"older norm for functions $f
\colon \II \to \CH$ by
\begin{equ}[e:defHolderNorm]
  \/f\/_{\theta} = \sup_{s,t \in \II} {\|f(s) - f(t)\|\over
    |t-s|^\theta}\;,
\end{equ}
and similarly if $f$ is real-valued. (Note that even though we use the
same notation as for the norm in the Cameron-Martin space in the
previous section, these have nothing to do with each other. Since on
the other hand the Cameron-Martin norm is never used in the present
section, we hope that this does not cause too much confusion.) We
furthermore set
\begin{equ}
  \/f\/_{\theta,\gamma} = \sup_{s,t \in \II} {\|f(s) - f(t)\|_\gamma
    \over |t-s|^\theta}\;,
\end{equ}
where $\|\cdot\|_\gamma$ denotes the $\gamma$th interpolation norm defined in Assumption~\ref{ass:basic}.
Finally, we are from now on going to assume that $\delta$ is a
function of $\eps$ through a scaling relation of the type
\begin{equ}[e:defdelta]
  \delta = {T\over 4}\eps^{r}
\end{equ}
for some (very small) value of $r$ to be determined later.

\subsection{Some preliminary calculations}

We begin with two preliminary calculations. The first translates a
given growth of the moments of a family of random variables into a
statement saying that the variables are ``small,'' modulo a negligible
family of events. As such, it is essentially a translation of Chebyshev's
inequality into our language. The second is an interpolation result
which controls the supremum of a function's derivative by the supremum
of the function and the size of some H\"older coefficient.

\begin{lemma}\label{lem:stupid}
  Let $\delta$ be as in \eref{e:defdelta} with $r>0$, let $\Psi\colon \CH \to [1,\infty)$ be an arbitrary function,
   and let
  $X_\delta$ be a $\delta$-dependent family of random variables such
  that there exists $b \in \R$ ($b$ is allowed to be negative) such that, for every $p \ge 1$, $\E
  |X_\delta|^p \le C_p \Psi^p(u_0)\delta^{-b p}$.  Then, for any $q >
 br$ and any $c>0$, the family of events
  \begin{equ}
    \Bigl\{ |X_\delta| > {\eps^{-q} \over c}\Bigr\}
  \end{equ}
  is $\Psi^{1\over q-b r}$-dominated negligible.
\end{lemma}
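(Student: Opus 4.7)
The plan is to derive the bound directly from Markov's inequality, choosing the exponent so that the resulting power of $\eps$ matches exactly $p$. Writing $\eta = 1/(q-br)$ for brevity (so $\eta > 0$ since $q > br$), the goal is to produce, for each $p \geq 1$, constants $\tilde C_p$ independent of $u_0$ such that
\begin{equ}
\P\bigl(|X_\delta| > \eps^{-q}/c\bigr) \leq \tilde C_p\, \Psi^{\eta p}(u_0)\, \eps^p\;.
\end{equ}

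First I would apply Markov's inequality at exponent $p' \eqdef \eta p = p/(q-br)$, which yields
\begin{equ}
\P\bigl(|X_\delta| > \eps^{-q}/c\bigr) \leq c^{p'} \eps^{q p'}\, \E |X_\delta|^{p'}\;.
\end{equ}
Using the scaling $\delta = (T/4)\eps^r$ from \eref{e:defdelta}, the moment assumption gives $\E|X_\delta|^{p'} \leq C_{p'} \Psi^{p'}(u_0) (T/4)^{-b p'} \eps^{-br p'}$ whenever $p' \geq 1$, and then the two exponents of $\eps$ combine to give $q p' - br p' = (q-br)p' = p$ exactly; similarly the exponent of $\Psi$ becomes $p' = \eta p$, yielding the desired bound.

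The one (mild) technical point I would have to address is the case $p < q - br$, where $p' < 1$ and the hypothesis on moments is not directly stated. Here I would invoke Jensen's inequality for the concave map $x \mapsto x^{p'}$, obtaining $\E|X_\delta|^{p'} \leq (\E|X_\delta|)^{p'} \leq (C_1 \Psi(u_0) \delta^{-b})^{p'}$, so the same scaling identity goes through with $C_1^{p'}$ in place of $C_{p'}$. Thus one obtains in both regimes a bound of the form $\tilde C_p\,\Psi^{\eta p}(u_0)\,\eps^p$ with $\tilde C_p$ depending on $p$, $c$, $T$, $b$, and the constants $C_{p'}, C_1$ from the hypothesis but not on $u_0$ or $\eps \leq 1$.

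There is no real obstacle here — the statement is essentially a packaging of Chebyshev under the polynomial scaling $\delta = (T/4)\eps^r$, and the only subtlety is the bookkeeping to ensure the exponent of $\Psi$ comes out as $1/(q-br)$ rather than $1$. Once this template is in place, it will be applied many times throughout Section~\ref{sec:proofMalliavin} to convert moment estimates on various functionals of the solution into negligibility statements of the kind needed to run the inductive H\"ormander-type argument toward Theorem~\ref{thm:Malliavin2}.
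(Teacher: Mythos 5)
Your proof is correct and is essentially the paper's own argument: Chebyshev/Markov combined with the scaling $\delta = (T/4)\eps^r$, with the exponent chosen (equivalently, reparametrised via $\ell = p(q-br)$ in the paper versus $p' = p/(q-br)$ in yours) so that the powers of $\eps$ and $\Psi$ come out as $p$ and $p/(q-br)$ respectively. Your Jensen step for the regime $p < q-br$ is a welcome extra detail that the paper glosses over with ``this holds for every $\ell>0$.''
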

\begin{proof}
  It follows from Chebychev's inequality that
  \begin{equ}
    \P \Bigl( |X_\delta| > {\eps^{-q} \over c}\Bigr) \le C_p c^p
    \Psi^p \delta^{-bp} \eps^{qp} = \bar C_\ell \bigl(\Psi^{ 1\over
      q-br}\bigr)^\ell \eps^\ell\;,
  \end{equ}
  where $\bar C_\ell$ is equal to $C_p c^p$ with $\ell = p(q-br)$.
  Provided that $q-br > 0$, this holds for every $\ell > 0$ and the
  claim follows.
\end{proof}

\begin{lemma}\label{lem:dtf}
  Let $f \colon [0,T] \to \R$ be continuously differentiable and let
  $\alpha \in (0,1]$. Then, the bound
  \begin{equ}
   \|\d_t f\|_{L^\infty} = \/ f\/_{1} \le 4\|f\|_{L^\infty} \max \Bigl\{ {1 \over
      T} , \|f\|_{L^\infty}^{-{1 \over 1+\alpha}} \/
   \d_t  f\/_{\alpha}^{1\over 1+\alpha} \Bigr\}
  \end{equ}
  holds, where $\/f\/_\alpha$ denotes the best $\alpha$-H\"older constant for $f$.
\end{lemma}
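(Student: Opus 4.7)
Write $A = \|\d_t f\|_{L^\infty}$, $M = \|f\|_{L^\infty}$, and $H = \/\d_t f\/_\alpha$. The plan is a one-sided interpolation argument. Pick a point $t_0 \in [0,T]$ at which $|\d_t f|$ essentially attains $A$; after replacing $f$ by $-f$ if necessary and passing to a limit, one may assume $\d_t f(t_0) = A$. Setting $\delta_0 := (A/(2H))^{1/\alpha}$, the H\"older control on $\d_t f$ then forces
\begin{equation*}
\d_t f(t) \ge A - H|t-t_0|^\alpha \ge \tfrac{A}{2}\qquad\text{for all }|t-t_0|\le \delta_0\,.
\end{equation*}
Since $t_0 \in [0,T]$, in whichever direction from $t_0$ there is more room we can exhibit a subinterval $I \subset [0,T]$ with $t_0$ as an endpoint and length $|I| \ge \min(\delta_0, T/2)$; on $I$, $\d_t f \ge A/2$, so integration and the $L^\infty$ bound on $f$ yield
\begin{equation*}
2M \ge \bigl|f(\text{other endpoint of }I) - f(t_0)\bigr| \ge \tfrac{A}{2}|I|\,,\qquad\text{hence}\qquad A|I|\le 4M\,.
\end{equation*}

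A case split on the size of $\delta_0$ relative to $T$ then produces the two branches of the maximum. When $\delta_0 \le T/2$, substituting $|I| = \delta_0 = (A/(2H))^{1/\alpha}$ into $A|I|\le 4M$ and solving for $A$ gives $A^{(1+\alpha)/\alpha} \le 4M\,(2H)^{1/\alpha}$, which rearranges to $A \le C_\alpha\, M^{\alpha/(1+\alpha)} H^{1/(1+\alpha)}$ with $C_\alpha = 4^{\alpha/(1+\alpha)}\,2^{1/(1+\alpha)} \le 4$ for $\alpha \in (0,1]$, matching the second entry of the maximum. When $\delta_0 > T/2$, $|I| \ge T/2$ gives $A \le 8M/T$; the sharper constant in $4M/T$ is recovered in the subregime $\delta_0 \ge T$, where $\d_t f \ge A/2$ in fact holds on \emph{all} of $[0,T]$ so that $AT/2 \le 2M$ directly, while the narrow intermediate window $T/2 < \delta_0 < T$ is absorbed by the same oscillation argument applied to the actual (longer) one-sided interval inside $[0,T]$ on which the H\"older estimate still forces $\d_t f \ge A/2$, whose length is at least $\delta_0 > T/2$ and automatically equals $T$ unless $t_0$ lies close to a boundary.

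The argument is entirely elementary; there is no real obstacle beyond the numerical bookkeeping required to drive the constant down to $4$ through the case split on $\delta_0$ versus $T$, and the whole content of the lemma is the standard Kolmogorov-type interpolation between $\|f\|_{L^\infty}$ and $\/\d_t f\/_\alpha$ with the $1/T$ defect accounting for the finite length of the interval on which the supremum is taken.
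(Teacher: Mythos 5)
Your argument is correct and is essentially the paper's proof: localize at a maximizer of $|\partial_t f|$, use the H\"older bound on $\partial_t f$ to keep it above $A/2$ on an interval of length comparable to $\min(\delta_0,T)$ with $\delta_0=(A/2H)^{1/\alpha}$, and integrate against $\|f\|_{L^\infty}$. The only difference is bookkeeping: taking the full two-sided intersection $[t_0-\delta_0,t_0+\delta_0]\cap[0,T]$, whose length is always at least $\min(\delta_0,T)$, yields $A\min(\delta_0,T)\le 4M$ in one stroke and removes both the lost factor of $2$ and the awkward intermediate window $T/2<\delta_0<T$.
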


\begin{proof}
  Denote by $x_0$ a point such that $|\d_t f(x_0)| = \|\d_t
  f\|_{L^\infty}$. It follows from the definition of the
  $\alpha$-H\"older constant $\|\d_t f\|_{\CC^\alpha}$ that $|\d_t
  f(x)| \ge {1\over 2} \|\d_t f\|_{L^\infty}$ for every $x$ such that
  $|x - x_0| \le \bigl(\|\d_t f\|_{L^\infty} / 2 \|\d_t
  f\|_{\CC^\alpha}\bigr)^{1/\alpha}$.  The claim then follows from the
  fact that if $f$ is continuously differentiable and $|\d_t f(x)| \ge A$ over an interval $I$, then there
  exists a point $x_1$ in the interval such that $|f(x_1)| \ge
  A|I|/2$.
\end{proof}

\subsection{Transferring properties of $\phi$ back from the terminal
  time}
  
We now prove a result which shows that if $\phi \in \CS_\alpha$ then
with high probability both $\|\Pi K_{T-\delta,T}\phi\|$ and the ratio $\|\Pi
K_{T-\delta,T}\phi\|/\| K_{T-\delta,T}\phi\|$ can not change
dramatically for small enough $\delta$. This allows us to step back from the
terminal time $T$ to the right end point of the time interval
$\II$. As mentioned at the start of this section, this is needed to
allow the rougher test functions used in Theorem~\ref{theo:Malliavin}.

\begin{lemma}\label{lem:closing}
  Let \eqref{e:lipDeltaAdj} hold and fix any orthogonal projection
  $\Pi$ of $\CH$ onto a finite dimensional subspace of $\CH$ spanned
  by elements of $\CH_{1}$.  Recall furthermore the
  relation \eref{e:defdelta} between $\delta$ and $\eps$. There exists
  a constant $c\in(0,1)$ such that, for every $r >0$  and every $\alpha >0$, the implication
  \begin{equ}
    \phi \in \CS_\alpha \quad\Longrightarrow\quad K_{T-\delta,T} \phi
    \in \CS_{c\alpha} \quad\text{and}\quad \|\Pi K_{T-\delta,T} \phi
    \| \geq \frac{\alpha}2 \|\phi\|\;,
  \end{equ}
  holds modulo a $\Psi^{1/r}$-controlled family of negligible events.
\end{lemma}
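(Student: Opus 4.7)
The plan is to show that, modulo a $\Psi^{1/r}$-controlled family of negligible events, $\Pi K_{T-\delta,T}\phi$ stays close to $\Pi\phi$ while $\|K_{T-\delta,T}\phi\|$ remains bounded above by a deterministic multiple of $\|\phi\|$. Both conclusions then follow from $\|\Pi\phi\| \ge \alpha\|\phi\|$ by the triangle inequality.

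The key structural fact is that $\Pi$ has finite rank and its range is spanned by vectors in $\CH_1$. Pick an orthonormal basis $\{e_1,\ldots,e_N\}$ of $\Range(\Pi)$ and set $M = \max_j \|e_j\|_1$, a fixed deterministic constant. By Proposition~\ref{e:JKadjoint}, for each $j$ one has $\scal{e_j, K_{T-\delta,T}\phi - \phi} = \scal{J_{T-\delta,T}e_j - e_j, \phi}$, and therefore
\begin{equ}
\|\Pi K_{T-\delta,T}\phi - \Pi\phi\| \le \|\phi\|\sum_{j=1}^N \|J_{T-\delta,T}e_j - e_j\|\;.
\end{equ}
To bound each term, I would use the evolution equation \eref{e:Jacobian} to write
\begin{equ}
J_{T-\delta,T}e_j - e_j = \int_{T-\delta}^T \bigl(-L+DN(u_s)\bigr) J_{T-\delta,s}e_j\,ds\;,
\end{equ}
noting that the integrand is $\CH$-valued thanks to $e_j \in \CH_1$, the propagation of $\CH_1$-regularity supplied by Proposition~\ref{prop:bootstrapJ}, and the mapping property of $DN$ from Assumption~\ass{N}. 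This yields a pathwise bound of the form $\|J_{T-\delta,T}e_j - e_j\| \le C\delta\, M\, X$, where $X$ is a random factor whose $p$-th moment is controlled by $\Psi^p(u_0)$ for every $p\ge 1$, by combining Proposition~\ref{prop:apriori} and Assumption~\ref{ass:global}. Alternatively, \eref{e:lipDeltaAdj}, referenced in the hypothesis, provides exactly such a quantitative Lipschitz estimate for the adjoint flow tested against smooth elements and can be invoked directly.

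Applying Lemma~\ref{lem:stupid} with the scaling $\delta = T\eps^r/4$, I conclude that, outside a $\Psi^{1/r}$-controlled family of negligible events, $\|\Pi K_{T-\delta,T}\phi - \Pi\phi\| \le (\alpha/2)\|\phi\|$. Combined with $\|\Pi\phi\| \ge \alpha\|\phi\|$, this gives the second assertion $\|\Pi K_{T-\delta,T}\phi\|\ge (\alpha/2)\|\phi\|$. For the first assertion, I would use Proposition~\ref{lem:kbound} together with another application of Lemma~\ref{lem:stupid} to get $\|K_{T-\delta,T}\phi\|\le C_0\|\phi\|$ modulo an additional negligible family; then $\|\Pi K_{T-\delta,T}\phi\|/\|K_{T-\delta,T}\phi\| \ge \alpha/(2C_0) =: c\alpha$, so that $K_{T-\delta,T}\phi \in \CS_{c\alpha}$.

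The main obstacle is ensuring that $\|J_{T-\delta,T}e_j - e_j\|$ really is of order $\delta$ (and not some weaker H\"older rate), since an order $\delta^\theta$ bound with $\theta<1$ would still work but forces a careful tracking of the exponents through Lemma~\ref{lem:stupid}. This comes down to propagating the $\CH_1$-regularity of $e_j$ through the linearized flow so that $(-L+DN(u_s))J_{T-\delta,s}e_j$ is genuinely $\CH$-valued with integrable supremum — precisely the content of Proposition~\ref{prop:bootstrapJ} applied at the endpoint of its range, combined with the polynomial structure of $N$. Once this regularity-propagation step is secured, the rest of the argument is bookkeeping through the negligibility language of Definition~\ref{def:negligible}.
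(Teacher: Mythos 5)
Your overall architecture (write $K_{T-\delta,T}\phi$ as a perturbation of $\phi$, then use the triangle inequality against $\|\Pi\phi\|\ge\alpha\|\phi\|$) matches the paper's, and your first half can be closed: by duality $|\scal{e_j,K_{T-\delta,T}\phi-\phi}|\le \|e_j\|_{1}\|K_{T-\delta,T}\phi-\phi\|_{-1}$, so invoking \eref{e:lipDeltaAdj} directly, as you suggest in your "alternative" sentence, reduces your decomposition to exactly the paper's estimate $\|\Pi K_{T-\delta,T}\phi - \Pi\phi\| \le C_1\|K_{T-\delta,T}\phi-\phi\|_{-1}$, and Lemma~\ref{lem:stupid} with $b=-(1-a)$ and $q=0$ then yields the threshold $\alpha\|\phi\|/2$ modulo a $\Psi^{n/((1-a)r)}$-controlled family. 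Your primary route via the differential identity $J_{T-\delta,T}e_j-e_j=\int_{T-\delta}^T(-L+DN(u_s))J_{T-\delta,s}e_j\,ds$ is shakier: Proposition~\ref{prop:bootstrapJ} only gives $\|J_{T-\delta,s}e_j\|_{1}\lesssim (s-(T-\delta))^{-1}$, a non-integrable singularity, so the $L$-term of the integrand is not obviously integrable; one should work with the mild formulation (which produces the $\delta^{1-a}$ rate, exactly as in Lemma~\ref{lem:lipDeltaAdj}) rather than the differential one.

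The genuine gap is in your proof that $K_{T-\delta,T}\phi\in\CS_{c\alpha}$ for a \emph{fixed} $c$. You propose to get $\|K_{T-\delta,T}\phi\|\le C_0\|\phi\|$ from Proposition~\ref{lem:kbound} plus Lemma~\ref{lem:stupid}. But Proposition~\ref{lem:kbound} is a bare moment bound with no positive power of $\delta$, i.e.\ $b=0$ in the notation of Lemma~\ref{lem:stupid}, so that lemma only produces thresholds of the form $\eps^{-q}/c$ with $q>0$ — never a deterministic constant. Indeed $\{\sup_{\|\phi\|=1}\|K_{T-\delta,T}\phi\|>C_0\}$ is \emph{not} a negligible family on the strength of moment bounds alone: Chebyshev gives only $\P(\|K_{T-\delta,T}\|>C_0)\le C\Psi_0^p(u_0)/C_0^p$, a fixed positive number rather than $o(\eps^p)$. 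With your bound the best available conclusion is $c\sim\eps^{q}$, which destroys the statement. The paper avoids this by splitting $K_{T-\delta,T}\phi = e^{-\delta L}\phi + (K_{T-\delta,T}\phi - e^{-\delta L}\phi)$: the first term satisfies $\|e^{-\delta L}\phi\|\le\|\phi\|$ deterministically, while the second has $p$-th moments bounded by $C_p\Psi^{np}(u_0)\delta^{(1-a)p}$ by \eref{e:boundKphi}, so that Lemma~\ref{lem:stupid} (now with $b=-(1-a)<0$ and $q=0$) shows it is at most $\|\phi\|$ modulo a negligible family, giving $\|K_{T-\delta,T}\phi\|\le C\|\phi\|$ with a genuine constant and hence $c=1/(2C)$. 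You need the first estimate of Lemma~\ref{lem:lipDeltaAdj}, not just Proposition~\ref{lem:kbound}, to complete this step.
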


To prove this Lemma, we will need the following axillary lemma whose
proof is given at the end of the section.
\begin{lemma}\label{lem:lipDeltaAdj} For any $\delta \in (0,T/2]$,
  one has the bound \minilab{e:aprioriK2}
  \begin{equs}
    \EE \sup_{\|\phi\| \le 1} \|K_{T-\delta,T}\phi- e^{-\delta L}
    \phi\|^p  &\le C_p \Psi^{np}(u_0) \delta^{(1-a)p}\;, \label{e:boundKphi} \\
    \EE \sup_{\|\phi\| \le 1} \|K_{T-\delta,T}\phi - \phi\|_{-1}^p
    &\le {C_p\Psi^{np}(u_0) \delta^{(1-a)p}}\;, \label{e:lipDeltaAdj}
  \end{equs}
  for every $p \ge 1$ and every $u_0 \in \CH$. Here, $n$ is the degree of the nonlinearity $N$.
\end{lemma}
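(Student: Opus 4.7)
The plan is to use the mild formulation of the backward equation from \eref{e:defPhis}. Setting $\phi_s = K_{T-s,T}\phi$, we have
\[
\phi_\delta = e^{-L\delta}\phi + \int_0^\delta e^{-L(\delta-r)}DN^*(u_{T-r})\phi_r\,dr,
\]
so that
\[
K_{T-\delta,T}\phi - e^{-L\delta}\phi = \int_0^\delta e^{-L(\delta-r)}DN^*(u_{T-r})K_{T-r,T}\phi\,dr.
\]

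For the first estimate \eref{e:boundKphi}, I would use the smoothing bound $\|e^{-Lt}\|_{\CH_{-a}\to\CH}\le Ct^{-a}$ together with Assumption~\ass{DN} at $\beta=-a$ (which supplies some $\gamma<\gamma_\star+1$ for which $DN^*(u)\colon\CH\to\CH_{-a}$ has operator norm bounded by $C(1+\|u\|_\gamma)^{n-1}$) to obtain the pointwise bound
\[
\|K_{T-\delta,T}\phi - e^{-L\delta}\phi\| \le C\int_0^\delta (\delta-r)^{-a}(1+\|u_{T-r}\|_\gamma)^{n-1}\|K_{T-r,T}\|\|\phi\|\,dr.
\]
Computing $\int_0^\delta(\delta-r)^{-a}\,dr = \delta^{1-a}/(1-a)$ pulls out the desired $\delta^{1-a}$ factor once the supremum over $r$ is taken of the remaining integrand. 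Taking the supremum over $\|\phi\|\le 1$, raising to the $p$-th power, taking expectations, and applying H\"older's inequality then allows me to control the result by moments of $\sup_r(1+\|u_{T-r}\|_\gamma)^{n-1}$ and of $\sup_r\|K_{T-r,T}\|$ separately; these are in turn bounded by powers of $\Psi(u_0)$ using Proposition~\ref{prop:apriori} (which controls $\|u\|_\gamma$ on $\II$, and can be extended to the endpoint $T$ by the same argument) and Proposition~\ref{lem:kbound}. The dominant contribution comes from the $(n-1)$-th power of $u$, so that the final bound is of the form $C_p\Psi^{np}(u_0)\delta^{(1-a)p}$ as claimed.

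For the second estimate \eref{e:lipDeltaAdj}, I would decompose
\[
K_{T-\delta,T}\phi - \phi = (e^{-L\delta}-I)\phi + \int_0^\delta e^{-L(\delta-r)}DN^*(u_{T-r})K_{T-r,T}\phi\,dr.
\]
For the first summand, the spectral theorem combined with $L\ge I$ yields $|1-e^{-\lambda\delta}|/\lambda\le\delta$ uniformly in $\lambda\ge 1$, hence $\|(e^{-L\delta}-I)\phi\|_{-1}\le\delta\|\phi\|$; since $\delta\le T/2\le 1\vee T$, this is dominated by $C_T\delta^{1-a}\|\phi\|$. For the integral term, the bound $\|\cdot\|_{-1}\le\|\cdot\|$ (again because $L\ge I$) reduces the analysis to exactly the estimate established for the first bound. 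Summing the two contributions and proceeding as before yields \eref{e:lipDeltaAdj}.

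The main difficulty is not conceptual but rather the bookkeeping needed to ensure that the combined growth in $u$ (arising from the $(n-1)$-th power in the bound on $DN^*$, together with the polynomial growth in $u$ of $\|K_{s,t}\|$ controlled via Proposition~\ref{lem:kbound}) fits inside $\Psi^{np}(u_0)$. This is achieved by choosing the exponent $q$ defining $\Psi=\Psi_0^q$ large enough (per the remark after Proposition~\ref{prop:apriori}, one may assume $q\ge n$, which gives the room needed in H\"older's inequality); the elementary inequality $\delta\le\delta^{1-a}$ for $\delta\le 1$ absorbs the small discrepancy between the semigroup and integral contributions in the second bound.
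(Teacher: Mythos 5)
Your proposal is correct and follows essentially the same route as the paper: the variation-of-constants identity for the backward equation \eref{e:defKst}, the smoothing bound $\|e^{-Lt}\|_{\CH_{-a}\to\CH}\le Ct^{-a}$ combined with Assumption~\ass{DN} at $\beta=-a$ to control $DN^*(u_s)K_{s,T}\phi$, the moment bounds of Proposition~\ref{prop:apriori} and Proposition~\ref{lem:kbound} via H\"older, and, for the second estimate, the triangle inequality together with $\|e^{-L\delta}\phi-\phi\|_{-1}\le C\delta$ and $\|\cdot\|_{-1}\le\|\cdot\|$. Your explicit bookkeeping of the exponents (the $(n-1)$ from $DN^*$ plus one from $K$ giving $\Psi^{np}$, using $q\ge n$) and the remark that the a priori bounds extend to the endpoint $T$ are details the paper leaves implicit, but there is no substantive difference.
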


\begin{proof}[of Lemma~\ref{lem:closing}]
  We begin by showing that, modulo some $\Psi^{1/r}$-dominated family
  of negligible events,
  \begin{equ}
    \|\Pi \phi\| \geq \alpha \|\phi\| \quad\Longrightarrow\quad \|\Pi
    K_{T-\delta,T} \phi \| \geq \frac{\alpha}2 \|\phi\|\;.
  \end{equ}
  By the assumption on $\Pi$, we can find a collection
  $\{v_k\}_{k=1}^N$ in $\CH_{1}$ with $\|v_k\| = 1$ such that $\Pi
  \phi = \sum_{k} v_k \scal{v_k, \phi}$. Therefore, there exists a
  constant $C_1 = \sup_k \|v_k\|_{1}$ so that $\|\Pi \phi\| \leq C_1
  \|\phi\|_{-1}$. Combining Lemma~\ref{lem:stupid} with
  Lemma~\ref{lem:lipDeltaAdj}, we see that
  \begin{equ}[e:negligible]
    \sup_{\phi \in \CH\,:\, \|\phi\| = 1} \|K_{T-\delta,T}\phi -
    \phi\|_{-1} \le \frac\alpha{2 C_1} \;,
  \end{equ}
  modulo a $\Psi^{n \over (1-a)r}$-dominated family of negligible
  events. Hence, modulo the same family
  of events,
  \begin{equs}
    \|\Pi K_{T-\delta,T} \phi \| &\geq \|\Pi \phi\| - C_1
     \|K_{T-\delta,T}\phi -  \phi\|_{-1}\\
    &\geq \alpha \|\phi\| - \frac\alpha2 \|\phi\| = \frac{\alpha}2
    \|\phi\|\;.
  \end{equs}
  Combining now Lemma~\ref{lem:stupid} with \eref{e:boundKphi}, we
  see that
  \begin{equ}
    \|K_{T-\delta,T} \phi \| \le \|\phi\| + \|e^{-\delta L} \phi\|
    \le C \|\phi\|\;,
  \end{equ}
  modulo a $\Psi^{n\over (1-a) r}$-dominated family of negligible
  events, thus showing that $K_{T-\delta,T} \phi$ belongs to $\CS_{c\alpha}$  with $c = 1/(2C)$
  and concluding the proof.
\end{proof}

We now give the proof of the auxiliary lemma used in the proof of Lemma~\ref{lem:closing}.
\begin{proof}[of Lemma~\ref{lem:lipDeltaAdj}]
It follows from \eref{e:defKst} and the variation of constants formula that 
\begin{equ}
K_{T-\delta,T}\phi - e^{-\delta L}\phi = \int_{T-\delta}^T e^{-(T-s)L} DN^*(u_s)K_{s,T}\phi\,ds\;.
\end{equ}
It now follows from Assumption~\ref{ass:basic}, point 3 that there exists $\gamma_0 \in [0,\gamma_\star + 1)$
such that $DN^*(u)$ is a bounded linear map from $\CH$ to $\CH_{-a}$ for every $u \in \gamma_0$
and that its norm is bounded by $C \|u\|_{\gamma_0}^{n-1}$ for some constant $C$.
The first bound then follows by combining Proposition~\ref{prop:apriori} with the fact that
$e^{-Lt}$ is bounded by $Ct^{-a}$ as an operator from $\CH_{-a}$ to $\CH$ as a consequence
of standard analytic semigroup theory  \cite{Ka}.

In order to obtain the second bound, we write
  \begin{equs}
    \|K_{T-\delta,T}\phi - \phi\|_{-1} &\le \|K_{T-\delta,T}\phi -
    e^{-L \delta}\phi\|_{-1} +
    \|e^{-L \delta} \phi - \phi\|_{-1} \\
    &\le \|K_{T-\delta,T}\phi - e^{-L \delta}\phi\| + C \delta\;,
  \end{equs}
  where the last inequality is again a consequence of standard analytic
  semigroup theory.  The claim then follows from
  \eref{e:boundKphi}.
\end{proof}

\subsection{The smallness of $\MM_T$ implies the smallness of $\CQ_N(u_{T-\delta})$}

In this section, we show that if $\scal{\MM_T \phi,\phi}$ is small
then $\scal{\CQ_N(u_{t})K_{t,T}\phi,K_{t,T}\phi}$
must also be small with high probability for every $t \in I_\delta$. The precise statement is given by the following result:
\begin{lemma}\label{lem:contradiction}
  Let the Malliavin matrix $\MM_T$ be defined as in \eref{e:defM} and
  assume that Assumptions~\ref{ass:basic} and \ref{ass:global} are satisfied.  Then, for
  every $N>0$, there exist $r_N>0$, $p_N > 0$ and $q_N > 0$ such
  that, provided that $r \le r_N$, the implication
  \begin{equ}
    \scal{\phi, \MM_T\phi} \le \eps \|\phi\|^2 \quad\Longrightarrow\quad
    \sup_{Q \in \AA_N}\sup_{t\in \II} |\scal{K_{t,T}\phi, Q(u_t)}| \le
    \eps^{p_N} \|\phi\|\;,
  \end{equ}
  holds modulo some $\Psi^{q_N}$-dominated negligible family of
  events.
\end{lemma}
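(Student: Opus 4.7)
I would proceed by induction on $N\ge 1$, starting from the direct representation of $\MM_T$ in terms of the $g_k$'s and then iteratively picking up new directions by applying the Ito chain rule to processes of the form $Z_Q(t)=\scal{K_{t,T}\phi, Q(u_t)}$ for $Q\in\AA_N$.

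\textbf{Base case} ($N=1$). Since $\AA_1=\{g_k\}_{k=1}^d$, the definition of $\MM_T$ gives
\begin{equ}
\scal{\phi,\MM_T\phi} \;=\; \sum_{k=1}^d \int_0^T f_k(s)^2\,ds\;,\qquad f_k(s)\eqdef\scal{g_k, K_{s,T}\phi}\;,
\end{equ}
so the hypothesis $\scal{\phi,\MM_T\phi}\le\eps\|\phi\|^2$ forces $\|f_k\|_{L^2(\II)}^2\le\eps\|\phi\|^2$ for each $k$. Using \eref{e:apadj} of Proposition~\ref{prop:apriori} with some $\beta\in[-a,\beta_\star)$ such that $g_k\in\CH_{-\beta}$ (possible since $g_k\in\CH_{\gamma_\star+1}$), combined with Lemma~\ref{lem:stupid}, one obtains $\/f_k\/_{1}\le \eps^{-\mu}\|\phi\|$ modulo a $\Psi$-controlled negligible family, for arbitrarily small $\mu>0$ provided $r$ in \eref{e:defdelta} is chosen small enough. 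Converting the $L^2$-bound into a sup bound via the interpolation Lemma~\ref{lem:dtf} then yields $\sup_{t\in\II}|f_k(t)|\le\eps^{p_1}\|\phi\|$ for some $p_1>0$, modulo a $\Psi^{q_1}$-controlled negligible family.

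\textbf{Inductive step} ($N\to N+1$). Assume the conclusion at level $N$. Fix an admissible $Q\in\AA_N$ and consider $Z(t)=\scal{K_{t,T}\phi, Q(u_t)}$. Combining $\d_t K_{t,T}\phi=(L-DN^*(u_t))K_{t,T}\phi$ with the Ito expansion of $Q(u_t)$ and the algebraic identity $DF(u)=-L+DN(u)$ (so that $LQ(u)-DN(u)Q(u)+DQ(u)F(u)=[F,Q](u)$) gives, for $t_1<t_2$ in $\II$,
\begin{equ}
Z(t_2)-Z(t_1) \;=\; \int_{t_1}^{t_2}\!\!\scal{K_{r,T}\phi,\, [F,Q](u_r) + \textstyle\sum_k Q_{(k,k)}(u_r)}\,dr + \sum_{k=1}^d\int_{t_1}^{t_2}\!\!\scal{K_{r,T}\phi,\, Q_{(k)}(u_r)}\,dW_k(r)\;,
\end{equ}
with $Q_{(k)}=DQ(\ccdot)g_k$ and $Q_{(k,k)}=\tfrac12 D^2Q(\ccdot)(g_k,g_k)$ as in \eref{e:repeatedBrakets}, and where the stochastic integrals are to be read in the Skorokhod sense since $K_{\cdot,T}\phi$ is not adapted. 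The admissibility of $Q$ is precisely what ensures that all these pairings are well-defined in view of the interpolation regularity of $K_{\cdot,T}\phi$ and $u$ provided by Proposition~\ref{prop:apriori}. Rewriting each Skorokhod integral via the anticipating integration by parts formula as $C_k(t)W_k(t)-C_k(t_1)W_k(t_1)-\int_{t_1}^{t}W_k(r)\,dC_k(r)$ with $C_k(r)=\scal{K_{r,T}\phi, Q_{(k)}(u_r)}$ casts the right-hand side as a (first-order) Wiener polynomial in the values $W_k(\cdot)$ with non-adapted but Lipschitz coefficients. Since $|Z(t)|\le\eps^{p_N}\|\phi\|$ on $\II$ by the inductive hypothesis, the Wiener polynomial estimate of Section~\ref{sec:Wiener} applies and gives
\begin{equ}
\sup_{t\in\II}\bigl|\scal{K_{t,T}\phi, P(u_t)}\bigr| \;\le\; \eps^{p_{N+1}}\|\phi\|
\end{equ}
for every $P\in\{[F,Q],\, Q_{(k)},\, Q_{(k,k)}\}$, modulo a $\Psi^{q_{N+1}}$-controlled negligible family, for some $p_{N+1}>0$. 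Iterating the argument with $Q_{(k)}$ in place of $Q$ recovers $Q_\alpha$ for all multi-indices $\alpha$ (noting the combinatorial identity $(Q_{(k)})_{(j)}$ gives $Q_{(j,k)}$ up to a harmless factor), while running the same derivation starting from $\scal{K_{t,T}\phi, F_\sigma(u_t)}$ instead of $Z$ and combining with $Q_\alpha$ picks up all the cross-brackets $[F_\sigma,Q_\alpha]$. This exhausts $\AA_{N+1}$ and closes the induction.

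\textbf{The main obstacle.} The heart of the argument lies in matching the Ito-Skorokhod identity above with the precise hypotheses of the Wiener polynomial lemma of Section~\ref{sec:Wiener}: one must verify uniform Lipschitz-in-time control of the coefficient processes $\scal{K_{r,T}\phi, P(u_r)}$ on $\II$, which requires combining the bounds of Proposition~\ref{prop:apriori} on $\d_t K_{t,T}\phi$ and $\d_t u_t$ in the appropriate interpolation norms with the admissibility of the brackets built into the definition of $\AA_N$. The $\delta^{-\bar p_\beta}$ singularities at $t=T$ inherent in Proposition~\ref{prop:apriori} are absorbed by choosing $r$ sufficiently small at the outset via Lemma~\ref{lem:stupid}, and the exponents $p_N$, $q_N$ are merely tracked polynomially through the finitely many inductive steps.
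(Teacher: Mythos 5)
Your base case is essentially the paper's Lemma~\ref{lem:primer} and is fine. The inductive step, however, has a gap that I believe is fatal as written.

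You expand $Z(t)=\scal{K_{t,T}\phi,Q(u_t)}$ by an anticipating It\^o formula and try to recast the result as a degree-one Wiener polynomial with coefficients $C_k(r)=\scal{K_{r,T}\phi,Q_{(k)}(u_r)}$ and a drift coefficient built from $\int W_k\,dC_k$. But these coefficients are evaluated at $u_r$, which contains $GW(r)$ and is therefore only H\"older continuous of order just below $1/2$ in time; they are \emph{not} Lipschitz. Theorem~\ref{theo:Norris} is a dichotomy between ``all coefficients small'' and ``some coefficient has large Lipschitz constant,'' so it gives nothing when the Lipschitz constants are infinite; moreover $\int W_k\,dC_k$ is not even defined as a Riemann--Stieltjes integral when $C_k$ has unbounded variation. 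The paper's proof is built precisely to avoid this: it substitutes $u_t=v_t+GW(t)$ and Taylor-expands the polynomial, so that $\scal{Q(u_t),K_{t,T}\phi}=\sum_\alpha\scal{Q_\alpha(v_t),K_{t,T}\phi}W_\alpha(t)$ is a Wiener polynomial (of degree up to $m$, not one) whose coefficients are evaluated at the \emph{differentiable} process $v_t$ and hence are genuinely Lipschitz by \eref{e:aplip} and \eref{e:apadj}. No stochastic calculus on $Z$ is needed at all: the time derivative of $\scal{Q_\alpha(v_t),K_{t,T}\phi}$ exists classically, which also makes the Skorokhod-versus-forward-integral issue you would otherwise have to confront simply disappear, along with the It\^o correction $Q_{(k,k)}$.

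A second, related gap: your step only produces the full drift bracket $[F,Q]$ (lumped together with $\sum_k Q_{(k,k)}$), whereas $\AA_{N+1}$ requires each individual $[F_\sigma,Q_\alpha]$. These are extracted in the paper by expanding $F(u_t)=\sum_\sigma F_\sigma(v_t)W_\sigma(t)$ inside the derivative of the coefficient, obtaining $\sum_\sigma\scal{[Q_\alpha,F_\sigma](v_t),K_{t,T}\phi}W_\sigma(t)$, and applying Theorem~\ref{theo:Norris} a second time (Lemmas~\ref{lem:BasicFinteration}, \ref{lem:BracketRep} and \ref{lem:weinerBuildUp} then convert back from $v_t$ to $u_t$). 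Your suggestion to ``run the same derivation starting from $\scal{K_{t,T}\phi,F_\sigma(u_t)}$'' does not work because there is no a priori smallness of that quantity to feed into the argument. To repair your proof you would essentially have to reintroduce the $v_t$-expansion, at which point you recover the paper's argument.
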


\begin{proof}
  The proof proceeds by induction on $N$ and the steps of this induction are 
  the content of the next two subsections. 
  Since $\AA_1 =
  \{g_1,\ldots,g_d\}$, the case $N = 1$ is implied by
  Lemma~\ref{lem:primer}  below, with $p_1 = 1/4$, $q_1 = 8$, and $r_1 =
  1/(8\bar p_\beta)$.  
  
  The inductive step is then given by combining
  Lemmas~\ref{lem:recursion1} and \ref{lem:recursion2} below.  At each step,
  the values of $p_n$ and $r_n$ decrease while $q_n$ increases, but
  all remain strictly positive and finite after finitely many steps.
\end{proof}

\subsection{The first step in the iteration}

The ``priming step'' in the inductive proof of Lemma~\ref{lem:contradiction}
 follows from the fact that the directions which are directly forced by the Wiener processes are 
 not too small with high probability.
\begin{lemma}\label{lem:primer}
  Let the Malliavin matrix $\MM$ be defined as in \eref{e:defM} and
  assume that Assumptions~\ref{ass:basic} and \ref{ass:global} are satisfied. Then,
  provided that $r \le 1/(8\bar p_\beta)$, the implication
  \begin{equ}
    \scal{\phi, \MM_T\phi} \le \eps \|\phi\|^2
    \quad\Longrightarrow\quad \sup_{k = 1\ldots d}\;\sup_{t\in \II}
    |\scal{K_{t,T}\phi, g_k}| \le \eps^{1/4} \|\phi\|\;,
  \end{equ}
  holds modulo some $\Psi^{8}$-dominated negligible family of events.
  Here, $\bar p_\beta$ is as in \eref{e:apadj} and $\beta$ was fixed
  in \eref{e:gammabeta}.
\end{lemma}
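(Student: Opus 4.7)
The starting point is the second representation of the Malliavin matrix in \eref{e:Mscalar}:
\begin{equ}
\scal{\phi, \MM_T \phi} = \sum_{k=1}^d \int_0^T f_k(s)^2\,ds\;,\qquad f_k(s) \eqdef \scal{g_k, K_{s,T}\phi}\;,
\end{equ}
so the hypothesis immediately yields the $L^2$-bound $\int_{\II} f_k(s)^2\,ds \le \eps \|\phi\|^2$ on the subinterval $\II \subset [0,T]$ for each $k$. The plan is to promote this $L^2$-bound to the desired uniform bound via a Landau--Kolmogorov-style interpolation against a Lipschitz bound for $f_k$, which will itself follow from the a priori estimate \eref{e:apadj} of Proposition~\ref{prop:apriori}.

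For the Lipschitz bound: since $g_k \in \CH_{\gamma_\star+1}$ by Assumption~\ass{g}, and the constraint $\gamma + \beta \ge -1$ in \eref{e:gammabeta} together with $\beta \le \beta_\star$ ensures $g_k \in \CH_{-\beta}$, the pairing gives $|\d_s f_k(s)| \le \|g_k\|_{-\beta}\, \|\d_s K_{s,T}\phi\|_\beta$. The bound \eref{e:apadj} controls the $p$-th moment of $\sup_{t\in\II}\|\d_t K_{t,T}\phi\|_\beta$ by $C_p \Psi^p(u_0)\|\phi\|^p \delta^{-\bar p_\beta p}$, and with $\delta = (T/4)\eps^r$ this has precisely the form required by Lemma~\ref{lem:stupid} with $b = \bar p_\beta$. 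Applying that lemma with $q = 1/4$, and choosing the free constant there equal to $8\max_k \|g_k\|_{-\beta}$, delivers, modulo a $\Psi^{1/(1/4 - r\bar p_\beta)}$-controlled negligible family, the bound
\begin{equ}
\sup_{s \in \II} |\d_s f_k(s)| \le \tfrac{1}{8}\eps^{-1/4}\|\phi\|\;.
\end{equ}
Since $r \le 1/(8\bar p_\beta)$ forces $1/4 - r\bar p_\beta \ge 1/8$, this family is in particular $\Psi^8$-controlled.

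On the complement of the above event, $f_k$ is Lipschitz on $\II$ with constant $M \eqdef \tfrac{1}{8}\eps^{-1/4}\|\phi\|$, and the usual interpolation argument applies: at a point $t_0 \in \II$ where $|f_k|$ attains its supremum $m$, the interval of width $m/(2M)$ around $t_0$ (intersected with $\II$) carries $|f_k| \ge m/2$, so $(m/2)^2 \cdot m/(2M) \le \int_\II f_k^2\,ds \le \eps\|\phi\|^2$, whence $m^3 \le 8M\eps\|\phi\|^2 = \eps^{3/4}\|\phi\|^3$. The degenerate case where $m/(2M)$ exceeds $|\II| \ge T/4$ is strictly better, yielding $m \le C\eps^{1/2}\|\phi\|$ directly from the $L^2$-bound. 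Thus $\sup_{t\in\II}|f_k(t)| \le \eps^{1/4}\|\phi\|$, and a union bound over the finitely many indices $k \in \{1,\ldots,d\}$, absorbing the factor of $d$ into the implicit constants of the negligible family, completes the proof. The only genuine calibration is that the derivative cost $\eps^{-r\bar p_\beta}$ must leave enough margin for the $\eps^{(1-q)/3}$ gain from interpolation to yield a positive exponent $1/4$; this is exactly what the hypothesis $r \le 1/(8\bar p_\beta)$ encodes.
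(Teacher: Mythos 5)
Your proof is correct and follows essentially the same route as the paper: both start from the representation \eref{e:Mscalar}, control the Lipschitz constant of $s\mapsto\scal{g_k,K_{s,T}\phi}$ by pairing $g_k\in\CH_{-\beta}$ against the a priori bound \eref{e:apadj} and invoking Lemma~\ref{lem:stupid} with $q=1/4$ and $b=\bar p_\beta$ (which is exactly where the hypothesis $r\le 1/(8\bar p_\beta)$ and the $\Psi^8$ domination come from), and then interpolate the $L^2$ smallness against the Lipschitz bound to obtain the uniform estimate. The only cosmetic difference is that you carry out the $L^\infty$--$L^2$--Lipschitz interpolation by hand, whereas the paper packages it as the dichotomy obtained from Lemma~\ref{lem:dtf} applied to the antiderivative.
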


\begin{proof} For notational compactness, we scale $\phi$ to have norm
  one by replacing $\phi$ with $\phi/\|\phi\|$. We will still refer to
  this new unit vector as $\phi$. Now assume that $\scal{\phi, \MM_T\phi}
  \le \eps$. It then follows from \eref{e:defM} that
  \begin{equ}
    \sup_{k =1\ldots d} \int_\II \scal{g_k, K_{t,T} \phi}^2\, dt \le
    \eps\;.
  \end{equ}
  Applying Lemma~\ref{lem:dtf} with $f(t) = \int_{T/2}^t |\scal{g_k, K_{s,T} \phi}|\, ds$
  and $\alpha = 1$, it follows that there exists a constant $C>0$ such that, for every $k = 1\ldots d$,
  either
  \begin{equ}
    \sup_{t \in \II} |\scal{g_k, K_{t,T} \phi}| \le \eps^{1/4}\;,
  \end{equ}
  or
  \begin{equ}[e:smalleventgK]
    \/ \scal{g_k, K_{\cdot,T} \phi} \/_1 \ge C\eps^{-1/4}\;.
  \end{equ}
  Therefore, to complete the proof, we need only to show that the
  latter events form a $\Psi^{4}$-dominated negligible family for
  every $k$.  Since $\/\scal{g_k, K_{\cdot,T} \phi}\/_1 \le
  \|g_k\|_{-\beta} \/K_{\cdot ,T}\phi\/_{1,\beta}$, the bound
  \eref{e:smalleventgK} implies that
  \begin{equ}[e:smalleventgK2]
    \sup_{\phi \in \CH\,:\, \|\phi\| = 1} \/K_{t,T}\phi\/_{1,\beta}
    \ge {C\eps^{-1/4} \over g^*}\;,
  \end{equ}
  where $g^* = \max_{k} \|g_k\|_{-\beta}$ (which is finite since we
  have by assumption that $-\beta \le \gamma + 1 < \gamma_\star + 1$
  and since $g_k \in \CH_{\gamma_\star+1}$ for every $k$) . This event
  depends only on the initial condition $u_0$ and on the model under
  consideration. In particular, it is independent of $\phi$.

The claim now follows from the \textit{a priori} bound \eref{e:apadj}
and Lemma~\ref{lem:stupid} with $q = {1\over 4}$ and $b = \bar p_\beta$.
\end{proof}

\subsection{The iteration step}
\label{sec:iterate}

Recall that we consider evolution equations of the type
\begin{equ}[e:main2]
  du_t = F(u_t)\, dt + \sum_{k=1}^d g_k dW_k(t)\;,
\end{equ}
where $F$ is a ``polynomial'' of degree $n$.
The aim of this section is to implement the following recursion: if,
for any given polynomial $Q$, the expression
$\scal{Q(u_t), K_{t,T} \phi}$ is ``small'' in the supremum norm, then
both the expression $\scal{[Q,F](u_t), K_{t,T} \phi}$ and
$\scal{[Q,g_k](u_t), K_{t,T} \phi}$ must be small in the supremum norm
as well.

The main technical tool used in this section will be the estimates on
``Wiener polynomials'' from Section~\ref{sec:Wiener}. Using the notation
\begin{equ}
W_\alpha(t) \eqdef W_{\alpha_1}(t)\, W_{\alpha_2}(t)\cdots W_{\alpha_\ell}(t)\;,
\end{equ}
for a multi-index $\alpha = (\alpha_1,\ldots,\alpha_\ell)$, this estimate states that
if an expression of the type $\sum_{|\alpha|\le m} A_\alpha(t)W_\alpha(t)$ is small,
then, provided that the processes $A_\alpha$ are sufficiently regular in time, each
of the $A_\alpha$ must be small. In other words, two distinct monomials in a Wiener polynomial
cannot cancel each other out. Here, the processes $A_\alpha$ do not have to be
adapted to the filtration generated by the $W_k$, so this gives us some kind of anticipative
replacement of Norris' lemma. The main trick that we use in order to take advantage of such a result
is to switch back and forth between considering the process $u_t$ solution to \eref{e:main2}
and the process $v_t$ defined by
\begin{equ}
v_t \eqdef u_t - \sum_{k=1}^d g_k W_k(t)\;,
\end{equ}
which has more time-regularity than $u_t$. Recall furthermore that given a polynomial $Q$ and
a multi-index $\alpha$, we denote by $Q_\alpha$ the corresponding term \eref{e:repeatedBrakets} appearing in the (finite)
Taylor expansion of $Q$.

Recall the definition $\Poly^m(\gamma,\beta) = \Poly^m(\CH_\gamma,\CH_{-\beta-1}) \cap \Poly^m(\CH_{\gamma+1},\CH_{-\beta})$. We first show that if $Q \in \Poly^m(\gamma,\beta)$ and 
$\scal{Q(u_t), K_{t,T} \phi}$ is small, then the expression $\scal{Q_\alpha(v_t), K_{t,T} \phi}$ 
(note the appearance of $v_t$ rather than $u_t$) must be small as well
for every multi-index $\alpha$:

\begin{lemma}\label{lem:recursion0}
  Let $Q \in \Poly^m(\gamma,\beta)$ for some $m \ge 0$ and for
  $\gamma$ and $\beta$ as chosen in \eref{e:gammabeta}. Let
  furthermore $q > 0$ an set $\bar q = q 3^{-m}$. Then, the implication
  \begin{equ}
    \sup_{t \in \II} |\scal{Q(u_t), K_{t,T} \phi}| \le \eps^q\|\phi\|
    \quad\Longrightarrow\quad \sup_{\alpha} \sup_{t \in \II}
    |\scal{Q_\alpha(v_t), K_{t,T} \phi}| \le \eps^{\bar q}
    \|\phi\|\;,
  \end{equ}
  holds modulo some $\Psi^{6 (m+1) / \bar q}$-dominated negligible
  family of events, provided that $r < \bar q / (6\bar
  p_\beta)$.
\end{lemma}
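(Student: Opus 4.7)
The plan is to apply the Wiener polynomial estimate from Section~\ref{sec:Wiener} after re-expressing the hypothesis as a polynomial in the driving Wiener processes. Starting from the identity $u_t = v_t + \sum_{k=1}^d g_k W_k(t)$ and invoking the Taylor expansion \eref{e:TaylorQ} of $Q$ in the directions $g_k$, one obtains
\begin{equ}
Q(u_t) = \sum_{|\alpha| \le m} Q_\alpha(v_t)\, W_\alpha(t)\;,
\end{equ}
and consequently
\begin{equ}
\scal{Q(u_t), K_{t,T}\phi} = \sum_{|\alpha| \le m} A_\alpha(t)\, W_\alpha(t)\;,\qquad
A_\alpha(t) \eqdef \scal{Q_\alpha(v_t), K_{t,T}\phi}\;.
\end{equ}
The hypothesis then reads $\sup_{t\in\II}|\sum_\alpha A_\alpha(t) W_\alpha(t)| \le \eps^q\|\phi\|$, and the conclusion asks for smallness of each coefficient $A_\alpha$ separately. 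This is precisely the type of ``non-cancellation'' statement that the Wiener polynomial lemma is designed to produce.

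The first step is to control the time-regularity of each $A_\alpha$ on $\II$ in the form
\begin{equ}
\/A_\alpha\/_1 \;\le\; C\,\Psi^{M}(u_0)\,\delta^{-\bar p_\beta}\,\|\phi\|\;,
\end{equ}
for some exponent $M$ depending only on $m$. Writing
\begin{equs}
A_\alpha(t) - A_\alpha(s) &= \scal{Q_\alpha(v_t) - Q_\alpha(v_s),\, K_{t,T}\phi}\\
&\quad + \scal{Q_\alpha(v_s),\, K_{t,T}\phi - K_{s,T}\phi}\;,
\end{equs}
the first term is estimated by the duality between $\CH_{-\beta-1}$ and $\CH_{\beta+1}$, using $Q_\alpha \in \Poly(\CH_\gamma,\CH_{-\beta-1})$ together with the $\CH_\gamma$-regularity of $\partial_t v_t$ from \eref{e:aplip} and the bound on $\|K_{t,T}\phi\|_{\beta+1}$ from \eref{e:apadjnorm}. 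The second term is estimated by the duality between $\CH_{-\beta}$ and $\CH_\beta$, using $Q_\alpha \in \Poly(\CH_{\gamma+1},\CH_{-\beta})$ with \eref{e:apsol} and the $\CH_\beta$-regularity of $\partial_t K_{t,T}\phi$ from \eref{e:apadj}. Lemma~\ref{lem:stupid} combined with the scaling $\delta = \tfrac{T}{4}\eps^r$ then upgrades this moment bound to the pathwise estimate $\/A_\alpha\/_1 \le \eps^{-\bar q/6}\|\phi\|$, modulo a $\Psi$-controlled negligible family, provided $r < \bar q/(6\bar p_\beta)$.

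The second step is to feed the hypothesis and this Lipschitz estimate into the Wiener polynomial lower bound of Section~\ref{sec:Wiener} applied to the polynomial $\sum_{|\alpha|\le m} A_\alpha(t) W_\alpha(t)$ on $\II$. That result, whose iterative structure accounts for the factor $3^{-m}$ in $\bar q = q 3^{-m}$, produces the pointwise bound $\sup_{t\in\II}|A_\alpha(t)| \le \eps^{\bar q}\|\phi\|$ modulo a further negligible family for each individual multi-index $\alpha$. Since there are only finitely many multi-indices with $|\alpha| \le m$, a union bound gives the claimed implication; tracking the exponents through Lemma~\ref{lem:stupid} yields the $\Psi^{6(m+1)/\bar q}$-control advertised in the statement.

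The principal obstacle is the exponent bookkeeping: one must balance the singularity $\delta^{-\bar p_\beta}$ inherited from \eref{e:aprioriK} against the Lipschitz bound demanded by the Wiener polynomial lemma, while keeping the final exponent $\bar q$ strictly positive. The constraint $r < \bar q/(6\bar p_\beta)$ is precisely what makes both requirements simultaneously satisfiable, and the uniformity over $\alpha$ is guaranteed because the degree of $Q_\alpha$ only decreases with $|\alpha|$, so the estimates above apply uniformly with constants depending only on $m$, $\gamma$, $\beta$ and the model.
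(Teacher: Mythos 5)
Your proposal is correct and follows essentially the same route as the paper: expand $Q(u_t)$ via \eref{e:TaylorQ} into a Wiener polynomial with coefficients $A_\alpha(t) = \scal{Q_\alpha(v_t),K_{t,T}\phi}$, control $\/A_\alpha\/_1$ by the product rule using the two memberships $Q_\alpha\in\Poly(\CH_\gamma,\CH_{-\beta-1})$ and $Q_\alpha\in\Poly(\CH_{\gamma+1},\CH_{-\beta})$ together with Proposition~\ref{prop:apriori} and Lemma~\ref{lem:stupid}, and then invoke Theorem~\ref{theo:Norris}. The only bookkeeping slip is that the Lipschitz threshold to be ruled out is $\eps^{-\bar q/3}$ (the one appearing in the dichotomy of Theorem~\ref{theo:Norris}), not $\eps^{-\bar q/6}$; using the correct threshold is exactly what makes Lemma~\ref{lem:stupid} deliver the advertised $\Psi^{6(m+1)/\bar q}$-domination under $r<\bar q/(6\bar p_\beta)$.
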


\begin{proof}
  Note first that both inner products appearing in the implication are
  well-defined by Proposition~\ref{prop:apriori} and the assumptions
  on $Q$.  By homogeneity, we can assume that $\|\phi\| = 1$.  Since
  $Q$ is a polynomial, \eref{e:TaylorQ} implies that
  \begin{equ}
    \scal{Q(u_t), K_{t,T} \phi} = \sum_\alpha \scal{Q_\alpha(v_t),
      K_{t,T} \phi} W_\alpha(t)\;.
  \end{equ}
  Applying Theorem~\ref{theo:Norris}, we see that, modulo some
  negligible family of events $\Osc^m$, $\sup_{t \in \II}
  |\scal{Q(u_t), K_{t,T} \phi}| \le \eps^q$ implies that either
  \begin{equ}[e:goodterm]
    \sup_\alpha \sup_{t \in \II} |\scal{Q_\alpha(v_t), K_{t,T} \phi}|
    \le \eps^{\bar q}\;,
  \end{equ}
  or there exists some $\alpha$ such that
  \begin{equ}[e:largeLip]
    \/ \scal{Q_\alpha(v_t), K_{\cdot ,T} \phi}\/_1 \ge \eps^{- \bar q / 3}\;.
  \end{equ}
  We begin by arguing that the second event is negligible.  Since $Q$
  is of degree $m$, there exists a constant $C$ such that
  \begin{equs}
    \/\scal{Q_\alpha(v_t), &K_{\cdot,T} \phi}\/_1 \le \sup_{t \in \II}
    \|K_{t,T}\phi\|_{\beta+1} \/Q_\alpha(v_\cdot)\/_{1,-\beta-1} + \sup_{t
      \in \II} \|Q_\alpha(v_t)\|_{-\beta} \/K_{\cdot,T}\phi\/_{1,\beta} \\
    &\le C\sup_{t \in \II} \|K_{t,T}\phi\|_{\beta+1}
   \sup_{t \in \II} \|v_t\|_{\gamma}^{m-1} \/v\/_{1,\gamma} + C\sup_{t \in \II}
    \|v_t\|_{\gamma+1}^{m} \/K_{\cdot,T}\phi\/_{1,\beta}\;.
  \end{equs}
  Here, we used the fact that $Q_\alpha \in \Poly^m(\CH_\gamma,
  \CH_{-1-\beta})$ to bound the first term and the fact that $Q_\alpha
  \in \Poly^m(\CH_{\gamma+1}, \CH_{-\beta})$ to bound the second term.
  The fact that $Q_\alpha$ belongs to these spaces is a consequence of
  $g_k \in \CH_{\gamma_\star + 1}$ and of the definition
  \eref{e:TaylorQ} of $Q_\alpha$.
  
  Therefore, \eref{e:largeLip} implies that either
  \begin{equ}[e:eventX]
    X_\delta \eqdef \sup_{\phi \in \CH\,:\, \|\phi\| = 1}\sup_{t \in
      \II} \|K_{t,T}\phi\|_{\beta+1} \sup_{t \in
      \II}\|v_t\|_{\gamma}^{m-1}
    \/v\/_{1,\gamma} \ge {1\over 2C} \eps^{- \bar q /3}
  \end{equ}
  or
  \begin{equ}[e:eventY]
    Y_\delta \eqdef \sup_{\phi \in \CH\,:\, \|\phi\| = 1} \sup_{t \in
      \II} \|v_t\|_{\gamma+1}^{m} \/K_{\cdot,T}\phi\/_{1,\beta} \ge
    {1\over 2C} \eps^{- \bar q / 3}\;.
  \end{equ}
  Combining the Cauchy-Schwarz inequality with \eref{e:apadj} of
  Proposition~\ref{prop:apriori}, we see that $X_\delta$ and
  $Y_\delta$ satisfy the assumptions of Lemma~\ref{lem:stupid} with
  $\Phi = \Psi^{m+1}$ and $b = \bar p_\beta$, thus showing that the families of
  events \eref{e:eventX} and \eref{e:eventY} are both $\Psi^{6 (m+1)
     / \bar q}$-dominated negligible, provided that $r < \bar q
 / (6\bar p_\beta)$.
\end{proof}

In the sequel, we will need the follow simple result which is, in
some way, a converse to Theorem~\ref{theo:Norris}.
\begin{lemma}\label{lem:weinerBuildUp}
  Given any integer $N>0$ and any two exponents $0 < \bar q < q$, there exists a universal family of negligible events
  $\textrm{Sup}_W^N$ such that the
  implication 
  \begin{equ}{}
    \sup_\alpha \sup_{t \in \II} |A_\alpha(t)| < \eps^q \qquad
    \Longrightarrow \qquad \sup_{t \in \II} \Big|\sum_{\alpha : |\alpha| \leq N} A_\alpha(t) W_\alpha(t)\Big|
    <\eps^{\bar q}
  \end{equ}
  holds modulo $\textrm{Sup}_W^N$ for any collection of processes $\{A_\alpha(t) : |\alpha| \leq N\}$.
\end{lemma}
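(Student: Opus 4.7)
The plan is to exploit the fact that Wiener paths grow only polynomially slowly, so that $|W_\alpha(t)|$ can be made to beat any small negative power of $\eps$ off of a truly tiny exceptional set. Since there are only finitely many multi-indices of length at most $N$ and the bound $\eps^q$ on the $A_\alpha$ has a positive power to spare (namely $q-\bar q > 0$), a direct term-by-term estimate will close the argument.

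Concretely, first I would fix the auxiliary exponent $\eta = (q-\bar q)/(2N) > 0$, and let $M_N = \sum_{\ell = 0}^{N} d^\ell$ denote the (finite) number of multi-indices $\alpha$ with $|\alpha| \le N$. Choose $\eps_0 \in (0,1]$ small enough that $M_N \eps^{(q-\bar q)/2} \le 1$ for every $\eps \le \eps_0$. Define
\begin{equ}
\textrm{Sup}_W^{N,\eps} = \Bigl\{\sup_{1 \le k \le d}\sup_{t \in \II}|W_k(t)| > \eps^{-\eta}\Bigr\} \quad \text{for } \eps \le \eps_0,
\end{equ}
and $\textrm{Sup}_W^{N,\eps} = \Omega$ for $\eps \in (\eps_0,1]$. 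This family depends only on the Wiener processes (and on the fixed constants $N$, $q$, $\bar q$, $T$, $d$), so it is universal in the sense of Definition~\ref{def:negligible}.

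Next I would verify that $\textrm{Sup}_W^N$ is a negligible family. For $\eps \le \eps_0$, the reflection principle and standard Gaussian tail estimates give $\P(\sup_{t \in [0,T]}|W_k(t)| > x) \le 2 e^{-x^2/(2T)}$ so that $\P(\textrm{Sup}_W^{N,\eps}) \le 2d\, e^{-\eps^{-2\eta}/(2T)}$, which decays faster than any polynomial in $\eps$. For $\eps \in (\eps_0, 1]$ one has $\P(\textrm{Sup}_W^{N,\eps}) = 1 \le \eps_0^{-p}\eps^p$, so the combined bound $\P(\textrm{Sup}_W^{N,\eps}) \le C_p \eps^p$ holds for every $p \ge 1$ and $\eps \le 1$.

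Finally I would check the implication on the complement of $\textrm{Sup}_W^{N,\eps}$. For $\eps \in (\eps_0, 1]$ this complement is empty, so the implication holds vacuously. For $\eps \le \eps_0$, on the complement one has $|W_\alpha(t)| \le \eps^{-\eta|\alpha|} \le \eps^{-\eta N}$ uniformly in $t \in \II$ and in $|\alpha| \le N$. Assuming the hypothesis $\sup_\alpha \sup_{t \in \II}|A_\alpha(t)| < \eps^q$, we estimate
\begin{equ}
\sup_{t \in \II}\Bigl|\sum_{|\alpha| \le N} A_\alpha(t) W_\alpha(t)\Bigr| \le M_N\, \eps^q\, \eps^{-\eta N} = M_N\, \eps^{\bar q + (q-\bar q)/2} \le \eps^{\bar q},
\end{equ}
where the last inequality uses the choice of $\eps_0$. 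The only tiny subtlety is the handling of the regime $\eps$ close to $1$, which the definition above handles by making the exceptional set the full space there; this is harmless since negligibility only constrains the decay as $\eps \to 0$. There is no serious obstacle here — the result is an essentially trivial converse to Theorem~\ref{theo:Norris} driven entirely by Gaussian concentration of the driving paths.
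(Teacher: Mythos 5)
Your proof is correct and follows essentially the same route as the paper's: bound the Wiener polynomial by the supremum of the coefficients times a quantity built from $\sup_{t\in\II}|W_\alpha(t)|$, and observe that the event where this Wiener quantity exceeds a small negative power of $\eps$ is negligible by Gaussian tails, which the spare exponent $q-\bar q>0$ then absorbs. The paper states this more compactly (declaring directly that $\{\sum_{|\alpha|\le N}\sup_{t\in\II}|W_\alpha|>\eps^{-p}\}$ is negligible for any $p>0$), but your explicit choice of $\eta$ and handling of $\eps$ near $1$ are just a more detailed rendering of the identical argument.
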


\begin{proof}
  Observe that
  \begin{equ}{}
    \sup_{t \in \II}\Big|\sum_{\alpha : |\alpha| \leq N} A_\alpha(t) W_\alpha(t)\Big|
    \leq \left(\sup_\alpha \sup_{t \in \II} |A_\alpha(t)| \right)
    \Bigg(\sum_{\alpha : |\alpha| \leq N} \sup_{t \in \II} |W_\alpha|\Bigg)
  \end{equ}
  Since for any $p>0$,
  \begin{equ}{}
    \sum_{\alpha : |\alpha| \leq N} \sup_{t \in \II} |W_\alpha| > \eps^{-p}
  \end{equ}
  is a negligible family of events, the claim follows at once.
\end{proof}

As a corollary to Lemmas \ref{lem:recursion0} and
\ref{lem:weinerBuildUp}, we now obtain the key estimate for
Lemma~\ref{lem:contradiction} in the particular case where the
commutator is taken with one of the constant vector fields:

\begin{lemma}\label{lem:recursion1}
  Let $Q\in  \Poly^m(\gamma,\beta)$ be a polynomial of degree $m$
  and let $q > 0$. Then, for $\bar q = q 3^{-(m+1)}$, the implication
  \begin{equ}
    \sup_{t \in \II} |\scal{Q(u_t), K_{t,T} \phi}| \le \eps^q\|\phi\|
    \; \Longrightarrow\; \sup_{\alpha} \sup_{t \in \II}
    |\scal{Q_\alpha (u_t), K_{t,T} \phi}| \le \eps^{\bar q}\|\phi\|\;,
  \end{equ}
  holds for all $\phi \in \CH$ modulo some $\Psi^{2 (m+1) / \bar
    q}$-dominated negligible family of events, provided that $r < \bar
  q / (2\bar p_\beta)$.
\end{lemma}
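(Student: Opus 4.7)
\emph{Plan.} The statement follows by combining Lemma~\ref{lem:recursion0} with the ``buildup'' Lemma~\ref{lem:weinerBuildUp}. Set $\tilde q = q\cdot 3^{-m}$, so that $\bar q = \tilde q/3$, and, by homogeneity, assume $\|\phi\|=1$.

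\emph{Step 1 (transfer to $v_t$).} Apply Lemma~\ref{lem:recursion0} with the given $q$. Its smallness hypothesis is exactly our hypothesis, its quantitative condition $r < \tilde q/(6\bar p_\beta)$ rewrites as $r < \bar q/(2\bar p_\beta)$, and its exceptional set is $\Psi^{6(m+1)/\tilde q} = \Psi^{2(m+1)/\bar q}$-dominated. Outside of this negligible family one therefore has
\begin{equ}
\sup_{\beta}\;\sup_{t\in\II} |\scal{Q_\beta(v_t), K_{t,T}\phi}| \le \eps^{\tilde q}\;,
\end{equ}
where $\beta$ ranges over all multi-indices with $|\beta|\le m$.

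\emph{Step 2 (turn $Q_\alpha(u_t)$ into a Wiener polynomial in the $v$-coefficients).} Since $u_t = v_t + \sum_{k=1}^d g_k W_k(t)$ and $Q_\alpha \in \Poly^{m-|\alpha|}$, applying the Taylor-type expansion \eref{e:TaylorQ} now to the polynomial $Q_\alpha$ gives
\begin{equ}
Q_\alpha(u_t) = \sum_{|\sigma|\le m-|\alpha|} (Q_\alpha)_\sigma(v_t)\, W_\sigma(t)\;.
\end{equ}
By \eref{e:repeatedBrakets} and the fact that higher derivatives commute, $(Q_\alpha)_\sigma$ is a combinatorial multiple of $Q_{\alpha\cup\sigma}$, so that
\begin{equ}
\scal{Q_\alpha(u_t), K_{t,T}\phi} = \sum_{\sigma} c_{\alpha,\sigma}\, \scal{Q_{\alpha\cup\sigma}(v_t), K_{t,T}\phi}\, W_\sigma(t)
\end{equ}
for constants $c_{\alpha,\sigma}$ depending only on $m$.

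\emph{Step 3 (sum up the Wiener polynomial).} By Step~1, on the good set each coefficient process $c_{\alpha,\sigma}\scal{Q_{\alpha\cup\sigma}(v_t), K_{t,T}\phi}$ is bounded in supremum by $C\eps^{\tilde q}$. Apply Lemma~\ref{lem:weinerBuildUp} with Wiener polynomial degree $N=m$ and exponents $\tilde q > \bar q$ (valid since $\bar q = \tilde q/3 < \tilde q$): this produces an additional universal negligible family outside of which
\begin{equ}
\sup_{t\in\II}|\scal{Q_\alpha(u_t), K_{t,T}\phi}| \le \eps^{\bar q}\;.
\end{equ}
Since the number of multi-indices $\alpha$ with $|\alpha|\le m$ is finite, depending only on $m$ and $d$, Lemma~\ref{lem:sumup} ensures that the union over $\alpha$ of the exceptional events remains a $\Psi^{2(m+1)/\bar q}$-dominated negligible family, yielding the claim.

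\emph{Main obstacle.} There is no real conceptual obstacle: the argument is purely algebraic, separating the ``constant'' ($v_t$) and ``Wiener'' ($W(t)$) parts of $u_t$ and invoking the already-proven estimates. The only point requiring care is the bookkeeping of exponents---the factor $3$ between $\tilde q$ and $\bar q$ is precisely what is absorbed by the Wiener buildup in Lemma~\ref{lem:weinerBuildUp}, and compounding it with the $3^m$ factor from Lemma~\ref{lem:recursion0} gives the announced $3^{-(m+1)}$ scaling.
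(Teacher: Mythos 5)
Your proposal is correct and follows essentially the same route as the paper: the paper's proof is precisely the identity $Q_\alpha(u_t)=\sum_\sigma (Q_\alpha)_\sigma(v_t)W_\sigma(t)$ with $(Q_\alpha)_\sigma$ proportional to $Q_{\alpha\cup\sigma}$, combined with Lemma~\ref{lem:recursion0} and Lemma~\ref{lem:weinerBuildUp} with $N=m$. Your exponent bookkeeping ($\tilde q = q3^{-m}$, $\bar q=\tilde q/3$, and $r<\tilde q/(6\bar p_\beta)=\bar q/(2\bar p_\beta)$) matches the paper's exactly.
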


\begin{proof}
  Since it follows from \eref{e:TaylorQ} that $(Q_\alpha)_\beta =
  Q_{\alpha \cup \beta}$, we have the identity
  \begin{equ}{} Q_\alpha(u_t) = \sum_\beta
    (Q_\alpha)_\beta(v_t)W_\beta = \sum_\beta Q_{\alpha
      \cup \beta} (v_t)W_\beta \;.
  \end{equ}
  Combining Lemma~\ref{lem:recursion0} and
  Lemma~\ref{lem:weinerBuildUp} with $N=m$ proves the claim.
\end{proof}

In the next step, we show a similar result for the commutators between $Q$ and $F$.
We are going to use the fact that if a function $f$
is differentiable with H\"older continuous derivative, then $f$ being
small implies that $\d_t f$ is small as well, as made precise by
 Lemma~\ref{lem:dtf}. As previously, we start by showing a result that involves the process
 $v_t$ instead of $u_t$:

\begin{lemma}\label{lem:BasicFinteration}
   Let $Q$ be as in Lemma~\ref{lem:recursion0} and such that
  $[Q_\alpha,F_\sigma] \in \Poly(\gamma,\beta)$ for any two multi-indices
  $\alpha, \sigma$. Let furthermore $q > 0$ and set $\bar q=q 3^{-2m}/8$. Then the implication 
  \begin{equ}
        \sup_{t \in \II} |\scal{Q(u_t), K_{t,T} \phi}| \le \eps^q\|\phi\|
    \quad \Longrightarrow\quad \sup_{\alpha,\sigma} \sup_{t \in \II}
    |\scal{[Q_\alpha,F_\sigma](v_t), K_{t,T} \phi}| \le \eps^{\bar q}
    \|\phi\|\;,
  \end{equ}
 holds modulo some $\Psi^{6 (m+1) / \bar q}$-dominated negligible
  family of events, provided that $r < \bar q / (6\bar
  p_\beta)$. (As before the empty multi-indices are included in the supremum.)  
\end{lemma}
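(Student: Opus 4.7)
The plan is to combine three tools already at our disposal: Lemma~\ref{lem:recursion0} (to pass from $Q(u_t)$ to $Q_\alpha(v_t)$), the interpolation inequality Lemma~\ref{lem:dtf} (to transfer smallness of a function to smallness of its time derivative), and Theorem~\ref{theo:Norris} on Wiener polynomials (to separate the monomials in a Wiener polynomial). The crucial algebraic identity driving the argument is that $\d_t \scal{Q_\alpha(v_t), K_{t,T}\phi}$ is itself a Wiener polynomial in $W(t)$ whose coefficients are precisely the brackets $\scal{[F_\sigma, Q_\alpha](v_t), K_{t,T}\phi}$ that we wish to bound.

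First I would invoke Lemma~\ref{lem:recursion0} to replace the hypothesis by
\begin{equ}
  \sup_{\alpha} \sup_{t \in \II} |\scal{Q_\alpha(v_t), K_{t,T}\phi}| \le \eps^{q \cdot 3^{-m}} \|\phi\|\;,
\end{equ}
modulo a suitably controlled family of negligible events, which is legitimate thanks to the admissibility of $Q$. Next, using $\d_t v_t = F(u_t)$ and the equation $\d_s K_{s,T}\phi = L K_{s,T}\phi - DN^*(u_s) K_{s,T}\phi$ from \eref{e:defKst} together with the self-adjointness of $L$, a direct computation yields
\begin{equ}
  \d_t \scal{Q_\alpha(v_t), K_{t,T}\phi} = \scal{DQ_\alpha(v_t) F(u_t) - DF(u_t) Q_\alpha(v_t), K_{t,T}\phi}\;.
\end{equ}
Writing $u_t = v_t + GW(t)$ and invoking the polynomial Taylor expansion $F(v+GW) = \sum_\sigma F_\sigma(v) W_\sigma$ (noting that $L$ is linear and that $DF_\sigma = DN_\sigma$ for $|\sigma|\ge 1$), the linearity of $[\cdot,\cdot]$ in each argument produces the key identity
\begin{equ}
  \d_t \scal{Q_\alpha(v_t), K_{t,T}\phi} = \sum_{\sigma} \scal{[F_\sigma, Q_\alpha](v_t), K_{t,T}\phi}\, W_\sigma(t)\;.
\end{equ}

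Then I would use Lemma~\ref{lem:dtf} to conclude smallness of the time derivative from that of $\scal{Q_\alpha(v_t), K_{t,T}\phi}$. This requires an estimate on the H\"older seminorm of the right-hand side of the key identity, which I would derive from Proposition~\ref{prop:apriori} together with Lemma~\ref{lem:stupid}: the admissibility condition $[F_\sigma, Q_\alpha] \in \Poly(\gamma,\beta)$ is precisely what makes the pairing estimates in the two intermediate norms go through, while the restriction $t \in \II = [T/2, T-\delta]$ with $\delta = (T/4)\eps^r$ absorbs the singularity of $K_{\cdot,T}$ at the terminal time, cf.~\eref{e:apadj}. Finally, a second application of Theorem~\ref{theo:Norris} to the Wiener polynomial appearing in the key identity yields $\sup_{\alpha,\sigma} \sup_{t \in \II} |\scal{[F_\sigma, Q_\alpha](v_t), K_{t,T}\phi}| \le \eps^{\bar q}\|\phi\|$ with $\bar q = q\cdot 3^{-2m}/8$, where the exponent accounts for one factor of $3^{-m}$ from each application of Theorem~\ref{theo:Norris} and the factor $1/8$ absorbs the losses from Lemma~\ref{lem:dtf} and from the slack needed in the Wiener polynomial argument.

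The main obstacle will be the uniform H\"older seminorm estimate for $\d_t \scal{Q_\alpha(v_t), K_{t,T}\phi}$ in $\alpha$ and $\sigma$, with the correct polynomial dependence on $\Psi(u_0)$ and negative power of $\delta$. The right-hand side of the key identity depends multilinearly on $v_t$, on $K_{t,T}\phi$, and on $W(t)$, each of which is H\"older continuous with norms controlled by Proposition~\ref{prop:apriori}; a Leibniz-type expansion and the admissibility hypothesis keep every bracket in the space $\Poly(\gamma,\beta)$ where these pairing estimates hold. Once the bound is assembled, Lemma~\ref{lem:stupid} converts it into a negligibility statement, and the constraint $r < \bar q/(6\bar p_\beta)$ ensures that the exceptional family is $\Psi^{6(m+1)/\bar q}$-dominated, as required.
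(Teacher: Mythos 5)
Your proposal follows the paper's proof essentially step for step: apply Lemma~\ref{lem:recursion0} to pass to $Q_\alpha(v_t)$, differentiate to get the key identity $\d_t\scal{Q_\alpha(v_t),K_{t,T}\phi} = \sum_\sigma \scal{[Q_\alpha,F_\sigma](v_t),K_{t,T}\phi}W_\sigma(t)$, control its $1/3$-H\"older seminorm via Proposition~\ref{prop:apriori} and Lemma~\ref{lem:stupid} so that Lemma~\ref{lem:dtf} yields smallness of the derivative, and then apply Theorem~\ref{theo:Norris} a second time to isolate each bracket. The exponent bookkeeping ($3^{-m}$ per Norris application, $1/8$ from the interpolation step) matches the paper's as well.
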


\begin{proof}
   By homogeneity, we can assume that $\|\phi\| = 1$.  Combining
  Lemma~\ref{lem:recursion0} with Lemma~\ref{lem:dtf} and defining
  $\hat q = q 3^{-m}$, we obtain that $\sup_{t \in \II} |\scal{Q(u_t),
    K_{t,T} \phi}| \le \eps^q$ implies for $f_{\alpha,\phi}(t) \eqdef
  \d_t \scal{Q_\alpha(v_t), K_{t,T} \phi}$ the bound
  \begin{equ}[e:boundfaphi]
    \sup_{t \in \II} |f_{\alpha,\phi}(t)| \le C
    \max\bigl\{\eps^{\hat q}, \eps^{\hat q\over 4}
    \/f_{\alpha,\phi}\/_{1/3}^{3/4}\bigr\}\;,
  \end{equ}
  modulo some $\Psi^{6 (m+1) / \hat q}$-dominated negligible family of
  events, provided that $r \le \hat q / (6\bar p_\beta)$. Note that
  this family is in particular independent of both $\alpha$ and
  $\phi$.  Here and in the sequel, we use the letter $C$ to denote a generic constant
  depending on the details of the problem that may change from one expression to the next.
  
One can see that $\scal{Q_\alpha(v_t), K_{t,T}
    \phi}$ is differentiable in $t$ by combining
  Proposition~\ref{prop:apriori} with the fact that $Q_\alpha \in
  \Poly(\CH_\gamma, \CH_{-1-\beta})\cap \Poly(\CH_{\gamma+1},
  \CH_{-\beta})$ as in the proof of Lemma~\ref{lem:recursion0}. See
  \cite{DauLio5} for a more detailed proof of a similar statement.

  Computing the derivative explicitly, we obtain
  \begin{equ}
    f_{\alpha,\phi}(t) = \scal{DQ_\alpha(v_t) F(u_t) - DF(u_t)
      Q_\alpha(v_t), K_{t,T} \phi} \eqdef \scal{B_\alpha(t), K_{t,T} \phi}
    \;.
  \end{equ}
  The function $B_\alpha$ can be further expanded to
  \begin{equ}
    B_\alpha(t) = \sum_\sigma \bigl(DQ_\alpha(v_t) F_\sigma (v_t) - DF_\sigma
    (v_t) Q_\alpha(v_t)\bigr) \, W_\sigma(t) = \sum_\sigma [Q_\alpha,
    F_\sigma](v_t) W_\sigma(t)\;.
  \end{equ}
  Notice that, by the assumption that $[Q_\alpha, F_\sigma] \in \Poly(\gamma,\beta)$, one has
  \begin{equs}
    \/[Q_\alpha, F_\sigma](v_\cdot) W_\sigma(\cdot)\/_{{1\over
        3},-1-\beta} &\le C(1+\sup_{t \in \II} \|v_t\|_{\gamma})^{n+m-2 -
      |\alpha| - |\sigma|}
    \|\d_t v_t\|_{\gamma}\sup_{t \in \II}|W_\sigma(t)| \\
    &\quad + C\/W_\sigma\/_{1\over 3} (1+\sup_{t \in \II}
    \|v_t\|_{\gamma})^{n+m-1-|\alpha| - |\sigma|}\;, \\
    \|[Q_\alpha, F_\sigma](v_t) W_\sigma(t)\|_{-\beta} &\le C
    (1+\|v_t\|_{\gamma+1})^{n+m-1-|\alpha|-|\sigma|} |W_\sigma(t)|\;.
  \end{equs}
  (Here it is understood that if one of the exponents of the norm of $v_t$
  is negative, the
  term in question actually vanishes.)  It therefore follows from
  Proposition~\ref{prop:apriori} that
  \begin{equ}
    \E \/B_\alpha\/_{{1\over 3},-1-\beta}^p \le C_p \Psi^{(n+m-1)p}(u_0)
    \;,\qquad \E \sup_{t \in \II}\|B_\alpha(t)\|_{-\beta}^p \le C_p
    \Psi^{(n+m-1)p}(u_0) \;,
  \end{equ}
  for every $p\ge 1$ and some constants $C_p$.

  Since the H\"older norm of $f_{\alpha, \phi}$ is bounded
  by
  \begin{equ}
    \/\scal{B_\alpha(\cdot), K_{\cdot,T} \phi}\/_{1\over 3} \le
    \/B_\alpha\/_{{1\over 3},-1-\beta} \sup_{t \in \II} \|K_{t,T}\|_{\beta+1}
    + \/K_{\cdot,T}\/_{{1\over 3},\beta} \sup_{t \in
      \II}\|B_\alpha(t)\|_{-\beta} \;,
  \end{equ}
we can use the bounds on $B_\alpha$ just obtained, the
  Cauchy-Schwarz inequality, Proposition~\ref{prop:apriori}, and
  Lemma~\ref{lem:stupid}, to obtain
  \begin{equ}[e:bounddoublebracket]
    \sup_{\alpha} \sup_{\|\phi\| \le 1}
    \/f_{\alpha,\phi}\/_{1/3}^{3/4} \le \eps^{-{\hat q\over 8}}\;,
  \end{equ}
  modulo some $\Psi^{12 (n+m) / \hat q}$-dominated negligible family
  of events, provided that $r \le \min\{\hat q / 12, \hat q / (6\bar
  p_\beta)\}$.  As a consequence, modulo this family, we obtain from
  \eref{e:boundfaphi} the bound $\sup_\alpha \sup_{t \in \II}
  |f_{\alpha,\phi}(t)| \le C \eps^{\hat q\over 8}$ which can be rewritten
  as
  \begin{equ}[e:QalphaFsigmaBound0]
    \sup_\alpha \sup_{t \in \II} \left|\sum_\sigma \scal{[Q_\alpha,
    F_\sigma](v_t), K_{t,T} \phi} W_\sigma(t)\right| \le C \eps^{\hat q\over
      8}\;.
  \end{equ}
  Since $[Q_\alpha, F_\sigma] \in \Poly(\gamma,\beta)$ the same
  reasoning as in Lemma~\ref{lem:recursion0} combined with
  Theorem~\ref{theo:Norris} on Wiener polynomials implies that modulo
  some negligible family of events $\Osc^m$, the estimate
  \eref{e:QalphaFsigmaBound0} implies that either
  \begin{equ}[e:goodterm2]
    \sup_{\alpha,\sigma} \sup_{t \in \II} |\scal{[Q_\alpha,
    F_\sigma](v_t), K_{t,T} \phi}|
    \le \eps^{\bar q}\;,
  \end{equ}
  or there exists some $\alpha$ and $\sigma$ such that
  \begin{equ}[e:largeLip2]
    \/ \scal{[Q_\alpha,
    F_\sigma](v_t), K_{\cdot ,T} \phi}\/_1 \ge \eps^{-\bar q/3}\;.
  \end{equ}
  Again following the same logic as Lemma~\ref{lem:recursion0}, we see
  that the family of events in \eref{e:largeLip2} is $\Phi^{6 (m+1) /\bar
    q}$-dominated negligible provided that $r < \bar q /(6
  \bar p_\beta)$.
\end{proof}

In order to turn this result into a result involving the process $u_t$, we need the following expansion:

\begin{lemma}\label{lem:BracketRep}
  Given any two multi-indices $\alpha$ and $\sigma$ (including the
  empty indices), there exist an integer $N$ and a collection of
  multi-indices $\{\alpha_i, \sigma_i, \zeta_i : i=1\ldots N\}$ and
  constants $\{c_i : i=1\ldots N\}$ so that
  \begin{equ}{}
    [Q_\alpha,F_\sigma](u_t) = \sum_{i=1}^N c_i [Q_{\alpha_i},F_{\sigma_i}](v_t)
    W_{\zeta_i}(t) 
  \end{equ}
\end{lemma}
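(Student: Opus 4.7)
The plan is to derive this identity by combining the Taylor expansion formula \eref{e:TaylorQ} applied to the polynomial $P = [Q_\alpha, F_\sigma]$ with a Leibniz rule for the Lie bracket, then re-express everything in terms of brackets of the form $[Q_{\alpha'}, F_{\sigma'}]$. First, since $[Q_\alpha,F_\sigma] \in \Poly(\CH_\infty)$ (by the polynomial structure of $Q$ and $F$) and $u_t = v_t + \sum_{k=1}^d g_k W_k(t)$, formula \eref{e:TaylorQ} gives a finite expansion
\begin{equ}[e:planStep1]
  [Q_\alpha,F_\sigma](u_t) = \sum_{\zeta} \bigl([Q_\alpha,F_\sigma]\bigr)_\zeta(v_t)\, W_\zeta(t)\;,
\end{equ}
where the sum is over multi-indices $\zeta$ of length bounded by the degree of $[Q_\alpha,F_\sigma]$.

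The heart of the argument is then to identify each Taylor coefficient $([Q_\alpha,F_\sigma])_\zeta$ as a linear combination of brackets of the form $[Q_{\alpha_i}, F_{\sigma_i}]$. To do this I would prove, by induction on $|\zeta|$, the Leibniz rule
\begin{equ}
  D^{|\zeta|} [P_1,P_2](v)(g_{\zeta_1},\ldots,g_{\zeta_{|\zeta|}}) = \sum_{\zeta = \zeta' \sqcup \zeta''} \binom{\zeta}{\zeta'} \bigl[D^{|\zeta'|}P_1(v)(g_{\zeta'}),\, D^{|\zeta''|}P_2(v)(g_{\zeta''})\bigr]\;,
\end{equ}
where the sum is over ordered decompositions of $\zeta$ into two sub-multi-indices. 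The base case $|\zeta|=1$ follows from a direct computation using the definition $[P_1,P_2](u) = DP_2(u)P_1(u) - DP_1(u)P_2(u)$, which yields $D_g[P_1,P_2] = [D_gP_1,P_2] + [P_1, D_gP_2]$; the inductive step is then immediate. Combining this with the characterization \eref{e:repeatedBrakets} of $Q_\alpha$ and $F_\sigma$ as scalar multiples of iterated directional derivatives of $Q$ and $F$, we obtain that $D^{|\zeta'|}Q_\alpha(v)(g_{\zeta'}) = c_{\alpha,\zeta'} Q_{\alpha \cup \zeta'}(v)$ for a combinatorial constant $c_{\alpha,\zeta'}$, and analogously for $F_\sigma$.

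Putting these pieces together, each Taylor coefficient in \eref{e:planStep1} becomes
\begin{equ}
  \bigl([Q_\alpha,F_\sigma]\bigr)_\zeta(v_t) = \sum_{\zeta = \zeta' \sqcup \zeta''} \tilde c_{\alpha,\sigma,\zeta',\zeta''} \bigl[Q_{\alpha \cup \zeta'},\, F_{\sigma \cup \zeta''}\bigr](v_t)\;,
\end{equ}
for explicit constants $\tilde c$. Substituting this back into \eref{e:planStep1} and relabelling yields the claimed representation with $\alpha_i = \alpha \cup \zeta'_i$, $\sigma_i = \sigma \cup \zeta''_i$, and $\zeta_i = \zeta'_i \cup \zeta''_i$. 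The statement only requires the existence of some constants, so there is no need to track the combinatorial factors precisely; the main (mild) obstacle is simply verifying the Leibniz rule for brackets of polynomial vector fields and keeping bookkeeping of which multi-indices enter where.
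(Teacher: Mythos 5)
Your proposal is correct and takes essentially the same route as the paper: the opening Taylor expansion of $[Q_\alpha,F_\sigma]$ via \eref{e:TaylorQ} is identical, and your Leibniz rule $D_g[P_1,P_2]=[D_gP_1,P_2]+[P_1,D_gP_2]$ is exactly the Jacobi identity $[g,[Q_\alpha,F_\sigma]]=[[g,Q_\alpha],F_\sigma]+[Q_\alpha,[g,F_\sigma]]$ that the paper iterates, since $D_g=[g,\cdot\,]$ for a constant vector field $g$. The only difference is cosmetic: you write out the iterated decomposition over $\zeta=\zeta'\sqcup\zeta''$ explicitly, whereas the paper leaves the iteration implicit.
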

\begin{proof}
  First observe that
  \begin{equ}{}
    [Q_\alpha,F_\sigma](u_t)=\sum_\zeta  [Q_\alpha,F_\sigma]_\zeta(v_t) W_\zeta(t)\;.
  \end{equ}
  The Jacobi identity for Lie bracket states that 
  \begin{equs}{}
   \DF_{g_k}[Q_\alpha,F_\sigma]&=[g_k, [Q_\alpha,F_\sigma]] =[ [g_k,Q_\alpha], F_\sigma] + [Q_\alpha,[g_k,F_\sigma]]\\
    &=(|\alpha|+1)[Q_{\alpha \cup(k)},F_\sigma] +
    (|\sigma|+1)[Q_\alpha, F_{\sigma\cup(k)}]\;.
\end{equs}
By iterating this calculation, we see that for any multi-index $\zeta$,
$[Q_\alpha,F_\sigma]_\zeta$ is equal to some linear combination of a
finite number of terms of the form $[Q_{\alpha_i},F_{\sigma_i}]$ for
some multi-indices $\alpha_i$ and $\sigma_i$.
\end{proof}

In very much the same way as before, it then follows that:

\begin{corollary}\label{lem:recursion2}
  Let $Q$ be as in Lemma~\ref{lem:recursion0} and such that
  $[Q_\alpha,F_\sigma] \in \Poly(\gamma,\beta)$ for any two
  multi-indices $\alpha, \sigma$. Let furthermore $q > 0$ and set $\bar q =q
      3^{-2(m+1)}/8$. Then the implication
  \begin{equ}
    \sup_{t \in \II} |\scal{Q(u_t), K_{t,T} \phi}| \le \eps^q\|\phi\|
    \quad \Longrightarrow\quad \sup_{\alpha,\sigma} \sup_{t \in \II}
    |\scal{[Q_\alpha,F_\sigma](u_t), K_{t,T} \phi}| \le \eps^{\bar q}
    \|\phi\|\;,
  \end{equ}
  holds modulo some $\Psi^{2 (m+1) / (3\bar q)}$-dominated
  negligible family of events, provided that $r < 3\bar q /
  (2\bar p_\beta)$.
\end{corollary}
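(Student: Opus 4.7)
The plan is to reduce Corollary~\ref{lem:recursion2} to Lemma~\ref{lem:BasicFinteration} via the algebraic expansion in Lemma~\ref{lem:BracketRep}, and then use Lemma~\ref{lem:weinerBuildUp} to pass from ``smallness when tested against $v_t$'' back to ``smallness when tested against $u_t$''. Concretely, assume that the hypothesis $\sup_{t\in\II}|\scal{Q(u_t),K_{t,T}\phi}|\le\eps^q\|\phi\|$ holds. First, apply Lemma~\ref{lem:BasicFinteration} with exponent $q$; this yields, modulo a $\Psi^{6(m+1)/q_1}$-dominated negligible family (with $q_1 \eqdef q\,3^{-2m}/8$) and provided $r<q_1/(6\bar p_\beta)$, the bound
\begin{equ}[e:vbound]
  \sup_{\alpha,\sigma}\sup_{t\in\II}|\scal{[Q_\alpha,F_\sigma](v_t),K_{t,T}\phi}|\le \eps^{q_1}\|\phi\|\;.
\end{equ}

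Next, fix a pair $(\alpha,\sigma)$. Lemma~\ref{lem:BracketRep} furnishes an integer $N$, constants $c_i$ and multi-indices $(\alpha_i,\sigma_i,\zeta_i)$ such that
\begin{equ}
  \scal{[Q_\alpha,F_\sigma](u_t),K_{t,T}\phi} = \sum_{i=1}^N c_i\,\scal{[Q_{\alpha_i},F_{\sigma_i}](v_t),K_{t,T}\phi}\, W_{\zeta_i}(t)\;.
\end{equ}
This is a Wiener polynomial of degree bounded by some $N'$ depending only on $m$ and $n$, whose coefficients are exactly the quantities controlled by \eref{e:vbound}. Since each coefficient is bounded in absolute value by $(\max_i|c_i|)\,\eps^{q_1}\|\phi\|$, Lemma~\ref{lem:weinerBuildUp} applied with exponent $q_1$ and any smaller exponent $\bar q<q_1$ gives, modulo a further universal negligible family $\mathrm{Sup}_W^{N'}$, the desired estimate
\begin{equ}
  \sup_{\alpha,\sigma}\sup_{t\in\II}|\scal{[Q_\alpha,F_\sigma](u_t),K_{t,T}\phi}|\le \eps^{\bar q}\|\phi\|\;.
\end{equ}

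Choosing $\bar q = q_1/9 = q\,3^{-2(m+1)}/8$ matches the value stated in the corollary. The admissibility condition $r<3\bar q/(2\bar p_\beta)$ translates back to $r<q_1/(6\bar p_\beta)$, which is precisely the requirement of Lemma~\ref{lem:BasicFinteration}, and the $\Psi^{6(m+1)/q_1}=\Psi^{2(m+1)/(3\bar q)}$ domination of the negligible family produced by that lemma is preserved after taking the union with the universal family $\mathrm{Sup}_W^{N'}$. There is essentially no obstacle here beyond careful bookkeeping; all the analytic work (the time-regularity of $v_t$ and $K_{t,T}\phi$, the use of Lemma~\ref{lem:dtf}, and the application of Theorem~\ref{theo:Norris} to Wiener polynomials in $v_t$) has already been absorbed into Lemma~\ref{lem:BasicFinteration}, while the combinatorial fact that commutators evaluated at $u_t$ expand into Wiener polynomials whose coefficients are again commutators evaluated at $v_t$ is Lemma~\ref{lem:BracketRep}. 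The only mild care needed is to ensure that, after the expansion, all the coefficients $[Q_{\alpha_i},F_{\sigma_i}]$ still lie in $\Poly(\gamma,\beta)$ so that \eref{e:vbound} is applicable to each of them; this is guaranteed by the standing assumption of the corollary that $[Q_\alpha,F_\sigma]\in\Poly(\gamma,\beta)$ for every $\alpha,\sigma$, together with the fact that each $\alpha_i$, $\sigma_i$ arising from the Jacobi-identity iteration in Lemma~\ref{lem:BracketRep} is a concrete multi-index of controlled length.
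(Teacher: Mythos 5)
Your proposal is correct and follows essentially the same route as the paper's own proof: expand $[Q_\alpha,F_\sigma](u_t)$ via Lemma~\ref{lem:BracketRep} into a Wiener polynomial whose coefficients are $[Q_{\alpha_i},F_{\sigma_i}](v_t)$, control those coefficients by Lemma~\ref{lem:BasicFinteration}, and reassemble with Lemma~\ref{lem:weinerBuildUp}. Your exponent bookkeeping ($\bar q = q_1/9$, the condition on $r$, and the $\Psi$-domination) all check out against the stated constants.
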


\begin{proof}
  It follows from Lemma~\ref{lem:BracketRep} that 
  \begin{align*}
    \scal{[Q_\alpha,F_\sigma](u_t),K_{T,t}\phi} = \sum_{i=1}^N c_i
    \scal{[Q_{\alpha_i},F_{\sigma_i}](v_t),K_{T,t}\phi}
    W_{\gamma_i}(t)\;.
  \end{align*}
  Combining the control of the
  $\scal{[Q_{\alpha_i},F_{\sigma_i}](v_t),K_{T,t}\phi}$ obtained in
  Lemma~\ref{lem:BasicFinteration} with Lemma~\ref{lem:weinerBuildUp}
  gives the quoted result.
\end{proof}

\subsection{Putting it all together: proof of Theorem~\ref{thm:Malliavin2}}

We now finally combine all of the results we have just accumulated
 to give the proof of the main theorem of these
sections.
\begin{proof}[of Theorem~\ref{thm:Malliavin2}]
  We are going to prove the statement by showing that there exists $\theta>0$ and, for every $\alpha>0$,
   a $\Psi^{\theta}$-dominated family of negligible events such that,
  modulo this family, the assumption $\inf_{\phi \in \CS_\alpha} \scal{\phi,
    \MM_T \phi} \le \eps\|\phi\|^2$ leads to a contradiction for all $\eps$
  sufficiently small.

From now on, fix $N$ as in Assumption~\ref{ass:Hormander}.
  By Lemmas~\ref{lem:closing} and \ref{lem:contradiction}, we see that
  there exist constants $\theta, q, r_0 > 0$ such that, modulo some
  $\Psi^{\theta}$-dominated family of negligible events, one has the
  implication
  \begin{equ}
    \left.\begin{array}{r}  \phi \in \CS_\alpha   \\[0.3em]
        \scal{\phi, \MM_T\phi} \le \eps
        \|\phi\|^2 \end{array}\right\} \;\Longrightarrow\;
    \left\{\begin{array}{l} K_{T-\delta, T}\phi \in \CS_{c\alpha}
        \;\text{and}\; \|\Pi
        K_{T-\delta,T} \phi \|  \geq \frac{\alpha}2 \|\phi\| \\[0.3em]
        \scal{K_{T-\delta, T}\phi, \CQ_N (u_{T-\delta}) K_{T-\delta,
            T}\phi} \le \eps^q\|\phi\|^2\;,
      \end{array}\right. 
  \end{equ}
  provided that we choose $r \le r_0$ in the definition
  \eref{e:defdelta} of $\delta$.  By Assumption~\ref{ass:Hormander},
  this in turn implies (modulo the same family of negligible events)
  \begin{equ}
    \cdots \quad\Longrightarrow\quad \frac{\alpha}2 \|\phi\| \le \|\Pi
    K_{T-\delta, T}\phi\| \le \Lambda_{c\alpha}^{-1}(u_0) \eps^{q\over
      2}\|\phi\|\;.
  \end{equ}
  On the other hand, it follows from Lemma~\ref{lem:stupid} and the
  assumption on the inverse moments of $\Lambda_{c\alpha}$ that,
  modulo some $\Psi^{4\over q}$-dominated family of negligible events,
  one has the bound
  \begin{equ}
    \Lambda_{c\alpha}^{-1}(u_0) \le \eps^{- {q\over 4}}\;.
  \end{equ}
  Possibly making $\theta$ smaller, it follows that, modulo some
  $\Psi^{\theta}$-dominated family of negligible events, one has the
  implication
  \begin{equ}
    \left.\begin{array}{r}  \phi \in \CS_\alpha   \\[0.3em]
        \scal{\phi, \MM_T\phi} \le \eps
        \|\phi\|^2 \end{array}\right\} \;\Longrightarrow\; {\alpha
      \over 2} \le \eps^{q\over 4}\;,
  \end{equ}
  which cannot hold for $\eps$ small enough, thus concluding the proof
  of Theorem~\ref{thm:Malliavin2}
\end{proof}

\section{Bounds on Wiener polynomials}
\label{sec:Wiener}

We will use the terminology of ``negligible sets'' introduced in
Definition~\ref{def:negligible}. We will always work on the time
interval $[0,1]$, but all the results are independent (modulo change
of constants) of the time interval, provided that its length is
bounded from above and from below by two positive constants
independent of $\eps$.  This is seen easily from the scaling
properties of the Wiener process. 

The results of this section are descendents of similar results obtained in
\cite{MatPar06:1742,Yuri} by related techniques. In \cite{Yuri} it was
proven that if a Wiener polynomial, with continuous, bounded variation
coefficients, is identically zero on an interval then so are its
coefficients. This is enough to prove the almost sure invertibility of 
projections of the Malliavin matrix, which in turn implies the
existence of a density for the projections of the transition
probabilities. To prove smoothness of the densities or the ergodic results
of this paper, more quantitative control is needed. In \cite{Yuri}, a
result close to \eqref{theo:Norris} is claimed. However an error in
Lemma~9.12 of that article leaves the proof incomplete. Arguing along
similar, though slightly different lines, we prove the needed result
below. We build upon the presentation in \cite{Yuri} but simplify it
significantly. (The presentation in \cite{Yuri} was already a
significant simplification over that in \cite{MatPar06:1742}.)

\begin{theorem}\label{theo:Norris}
  Let $\{W_k\}_{k=1}^d$ be a family of i.i.d.\ standard Wiener
  processes and, for every multi-index $\alpha= (\alpha_1, \ldots,
  \alpha_\ell)$, define $W_\alpha = W_{\alpha_1}\ldots
  W_{\alpha_\ell}$ with the convention that $W_\alpha = 1$ if $\alpha
  = \emptyset$.  Let furthermore $A_\alpha$ be a family of (not
  necessarily adapted) stochastic processes with the property that
  there exists $m \ge 0$ such that $A_\alpha = 0$ whenever $|\alpha| >
  m$ and set $Z_A(t) = \sum_{\alpha} A_\alpha(t) W_\alpha(t)$.

  Then, there exists a universal family of negligible events $\Osc^m$
  depending only on $m$ such that the implication
  \begin{equ}[e:dichotomy]
    \|Z_A\|_{L^\infty} \le \eps \quad\Longrightarrow\quad
    \left\{\begin{array}{rl} \text{either}& \sup_\alpha
        \|A_\alpha\|_{L^\infty} \le \eps^{3^{-m}} \\ \text{or} &
        \sup_\alpha \|A_\alpha\|_{\Lip} \ge
        \eps^{-3^{-(m+1)}} \end{array}\right.
  \end{equ}
  holds modulo $\Osc^m$. (The supremum norms are taken on the interval
  $[0,1]$.)
\end{theorem}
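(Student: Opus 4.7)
The proof is by induction on $m$. The base case $m=0$ is immediate since $Z_A \equiv A_\emptyset$, giving $\|A_\emptyset\|_{L^\infty} \le \eps \le \eps^{3^{0}}$; we may take $\Osc^0 = \emptyset$.

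For the inductive step, assume the statement holds for all degrees less than $m$ with universal exceptional families. Arguing by contradiction, suppose $\|Z_A\|_{L^\infty} \le \eps$ while both $\sup_\alpha \|A_\alpha\|_{L^\infty} > \eps^{3^{-m}}$ and $\sup_\alpha \|A_\alpha\|_{\Lip} < L := \eps^{-3^{-(m+1)}}$. Pick $t_0$ and $\alpha_0$ with $|A_{\alpha_0}(t_0)| > \eps^{3^{-m}}$. The idea is to localise to a small window $I_\delta = [t_0, t_0 + \delta]$ with $\delta := \eps/L$, introduce the shifted Brownian motions $\hat W_k(s) = W_k(t_0+s) - W_k(t_0)$, and expand
\begin{equ}
  Z_A(t_0+s) = \sum_\beta B_\beta(s) \hat W_\beta(s)\;,
\end{equ}
where each $B_\beta$ is a polynomial combination of the values $\{A_\alpha(t_0+s) : \alpha \supset \beta\}$ weighted by frozen quantities $\{W_\gamma(t_0) : |\gamma| \le m - |\beta|\}$. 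Crucially, for $|\beta| = m$ one has $B_\beta(s) = A_\beta(t_0+s)$ (up to a multinomial factor), and more generally the linear map from $(A_\alpha(t_0))_\alpha$ to $(B_\beta(0))_\beta$ is triangular in $|\alpha|$ and invertible with bounded operator norm modulo a universal control on $\sup_{k,t}|W_k(t)|$.

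By Brownian self-similarity, $\hat W_k(\delta u) = \sqrt{\delta}\, \tilde W_k(u)$ with $\tilde W_k$ a standard Wiener process on $[0,1]$. The assumed sup-norm bound on $Z_A$ thus translates into
\begin{equ}
  \sup_{u \in [0,1]} \Big| \sum_\beta \delta^{|\beta|/2} B_\beta(\delta u)\, \tilde W_\beta(u)\Big| \le \eps\;.
\end{equ}
Using the Lipschitz bound on the $A_\alpha$ together with universal control on $\sup_{k,t}|W_k(t)|$, the coefficients $B_\beta(\delta u)$ are within $O(L\delta)$ of their frozen values $B_\beta(0)$, so after freezing one is left with a \emph{constant-coefficient} Wiener polynomial of degree $m$ on $[0,1]$, with sup norm bounded by $\eps + CL\delta$, which is of order $\eps$ by the choice $\delta = \eps/L$.

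The remaining task is to invert the constant-coefficient polynomial. This is accomplished by peeling off: evaluation at $u=0$ gives $|B_\emptyset(0)| \lesssim \eps$, and an application of the inductive hypothesis (combined with a Vandermonde-type extraction at a well-chosen collection of times for the sharpest scales) yields the remaining bounds on $\delta^{|\beta|/2} B_\beta(0)$. Inverting the triangular correspondence between $B_\beta(0)$ and $A_\alpha(t_0)$ produces $|A_{\alpha_0}(t_0)| \ll \eps^{3^{-m}}$, contradicting the contradiction hypothesis. All probabilistic inputs---the supremum $\sup_k \|W_k\|_\infty$, the scaling $\tilde W_k$, and the anti-concentration used to extract coefficients from constant-coefficient polynomials---depend only on the underlying Brownian motions and not on the $A_\alpha$, so the exceptional family $\Osc^m$ is indeed universal.

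The principal technical obstacle is the bookkeeping of the exponent $3^{-m}$. The factor $3$ arises from the optimal balance of three competing errors in the rescaled polynomial: the measurement error $\eps$, the Lipschitz-freezing error $L\delta$, and the anti-concentration loss in the constant-coefficient extraction; the balance $\delta \sim \eps/L$ with $L = \eps^{-3^{-(m+1)}}$ yields a residual bound of the form $\eps^{(1-3^{-(m+1)})/2}$ on the top-order $A_\beta$, which must be shown to be strictly stronger than $\eps^{3^{-m}}$ after all inductive levels are composed. A subsidiary difficulty is the constant-coefficient extraction itself---showing, modulo universal events, that $\sup_{u \in [0,1]} |\sum_\beta c_\beta \tilde W_\beta(u)| \le \eta$ forces $\max_\beta |c_\beta| \lesssim \eta^{3^{-m}}$---which is itself an induction exploiting anti-concentration for a random Vandermonde-like map from multi-indexed coefficient vectors into $C([0,1])$, and constitutes the engine of the whole argument.
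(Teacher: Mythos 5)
Your skeleton (induction on $m$, trivial base case, Lipschitz freezing of the coefficients, Brownian scaling, reduction to a frozen-coefficient extraction) follows the same spirit as the paper's proof, but the engine you rely on is neither supplied nor, as set up, workable. The paper never has to invert a degree-$m$ constant-coefficient Wiener polynomial: it covers $[0,1]$ by a \emph{deterministic} grid of blocks of length $\eps^{5/4}$, freezes the $A_\alpha$ and all but \emph{one} Wiener factor on each block, and reduces to the purely linear statement $\sup_{t}|\scal{A,W(t)}|\ge \eps^\kappa|A|$ (Lemma~\ref{lem:constant}), proved by a finite net on the unit sphere of $\R^d$ plus Gaussian small-ball bounds; this yields $\|Z_{M_jA}\|_{L^\infty}\le C_m\eps^{7/20}$ for the degree-lowered families $M_jA$, to which the induction hypothesis is applied. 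Your version instead reduces in one shot to the implication ``$\sup_u|\sum_\beta c_\beta\tilde W_\beta(u)|\le\eta\Rightarrow\max_\beta|c_\beta|\lesssim\eta^{\theta_m}$ modulo universal events'' for degree-$m$ polynomials with constant coefficients. You do not prove this; it is precisely the constant-coefficient special case of the theorem you are proving, so invoking it as ``the engine of the whole argument'' is circular unless you give an independent quantitative proof (a net over the sphere of coefficient vectors together with a small-ball estimate for $\sup_u|\sum_\beta c_\beta W_\beta(u)|$ uniform over unit coefficient vectors --- genuinely nontrivial for $m\ge 2$). A further issue is that your window is centred at a random time $t_0$ depending on $A$, whereas $\Osc^m$ must be universal; the paper avoids this by working on the deterministic grid and invoking Lemma~\ref{lem:sumup}.

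Even granting the extraction with the best conceivable exponent $\theta_m\le 1$, the exponent bookkeeping of the single-window reduction fails for $m\ge 2$. With $\delta=\eps/L=\eps^{1+3^{-(m+1)}}$, the rescaled coefficient of $\tilde W_\beta$ is $\delta^{|\beta|/2}B_\beta(0)$, so recovering a top-order coefficient ($|\beta|=m$) costs a factor $\delta^{-m/2}$, not the $\delta^{-1/2}$ implicit in your claimed residual $\eps^{(1-3^{-(m+1)})/2}$. The best possible outcome is $|B_\beta(0)|\lesssim\eps^{1-m(1+3^{-(m+1)})/2}$, and already for $m=2$ this exponent equals $-1/27<0$, i.e.\ the bound is vacuous. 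This is exactly why the paper strips only one Wiener factor per inductive step, paying $\delta^{-1/2}$ with $\delta=\eps^{5/4}$ each time and letting the exponent degrade geometrically (whence the $3^{-m}$), rather than paying $\delta^{-m/2}$ all at once. To repair your argument you would have to either adopt the one-factor-at-a-time architecture (at which point you are reproving the paper's argument and the constant-coefficient degree-$m$ lemma becomes unnecessary) or supply a per-scale extraction with windows of different lengths for different $|\beta|$, which your ``Vandermonde at well-chosen times'' remark gestures at but does not construct.
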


\begin{remark}
  Informally, we can read the statement of Theorem~\ref{theo:Norris}
  as ``if $Z_A$ is small, then either all of the coefficients
  $A_\alpha$ are small, or at least one of them oscillates very
  fast.'' The exponents appearing in the statement of
  Theorem~\ref{theo:Norris} are somewhat arbitrary. By going through
  the proof more carefully, we can see that for any $\kappa > 2$, it
  is possible to find a constant $C_\kappa > 0$ such that the
  exponents in \eref{e:dichotomy} can be replaced by $\kappa^{-m}$ and
  $-C_\kappa \kappa^{-m}$ respectively. Here, the coefficient
  $C_\kappa$ tends to $0$ as $\kappa \to 2$. While the precise values
  of the exponents in \eref{e:dichotomy} arising from our proof are unlikely to
  be sharp, they are not far from it, as can be seen by
  looking at processes of the form $Z(t) = \eps^{1-{\theta\over 2}} (W_{
    \theta}(t) - W(t))$, where $W_{\theta} $ is the linear
  interpolation of the Wiener process $W$ over intervals of size
  $\eps^{\theta}$.
\end{remark}

\begin{remark}
  The reason why the family of negligible sets appearing in this
  statement is called $\Osc^m$ is that it relies on the fact that the
  Wiener processes typically fluctuate sufficiently fast on every
  small time interval so that their effects can be distinguished from
  those of the multiplicators $A_\alpha$ which fluctuate over much
  longer timescales. It is important to note that $\Osc^m$ depends on
  the processes $A_\alpha$ only through the value of~$m$.
\end{remark}

Before we start with the proof, we show the following result, which is
essentially the particular case of Theorem~\ref{theo:Norris} where $m
= 1$ and where the coefficients $A_\alpha$ do not depend on time.
Here, $\scal{\cdot,\cdot}$ denotes the scalar product in $\R^d$.

\begin{lemma}\label{lem:constant}
  Let $\{W_k\}_{k=1}^d$ be a collection of i.i.d.\ standard Wiener
  processes. Then, for any exponent $\kappa > 0$, there exists a
  universal family $\Osc$ of negligible events such that the
  bound
  \begin{equ}[e:boundwiggle]
    \sup_{t \in [0,1]} \bigl|\scal{A, W(t)}\bigr| \ge \eps^\kappa
    |A|\;,
  \end{equ}
  holds modulo $\Osc$ for any choice of coefficients $A \in
  \R^d$.
\end{lemma}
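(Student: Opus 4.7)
The plan is to reduce the statement to the classical small-ball estimate for one-dimensional Brownian motion and then use a net argument on the unit sphere of $\R^d$ to upgrade it to a statement that is uniform in $A$.

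First, by homogeneity, I may assume $|A|=1$. In that case $B_A(t) \eqdef \scal{A, W(t)}$ is a standard real-valued Brownian motion on $[0,1]$, because the $W_k$ are independent standard Brownian motions and $A$ is a unit vector. For a one-dimensional standard Brownian motion $B$, the classical small ball estimate yields
\begin{equ}
 \P\Bigl(\sup_{t \in [0,1]} |B(t)| < \eta\Bigr) \le C\exp\bigl(-c/\eta^{2}\bigr)
\end{equ}
for some universal constants $c, C > 0$. Applied with $\eta = \eps^\kappa$ this shows that for each individual unit vector $A$, the probability that $\sup_{t}|B_A(t)| < \eps^\kappa$ decays super-polynomially in $\eps$, in particular it defines a negligible family.

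To get a single family of negligible events that works for \emph{all} $A$ simultaneously, I would fix a small parameter $\theta > 0$ (to be chosen) and choose a $\delta$-net $\{A_i\}_{i=1}^{N}$ in the unit sphere of $\R^d$ with $\delta = \eps^{\kappa + \theta}$. Standard volume counting gives $N \le C \delta^{-(d-1)} = C\eps^{-(\kappa+\theta)(d-1)}$, a polynomial number. The union bound then ensures that the event
\begin{equ}
 E_1^\eps \eqdef \Bigl\{\exists i \le N:\ \sup_{t \in [0,1]} |B_{A_i}(t)| < 2\eps^\kappa\Bigr\}
\end{equ}
has probability at most $C\eps^{-(\kappa+\theta)(d-1)} \exp(-c\eps^{-2\kappa})$, which is still super-polynomially small in $\eps$ and hence defines a negligible family.

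Finally I would interpolate between net points using the easy negligible event
\begin{equ}
 E_2^\eps \eqdef \Bigl\{\sup_{t \in [0,1]}|W(t)| > \eps^{-\theta/2}\Bigr\}\;,
\end{equ}
which is negligible because $\sup_t |W(t)|$ has all moments. For any $|A|=1$, picking $A_i$ in the net with $|A-A_i|\le \delta$, one has
\begin{equ}
 \sup_{t}|B_A(t)| \ge \sup_{t}|B_{A_i}(t)| - \delta \sup_{t}|W(t)| \ge 2\eps^\kappa - \eps^{\kappa + \theta/2} \ge \eps^\kappa
\end{equ}
for $\eps$ small, modulo the universal negligible family $\Osc \eqdef \{E_1^\eps \cup E_2^\eps\}$. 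The desired bound for general $A\in\R^d$ follows by the initial homogeneity reduction. The only mildly delicate step is bookkeeping the polynomial loss from the net size against the super-polynomial small-ball bound; this is not really an obstacle since the latter dominates any polynomial loss, and the $\Osc$ we produce depends on $\kappa$ (and $d$) but not on the particular vector $A$.
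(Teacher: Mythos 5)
Your proposal is correct and follows essentially the same route as the paper: a small-ball (Gaussian tail) estimate for the one-dimensional Brownian motion $\scal{A,W(\cdot)}$ at fixed unit $A$, a polynomial-size net on the sphere combined with a union bound, and interpolation off the net using a negligible event controlling $\sup_t|W(t)|$. The only difference is in the bookkeeping of exponents (the paper takes net spacing $\eps^{2\kappa}$ against $\sup_t|W(t)|\le\eps^{-\kappa}$, you take $\eps^{\kappa+\theta}$ against $\eps^{-\theta/2}$), which is immaterial.
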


\begin{remark}
  We would like to stress again the fact that the family of events
  $\Osc$ is \textit{independent} of the choice of
  coefficients $A$ and depends only on the realisation of the $W_k$'s.
\end{remark}

\begin{proof}
  Fix $\kappa > 0$ and define a family of events $B$ by $B^\eps =
  \{\sup_{t \in [0,1]} |W(t)| \ge \eps^{-\kappa}\}$. It follows
  immediately from the fact that the supremum of a Wiener process has
  Gaussian tails that the family $B$ is negligible. Consider now the
  unit sphere $S^d$ in $\R^d$.  For every $A \in S^d$, the process
  $W_A(t) = \scal{A, W(t)}$ is a standard Wiener process and so $\P
  \bigl(\sup_{t \in [0,1]} |W_A(t)| \le 2\eps^{\kappa} \bigr) \le C_1
  \exp(-C_2 \eps^{-2\kappa})$ for some constants $C_1$ and $C_2$ that
  are independent of $A$. Denote this event by $H_A^\eps$.

  Choose now a collection $\{A_k\}$ of points in $S^d$ such that
  $\sup_{A \in S^d} \inf_k |A - A_k| \le \eps^{2\kappa}$ and define
  $H^\eps = \bigcup_k H_{A_k}^\eps$. Since this can be achieved with
  $\CO (\eps^{-2\kappa (d-1)})$ points, the family $H$ is negligible
  by Lemma~\ref{lem:sumup}.  We now define $\Osc = H \cup B$
  and we note that, modulo $\Osc$, one has for every $\bar A
  \in \R^d$ the bound
  \begin{equs}
    \sup_{t \in [0,1]} \bigl| \scal{\bar A, W(t)}\bigr| &\ge |\bar A| \inf_{A \in S^d} 
    \sup_{t \in [0,1]} \bigl|\scal{A, W(t)}\bigr| \\
    &\ge |\bar A| \Bigl(\inf_{k}\sup_{t \in [0,1]} \bigl|\scal{A_k,
      W(t)}\bigr| - \eps^\kappa\Bigr) \ge |\bar A| \eps^\kappa\;,
  \end{equs}
  as required.
\end{proof}

We now turn to the
\begin{proof}[of Theorem~\ref{theo:Norris}] The proof proceeds by
  induction on the parameter $m$. For $m = 0$, the statement is
  trivial since in this case one has $Z_A(t) = A_\emptyset(t)$, so
  that one can take $\Osc^0 = \emptyset$.

  Fix now a value $m \ge 1$ and assume that, for some $\eps$, both
  inequalities \minilab{e:assumptions}
  \begin{equs}
    \|Z_A\|_{L^\infty} &\le \eps\;, \label{e:boundZ}\\
    \sup_{|\alpha| \le m} \|A_\alpha\|_\Lip &\le
    \eps^{-3^{-(m+1)}} \label{e:boundLip}
  \end{equs}
  hold. Our aim is to find a (universal) family of negligible sets
  $\Osc^m$ such that, modulo $\Osc^m$, these two bounds imply the
  bound $\sup_{\alpha} \|A_\alpha\|_{L^\infty} \le
  \eps^{3^{-m}}$. Before we proceed, we localise our argument to
  Wiener processes that do not behave too ``wildly.''  Using the fact
  that the H\"older norm of a Wiener process has Gaussian tails for
  every H\"older exponent smaller than $1/2$, we see that the bounds
  \begin{equ}[e:boundW]
    \sup_{t \in [0,1]} \sup_{|\alpha| \le m} |W_\alpha(t)| \le
    \eps^{-1/10}\;,\quad \sup_{s \neq t} \sup_{|\alpha| \le m}
    {|W_\alpha(t) - W_\alpha(s)| \over |t-s|^{2/5}} \le
    \eps^{-1/30}\;,
  \end{equ}
  both hold modulo some universal family $\Wien$ of negligible
  events. The reason for these particular choices of exponents will
  become clearer later on, but any two negative exponents would have been
  admissible.

  Choose an exponent $\kappa$ to be determined later and define a sequence of times
  $t_\ell = \ell \eps^\kappa$ for $\ell = 0, \ldots, \eps^{-\kappa}$,
  so that the interval $[0,1]$ gets divided into $\eps^{-\kappa}$
  subintervals of the form $[t_\ell, t_{\ell+1}]$.  We define
  $A_\alpha^\ell = A_\alpha(t_\ell)$ and similarly for $W_\alpha^\ell$. We also
  define the Wiener increments $\bar W_i^\ell(t) = W_i(t) -
  W_i(t_\ell)$ and their products $\bar W_\alpha^\ell = \Pi_{j \in
    \alpha} \bar W_{j}^\ell$. With these notations, one has for
  $t \in [t_\ell, t_{\ell+1}]$ the equality
  \begin{equs}
    Z_A(t) &= Z_A(t_\ell) + \sum_{\alpha \neq \emptyset} A_{\alpha}^\ell
    \bigl(W_\alpha(t) - W_\alpha^\ell\bigr)
    + \sum_\alpha \bigl(A_\alpha(t) - A_\alpha^\ell\bigr) W_\alpha(t) \label{e:Taylor}\\
    & = Z_A(t_\ell)
    +  \sum_{\alpha\neq \emptyset}\sum_{\sigma \subset \alpha \atop \sigma \neq \emptyset} 
    A_{\alpha}^\ell W_{\alpha \setminus \sigma}^\ell \bar W_\sigma^\ell(t) + \sum_\alpha 
    \bigl(A_\alpha(t) - A_\alpha^\ell\bigr) W_\alpha(t)\\
    & = Z_A(t_\ell)
    +  \sum_{\nu}\sum_{\sigma \neq \emptyset} C_{\nu,\sigma} A_{\nu \cup \sigma}^\ell 
    W_ \nu ^\ell \bar W_\sigma^\ell(t) + \sum_\alpha \bigl(A_\alpha(t) - A_\alpha^\ell\bigr) W_\alpha(t)\\
    & \equiv Z_A(t_\ell) + \sum_{\nu}\sum_{j=1}^d C_{\nu,(j)} A_{\nu
      \cup (j)}^\ell W_\nu ^\ell \bar W_j^\ell(t) + E_\ell(t)\;,
  \end{equs}
  for some ``error term'' $E_\ell$ that will be analysed later.  Here, the combinatorial factor
  $C_{\alpha,\sigma}$ counts the number of ways in which the
  multi-index $\sigma$ can appear in the multi-index $\alpha \cup
  \sigma$ (for example $C_{(i,j),(j)}$ is equal to $2$ if $i \neq j$
  and $3$ if $i = j$).  Using the Brownian scaling and the fact that
  the supremum of a Wiener process has Gaussian tails, we see that for
  every $\kappa' < \kappa$, the bound
  \begin{equ}[e:boundW2]
    \sup_{\ell \le \eps^{-\kappa}} \sup_{t \in [0,\eps^\kappa]}
    \sup_{j\in\{1,\ldots,d\}} |\bar W_j^\ell(t)| \le
    \eps^{\kappa'/2}\;,
  \end{equ}
  holds modulo some universal family $\Wien_{\kappa',m}$ of negligible
  events.

  Note now that all the terms appearing in $E_\ell$ are (up to
  combinatorial factors) either of the form $A_{\alpha \cup
    \sigma}^\ell W_\alpha^\ell \bar W_\sigma(t)$ with $|\sigma| \ge
  2$, or of the form $\bigl(A_{\alpha}(t) - A_{\alpha}^\ell\bigr)
  W_\alpha(t)$.  Together with \eref{e:boundW2} and the first bound in \eref{e:boundW},
  this shows that there exists a constant $C$ depending only on $m$
  such that \eref{e:boundLip} implies
  \begin{equ}[e:boundE]
    \sup_{\ell \le \eps^{-\kappa}}  \sup_{t \in [t_\ell, t_{\ell+1}]} |E_\ell(t)| \le
    C\bigl(\eps^{\kappa' - 1/27 - 1/10} + \eps^{\kappa - 1/9 -
      1/10}\bigr)\;,
  \end{equ}
  modulo $\Wien_{\kappa',m}$. Here we used the fact that
  \eref{e:boundLip} implies in particular that the bound
  $\|A_\alpha\|_{L^\infty} \le \eps^{-1/27}$ holds for every $\alpha$
  with $|\alpha| \ge 2$ (note that these terms are non-zero only if $m
  \ge 2$) and that $\|A_\alpha\|_{\Lip} \le \eps^{-1/9}$, since we
  assumed $m \ge 1$. At this point, we fix $\kappa = {5\over 4}$ and
  $\kappa' = {6\over 5}$, so that in particular both exponents appearing in
  \eref{e:boundE} are greater than $1$. We then define $\Wien' = \Wien \cup
  \Wien_{\kappa',m}$ so that, modulo $\Wien'$, \eref{e:boundZ} and
  \eref{e:Taylor} imply
  \begin{equ}[e:goodbound]
    \sup_{t \in [t_\ell, t_{\ell+1}]} \Bigl|\sum_{\alpha}\sum_{j=1}^d
    C_{\alpha,(j)} A_{\alpha \cup (j)}^\ell W_\alpha^\ell \bar
    W_j(t)\Bigr| \le 2\eps + \sup_{\ell \le \eps^{-\kappa}}  \sup_{t \in [t_\ell,
      t_{\ell+1}]}|E_\ell(t)| \le C\eps \;.
  \end{equ}
  The left hand side of this expression motivates the introduction of operators $M_j$ acting
  on the set of families of stochastic processes by
  \begin{equ}
    \bigl(M_j A\bigr)_\alpha = C_{\alpha,(j)} A_{\alpha \cup (j)}\;.
  \end{equ}
  Note that $M_j$ lowers the ``degree'' of $A$ by one in the sense that if $A_\alpha=0$
  for every $|\alpha| \ge m$, then $\bigl(M_j A\bigr)_\alpha =0$   for every $|\alpha| \ge m-1$.

  With this notation, we can rewrite \eref{e:goodbound} as
  \begin{equ}[e:boundMjA]
    \sup_{t \in [t_\ell, t_{\ell+1}]} \Bigl|\sum_{j=1}^d
    Z_{M_jA}(t_\ell) \bar W_j(t)\Bigr| \le C\eps \;.
  \end{equ}
  Using the Brownian scaling and applying Lemma~\ref{lem:constant},
  combined with Lemma~\ref{lem:sumup}, shows the existence of a family
  $\Osc$ of negligible events such that \eref{e:boundMjA}
  implies
  \begin{equ}
    |Z_{M_jA}(t_\ell)| \le \eps^{7/20} \;,\quad \forall \ell \le
    \eps^{-5/4}\;.
  \end{equ}
  Here, we used the fact that our choice of $\kappa$ implies that $1 -
  \kappa/2 > 7/20$.  This shows that the statements
  \eref{e:assumptions} imply
  \begin{equs}
    \|Z_{M_jA}\|_{L^\infty} &\le \eps^{7/20} + C_m\sup_\alpha
    \bigl(\eps^\kappa \|A_\alpha\|_\Lip \|W_\alpha\|_{L^\infty}
    + \eps^{2 \kappa /5} \|A_\alpha\|_{L^\infty} \|W_\alpha\|_{C^{2/5}} \bigr) \\
    &\le \eps^{7/20} + C_m \bigl(\eps^{\kappa - 1/9 - 1/0} + \eps^{1/2
      - 1/9 - 1/30}\bigr) \le C_m \eps^{7/20}\;,
    \label{e:boundMjAZ}
  \end{equs}
  modulo $\Wien' \cup \Osc$. Here, the constant $C_m > 1$
  depends only on $m$.

  We now finally arrived at the stage where we are able to apply our
  induction hypothesis to each of the processes $Z_{M_j A}$.  Note
  that since $7/20 > 1/3$, \eref{e:boundLip} implies that
  \begin{equ}
    \sup_{\alpha, j}\|(M_jA)_\alpha\|_\Lip \le
    (C_m\eps^{7/20})^{-3^{-m}}\;,
  \end{equ}
  for all sufficiently small $\eps$.  Therefore, outside of the event
  $(\Osc^{m-1})_{C_m\eps^{7/20}}$, one has the implication
  \begin{equs}[e:implic]
    \Bigl\{\sup_j \|Z_{M_jA}\|_{L^\infty} &\le C_m\eps^{7/20}\Bigr\}
    \,\&\,
    \Bigl\{\sup_\alpha \|A_\alpha\|_\Lip \le \eps^{-3^{-(m+1)}}\Bigr\} \\
    &\Longrightarrow\, \Bigl\{\sup_{\alpha,j}\|(M_j
    A)_\alpha\|_{L^\infty} \le C'_m \eps^{{7 \over
        20}3^{-(m-1)}}\Bigr\}\;,
  \end{equs}
  for some different constant $C'_m$ depending also only on $m$. Since
  $7/20 > 1/3$ and since $\|(M_j A)_\alpha\|_{L^\infty} \ge
  \|A_{\alpha \cup (j)}\|_{L^\infty}$, this implies in particular that
  $\|A_\alpha\|_{L^\infty} \le \eps^{3^{-m}}$ for every $\alpha \neq
  \emptyset$.

  In order to conclude the proof of the theorem, it therefore only
  remains to obtain a similar bound on $\|A_\emptyset\|_{L^\infty}$.
  We define a family of negligible events $\Wien_m''$ so that $\Wien'
  \subset \Wien_m''$ and such that the bound
  \begin{equ}[e:smallbW]
    \sup_{t \in [0,1]} \sup_{|\alpha| \le m} |W_\alpha(t)| \le
    \eps^{-{1\over 70}3^{-(m-1)}}\;,
  \end{equ}
  holds modulo $\Wien_m''$. We claim that if we define recursively
  \begin{equ}
    (\Osc^m)_\eps = (\Osc^{m-1})_{C\eps^{7/20}} \cup
    (\Wien''_m)_\eps\;,
  \end{equ}
  the family $\Osc^m$ has the requested properties.  It follows indeed
  from \eref{e:boundZ}, \eref{e:smallbW} and the definition of $Z_A$
  that, modulo $\Osc^m$, \eref{e:assumptions} imply the bound
  \begin{equ}[e:boundphi]
    \|A_\emptyset\|_{L^\infty} \le \eps + \sum_{\alpha \neq \emptyset}
    \|A_\alpha\|_{L^\infty} \|W_\alpha\|_{L^\infty} \le \eps + C_m'
    \eps^{(7/20 - 1/70)3^{-(m-1)}}\;.
  \end{equ}
  Since we choose the bound \eref{e:smallbW} in such a way that $7/20
  - 1/70 > 1/3$, we obtain $\|A_\emptyset\|_{L^\infty} \le \eps^{1/3}$
  for sufficiently small $\eps$.  Together with the remark following
  \eref{e:implic}, this concludes the proof of
  Theorem~\ref{theo:Norris}.
\end{proof}

\section{Examples}
\label{sec:examples}

In this section, we apply the abstract framework developed in this article to two concrete
examples: the stochastic Navier-Stokes equations on a sphere and a class of stochastic reaction-diffusion equations.
The examples are chosen in order to highlight the techniques that can be used to verify
the assumptions of our results and to get some idea of their scope of applicability. 
In particular, the Navier-Stokes equations provide an example where bounds on the Jacobian are not
very uniform, so that an initial condition dependent  control is required in Assumption~\ref{ass:global}.
The stochastic reaction-diffusion system on the other hand satisfies very strong a priori bounds, but Assumption~\ref{ass:basic}
is not verified with the usual choice $\CH = L^2$, so that one has to work a bit more to fit the equations into the framework
presented here. Our strategy is as follows: in a first section, we provide a simplified version of our results. 
We tried to find a formulation that strikes a balance between powerful results and easily verifiable assumptions.
This general formulation will then be used by both of the examples mentioned above. 

\subsection{A general formulation}

The `general purpose' theorem formulated in this section allows to obtain the asymptotic
strong Feller property for  a large class of semilinear SPDEs under a H\"or\-man\-der-type bracket condition.
Our first assumption ensures that all the stability conditions of the previous sections can be verified.

\SetAssumptionCounter{D}
\begin{assumption}\label{ass:bounds}
The operator $L$ has compact resolvent.
Furthermore, there exists a measurable function $V \colon \CH \to \R_+$ such that there exist
constants $c>0$ and $\alpha > 0$ such that the bound
\begin{equ}
V(u) \ge c\|u\|^\alpha\;,
\end{equ}
holds for all $u \in \CH$ and such that the following bounds hold:

There exists a constant $C>0$ and $\eta' \in [0,1)$ such that
\begin{equ}[e:VLyap]
\E \exp(V(u_1)) \le C \exp\bigl(\eta' V(u_0)\bigr)\;.
\end{equ} 
We also require the following bounds on the Jacobian, as well as the second variation on the dynamic.
For every $p>0$ and every $\delta > 0$, there exists a constant $C$ such that the bounds
\minilab{e:boundsGeneral}
 \begin{equs}
    \sup_{t \in [0,1]} \E\|u_t\|^{p} &\le C\exp\bigl(\delta V(u_0)\bigr)\;,\label{e:boundSol}\\
   \E \sup_{s,t \in [0,1]} \|J_{s,t}\|^{p} &\le C\exp\bigl(\delta V(u_0)\bigr)\;, \label{e:boundJac}\\
   \sup_{s,t \in [0,1]} \E \|J^{(2)}_{s,t}\|^{p} &\le C\exp\bigl(\delta V(u_0)\bigr)\;, \label{e:boundJac2}
 \end{equs}
hold for every $u_0 \in \CH$.
\end{assumption}

Our next assumption is simply a restatement of the H\"ormander bracket condition 
(considering only constant `vector fields'), with the additional condition that the $g_i$ belong to $\CH_\infty$.
This ensures that all the relevant brackets are in $\CH_\infty$ and hence admissible in the sense of Section~\ref{sec:Hormander}.

\begin{assumption}\label{ass:brackets}
The forcing directions $g_i$ belong to $\CH_\infty$. Furthermore, define a sequence of subsets of $\CH$ recursively by
 $\AA_0 = \{g_j\,:\, j=1,\ldots,d\}$ and
\begin{equ}
 \AA_{k+1} \eqdef \AA_k \cup \{N_m(h_1,\ldots,h_m)\,:\, h_j \in \AA_k\}\;.
\end{equ}
Then, the linear span of $\AA_\infty \eqdef \bigcup_{n > 0} \AA_n$ is dense
in $\CH$.
\end{assumption}

With these assumptions in hand, a simplified, yet sufficiently powerful for many uses, formulation of our main results is as follows:

\begin{theorem}\label{theo:general}
Consider the setting of equation \eref{e:SPDEintro} and assume that Assumptions~\ref{ass:basic},
\ref{ass:bounds}, and \ref{ass:brackets} hold. Then, there exist constants $C,\kappa >0$ and $\gamma \in (0,1)$ such that the Markov semigroup 
$\CP_t$ generated by \eref{e:SPDEintro} satisfies the bound
\begin{equ}[e:boundGeneral]
   \|\DF(\CP_{2n}\phi)(u)\| \leq  C e^{\kappa V(u_0)}\Big(
   \sqrt{(\CP_{2n} \phi^2)(u)} + \gamma^{n}\sqrt{(\CP_{2n}
     \|\DF\phi\|^2)(u)}\Big)\;,
\end{equ}
for every integer $n>0$.
In particular, it satisfies the asymptotic strong Feller property. 

Furthermore, if $\beta_\star > a-1$, then for every $m>0$, every $u \in \CH$, and every linear map
$\CT\colon \CH \to \R^m$, the projections of the time-$2$ transition probabilities $\CT^*\CP_2(u,\cdot)$ have $\CC^\infty$
densities with respect to Lebesgue measure on $\R^m$.
\end{theorem}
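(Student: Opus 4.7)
The plan is to derive \eref{e:boundGeneral} by verifying the four Assumptions~\ref{ass:Malliavin}--\ref{ass:smoothing} of Theorem~\ref{thm:asfEstimate} for a suitable finite-rank projection $\Pi$. Since $L$ has compact resolvent, its eigenvalues $\lambda_k \to \infty$, and I would take $\Pi = \Pi_N$ to be the orthogonal projection onto the span of the first $N$ eigenfunctions of $L$, with $N$ fixed sufficiently large at the end. Assumption~\ref{ass:Lyapunov} is immediate from \eref{e:VLyap}; Assumption~\ref{ass:Jacobian} follows from \eref{e:boundJac}--\eref{e:boundJac2} upon choosing $\delta = \bar p \eta$ with $\eta$ arbitrarily small (so that the compatibility constraints $\bar p \eta < 1-\eta'$ and $2/\bar q + 10/\bar p \le 1$ are easily arranged); and Assumption~\ref{ass:global} (needed to apply Theorem~\ref{theo:Malliavin}) follows similarly with $\Psi_0(u) = C\exp(\delta V(u)/p)$.

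To obtain Assumption~\ref{ass:Malliavin} via Theorem~\ref{theo:Malliavin}, it remains to verify the H\"ormander-like Assumption~\ref{ass:Hormander} for this $\Pi_N$. Here Assumption~\ref{ass:brackets} is used in the form indicated in Section~\ref{subsec:SatHor}: each element $N_m(g_{i_1},\ldots,g_{i_m})$ of the Meta-Theorem set $\AA_1$ arises, up to a multiplicative constant and lower-order terms, as a top-order iterated Lie bracket $[\cdots[[F,g_{i_1}],g_{i_2}]\cdots,g_{i_m}]$, so the constant vector fields in the bracket set $\AA_k$ of Section~\ref{sec:Hormander} eventually contain $\AA_\infty$. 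Since $\Pi_N$ has finite rank with range in $\CH_\infty$, the density of $\AA_\infty$ in $\CH$ supplies finitely many such constant elements whose $\Pi_N$-projections span $\Pi_N\CH$; this gives $\CQ_k(u) \ge c_N\Pi_N$ with a deterministic constant $c_N > 0$, so that $\Lambda_\alpha$ may be taken constant and the inverse-moment condition is trivially satisfied.

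The main obstacle is Assumption~\ref{ass:smoothing}, which requires $C_\Pi - C_J > 2\kappa C_L$. Using the mild formulation
\begin{equ}
  J_{0,1}\Pi_N^\perp = e^{-L}\Pi_N^\perp + \int_0^1 e^{-L(1-s)}\,DN(u_s)\,J_{0,s}\Pi_N^\perp\,ds\;,
\end{equ}
the first term has operator norm at most $e^{-\lambda_{N+1}}$. For the Duhamel term, combining the standard smoothing bound $\|e^{-L(1-s)}\|_{\CH_{-a}\to\CH}\le C(1-s)^{-a}$ with $\|DN(u_s)\|_{\CH\to\CH_{-a}}\le C(1+\|u_s\|)^{n-1}$ in a singular-kernel Gronwall argument, together with $\int_0^1(1-s)^{-a}e^{-\lambda_{N+1}(1-s)}\,ds = \CO(\lambda_{N+1}^{a-1})$, I expect an operator bound of the form $\|J_{0,1}\Pi_N^\perp\| \le C\lambda_{N+1}^{a-1}\exp\bigl(C\sup_{s\in[0,1]}(1+\|u_s\|)^{n-1}\bigr)$. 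Taking $\bar p$-th moments and using \eref{e:boundSol} (with $\delta$ as small as needed) to absorb the $u$-dependence into $\exp(\bar p\eta V(u_0))$ produces $\E\|J_{0,1}\Pi_N^\perp\|^{\bar p}\le \exp(\bar p\eta V(u_0) - \bar p C_\Pi)$ with $C_\Pi \sim (1-a)\log\lambda_{N+1}$, so $C_\Pi - C_J - 2\kappa C_L \to \infty$ as $N \to \infty$. The bound \eref{e:boundGeneral} then follows from \eref{eq:asfSmooth}, the prefactor $U^2(u_0)e^{4\kappa V(u_0)}$ being absorbed into $Ce^{\kappa' V(u_0)}$ for a redefined constant, since $U = \Psi^\theta$ grows only as an exponential of an arbitrarily small multiple of $V$.

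For the smoothness of the projected densities when $\beta_\star > a-1$, I would combine Theorem~\ref{theo:Malliavin} with the classical Malliavin calculus criterion. Given $\CT\colon\CH\to\R^m$, enlarging $\Pi$ to an orthogonal projection of finite rank whose range contains $\CT^*(\R^m) \subset \CH_\infty$, Theorem~\ref{theo:Malliavin} yields inverse moments of all orders for the finite-dimensional Malliavin matrix $\CT\MM_2\CT^*$ of $\CT u_2$. The existence of all iterated Malliavin derivatives of $\CT u_2$, with moments of all orders, then follows by induction from the bound of Proposition~\ref{prop:boundvarn} (whose hypothesis is precisely $\beta_\star > a-1$), as sketched in Section~5.1 of \cite{Yuri}; Malliavin's criterion then gives smoothness of the density on $\R^m$.
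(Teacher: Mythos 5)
Your overall architecture is the same as the paper's: take $\Pi=\Pi_N$ to be the spectral projection of $L$ onto its first $N$ eigenfunctions, verify Assumptions~\ref{ass:Malliavin}--\ref{ass:smoothing} (with Assumption~\ref{ass:Malliavin} obtained from Theorem~\ref{theo:Malliavin} via Lemma~\ref{lem:dense} and constant bracket fields, so that $\Lambda_\alpha$ is constant), apply Theorem~\ref{thm:asfEstimate}, and obtain the densities by shifting the Gaussian structure to $[1,2]$ and combining Theorem~\ref{theo:Malliavin} with Proposition~\ref{prop:boundvarn} and Nualart's criterion. Most of these steps are carried out exactly as in the paper.

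However, your verification of Assumption~\ref{ass:smoothing} has a genuine gap. Running the singular Gronwall iteration on the whole interval $[0,1]$ produces, as you yourself record, a multiplicative factor of the form $\exp\bigl(C\sup_{s\le 1}(1+\|u_s\|)^{n-1}\bigr)$ (in fact the Mittag--Leffler resolvent gives $\exp\bigl(cM^{1/(1-a)}\bigr)$ with $M\sim \sup_s\|u_s\|^{n-1}$). You then propose to take $\bar p$-th moments and absorb this via \eref{e:boundSol}, but \eref{e:boundSol} only provides \emph{polynomial} moments of $\|u_t\|$, and Assumption~\ref{ass:bounds} gives no control whatsoever on $\E\exp\bigl(C\sup_{s\le1}\|u_s\|^{n-1}\bigr)$: the lower bound $V(u)\ge c\|u\|^\alpha$ holds only for some unspecified $\alpha>0$, and \eref{e:VLyap} controls exponential moments of $V$ only at time $1$, not uniformly on $[0,1]$ and not for powers as large as $n-1$. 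So the moment-taking step does not close. The paper avoids the Gronwall iteration altogether: on a \emph{short} interval it bounds the Duhamel term crudely by
\begin{equ}
\|J_{0,t}\Pi_N^\perp\| \le e^{-\lambda_{N+1}t} + C\,t^{1-a}\sup_{s\le t}\|u_s\|^{n-1}\sup_{s\le t}\|J_{0,s}\|\;,
\end{equ}
which is polynomial in the random quantities and hence has all moments bounded by $C(e^{-\lambda_{N+1}t}+t^{1-a})\exp(\delta V(u_0))$; one then first chooses $t$ small (to kill $t^{1-a}$), then $N$ large (to kill $e^{-\lambda_{N+1}t}$ for that fixed $t$), and finally writes $J_{0,1}\Pi_N^\perp = J_{t,1}J_{0,t}\Pi_N^\perp$ and applies Cauchy--Schwarz with \eref{e:boundJac}. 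A related, smaller omission: Assumption~\ref{ass:global} requires $\E\sup_t\|u_t\|^p$ with the supremum \emph{inside} the expectation, whereas \eref{e:boundSol} only gives $\sup_t\E\|u_t\|^p$; the paper's first step is precisely to upgrade one to the other via Duhamel and H\"older, and your proposal passes over this silently.
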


\begin{remark}
The final times $1$ and $2$ appearing in the statement are somewhat arbitrary since it suffices to
rescale the equation in time, which does not change any of our assumptions. We chose to keep them in this
way in order to avoid awkward notations in the proof.
\end{remark}

In this result, the H\"ormander-type assumption, Assumption~\ref{ass:Hormander}
is verified by using constant vector fields only. Before we turn to the proof of Theorem~\ref{theo:general}, we therefore
present the following useful little lemma:

\begin{lemma}\label{lem:dense}
Let $\CH$ be a separable Hilbert space and $\{g_i\}_{i=1}^\infty \subset \CH$ a collection of elements
such that its span is dense in $\CH$. Define a family of symmetric bilinear forms $\CQ_n$ on $\CH$ by
$\scal{h, \CQ_n h} = \sum_{i=1}^n \scal{g_i,h}^2$.
Let $\Pi\colon \CH \to \CH$ be any orthogonal projection on a finite-dimensional
subspace of $\CH$. Then, there exists $N>0$ and, for every $\alpha>0$ there exists  $c_\alpha > 0$ such that 
$\scal{h, \CQ_n h} \ge c_\alpha \|\Pi h\|^2$ for every $h \in \CH$ with $\|\Pi h\| \ge \alpha \|h\|$ and every $n\ge N$.
\end{lemma}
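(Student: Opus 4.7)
The plan is a soft weak-compactness argument. First I would fix $\alpha \in (0,1]$ and introduce the set
\begin{equation*}
K_\alpha \eqdef \{h \in \CH \,:\, \|h\| \le 1,\ \|\Pi h\| \ge \alpha\}\,,
\end{equation*}
which is weakly compact: it is norm-bounded (hence weakly pre-compact by reflexivity of $\CH$), and it is weakly closed because the Hilbert norm is weakly lower semicontinuous while the finite-rank map $h \mapsto \Pi h$ is weakly continuous, so that $h \mapsto \|\Pi h\|$ is weakly continuous as well. A short homogeneity step reduces the desired inequality $\scal{h, \CQ_n h} \ge c_\alpha \|\Pi h\|^2$, valid for every $h$ with $\|\Pi h\| \ge \alpha \|h\|$, to the assertion that $\scal{h, \CQ_n h} > c_\alpha$ for every $h \in K_\alpha$: for nonzero $h$ with $\|\Pi h\| \ge \alpha \|h\|$, apply the $K_\alpha$-bound to $h/\|h\|$ and use bilinearity together with $\|\Pi h\| \le \|h\|$.

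Next I would introduce the pointwise supremum
\begin{equation*}
F(h) \eqdef \sum_{i=1}^\infty \scal{g_i,h}^2 \;=\; \sup_N F_N(h) \in [0,\infty]\,,\qquad F_N(h) \eqdef \scal{h, \CQ_N h}\,,
\end{equation*}
and establish that $\bar c_\alpha \eqdef \inf_{K_\alpha} F > 0$. Each $F_N$ is weakly continuous on $\CH$ as a finite sum of squares of weakly continuous linear functionals, so $F$ is weakly lower semicontinuous as a countable supremum of such functions. By weak lower semicontinuity together with the weak compactness of $K_\alpha$, the infimum $\bar c_\alpha$ is attained at some $h_\star \in K_\alpha$. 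If $F(h_\star) = 0$ were to hold, then $\scal{g_i, h_\star} = 0$ for every $i$, so $h_\star$ would be orthogonal to the closed linear span of $\{g_i\}_{i\ge 1}$, which is $\CH$ by the density hypothesis; hence $h_\star = 0$, contradicting $\|\Pi h_\star\| \ge \alpha > 0$. So $\bar c_\alpha > 0$.

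Setting $c_\alpha \eqdef \bar c_\alpha/2$, the last step is to trade the infinite supremum for a finite partial sum. I would consider the nested family of weakly closed subsets
\begin{equation*}
A_N \eqdef \{h \in K_\alpha \,:\, F_N(h) \le c_\alpha\}\,,\qquad A_{N+1} \subset A_N\,,
\end{equation*}
(each $A_N$ is weakly closed by weak continuity of $F_N$). Their total intersection is $\{h \in K_\alpha : F(h) \le c_\alpha\}$, which is empty because $F \ge 2c_\alpha$ on $K_\alpha$. The finite intersection property on the weakly compact set $K_\alpha$ then forces some $A_N$ to be empty on its own; let $N$ be the first such index. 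Then $F_n(h) \ge F_N(h) > c_\alpha$ holds for every $h \in K_\alpha$ and every $n \ge N$, since $\CQ_n$ is nondecreasing in $n$. Combined with the homogeneity reduction, this yields the bound $\scal{h, \CQ_n h} \ge c_\alpha \|\Pi h\|^2$ stated in the lemma.

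The only real obstacle is conceptual rather than computational: bounded subsets of an infinite-dimensional Hilbert space are not norm-compact, so the natural intuition of ``a continuous function attains its infimum on a compact set'' has to be rerouted through the weak topology. The two features of the problem that make this possible are the finite rank of $\Pi$, which renders $h \mapsto \|\Pi h\|$ weakly continuous and hence $K_\alpha$ weakly closed, and the weak continuity of each finite partial sum $F_N$, which lifts to weak lower semicontinuity of $F = \sup_N F_N$.
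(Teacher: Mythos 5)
Your proof is correct and rests on exactly the same ingredients as the paper's argument: weak compactness of bounded sets in the separable Hilbert space $\CH$, weak continuity of the finite-rank projection $\Pi$ and of each form $\scal{\cdot,\CQ_n\cdot}$, monotonicity of $\CQ_n$ in $n$, and density of the span of the $g_i$; the paper merely packages this as a contradiction with a diagonal sequence $h_n$ satisfying $\scal{h_n,\CQ_n h_n}\to 0$ (using $\scal{h_m,\CQ_n h_m}\le\scal{h_m,\CQ_m h_m}$ for $m\ge n$) where you argue directly via attainment of the infimum of the weakly lower semicontinuous form $F=\sup_N F_N$ followed by a finite-intersection-property extraction. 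Like the paper's proof, yours produces an $N$ depending on $\alpha$, which is all that is required for Assumption~\ref{ass:Hormander}.
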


\begin{proof}
Assume by contradiction that the statement does not hold. Then, 
there exists $\alpha > 0$ and a sequence $h_n$ in $\CH$ such that $\|\Pi h_n\| = 1$, $\|h_n\| \le \alpha^{-1}$, and
such that $\lim_{n \to 0} \scal{h_n, \CQ_n h_n} \to 0$. Since $\|h_n\| \le \alpha^{-1}$ is bounded, we can assume
(modulo extracting a subsequence) that there exists $h \in \CH$ such that $h_n \to h$ in the weak
topology.
Since $\Pi$ has finite rank, one has $\|\Pi h\| = 1$. Furthermore, since the maps $h \mapsto \scal{h, Q_n h}$
are continuous in the weak topology and since $n \mapsto \scal{h, Q_n h}$ is increasing for every $n$, one has
\begin{equ}
\scal{h,Q_n h} = \lim_{m \to \infty} \scal{h_m, Q_n h_m} \le \lim_{m \to \infty} \scal{h_m, Q_m h_m} = 0\;,
\end{equ}
so that $\scal{h, g_i} = 0$ for every $i > 0$. This contradicts the fact that the span of the $g_i$ is dense in $\CH$.
\end{proof}

We are now in a position to turn to the proof of our general result.

\begin{proof}[of Theorem~\ref{theo:general}]
We show first that the supremum in \eref{e:boundSol} can easily be pulled under the expectation.
Indeed, it follows from the variation of constants formula that we have the bound
\begin{equ}
\|u_t\| \le \|S(t)u_0\| +C \int_0^s (t-s)^{-a}\|N(u_s)\|_{-a}\,ds + \|W_L(t)\|\;,
\end{equ}
where $W_L$ is the stochastic convolution of $G\,W$ with the semigroup $S$ generated by $L$. It follows immediately from H\"older's inequality that 
there exists a constant $C$ and an exponent $p>0$ such that
\begin{equ}
\sup_{t \le 1}\|u_t\| \le \|u_0\| + C\Bigl(\int_0^1\bigl(1+ \|u_s\|^{np}\bigr)\,ds\Bigr)^{1/p} + \sup_{t \le 1}\|W_L(t)\|\;.
\end{equ}
Combining this with \eref{e:boundSol}, we conclude immediately that for every $p>0$ and every $\delta > 0$ there exists
$C>0$ such that 
\begin{equ}[e:boundSolSup]
     \E\sup_{t \in [0,1]} \|u_t\|^{p} \le C\exp\bigl(\delta V(u_0)\bigr)\;.
\end{equ}

We now verify that Assumptions~\ref{ass:global} and \ref{ass:Hormander} are satisfied for our problem.
It follows from \eref{e:boundSolSup} and \eref{e:boundJac} that for every $\delta > 0$, 
Assumption~\ref{ass:global} holds with the choice $\Psi_0(u) = \exp\bigl(\delta V(u)\bigr)$.

Furthermore, Assumption~\ref{ass:Hormander} holds for every finite-rank orthogonal projection $\Pi\colon \CH \to \CH$ 
by Assumption~\ref{ass:brackets} and
Lemma~\ref{lem:dense}. Note that the function $\Lambda_\alpha$ is then constant, so that the condition on its moments is trivially satisfied.
We can therefore apply Theorem~\ref{theo:Malliavin} 
which states that for every $\alpha \in (0,1)$, every $\delta > 0$, every finite-rank projection $\Pi$, and every $p \ge 1$
  there exists a  constant $C$ such that the bound
  \begin{equ}[e:inverseMalliavin]
    \P \Bigl( \inf_{\phi \in \CS_\alpha} {\scal{\phi, \MM_1 \phi}\over \|\phi\|^2} \le
    \eps \Bigr) \le C \exp\bigl(\delta V(u_0)\bigr) \eps^{p}\;,
  \end{equ}
holds for every $u_0 \in \CH$ and every $\eps \le 1$. 

Combining this statement with \eref{e:VLyap}, we see that Assumption~\ref{ass:Malliavin} is satisfied with $\bar q = 8$ (for example) and
$U(u) = \exp\bigl(\delta V(u)\bigr)$ with every $\delta \le {1\over 8}$.

The bound \eref{e:VLyap} is nothing but a restatement of
Assumption~\ref{ass:Lyapunov}. 
Since we assume that \eref{e:boundJac} and \eref{e:boundJac2} hold for every $\delta>0$, 
we infer that Assumption~\ref{ass:Jacobian} holds with $\bar p = 20$ and $\eta$ sufficiently small.
It remains to verify that, for every $C_\Pi>0$ there exists a finite-rank projection $\Pi$ such that
\eref{e:smoothingJ} is satisfied. This ensures that the required relation $C_\Pi > C_J + 2\eta C_L/(1-\eta')$
can be satisfied by a suitable choice of $\Pi$.

Because $L$ has compact resolvent by assumption,  it has a complete system of eigenvectors with the corresponding eigenvalues
$\{\lambda_n\}$ satisfying $\lim_{n \to \infty} \lambda_n = \infty$. Therefore, if we denote by $\Pi_N$ the projection
onto the subspace of $\CH$ spanned by the first $N$ eigenfunctions, we have the identity
\begin{equ}
\|e^{-Lt} \Pi_N^\perp\| = e^{-\lambda_{N+1}t}\;.
\end{equ}
This allows us to get a bound on $J_{0,1} \Pi^\perp$ as follows. It follows from \eref{e:Jacobian} and the variation
of constants formula that
\begin{equs}
\|J_{0,t} \Pi^\perp\| &\le \|e^{-Lt} \Pi^\perp\| + \int_0^t C s^{-a}\|DN(u_s)\|_{-a}\,ds \\
&\le e^{-\lambda_{N+1}t} + Ct^{1-a} \sup_{s\le t} \|u_s\|^{k}\;,
\end{equs}
so that, for every $\delta > 0$, we have by \eref{e:boundSolSup} the bound
\begin{equ}
\E \|J_{0,t} \Pi^\perp\|^p \le C_{\delta,p} \bigl(e^{-\lambda_{N+1}t} + t^{1-a}\bigr) \exp \bigl(\delta V(u_0)\bigr)\;,
\end{equ}
for some family of constants $C_{\delta,p}$ independent of $t \in [0,1]$.
Since $a < 1$, it follows that for every $\eps, \delta > 0$ and $p > 0$, we can find $N$ sufficently large and $t$ sufficiently small such that
\begin{equ}
\E \|J_{0,t} \Pi^\perp\|^p \le \eps \exp \bigl(\delta V(u_0)\bigr)\bigr)\;.
\end{equ}
Combining this with \eref{e:boundJac} and the fact that $\|J_{0,1}\Pi^\perp\| \le \|J_{t,1}\| \|J_{0,t}\Pi^\perp\|$,
we obtain
\begin{equ}
\E \|J_{0,1} \Pi^\perp\|^{\bar p} \le \Bigl(\E \|J_{0,t}\|^{2\bar p} \E \|J_{t,1}\|^{2\bar p}\Bigr)^{1\over 2} \le  C \eps \exp \bigl(2\delta V(u_0)\bigr)\bigr)\;,
\end{equ}
provided that $N$ is sufficiently large. By choosing $\delta$ sufficiently small, it follows that 
Assumption~\ref{ass:smoothing} (with arbitrary values for $\bar p$ and $C_\Pi$)
can always be satisfied by choosing for $\Pi$ the projection onto the first $N$ eigenvectors of $L$ for
some large enough value of $N$. The bound \eref{e:boundGeneral} now follows from a simple application of Theorem~\ref{thm:asfEstimate}.

It remains to prove the statement about the smoothness of $\CT^*\CP_2(u,\cdot)$, which will be a consequence of
\eref{e:inverseMalliavin} by \cite[Cor.~2.1.2]{Nualart}. The reason why we consider the process at time $2$ is that,
in order to avoid the singularity at the origin, we consider the solution $u_2$ as an element of the probability space
with Gaussian structure given by the increments of $W$ over the interval $[1,2]$. The increments of $W$ over $[0,1]$
are then considered as some ``redundant'' randomness, which is irrelevant by \cite[Ch.~1]{Nualart}.
With this slightly tweaked Gaussian structure, the Malliavin matrix of $\Pi u_2$ is given almost surely
by $\Pi \CM_1(u_1) \Pi$, where $\CM_1$ is defined as before, but over the interval $[1,2]$.
The claim now follows from \eref{e:inverseMalliavin} and \eref{e:VLyap}, provided that the random variable $\Pi u_2$ 
belongs to the space $\CD^{\infty}$ of random variables whose Malliavin derivatives of all orders have moments of all orders.

Recall now (see for example \cite[Section~5.1]{Yuri}) that for any $n$-tuple of elements $h_1,\ldots,h_n \in L^2([1,2], \R^d)$, the $n$th Malliavin derivative of $u_2$ in the
directions $h_1,\ldots,h_n$ is given by
\begin{equ}[e:mallu2]
\CD^n u_2(h) = \int_{1 \le s_1 <\cdots < s_n \le 2} J_{s,1}^{(n)} Gh_s \,ds\;.
\end{equ}
Applying \eref{e:apsol} in Proposition~\ref{prop:apriori} we see that, for every $u_0 \in \CH$, every $\gamma < \gamma_\star+1$, 
and every $p>0$, one has the bound
\begin{equ}
\E \sup_{t \in [1,2]} \|u_t\|_\gamma^p < \infty\;.
\end{equ}
We conclude from Proposition~\ref{prop:boundvarn} that 
\begin{equ}
\E \sup_{1 \le s_1 <\cdots < s_n \le 2} \sup_{\|\phi_j\| \le 1} \|J_{s,t}^{(k)}(\phi_1,\ldots,\phi_k)\|^p \le \infty\;,
\end{equ}
so that, by \eref{e:mallu2}, $u_2$ does indeed have Malliavin derivatives of all orders with bounded
moments of all orders. This concludes the proof.
\end{proof}

\subsection{The 2D Navier-Stokes equations on a sphere}
\SetAssumptionCounter{NS}

Consider the stochastically forced two-dimensional Navier-Stokes equations on the two-dimensional 
sphere $S^2$:
\begin{equ}[e:SNSsphere]
du = \nu \Delta u \,dt + \nu \Ric u\, dt - \nabla_{\! u} u\,dt - \nabla p\,dt + Q\,dW(t)\;,\qquad \div u = 0\;.
\end{equ}
Here, the velocity field $u$ is an element of $H^1(S^2, TS^2)$, $\nabla_{\! u} u$ denotes the
covariant differentiation of $u$ along itself with respect to the Levi-Civita connection on $S^2$, 
$\Delta = -\nabla^* \nabla$ is the (negative of the) Bochner Laplacian on $S^2$, and $\Ric$ denotes the Ricci operator from $TS^2$ into
itself. In the case of the sphere, the latter is just the multiplication with the scalar $1$.
See also \cite{Taylor92,TemWang,Naga97} for more details on the Navier-Stokes equations on manifolds.

As in the flat case, it is possible to represent $u$ uniquely by a scalar ``vorticity'' field $w$ given
by 
\begin{equ}[e:defw]
w = \curl u \eqdef - \div \bigl(n \wedge u\bigr)\;,
\end{equ}
where $n$ denotes the unit vector in $\R^3$ normal to the
surface of the sphere (so that $n \wedge u$ defines again a vector field on the sphere). 
With this notation, one can rewrite \eref{e:SNSsphere} as
\begin{equ}[e:SNSvort]
dw = \nu \Delta w \,dt - \div (w\,Kw)\,dt + G\,dW(t)\;.
\end{equ}
Here, we denoted by $K$ the operator that reconstructs a velocity field from its vorticity field, that is
\begin{equ}
u = Kw = - \curl \Delta^{-1} w \eqdef n \wedge \nabla \Delta^{-1} w\;, 
\end{equ}
and $\Delta$ denotes the Laplace-Beltrami operator on the
sphere. See \cite{TemWang} for a more detailed derivation of these equations. 
In order to fit the framework developed in this article, we assume that the operator $G$ is of finite rank
and that its image consists of smooth functions, so that the noise term can be written as
\begin{equ}
G\,dW(t) = \sum_{i=1}^n g_i\,dW_i(t)\;,\qquad g_i \in H^\infty(S^2,\R)\;.
\end{equ}
We choose to work in the space $\CH = L^2(S^2,\R)$ for the equation \eref{e:SNSvort} in vorticity formulation, 
so that the interpolation spaces $\CH_\alpha$
coincide with the fractional Sobolev spaces $H^{2\alpha}(S^2,\R)$, see \cite{Triebel}.
In particular, elements $w \in \CH_\alpha$ are characterised by the fact that the functions
$x \mapsto \phi(x) w(\psi(x))$
belong to $H^{2\alpha}(\R^2)$ for any compactly supported smooth function $\phi$ and
any function $\psi \colon \R^2 \to S^2$ which is smooth on an open set containing the support of $\phi$.
Since the sphere is compact, this implies that the usual Sobolev embeddings for the torus also
hold true in this case.

Define now $\AA_0 = \{g_i\,:\, i=1,\ldots,n\}$ and set recursively
\begin{equ}
\AA_{n+1} = \AA_n \cup \{B(v,w)\,:\; v,w \in \AA_n\}\;,
\end{equ}
where we made use of the symmetrised nonlinearity
\begin{equ}
B(v,w) = \hf \bigl(\div (w\,Kv) + \div (v\,Kw)\bigr)\;.
\end{equ}

We then have the following result:

\begin{theorem}
If the closure of the linear span of $\AA_\infty = \bigcup_{n \ge 0} \AA_n$ is equal to all of $L^2(S^2,\R)$, then
the equations \eref{e:SNSsphere} have a unique invariant measure.
\end{theorem}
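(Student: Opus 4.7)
My plan is to verify the hypotheses of Theorem~\ref{theo:general}, yielding the asymptotic strong Feller property, and then combine this with accessibility of the origin via Corollary~\ref{cor:weakIrr} to deduce at most one invariant probability measure; existence will follow from a standard Krylov-Bogoliubov argument. To place \eref{e:SNSvort} into the framework of Assumption~\ref{ass:basic}, I take $\CH$ to be the space of mean-zero $L^2$ scalar functions on $S^2$ (note that $\int_{S^2} w = 0$ for $w = \curl u$ by Stokes' theorem, and all elements built from the $g_i$ by the bracket recursion preserve this mean-zero property since $B$ is in divergence form), $L = -\nu\Delta$ (positive selfadjoint with compact resolvent, coercive after harmless rescaling), and $N(w) = -B(w,w)$, a symmetric bilinear map. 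Identifying $\CH_\alpha$ with $H^{2\alpha}(S^2,\R)$, 2D Sobolev multiplication estimates yield $N \in \Poly^2(\CH_{\gamma+a}, \CH_\gamma)$ for any $a \in (\tfrac12, 1)$ and any $\gamma \ge -a$, so one may take $\gamma_\star = \beta_\star = \infty$; the derivative condition \ass{DN} follows analogously from the selfadjoint structure of $B$, and \ass{g} is part of the assumption on the $g_i$.

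For Assumption~\ref{ass:bounds}, I take $V(w) = \eta\|w\|^2$ with $\eta>0$ sufficiently small. Applying It\^o's formula gives
\begin{equ}
d\|w\|^2 = -2\nu\|\nabla w\|^2\,dt + \sum_k\|g_k\|^2\,dt + 2\sum_k\scal{w,g_k}\,dW_k\;,
\end{equ}
and combining this with the Poincar\'e inequality $\|\nabla w\|^2 \ge 2\|w\|^2$ on mean-zero $L^2(S^2)$ (the first nonzero eigenvalue of $-\Delta$ on $S^2$ being $2$) yields an estimate of the form $\CL \exp(V) \le -c\,V\exp(V) + C\exp(V)$ for $\eta$ small relative to $\nu/\sum_k\|g_k\|^2$, which via the Hairer-Mattingly super-Lyapunov argument gives \eref{e:VLyap} with $\eta' < 1$. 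The polynomial moment bounds \eref{e:boundSol}, \eref{e:boundJac}, \eref{e:boundJac2} follow from standard energy estimates on \eref{e:SNSvort} and its first and second variations, using coercivity of $L$ and the antisymmetry $\int w\,\div(wKw) = 0$. Assumption~\ref{ass:brackets} is precisely the hypothesis of the theorem, since for the quadratic nonlinearity $N(w) = -B(w,w)$ one has $N_m \equiv 0$ for $m\neq 2$ and $N_2(h_1,h_2) = -B(h_1,h_2)$, making the two recursive constructions of $\AA_\infty$ coincide. Theorem~\ref{theo:general} then delivers the asymptotic strong Feller property for $\CP_t$.

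To obtain uniqueness I apply Corollary~\ref{cor:weakIrr} with common accumulation point $v = 0$. The Feller property is standard for semilinear parabolic SPDEs with additive smooth noise, by continuity of the solution map in both initial condition and driving path. For weak topological irreducibility, the deterministic flow $\partial_t w = \nu\Delta w - B(w,w)$ satisfies $\tfrac{d}{dt}\|w\|^2 \le -4\nu\|w\|^2$, so the zero-noise trajectory drives any initial condition into an arbitrary $L^2$-neighbourhood of the origin in finite time; a Stroock-Varadhan-type support theorem for additive noise equations (available under our regularity assumptions on the $g_i$, cf.~the treatment in \cite{MatPar06:1742}) then gives $\CP_t(u_0,A)>0$ for every open $A \ni 0$ and every $t$ sufficiently large depending on $u_0$ and $A$, verifying weak topological irreducibility. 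Existence of an invariant measure follows by Krylov-Bogoliubov, using the exponential moment bound together with the $\CH \hookrightarrow \CH_\alpha$ smoothing of $\CP_t$ for $\alpha > 0$ (from Proposition~\ref{prop:bootstrapping}) to establish tightness of the time-averaged measures in $\CH$. The main technical obstacle is the exponential moment bound \eref{e:VLyap}: the second-order It\^o correction $2\eta^2\sum_k\scal{w,g_k}^2$ competes directly with the dissipative term, and calibrating $\eta$ to ensure $\eta' < 1$ rather than merely $\eta' < \infty$ requires the careful super-Lyapunov argument of \cite{HairerMattingly06AOM} rather than a naive exponential supermartingale estimate.
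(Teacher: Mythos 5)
Your proposal is correct and follows essentially the same route as the paper: verify Assumption~\ref{ass:basic} for the vorticity formulation via Sobolev multiplication estimates and interpolation, verify Assumption~\ref{ass:bounds} with $V(w)=\eta\|w\|^2$ using the enstrophy balance and the exponential-moment argument of \cite{HairerMattingly06AOM}, apply Theorem~\ref{theo:general} for the asymptotic strong Feller property, and then use the dissipativity-driven accessibility of the origin (the paper invokes this through the disjoint-supports theorem rather than Corollary~\ref{cor:weakIrr}, but the two are interchangeable here). The only cosmetic difference is that you make the irreducibility and Krylov--Bogoliubov steps explicit where the paper leaves them implicit.
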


\begin{remark}
  Sufficient conditions for density of $\AA_\infty$ and for approximate
  controllability are given in
  \cite{agrachev_sarychev_2007}. In particular, the authors there give an example of $\AA_0$
  containing five spherical harmonics that satisfies our
  condition. Note however that controllability is not required for our
  result to hold, since we only use the fact that the origin belongs
  to the topological support of every invariant measure. On the other
  hand, as shown in \cite{MatPar06:1742}, controllability allows to obtain
  positivity of the projected densities of transition probabilities.
\end{remark}

\begin{proof}
  The main step in the proof is to check that we can apply
  Theorem~\ref{thm:asfEstimate} to conclude that the Markov semigroup
  generated by the solutions to \eref{e:SNSsphere} has the asymptotic
  strong Feller property.  Let us first check that the Navier-Stokes
  nonlinearity on the sphere does indeed satisfy
  Assumption~\ref{ass:basic} for some $a \in [0,1)$. It is clear that
  the nonlinearity $N$, defined by $N(w)=B(w,w)$, is continuous from
  $\CH_\infty$ to $\CH_\infty$ (which coincides with the space of
  infinitely differentiable functions on the sphere), so in order to
  show point 2, it remains to show that $N$ maps $\CH_{\gamma}$ into
  $\CH_{\gamma - a}$ for a range of values $\gamma \ge 0$ and some $a
  \in [0,1)$.

Setting $\hat B(w,w') = \div (w\,Kw')$ so that $N(w) = B(w,w) = \hat B(w,w)$, one can show exactly as in \cite{ConstFoi} that, 
for any triplet $(s_1,s_2,s_3)$ with $s_i \ge 0$, $\sum_{i} s_i >1$, one has bounds of the type
\begin{equs}
\int_{S^2} v(x)\,\hat B(w,w')(x)\,dx &\le C\|v\|_{H^{s_1}} \|w\|_{H^{1+s_2}} \|w'\|_{H^{s_3-1}}\;, \\
\int_{S^2} v(x)\,\hat B(w,w')(x)\,dx &\le C\|v\|_{H^{1+s_1}} \|w\|_{H^{s_2}} \|w'\|_{H^{s_3-1}}\;,
\end{equs}
for some constant $C$ depending on the choice of the $s_i$. In particular, $\hat B$ can be interpreted as a continuous linear map
from $\CH \otimes \CH$ into $\CH_{-{3 \over 4}}$ (for example) and from $\CH_{1\over 2} \otimes \CH_{1\over 2}$ into
$\CH$ (using the usual identification of bilinear maps with linear maps between tensor products). 
It  thus follows from the Calder\'on-Lions interpolation theorem as in Remark~\ref{rem:interpolation} that 
$\hat B$ is a continuous linear map from $\CH_\alpha \otimes \CH_\alpha$ into $\CH_\beta$ for
$\beta = {3\alpha \over 2} - {3\over 4}$ and $\alpha \in [0,\hf]$. For $\alpha > \hf$, we use the fact that
$\CH_\alpha$ is an algebra \cite{Trie92} to deduce that $\hat B$ is continuous from $\CH_\alpha \otimes \CH_\alpha$
into $\CH_{\alpha-{1\over 2}}$. This shows that point 2 of Assumption~\ref{ass:basic} is satisfied with $a = {3\over 4}$
(any exponent strictly larger than $\hf$ would do, actually) and $\gamma_\star = +\infty$.

Turning to point 3 of Assumption~\ref{ass:basic}, it suffices to show that, for $v$ sufficiently
smooth, the map $w \mapsto \hat B(v,w)$ is bounded from $\CH_{-\beta}$ into $\CH_{-\beta-a}$.
It is well-known on the other hand that if $v \in \CC^k$ then the multiplication operator $w \mapsto vw$
is continuous in $H^s$ for all $|s| \le k$. It follows immediately that 
$DN^*(v)$ is continuous from $\CH_{-\beta}$ into $\CH_{-\beta-{1\over 2}}$, provided that 
$v \in \CC^k$ for $k \ge 2\beta$. Point 3 then follows with $\beta_\star = \infty$.

For any fixed $\eta > 0$, it follows exactly as in \cite[Lemma~4.10]{HairerMattingly06AOM} that
Assumption~\ref{ass:bounds} is verified with $V(w) = \eta \|w\|^2$ for $\eta$ sufficiently small. 
This concludes the verification
of the assumptions of Theorem~\ref{theo:general} and the claim follows.
\end{proof}

\begin{remark}
Just as in \cite{HairerMattingly06AOM}, this result is optimal in the following sense. The closure
$\bar \AA_\infty$ of the linear span of $\AA_\infty$ in $L^2$ is always an invariant subspace for \eref{e:SNSvort} and
the invariant measure for the Markov process restricted to $\bar \AA_\infty$ is unique.
However, if $\bar \AA_\infty \neq L^2$, then one expects in general the presence of more than one invariant
probability measure in $L^2$ at low values of the viscosity $\nu$.
\end{remark}

\subsection{Stochastic reaction-diffusion equations}
\SetAssumptionCounter{RD}

In this section, we consider a general class of reaction-diffusion equations on a ``nice'' domain $D$. The 
dimension $m$ of the ambient space is chosen smaller or equal to $3$ for technical reasons. However,
 the number $\ell$ of components in the reaction is arbitrary. The domain
$D$ is assumed to be either of
\begin{claim}
\item A compact smooth $m$-dimensional Riemannian manifold.
\item A bounded open domain of $\R^m$ with smooth boundary.
\item A hypercube in $\R^m$.
\end{claim}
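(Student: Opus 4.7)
Given that the statement closing the excerpt merely sets up the geometric framework (the three possible types of domains $D$) for Section~\ref{sec:examples}'s second example, I expect the anticipated main theorem of this subsection to be an analogue of the Navier–Stokes result already proved above: under a mild dissipativity hypothesis on the reaction term (e.g.\ a coercive polynomial such as $u - u^3$ or more general odd-leading-coefficient cubics) and a spanning condition on the iterated products of forcing directions, the resulting SPDE has a unique invariant probability measure on $L^2(D,\R^\ell)$.

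My plan is to verify the three abstract hypotheses of Theorem~\ref{theo:general} for this class of equations. Taking $\CH = L^2(D,\R^\ell)$ and $L = -\Delta$ with appropriate boundary conditions (periodic for the torus, Neumann for a smooth bounded domain, or intrinsic for a manifold), the interpolation spaces $\CH_\alpha$ coincide with fractional Sobolev spaces $H^{2\alpha}$. Since $m \le 3$, the Sobolev embedding $H^s \hookrightarrow L^\infty$ for $s > m/2$ is available, and the Nemytskii operator $u \mapsto N(u)$ associated to any polynomial $N$ of degree $n$ maps $\CH_\gamma$ into itself for $\gamma$ large enough. The delicate point for Assumption~\ref{ass:basic} is to obtain $N \in \Poly(\CH_{\gamma+a},\CH_\gamma)$ for some $a < 1$ down to $\gamma = -a$; this is done, as usual, by pairing against a test function and using the tame product estimates in $H^s$, together with the Calder\'on–Lions interpolation argument of Remark~\ref{rem:interpolation}, to cover the whole admissible range of $\gamma$ and $\beta$.

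Next, for Assumption~\ref{ass:bounds}, the Lyapunov function will be $V(u) = \eta \|u\|_{L^2}^2$ for a sufficiently small $\eta > 0$. Testing the equation against $u$ (interpreted via It\^o's formula) and exploiting the dissipative monomial at leading order (e.g.\ $-\int u^4 \le 0$ for cubic $N$) yields the exponential moment bound \eref{e:VLyap}; a standard Gronwall-type argument with the analytic semigroup generated by $-\Delta$ then provides the Jacobian and second-variation bounds \eref{e:boundJac}–\eref{e:boundJac2} with arbitrarily small pre-factor $\delta$, because the initial-condition dependence of these variational equations is linear in $u$ via $DN(u)$ and $D^2N(u)$. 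For Assumption~\ref{ass:brackets}, the key observation is that if $N$ has leading homogeneous part $N_n$ of degree $n$, then the symmetric $n$-linear map $N_n(h_1,\ldots,h_n)$ is up to constants the pointwise product $h_1 \cdots h_n$; iteratively bracketing the $g_k$ against $N$ therefore generates the multiplicative algebra spanned by the $g_k$ inside $\CH_\infty$. Density of this algebra in $L^2$ then follows from Stone–Weierstrass as soon as the forcing directions separate the points of $D$ and contain a non-vanishing element (for scalar $\ell=1$), with the natural componentwise refinement when $\ell > 1$.

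The main technical obstacle I anticipate is the sharp dimension $m=3$ with cubic nonlinearity, where $u^3$ sits exactly at the Sobolev threshold: one needs $u \in H^1 \hookrightarrow L^6$ to place $u^3 \in L^2$, which works, but verifying point~3 of Assumption~\ref{ass:basic} (the mapping property of $DN^*$ into $\CL(\CH_{\beta+a},\CH_\beta)$ for $\beta$ close to $\beta_\star$) will require either a careful choice of $a$ strictly less than $1$ and a matching range $[\,-a,\beta_\star)$, or else working in a slightly smaller Banach space than $L^2$ and re-verifying the framework there, as was done in \cite{Yuri}. A secondary subtlety is that for the hypercube one should check that the chosen boundary conditions are compatible both with the $\Poly$ structure and with the particular realisations of Assumption~\ref{ass:brackets} via trigonometric or Fourier–cosine bases. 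Once all three assumptions have been verified, Theorem~\ref{theo:general} provides both the asymptotic strong Feller property and smoothness of finite-dimensional projections of $\CP_2(u,\cdot)$; combined with an accessibility argument analogous to Section~\ref{sec:accessibleGL}, this will yield unique ergodicity.
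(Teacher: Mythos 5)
The excerpt you were asked about is only the list of admissible domains $D$ for the reaction--diffusion example; there is nothing to prove there, so what matters is whether your plan for the ensuing subsection matches a workable route. The overall architecture of your plan (verify Assumptions~\ref{ass:basic}, \ref{ass:bounds} and \ref{ass:brackets}, then invoke Theorem~\ref{theo:general}, with the bracket condition reduced to density of the multiplicative algebra generated by the $g_k$) is the right one and coincides with the paper's strategy. However, there is one genuine gap at the very first step: your choice of base space $\CH = L^2(D,\R^\ell)$ cannot work for a polynomial nonlinearity of general degree $n$. Assumption~\ref{ass:basic}, point~2, requires $N \in \Poly(\CH_{\gamma+a},\CH_\gamma)$ for \emph{every} $\gamma \in [-a,\gamma_\star)$, in particular $N\colon \CH \to \CH_{-a}$ with $a<1$. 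With $\CH = L^2$ and, say, $n=3$, $m=3$, one only gets $u^3 \in L^{2/3}$, and $L^{2/3} \hookrightarrow H^{-2a}$ forces $a \ge 3/2$; for higher $n$ the situation is worse. The paper states this obstruction explicitly and circumvents it by taking $\CH = H^k$ with $k>m/2$ (concretely $H^2$ under the restriction $m\le 3$), where $H^k$ is a multiplicative algebra, so that $N \in \Poly(\CH_\alpha,\CH_\alpha)$ for all $\alpha\ge 0$ and Assumption~\ref{ass:basic} holds with $a=0$, $\gamma_\star=\beta_\star=\infty$. Your parenthetical remark about ``working in a slightly smaller Banach space than $L^2$'' is in fact the main point, not a fallback, and the verification of point~3 then reduces to boundedness of multiplication operators $v\mapsto f'(u)v$ on Sobolev spaces via $DN^*(u)v = \Delta^{-2}f'(u)\Delta^2 v$, rather than to a threshold analysis in negative-order spaces.

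A second, smaller divergence concerns the Lyapunov structure. You propose $V(u)=\eta\|u\|_{L^2}^2$ with an It\^o energy estimate, which is the Navier--Stokes-style argument. For the reaction--diffusion system the paper instead exploits the much stronger pathwise dissipativity of the reaction term: a comparison argument in $L^\infty$ (Proposition~\ref{prop:boundRD}) gives $\|u(t)\|_{L^\infty} \le C\,\Sup_{t,\infty}(W_\Delta)$ for $t\ge t_0$ \emph{uniformly over all initial conditions}, which after bootstrapping yields uniform exponential moments of $\|u(1)\|_{H^2}^{1/n}$ and lets one take $V(u)=\|u\|_{H^2}^{1/n}$ with $\eta'=0$. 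This buys initial-condition-independent constants in \eref{e:DPtRD} (and hence a spectral gap later for Ginzburg--Landau), which a quadratic $L^2$ Lyapunov function would not deliver. Your treatment of Assumption~\ref{ass:brackets} via the identification of $N_n(h_1,\ldots,h_n)$ with the pointwise product and a Stone--Weierstrass density argument is consistent with the paper's approach.
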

We furthermore denote by $\Delta$ the Laplace (resp. Laplace-Beltrami) operator on $D$, endowed with either
Neumann or Dirichlet boundary conditions. With these notations in place, the equations that we consider are
\begin{equ}[e:RD]
du = \Delta u\, dt + f\circ u\, dt + \sum_{i=1}^d g_i\, dW_i(t)\;,
\end{equ}
with $u(t)\colon D \to \R^\ell$ and $f \colon \R^\ell \to \R^\ell$ a polynomial of arbitrary degree $n$ with $n\ge 3$ an odd integer. 
(We exclude the case $n = 1$ since this gives rise to a linear equation and is trivial to analyse.)
The functions $g_i$ describing the
stochastic component of the equations are assumed to belong to $\CH_\infty$, the intersection of the domains of 
$\Delta^\alpha$ in $L^2(D)$ for all $\alpha > 0$. It is a straightforward exercise to check that \eref{e:RD}
has unique local solutions in $\CE = \CC(D,\R^\ell)$ for every initial condition in $\CC(D,\R^\ell)$ (replace $\CC$ by
$\CC_0$ in the case of Dirichlet boundary conditions). In order to obtain global solutions, we make the following 
assumption on the nonlinearity:

\begin{assumption}\label{ass:Lyap}
Writing $f = \sum_{k=0}^{n} f_k$ for $f$ with $f_k$ being $k$-linear maps from $\R^\ell$ to itself, we assume that
$n$ is odd and that
\begin{equ}
\scal{f_n(u,\ldots,u,v),v} < 0\;,
\end{equ}
for every $u,v \in \R^\ell \setminus \{0\}$.
\end{assumption}

\begin{remark}
Provided that Assumption~\ref{ass:Lyap} holds, one 
can check that there exist positive constants $c$ and $C$ such that the inequality
\begin{equ}[e:boundf]
\scal{f(u+v),u} \le C(1 + \|v\|^{n+1}) - c \|u\|^{n+1}\;,
\end{equ}
holds for every $u,v \in \R^\ell$.
\end{remark}

Essentially, Assumption~\ref{ass:Lyap} makes sure that the function $u
\mapsto |u|^2$ is a Lyapunov function for the ``reaction'' part $\dot
u = f(u)$ of \eref{e:RD}. In the interest of brevity, we define
$\Sup_{t,\infty}(v)=1+\sup_{s \le t} \|v(s)\|_{\CE}$ for any
function $v \in L^\infty ([0,t],\CE)$ and
$\Sup_{t,r}(v)=1+\sup_{s \le t} \|v(s)\|_{H^r}$ for $v \in L^\infty
([0,t],H^r(D))$, As a consequence of Assumption~\ref{ass:Lyap}, we
obtain the following \textit{a priori} bound on the solutions to
\eref{e:RD}:

\begin{proposition}\label{prop:boundRD}
Under Assumption~\ref{ass:Lyap}, there exist constants $c$ and $C$ such that
the bound
\begin{equ}
  \|u(t)\|_{L^\infty} \le C \Bigl(\frac{\|u_0\|_{L^\infty}}{(1 +
    t\|u_0\|_{L^\infty}^{n})^{1/n}} +
  \Sup_{t,\infty}(W_\Delta)\Bigr)\;,
\end{equ}
holds almost surely for every $u_0 \in \CE$, where $\CE$ is either
$\CC(D,\R^\ell)$ or $\CC_0(D,\R^\ell)$, depending on the boundary
conditions of $\Delta$. In particular, for every $t_0 > 0$ there
exists a constant $C$ such that one has the almost sure bound
\begin{equ}[e:bounddV]
\|u(t)\|_{L^\infty} \le C \Sup_{t,\infty}(W_\Delta)\;,
\end{equ}
independently of the initial condition, provided that $t \ge t_0$.
\end{proposition}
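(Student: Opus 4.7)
The strategy is to decompose the solution pathwise as $u(t) = v(t) + W_\Delta(t)$, where
\begin{equ}
W_\Delta(t) \eqdef \sum_{i=1}^d \int_0^t e^{(t-s)\Delta} g_i\, dW_i(s)
\end{equ}
is the stochastic convolution. The remainder $v = u - W_\Delta$ then solves the random \emph{deterministic} parabolic equation
\begin{equ}
\partial_t v = \Delta v + f(v + W_\Delta)\;,\qquad v(0) = u_0\;.
\end{equ}
The advantage of this splitting is that, for fixed realisation of $W_\Delta$, the process $v$ inherits extra smoothness from the parabolic semigroup, so that energy-type computations on $v$ are justified pathwise and the bulk of the argument becomes purely deterministic.

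The core step I would carry out is an almost sure $L^\infty$ bound on $v$ obtained via an $L^{2p}$ energy estimate followed by $p\to\infty$. Fixing $p\ge 1$ and setting $\phi_p(t) = \int_D |v(t,x)|^{2p}\,dx$, integration by parts (using the Neumann or Dirichlet boundary condition on $\Delta$) shows that the diffusive contribution $2p\int |v|^{2p-2}\scal{v,\Delta v}\,dx$ is non-positive. For the reaction term, the key input is the pointwise bound \eref{e:boundf} supplied by Assumption~\ref{ass:Lyap}, applied with the identifications $u\leftrightarrow v(x)$ and $v\leftrightarrow W_\Delta(x)$:
\begin{equ}
\scal{v(x), f(v(x)+W_\Delta(x))} \le C\bigl(1+|W_\Delta(x)|^{n+1}\bigr) - c|v(x)|^{n+1}\;.
\end{equ}
H\"older's inequality to interpolate the $L^{2p-2}$ and $L^{2p+n-1}$ norms of $v$ in terms of $\phi_p$, combined with Young's inequality to absorb the $W_\Delta$-dependent factor into a small portion of the dissipative term, then produces a differential inequality of the schematic form
\begin{equ}
\dot\phi_p(t) \le -\tilde c\,\phi_p(t)^{\alpha_p} + C_p\bigl(1+\Sup_{t,\infty}(W_\Delta)\bigr)^{\beta_p}\;,
\end{equ}
with $\alpha_p = 1 + (n-1)/(2p) > 1$ and the constants $\tilde c, C_p$ independent of the initial condition.

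Comparison with the scalar ODE $\dot y = -\tilde c\,y^{\alpha_p} + A$ yields an explicit algebraic-decay bound for $\phi_p(t)$ depending on $u_0$ only through $\phi_p(0)\le |D|\,\|u_0\|_\infty^{2p}$. Extracting $\phi_p(t)^{1/(2p)}$ and sending $p\to\infty$ (so that $\phi_p(0)^{1/(2p)}\to \|u_0\|_\infty$) converts this into a pointwise estimate on $\|v(t)\|_\infty$ of the required algebraic shape. The proposition then follows by the triangle inequality $\|u(t)\|_\infty \le \|v(t)\|_\infty + \|W_\Delta(t)\|_\infty \le \|v(t)\|_\infty + \Sup_{t,\infty}(W_\Delta)$. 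The uniform-in-$u_0$ estimate \eref{e:bounddV} is then immediate: for $t\ge t_0$, the algebraic term is bounded by a constant depending only on $t_0$, uniformly in $u_0$.

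The main obstacle I anticipate is the careful bookkeeping of exponents through the $p\to\infty$ limit, in order to produce precisely the algebraic profile $\|u_0\|_\infty/(1+t\|u_0\|_\infty^n)^{1/n}$ asserted in the statement (in particular, arranging the Young splitting so that the constants in the final bound do not blow up with $p$). A secondary, more technical, point is that $v$ is \emph{a priori} only $\CE$-valued and need not be smooth enough in $x$ for a literal pointwise computation of $\frac{d}{dt}|v|^{2p}$; this will be handled by passing through the mild formulation of the equation for $v$, combined with a standard mollification argument to justify the energy identity before taking the $p\to\infty$ limit.
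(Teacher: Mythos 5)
Your proposal is correct and rests on the same skeleton as the paper's argument: the pathwise splitting $u = v + W_\Delta$, the coercivity bound \eref{e:boundf} from Assumption~\ref{ass:Lyap} applied with $v(x)$ and $W_\Delta(x)$ in the two slots, and a comparison with a scalar ODE of the form $\dot y = -c\,y^{m} + A$. Where you genuinely diverge is in how the $L^\infty$ norm of $v$ is accessed. The paper works with $V(v)=\|v\|_{L^\infty}^2$ directly and differentiates it in time (the derivative being understood as a right upper Dini derivative, as the remark after the proposition makes explicit); evaluating at the spatial maximum of $|v|^2$, where $\scal{v,\Delta v}\le 0$, gives $\dot V \le C\Sup_{t,\infty}^{n+1}(W_\Delta) - cV^{(n+1)/2}$ in one line. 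Your route through $\phi_p=\|v\|_{L^{2p}}^{2p}$ and the limit $p\to\infty$ is longer but sound: the constants do survive the limit, since the Young constant enters the stationary level of your ODE only through a power of order $1/(2p+n-1)$, and the transient of $\dot y = -p\tilde c\, y^{1+(n-1)/(2p)}$ satisfies $y(t)^{1/(2p)} \le \bigl((y_0^{1/(2p)})^{-(n-1)}+c't\bigr)^{-1/(n-1)}$ with $c'$ independent of $p$. What your version buys is that you never have to differentiate a supremum (your mollification step replaces the paper's Dini-derivative device); what it costs is exactly the exponent bookkeeping you flagged.

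One reassurance on the obstacle you anticipate: you should not try to force the Young splitting to reproduce the displayed profile $\|u_0\|_{L^\infty}/(1+t\|u_0\|_{L^\infty}^{n})^{1/n}$ exactly. Both your computation and the paper's own ODE comparison naturally produce the profile $\bigl(\|u_0\|_{L^\infty}^{-(n-1)}+c t\bigr)^{-1/(n-1)}$, i.e.\ with exponent $n-1$ in place of $n$. The precise exponent is immaterial: the only consequence used in the sequel is the uniform-in-$u_0$ bound \eref{e:bounddV} for $t\ge t_0$, and either profile (being bounded by $(ct_0)^{-1/(n-1)} \le C\,\Sup_{t,\infty}(W_\Delta)$ since $\Sup_{t,\infty}\ge 1$) delivers it.
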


\begin{proof}
  The proof is straightforward and detailed calculations for a variant
  of it can be found for example in \cite{SPDEnotes}. Setting $v = u -
  W_\Delta(t)$ where $W_\Delta$ is the ``stochastic convolution''
  solving the linearised equation \eref{e:RD} with $f\equiv 0$, and
  defining $V(v) = \|v\|_{L^\infty}^2$, we obtain from \eref{e:boundf}
  the almost sure bound
\begin{equ}
{d \over dt} V(v(t)) \le C\Sup_{t,\infty}^{n+1}(W_\Delta) - c V^{(n+1)/2}(v(t))\;.
\end{equ}
In particular, there exist possibly different constants such that
\begin{equ}
{d \over dt} V(v(t)) \le  - c V^{(n+1)/2}(v(t))
\end{equ}
for all $v$ such that $V(v(t)) \ge
C\Sup_{t,\infty}^2(W_\Delta))$. Since we assumed that $n \ge 3$, a
simple comparison theorem for ODEs then implies that
\begin{equ}
  V(v(t)) \le C {\|u_0\|^2 \over (1 + t\|u_0\|^{2/\alpha})^\alpha}
  \wedge\Sup_{t,\infty}^2(W_\Delta)\;,
\end{equ}
where we set $\alpha = 2/n$. The requested bound then follows at once. The second bound is an immediate consequence 
of the first one.
\end{proof}

\begin{remark}
  The function $t \mapsto V(v(t))$ is of course not differentiable in
  time in general. The left hand side in \eref{e:bounddV} should
  therefore be interpreted as the right upper Dini derivative
  $\limsup_{h\to 0^+} h^{-1} \bigl(V(v(t+h)) - V(v(t))\bigr)$.
\end{remark}

In order to fit the framework developed in this article, we cannot
take $L^2$ as our base space, since the nonlinearity will not in
general map $L^2$ into any Sobolev space of negative order. However,
provided that $k > m/2$, the Sobolev spaces $H^k$ form an algebra, so
that the nonlinearity $u \mapsto N(u) \eqdef f\circ u$ is continuous
from $H^k$ to $H^k$ in this case. It is therefore natural to choose
$\CH = H^k$ for some $k > m/2$.  In this case, for $\alpha > 0$, the
interpolation spaces $\CH_\alpha$ coincide with the Sobolev spaces
$H^{k + 2\alpha}$, so that one has $N \in
\Poly(\CH_\alpha,\CH_\alpha)$ for every $\alpha > 0$. This shows that
Assumption~\ref{ass:basic} is satisfied with $a = 0$, $\gamma_\star =
\infty$ and $\beta_\star = \infty$. It turns out that it is relatively
easy to obtain bounds in the Sobolev space $H^2$. From now on, we do
therefore assume that the following holds:

\begin{assumption}\label{ass:dim3}
The space dimension $m$ is smaller or equal to $3$.
\end{assumption}

This will allow us to work in $\CH = H^2$. Before we state the main theorem of this section,
we obtain a number of \textit{a priori} bounds that will allow us to
verify that the assumptions from the previous parts of this article do
indeed apply to the problem at hand.

By using a bootstrapping argument similar to
Proposition~\ref{prop:bootstrapping}, we can obtain the following
\textit{a priori} estimate:
\begin{proposition}\label{prop:aprioriRD} Assume that Assumptions~\ref{ass:Lyap} and \ref{ass:dim3} hold. If $u$ is the solution to
  \eref{e:RD} with initial condition $u_0 \in H^2$ then there exists a
  constant $C$ such that the bounds
\begin{equs}
  \|u(t)\|_{H^2} &\le C \Sup_{t,\infty}^{2n}(u) 
  \bigl(\|u_0\|_{H^2} +\Sup_{t,2}(W_\Delta)\bigr)\;,\\
  \|u(t)\|_{H^2} &\leq C \Sup_{t,\infty}^{2n}(u) 
  \bigl( \frac1t\|u_0\|_{L^2} +\Sup_{t,2}(W_\Delta)\bigr)\;,
\end{equs}
hold for all $t \leq 1$ almost surely.
\end{proposition}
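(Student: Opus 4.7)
The plan is to prove both bounds by a two-step bootstrapping argument on the Sobolev regularity, running along the same lines as Proposition~\ref{prop:bootstrapping} but now directly in the concrete Sobolev scale $H^s(D)$ and using the uniform $L^\infty$-control already provided by Proposition~\ref{prop:boundRD}. Fix a small parameter $\eps\in(0,\tfrac12)$ so that $H^{2-\eps}(D)$ is an algebra (this uses $m\le3$). Writing the equation in mild form
\begin{equ}
u(t)=e^{t\Delta}u_0+\int_0^t e^{(t-s)\Delta}f(u(s))\,ds+W_\Delta(t)\;,
\end{equ}
I would control the two non-trivial contributions separately, using the analytic semigroup bound $\|e^{r\Delta}\|_{H^s\to H^{s+2\alpha}}\le C r^{-\alpha}$.

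\textbf{Step 1 (from $L^\infty$ to $H^{2-\eps}$).} Since $f$ is a polynomial of degree $n$, $\|f(u(s))\|_{L^2}\le C(1+\|u(s)\|_{L^\infty}^n)\le C\,\Sup_{t,\infty}^n(u)$. Combining this with $\|e^{r\Delta}\|_{L^2\to H^{2-\eps}}\le C r^{-(1-\eps/2)}$ (integrable since $\eps>0$) yields
\begin{equ}
\|u(t)\|_{H^{2-\eps}}\le \|e^{t\Delta}u_0\|_{H^{2-\eps}}+C\,\Sup_{t,\infty}^n(u)+\Sup_{t,2}(W_\Delta)\;.
\end{equ}
When $u_0\in H^2$, the first term is bounded by $\|u_0\|_{H^2}$; when $u_0\in L^2$, it is bounded by $Ct^{-(1-\eps/2)}\|u_0\|_{L^2}$.

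\textbf{Step 2 (from $H^{2-\eps}\cap L^\infty$ to $H^2$).} Here I invoke the Moser-type estimate
\begin{equ}
\|f(u)\|_{H^{2-\eps}}\le C\bigl(1+\|u\|_{L^\infty}^{n-1}\bigr)\bigl(1+\|u\|_{H^{2-\eps}}\bigr)\;,
\end{equ}
which follows for polynomial $f$ from iterating the Kato--Ponce/algebra inequality $\|uv\|_{H^{2-\eps}}\le C(\|u\|_{L^\infty}\|v\|_{H^{2-\eps}}+\|u\|_{H^{2-\eps}}\|v\|_{L^\infty})$, valid in our range of parameters. Combining this with $\|e^{r\Delta}\|_{H^{2-\eps}\to H^2}\le C r^{-\eps/2}$ (again integrable) gives
\begin{equ}
\|u(t)\|_{H^2}\le \|e^{t\Delta}u_0\|_{H^2}+C\,\Sup_{t,\infty}^{n-1}(u)\int_0^t (t-s)^{-\eps/2}\|u(s)\|_{H^{2-\eps}}\,ds+\|W_\Delta(t)\|_{H^2}\;.
\end{equ}

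\textbf{Step 3 (assembling the two bounds).} Inserting the estimate from Step~1 into the integral and using that, for $t\le 1$, the beta-type integral $\int_0^t(t-s)^{-\eps/2}s^{-(1-\eps/2)}\,ds=B(\eps/2,1-\eps/2)$ is a finite constant independent of $t$, the integral contribution is bounded by $C\,\Sup_{t,\infty}^{n-1}(u)\bigl(\sup_{s\le t}\|e^{s\Delta}u_0\|_{H^{2-\eps}}+\Sup_{t,\infty}^n(u)+\Sup_{t,2}(W_\Delta)\bigr)$. Multiplying the two $\Sup$-factors gives $\Sup_{t,\infty}^{2n-1}(u)\le\Sup_{t,\infty}^{2n}(u)$, which produces the quoted exponent. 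Finally, $\|e^{t\Delta}u_0\|_{H^2}\le\|u_0\|_{H^2}$ in the first case and $\|e^{t\Delta}u_0\|_{H^2}\le Ct^{-1}\|u_0\|_{L^2}$ in the second. All lower-order terms (including those arising from the beta-function integral with an $L^2$ initial condition) are absorbed using $t\le 1$, so that $t^{-1}$ indeed dominates. The main technical point to get right is the Moser inequality in Step~2: since $f$ is a \emph{polynomial} nonlinearity and we stay above the algebra threshold $m/2$, it reduces to a routine application of the fractional Leibniz rule, so I expect no serious obstacle, only careful bookkeeping of the exponents so the factor $\Sup_{t,\infty}^{2n}(u)$ comes out exactly as stated.
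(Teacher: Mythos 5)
Your argument is correct and follows essentially the same route as the paper's proof: a two-step Duhamel bootstrap, a Moser-type product estimate on the polynomial nonlinearity controlled by $\Sup_{t,\infty}(u)$, and a Beta-function integral to handle the singular $t^{-1}\|u_0\|_{L^2}$ case. The only (immaterial) difference is that the paper passes through the intermediate space $H^1$ with the integer Leibniz rule, whereas you pass through $H^{2-\eps}$ and hence invoke the fractional Kato--Ponce inequality; the exponent bookkeeping comes out the same either way.
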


\begin{proof}
From Duhamel's formula, we obtain the bound
\begin{equs}
\|u(t)\|_{H^1} &\le \|u_0\|_{H^1} + \int_0^t {C\over \sqrt{t-s}} \|f \circ u(s)\|_{L^2}\,ds + \Sup_{t,1}(W_\Delta) \\
&\le \|u_0\|_{H^2} +  C \sqrt t \Sup_{t,\infty}^n(u)+ \Sup_{t,1}(W_\Delta)\;.
\end{equs}
At this stage, we use that since $f$ is a polynomial of degree $n$, there exists a constant $C$ such that
\begin{equ}[e:algebra]
\|f\circ u\|_{H^1} \le C \bigl(1+\|u\|_{L^\infty}^n + \|u\|_{L^\infty}^{n-1} \|u\|_{H^1}\bigr)\;.
\end{equ}
Using Duhamel's formula again, this yields
\begin{equs}
  \|u(t)\|_{H^2} &\le \|u_0\|_{H^2} + \int_0^t {C\over \sqrt{t-s}} \|f \circ u(s)\|_{H^1}\,ds+\Sup_{t,2}(W_\Delta) \\
  &\le \|u_0\|_{H^2} +   \int_0^t {C\over \sqrt{t-s}} \bigl(1+\|u(s)\|_{L^\infty}^n + \|u(s)\|_{L^\infty}^{n-1} \|u(s)\|_{H^1}\bigr)\,ds \\
  &\qquad +\Sup_{t,2}(W_\Delta)\\
  &\le \|u_0\|_{H^2} + \int_0^t {C\over \sqrt{t-s}} \Bigl(\Sup_{t,\infty}^n(u) +
  \Sup_{t,\infty}^{n-1}(u) \bigl(\|u_0\|_{H^2} \\
  &\qquad \qquad \qquad \qquad \quad + \sqrt s \Sup_{t,\infty}^n(u)+\Sup_{t,1}(W_\Delta)\bigr)\Bigr)\,ds +\Sup_{t,2}(W_\Delta) \;.
\end{equs}
Integrating the last term yields the first bound. The second bound can be obtained in exactly the same way, using the
smoothing properties of the semigroup generated by the Laplacian.
\end{proof}

As a consequence, we obtain the following bound on the exponential moments in $H^2$
of the solution starting from any initial condition:

\begin{proposition}\label{prop:RDUniformV}
For every $T>0$, there exists a  constant $C > 0$ such that
\begin{equ}
\E \exp\bigl(\|u(T)\|_{H^2}^{1/n}\bigr) \le C\;,
\end{equ}
for every initial condition $u_0 \in H^2$.
\end{proposition}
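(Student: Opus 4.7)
The strategy is to exploit the strong dissipativity embodied in Proposition~\ref{prop:boundRD} to obtain an $L^\infty$ bound that is independent of the initial condition after any positive time, and then to bootstrap this $L^\infty$ control up to an $H^2$ bound via the smoothing properties of the heat semigroup. Applying Proposition~\ref{prop:boundRD} with $t_0 = T/4$, there is a constant $C$ such that
\[
\sup_{t\in[T/4,T]} \|u(t)\|_{L^\infty} \;\le\; CX,\qquad X \eqdef 1 + \Sup_{T,\infty}(W_\Delta) + \Sup_{T,2}(W_\Delta),
\]
almost surely and uniformly over $u_0 \in H^2$. Since $W_\Delta$ is a continuous Gaussian process with values in $H^2$, Fernique's theorem guarantees $\E \exp(cX^2) < \infty$ for some $c > 0$.

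Next I bootstrap this $L^\infty$ bound up to $H^2$ by writing Duhamel's formula restarted at $T/4$, together with the smoothing estimates $\|e^{\Delta h}\|_{H^\alpha \to H^{\alpha+\beta}} \le Ch^{-\beta/2}$, the bounded-domain embedding $\|v\|_{L^2} \le |D|^{1/2}\|v\|_{L^\infty}$, and the chain-rule estimate $\|f(v)\|_{H^1} \le C(1 + \|v\|_{L^\infty}^n + \|v\|_{L^\infty}^{n-1}\|v\|_{H^1})$ coming from $\nabla f(v) = f'(v)\nabla v$. A first bootstrap on the interval $[T/4, T/2]$ yields $\|u(T/2)\|_{H^1} \le CX^n$. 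A second bootstrap on $[T/2, T]$, using the new $H^1$ initial datum at $T/2$ together with the uniform $L^\infty$ bound $CX$ throughout $[T/2,T]$, gives
\[
\|u(T)\|_{H^2} \;\le\; CX^{2n-1}.
\]

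The main obstacle is that a single direct application of Proposition~\ref{prop:aprioriRD} would yield only the weaker bound $\|u(T)\|_{H^2} \le CX^{2n+1}$; since $(2n+1)/n = 2 + 1/n > 2$, this is insufficient to produce the desired exponential moment once one raises to the power $1/n$. It is therefore essential to carry out the two-step bootstrap above in order to save a factor of $X^2$ and bring the exponent down to $2n-1$, which satisfies $(2n-1)/n = 2 - 1/n < 2$. The delicate bookkeeping here is what makes $L^\infty$ control at the intermediate time $T/4$ (as opposed to just $L^2$ control) decisive.

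Once $\|u(T)\|_{H^2} \le CX^{2n-1}$ is in hand, raising to the $1/n$-th power gives $\|u(T)\|_{H^2}^{1/n} \le CX^{2-1/n}$. Since $X^{-1/n} \to 0$ as $X \to \infty$, for any $c' > 0$ one has $CX^{2-1/n} \le c'X^2 + K_{c'}$ for some constant $K_{c'}$ independent of $X$. Choosing $c'$ sufficiently small so that $\E\exp(c'X^2) < \infty$, we conclude $\E\exp(\|u(T)\|_{H^2}^{1/n}) \le e^{K_{c'}}\E\exp(c'X^2) < \infty$, uniformly in $u_0 \in H^2$.
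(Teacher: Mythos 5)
Your proof is correct and follows the same overall strategy as the paper's: use the pathwise dissipativity of Proposition~\ref{prop:boundRD} to obtain, after a positive time, an $L^\infty$ bound depending only on the noise, bootstrap this up to $H^2$ via Duhamel's formula and the smoothing of the heat semigroup, and conclude with Fernique's theorem. The difference is that the paper condenses the bootstrap into a single invocation of the second estimate of Proposition~\ref{prop:aprioriRD} restarted at time $T/2$, while you carry out the $L^\infty \to H^1 \to H^2$ bootstrap explicitly and track the powers of your noise quantity $X$. Your extra care is in fact needed rather than gratuitous: taken at face value, the second bound of Proposition~\ref{prop:aprioriRD} carries the prefactor $\Sup_{t,\infty}^{2n}(u)$, so the one-step argument yields $\|u(T)\|_{H^2}\le C X^{2n+1}$ and hence $\|u(T)\|_{H^2}^{1/n}\le C X^{2+1/n}$, which is \emph{not} exponentially integrable for a Gaussian-tailed $X$ (Fernique only provides $\E \exp(cX^2)<\infty$ for some small $c$); the displayed inequality in the paper's proof silently replaces the exponent $2n$ by $n$. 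Your two-step computation shows that the exponent one can actually achieve is $2n-1$, the dominant contribution being $\|u\|_{L^\infty}^{n-1}\|u\|_{H^1}\le C X^{n-1}\cdot X^{n}$ in the $H^1$ estimate on $f\circ u$, and $(2n-1)/n = 2-1/n<2$ is exactly what is required to absorb the resulting quantity into $c'X^2+K_{c'}$ with $c'$ arbitrarily small. In short, your argument reaches the same conclusion by the same route, but it repairs the exponent bookkeeping that the paper's one-line proof glosses over.
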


\begin{proof}
Without loss of generality, we set $T=1$.
Combining Proposition~\ref{prop:aprioriRD} and the Markov property, we see that there exists a constant $C>0$ such that
\begin{equs}
\|u(1)\|_{H^2}&\le C\Big(\sup_{\hf \le s \le 1} \|u(s)\|_{L^\infty}^n\Big) \Big(\|u(\hf)\|_{L^2} + \Sup_{1,2}(W_\Delta)\Big)\\
&\le C\Big(\sup_{\hf \le s \le 1} \|u(s)\|_{L^\infty}^n\Big)\Sup_{1,2}(W_\Delta)\;.
\end{equs}
The requested bound then follows from \eqref{e:bounddV} and the fact
that $\Sup_{1,2}(W_\Delta)$ has Gaussian tails by Fernique's theorem.
\end{proof}

We now turn to bounds on the Jacobian $J$ for \eref{e:RD}. Recall from
\eref{e:Jacobian} that, given any ``tangent vector'' $\xi$, the
Jacobian $J_{s,t}\xi$ satisfies the random PDE
\begin{equ} {d\over dt} J_{s,t}\xi = \Delta J_{s,t}\xi + (Df \circ
  u)(t) J_{s,t}\xi\;,
\end{equ}
where $Df$ denotes the derivative of the map $f$.  Our main tool is
the fact that, from Assumption~\ref{ass:Lyap}, we obtain the existence
of a constant $C>0$ such that
\begin{equ}
\scal{Df(u)v,v} \le C|v|^2\;,
\end{equ}
for every $u,v \in \R^\ell$.
In particular, we obtain the \textit{a priori} $L^2$ estimate:
\begin{equ}[e:jacdetRD] {d\over dt} \|J_{s,t}\xi\|_{L^2}^2 = -
  2\|\nabla J_{s,t}\xi\|_{L^2}^2 + 2\scal{J_{s,t}\xi, (Df \circ u)(t)
    J_{s,t}\xi} \le 2C\|J_{s,t}\xi\|_{L^2}^2\;,
\end{equ}
so that $\|J_{s,t}\xi\|_{L^2} \le e^{C(t-s)}\|\xi\|_{L^2}$ almost
surely. We now us similar reasoning to obtain a sequence of similar
estimates in smoother spaces.
\begin{proposition}\label{prop:RDJ}
For any $u_0 \in H^2$, the Jacobian satisfies the operator bounds 
  \begin{align*}
    \|J_{s,t}\|_{L^2 \rightarrow L^2} &\le C\\
    \|J_{s,t}\|_{L^2\rightarrow H^1}  &\leq  C \Bigl({1\over \sqrt{t-s}} + \Sup_{t,\infty}^{n}(u)\Bigr)\,,\\
    \|J_{s,t}\|_{H^1\rightarrow H^1}  &\leq  C\Sup_{t,\infty}^{n}(u)\,,\\
    \|J_{s,t}\|_{H^2 \rightarrow H^2} & \leq  C \Sup_{t,\infty}^{4n}(u) 
    \bigl(\|u_0\|_{H^2} +\Sup_{t,2}(W_\Delta)\bigr)\\
    \|J_{s,t}\|_{H^1 \rightarrow H^2} & \leq  C \Sup_{t,\infty}^{4n}(u) 
    \bigl(\|u_0\|_{H^2} +\Sup_{t,2}(W_\Delta)\bigr) +\frac{C}{\sqrt{t-s}}
  \end{align*}
  for $0\leq s < t \leq 1$ with $\Sup_{t,\infty}$ defined just before
  Proposition \ref{prop:aprioriRD}.
\end{proposition}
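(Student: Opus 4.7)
The $L^2\to L^2$ bound is already contained in \eref{e:jacdetRD}, which upon integration gives $\|J_{s,t}\xi\|_{L^2}\le e^{C(t-s)}\|\xi\|_{L^2}$, bounded by a constant for $t-s\le 1$. For all the remaining bounds I plan to iterate Duhamel's formula
\begin{equ}[e:duhamelRD]
J_{s,t}\xi = e^{(t-s)\Delta}\xi + \int_s^t e^{(t-r)\Delta}(Df\circ u)(r)\,J_{s,r}\xi\,dr\;,
\end{equ}
combined with the analytic semigroup smoothing estimates $\|e^{\tau\Delta}\|_{H^a\to H^b}\le C\tau^{-(b-a)/2}$ for $0\le a\le b$, and the pointwise bound $|Df(y)|\le C(1+|y|^{n-1})$, which yields $\|Df\circ u(r)\|_{L^\infty}\le C\Sup_{t,\infty}^{n-1}(u)$.

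The $L^2\to H^1$ and $H^1\to H^1$ bounds follow directly from this setup. In both cases the nonlinear term inside the Duhamel integral is controlled in $L^2$ via $\|(Df\circ u)(r)J_{s,r}\xi\|_{L^2}\le C\Sup_{t,\infty}^{n-1}(u)\|J_{s,r}\xi\|_{L^2}$, together with the $L^2\to L^2$ bound already established, and the singularity $(t-r)^{-1/2}$ produced by $\|e^{(t-r)\Delta}\|_{L^2\to H^1}$ is integrable on $[s,t]$. For the first term in \eref{e:duhamelRD} one uses $\|e^{(t-s)\Delta}\xi\|_{H^1}\le C(t-s)^{-1/2}\|\xi\|_{L^2}$ in the first case and $\|e^{(t-s)\Delta}\|_{H^1\to H^1}\le 1$ in the second.

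The main technical obstacle lies in the $H^2$-valued bounds, which require an estimate on $\|(Df\circ u)J\|_{H^1}$. Here I would use that, by Assumption~\ref{ass:dim3}, $m\le 3$, so $H^2$ is an algebra and one has the Sobolev multiplier estimate $\|fg\|_{H^1}\le C\|f\|_{H^2}\|g\|_{H^1}$ (obtained by combining the embeddings $H^2\hookrightarrow L^\infty$, $H^2\hookrightarrow W^{1,3}$ and $H^1\hookrightarrow L^6$ in the product rule for $\nabla(fg)$). A standard Moser/chain-rule estimate for the polynomial $Df$ of degree $n-1$ then gives $\|Df\circ u\|_{H^2}\le C\Sup_{t,\infty}^{n-2}(u)\|u\|_{H^2}$, and inserting the $H^2$ bound on $u$ from Proposition~\ref{prop:aprioriRD} yields
\begin{equ}
\|(Df\circ u)(r)\,J_{s,r}\xi\|_{H^1} \le C\Sup_{t,\infty}^{3n-2}(u)\bigl(\|u_0\|_{H^2}+\Sup_{t,2}(W_\Delta)\bigr)\|J_{s,r}\xi\|_{H^1}\;.
\end{equ}
Feeding this into \eref{e:duhamelRD} with the smoothing $\|e^{(t-r)\Delta}\|_{H^1\to H^2}\le C(t-r)^{-1/2}$ and the $H^1\to H^1$ bound $\|J_{s,r}\xi\|_{H^1}\le C\Sup_{t,\infty}^n(u)\|\xi\|_{H^1}$ just obtained, the total power of $\Sup_{t,\infty}(u)$ is $3n-2+n=4n-2\le 4n$ and the claimed $H^2\to H^2$ bound follows (after integrating $(t-r)^{-1/2}$). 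The $H^1\to H^2$ bound is obtained identically, the sole modification being that the first term of \eref{e:duhamelRD} is bounded as $\|e^{(t-s)\Delta}\xi\|_{H^2}\le C(t-s)^{-1/2}\|\xi\|_{H^1}$, which produces the extra $(t-s)^{-1/2}$ contribution on the right-hand side.
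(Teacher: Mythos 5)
Your proposal is correct and follows essentially the same route as the paper: the $L^2\to L^2$ bound from the differential inequality \eref{e:jacdetRD}, Duhamel's formula with heat-semigroup smoothing for the $H^1$ bounds, and the Moser-type estimate $\|Df(u)v\|_{H^1}\le C\|u\|_{L^\infty}^{n-2}\|u\|_{H^2}\|v\|_{H^1}$ (the paper obtains it by expanding $\nabla(Df(u)v)$ and using $H^1\hookrightarrow L^4$, which is the same Sobolev bookkeeping as your multiplier estimate) combined with Proposition~\ref{prop:aprioriRD} for the $H^2$ bounds.
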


\begin{proof} The first estimate is just a rewriting of the
  calculation before the Proposition.  As in the proof of the \textit{
    a priori} bounds for the solution, we are going to use a
  bootstrapping argument, starting from the bound
  \eref{e:jacdetRD}. Applying Duhamel's formula and using the notation
  $\Sup_{t,\infty}$ as before, we obtain
\begin{equs}
  \|J_{s,t}\xi\|_{H^1} &\le \|\xi\|_{H^1} + \int_s^t {C\over
    \sqrt{t-r}} \|(Df
  \circ u)(r) J_{s,r}\xi\|_{L^2}\,dr \\
  &\le \|\xi\|_{H^1} \Bigl(1 + \Sup_{t,\infty}^{n-1}(u) \int_s^t {C\over \sqrt{t-r}} e^{C(r-s)}\,dr \Bigr) \\
  &\le \|\xi\|_{H^1} \Bigl(1 + C\Sup_{t,\infty}^{n-1}(u) e^{C|t-s|}\sqrt{t-s}\Bigr)\\
  &\le \|\xi\|_{H^1} C\Sup_{t,\infty}^{n-1}(u) e^{C|t-s|}\;.
\end{equs}
(And similarly for the second bound.)
Regarding the $H^2$ norm of the Jacobian, we use the fact that there
is a constant $C$ such that the bound
\begin{equs}
  \|Df(u)v\|_{H^1} &\le C \bigl(\|u\|_{L^\infty}^{n-1} \|v\|_{H^1} +
  \|u\|_{L^\infty}^{n-2} \|\nabla u\|_{L^4} \|v\|_{L^4}\bigr) \\
  &\le C
  \|u\|_{L^\infty}^{n-2} \|u\|_{H^2} \|v\|_{H^1}
\end{equs}
holds. Hence we get similarly to before
\begin{equs}
  \|J_{s,t}\xi\|_{H^2} &\le \|\xi\|_{H^2} + \int_s^t {C\over
    \sqrt{t-r}} \|(Df \circ u)(r) J_{s,r}\xi\|_{H^1}\,dr \\
  &\le \|\xi\|_{H^2} + \int_s^t {C\over \sqrt{t-r}}
  \|u(r)\|_{L^\infty}^{n-1}
  \|u(r)\|_{H^2} \|J_{s,r}\xi\|_{H^1}\,dr \\
  &\le C \Sup_{t,\infty}^{4n}(u) 
  \bigl(\|u_0\|_{H^2} +\Sup_{t,2}(W_\Delta)\bigr) \|\xi\|_{H^2} \;,
\end{equs}
which is the requested bound. To obtain the last bound, one proceeds
identically except that one used $\|e^{L(t-s)} \xi\|_{H^2} \leq
C\|\xi\|_{H^1}/\sqrt{t-s}$.
\end{proof}

We now turn to the second variation. 
\begin{proposition}\label{prop:RDJ2}
  For any $u_0 \in H^2$, the second variation $J^{(2)}$ of the
  solution to \eqref{e:RD} satisfies
  \begin{align*}
    \|J_{s,t}^{(2)}\|_{H^2\otimes H^2 \rightarrow H^2}  \leq  C \Sup_{t,\infty}^{13n}(u) 
    \bigl(\|u_0\|_{H^2} +\Sup_{t,2}(W_\Delta)\bigr)^4
  \end{align*}
  for $0\leq s < t \leq 1$ with $\Sup_{t,\infty}$ defined just before
  Proposition \ref{prop:aprioriRD}.
\end{proposition}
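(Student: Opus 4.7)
The plan is to proceed via the variation of constants formula for higher variations given in \eref{e:exprJn}; specialising it to the case $n=2$ yields
\[
J^{(2)}_{s,t}(\phi,\psi) = \int_s^t J_{r,t}\,D^2 f(u(r))\bigl(J_{s,r}\phi,\,J_{s,r}\psi\bigr)\,dr\;,
\]
where I have used that $N(u) = f\circ u$ is a pointwise polynomial, so that $D^2 N(u)$ is simply the pointwise second derivative $D^2 f\circ u$ acting as a bilinear form. The problem therefore reduces to estimating this source term in $H^2$ and invoking the $H^2 \to H^2$ bound on $J_{r,t}$ already established in Proposition~\ref{prop:RDJ}.

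Since $f$ is a polynomial of degree $n$, the operator $D^2 f(u)$ is a pointwise polynomial in $u$ of degree $n-2$. Under Assumption~\ref{ass:dim3} one has $m \le 3$, so $H^2 \hookrightarrow L^\infty$ and $H^2$ is a Banach algebra. A pointwise expansion of $\Delta\bigl(p(u)vw\bigr)$ for $p$ a polynomial of degree $n-2$, combined with H\"older's inequality, the Sobolev embedding $H^1 \hookrightarrow L^4$, and the Gagliardo--Nirenberg interpolation $\|\nabla u\|_{L^4} \le C\|u\|_{L^\infty}^{1/2}\|u\|_{H^2}^{1/2}$ (valid for $m \le 3$), I would establish an estimate of the schematic form
\[
\|D^2 f(u)(v,w)\|_{H^2} \le C\bigl(1+\|u\|_{L^\infty}\bigr)^{n-3}\bigl(1+\|u\|_{H^2}\bigr)\|v\|_{H^2}\|w\|_{H^2}\;,
\]
the point being that the high polynomial degree $n-2$ of $D^2 f$ in $u$ contributes only cheap $L^\infty$-factors, and at most one expensive factor of $\|u\|_{H^2}$ arises from the two spatial derivatives brought down by $\Delta$ (the Gagliardo--Nirenberg interpolation being what converts the naive $\|u\|_{H^2}^2$ into a single such factor).

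Substituting this bound into the Duhamel formula and inserting $\|J_{r,t}\|_{H^2 \to H^2} \le C\Sup_{t,\infty}^{4n}(u) K$ and $\|J_{s,r}\phi\|_{H^2} \le C\Sup_{t,\infty}^{4n}(u) K\|\phi\|_{H^2}$ from Proposition~\ref{prop:RDJ}, together with $\|u(r)\|_{H^2} \le C\Sup_{t,\infty}^{2n}(u) K$ from Proposition~\ref{prop:aprioriRD} (with $K := \|u_0\|_{H^2} + \Sup_{t,2}(W_\Delta)$) and the trivial bound $\|u\|_{L^\infty} \le \Sup_{t,\infty}(u)$, each $H^2$-factor contributes exactly one power of $K$, while all the other factors produce only powers of $\Sup_{t,\infty}(u)$. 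The four $H^2$-factors ($\|J_{r,t}\|_{H^2 \to H^2}$, $\|u(r)\|_{H^2}$, $\|J_{s,r}\phi\|_{H^2}$ and $\|J_{s,r}\psi\|_{H^2}$) collectively supply the $K^4$ announced in the statement, while the integration over $r \in [s,t] \subset [0,1]$ contributes only a harmless multiplicative constant since the integrand is uniformly bounded on this interval.

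The main technical obstacle is the careful bookkeeping of exponents: in order to reduce the number of $K$-factors from the naive $n+1$ down to the announced $4$, it is essential to estimate the polynomial degree $n-2$ of $D^2 f$ in $u$ via the $L^\infty$-norm (controlled cheaply by $\Sup_{t,\infty}(u)$) rather than the $H^2$-norm, and to use Gagliardo--Nirenberg interpolation to trade pairs of first-order derivatives of $u$ for the product of an $L^\infty$- and a single $H^2$-norm. Once this is carried out, the remaining accumulation of $\Sup_{t,\infty}$-exponents coming from the source bound, the Jacobian bounds, and the uniform estimate on $u(r)$ is routinely absorbed into a power of $\Sup_{t,\infty}(u)$ of the order claimed in the statement.
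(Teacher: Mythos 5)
Your proposal is correct and follows essentially the same route as the paper: the Duhamel representation \eqref{eq:J2RD} for $J^{(2)}$, a pointwise expansion of $\nabla^2\bigl(D^2f(u)(\phi,\psi)\bigr)$ estimated via H\"older, $H^1\hookrightarrow L^4$ and $H^2\hookrightarrow L^\infty$, and then insertion of the bounds from Propositions~\ref{prop:RDJ} and \ref{prop:aprioriRD}. The one substantive difference is your Gagliardo--Nirenberg step $\|\nabla u\|_{L^4}^2\le C\|u\|_{L^\infty}\|u\|_{H^2}$, which turns the troublesome term $\|(\nabla u)^2\phi\psi\|_{L^2}$ into a bound that is \emph{linear} in $\|u\|_{H^2}$; the paper instead estimates this term by $\|u\|_{H^2}^2\|\phi\|_{L^\infty}\|\psi\|_{L^\infty}$ and its proof consequently ends with $\Sup_{t,\infty}^{17n}(u)\,\bigl(\|u_0\|_{H^2}+\Sup_{t,2}(W_\Delta)\bigr)^5$, which does not match the exponents $13n$ and $4$ in the statement. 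Your refinement is precisely what recovers the fourth power of $\|u_0\|_{H^2}+\Sup_{t,2}(W_\Delta)$ claimed in the proposition (your $\Sup_{t,\infty}$-exponent comes out as $15n-3$ rather than $13n$, but since $\Sup_{t,\infty}\ge 1$ and these exponents are only ever fed into Proposition~\ref{prop:RDass}, where any finite power is absorbed into $\exp(\eta V)$, this discrepancy -- like the paper's own -- is immaterial).
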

\begin{proof}
Again using  Duhamel's formula, we have
\begin{equation}
  \label{eq:J2RD}
  J^{(2)}_{s,t}(\phi,\psi) = \int_s^t J_{r,t} D^2F(u_r)(J_{s,r}\phi,J_{s,r}\psi)dr\;.
\end{equation}
To control the $H^2$ norm we will need the following estimate:
\begin{align*}
  \|\nabla^2 D^2F(u)&(\phi,\psi)\|_{L^2} \leq C(1+
  \|u\|^{n-2}_\infty)(\|(\nabla^2 u)\phi \psi\|_{L^2}+ \|(\nabla
  u)^2\phi \psi\|_{L^2}  \\
  &\quad + \|(\nabla^2\phi)
  \psi\|_{L^2} +\|\phi (\nabla^2 \psi)\|_{L^2}+\|(\nabla \phi) (\nabla\psi)\|_{L^2}\\
  &\quad + \|(\nabla u)(\nabla \psi) \phi\|_{L^2} + \|(\nabla u)(\nabla \phi) \psi\|_{L^2}) \\
  &\leq C(1+
  \|u\|^{n-2}_\infty)(\|u\|_{H^2}\|\phi\|_{L^\infty} \|\psi\|_{L^\infty}+ \|u\|_{H^2}^2\|\phi\|_{L^\infty} \|\psi\|_{L^\infty} \\
  &\quad + \|\phi\|_{H^2}
  \|\psi\|_{L^\infty}+\|\phi\|_{L^\infty} \|\psi\|_{H^2}+\|\phi\|_{H^2} \|\psi\|_{H^2} \\
  &\quad +\|u\|_{H^2}\|\phi\|_{L^\infty} \|\psi\|_{H^2}+ \|u\|_{H^2}\|\phi\|_{H^2} \|\psi\|_{L^\infty})\\
 &\leq C(1+
  \|u\|^{n-2}_\infty)(1+\|u\|_{H^2}^2)\|\phi\|_{H^2} \|\psi\|_{H^2}\;.
  \end{align*}
  In this estimate, we have used repeatedly the fact that
  $\|v\|_{L^4} \leq C \|v\|_{H^1}$ and $\|v\|_{L^\infty} \leq C
  \|v\|_{H^2}$.  Using this estimate in \eqref{eq:J2RD}, we
  obtain
  \begin{equs}
    \|J^{(2)}_{s,t}(\phi,\psi)\|_{H^2} &\leq C \Sup_{t,\infty}^{n-2}(u) \int_s^t
    (1+\|u_r\|_{H^2}^2) \|J_{r,t}\|_{H^2\to H^2} \|J_{s,r}\phi\|_{H^2}
    \|J_{s,r}\psi\|_{H^2} dr \\
  &\leq C  \Sup_{t,\infty}^{17n}(u)(\|u_0\|_{H^2} +\Sup_{t,2}(W_\Delta))^5\|\psi\|_{H^2}\|\phi\|_{H^2}\;,
  \end{equs}
which completes the proof.
\end{proof}

  We now set the stage to prove the analogue of Theorem
  \ref{thm:asfEstimate} for equation \eqref{e:RD}. We begin by
  collecting a number for relevant results implied by the preceding
  calculations. We will work in $H^2$ since this will be the base
  space for what follows.
  \begin{proposition}\label{prop:RDass} Define $V(u) = \|u\|_{H^2}^{1/n}$.
  Then, for every $p>0$ there exists a constant $C_p$ and, for every $\eta > 0$ and $p>0$
   there exists a constant $C_{\eta,p}$ so that the bounds
    \begin{align*}
      \E \sup_{1/2\leq t \leq 1}\|u(t)\|_{H^2}^p &\leq C_p \\
      \E \sup_{1/2 \leq s< t \leq 1} \|J_{s,t}\|_{H^2 \rightarrow H^2}^p &\leq C_p \\
      \sup_{0\leq s < t \leq 1}\E \|J_{s,t}\|_{H^2 \rightarrow H^2}^p&\leq \exp(\eta pV(u) + pC_{\eta,p})\\
      \sup_{0\leq s < t \leq 1}\E \|J_{s,t}^{(2)}\|_{H^2\otimes H^2 \rightarrow H^2}^p&\leq \exp(\eta pV(u) + pC_{\eta,p})
    \end{align*}
hold for all $u_0 \in H^2$.
  \end{proposition}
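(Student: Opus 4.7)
The four bounds will be obtained in the order they are stated, each one building on the previous, using the \textit{a priori} estimates already proved in Propositions~\ref{prop:boundRD}--\ref{prop:RDJ2} together with Sobolev embedding $\|v\|_{L^\infty} \le C\|v\|_{H^2}$, which holds since $m \le 3$. The central observation, used repeatedly, is that the first term in the bound of Proposition~\ref{prop:boundRD} for $\|u(t)\|_{L^\infty}$ is bounded by $C t^{-1/n}$ uniformly in $\|u_0\|_{L^\infty}$. Consequently, for any fixed $t_0 > 0$, the random variable $\sup_{t \in [t_0,1]} \|u(t)\|_{L^\infty}$ is bounded by $C(1 + \Sup_{1,\infty}(W_\Delta))$, a quantity independent of $u_0$ which has moments of all orders by Fernique's theorem applied to the Gaussian process $W_\Delta$.

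For the first bound, I apply the Markov property together with the second estimate of Proposition~\ref{prop:aprioriRD}. Setting $\tilde u(s) = u(s+1/4)$ and using the initial time $1/4$, for $t \in [1/2,1]$ I obtain
\begin{equ}
\|u(t)\|_{H^2} \le C\bigl(1 + \sup_{s \in [1/4,1]}\|u(s)\|_{L^\infty}\bigr)^{2n}\bigl(4\|u(1/4)\|_{L^2} + \Sup_{1,2}(W_\Delta)\bigr)\;.
\end{equ}
Both factors are bounded, by the observation above, by quantities with all moments that are independent of $u_0$, and H\"older's inequality gives bound~1. The second bound then follows by the same time-shift trick applied to the $H^2 \to H^2$ estimate in Proposition~\ref{prop:RDJ}: for $1/2 \le s < t \le 1$, the Jacobian $J_{s,t}$ is controlled by the same polynomial expression in $\sup_{r\in[s,t]}\|u(r)\|_{L^\infty}$ and $\|u(s)\|_{H^2} + \Sup_{1,2}(W_\Delta)$, and bound~1 together with the universal $L^\infty$ estimate provides all necessary moments.

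For bounds 3 and 4 we no longer have the luxury of staying away from $t = 0$, so we must absorb the genuine dependence on $\|u_0\|_{H^2}$. By Proposition~\ref{prop:boundRD} and Sobolev embedding, the quantity $\Sup_{1,\infty}(u)$ on $[0,1]$ is bounded by $C(1 + \|u_0\|_{H^2} + \Sup_{1,\infty}(W_\Delta))$. Inserting this into the bound of Proposition~\ref{prop:RDJ} gives
\begin{equ}
\|J_{s,t}\|_{H^2 \to H^2} \le C\bigl(1 + \|u_0\|_{H^2} + \Sup_{1,\infty}(W_\Delta)\bigr)^{4n}\bigl(\|u_0\|_{H^2} + \Sup_{1,2}(W_\Delta)\bigr)\;,
\end{equ}
and analogously for $J^{(2)}_{s,t}$ via Proposition~\ref{prop:RDJ2}, with the exponent $4n$ replaced by $13n$ and an extra factor of $(\|u_0\|_{H^2} + \Sup_{1,2}(W_\Delta))^3$. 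Raising to the $p$-th power, taking expectation, and using Fernique's theorem on the Gaussian factors reduces matters to showing that the remaining polynomial expression in $\|u_0\|_{H^2}$ is dominated by $\exp(\eta p \|u_0\|_{H^2}^{1/n} + pC_{\eta,p})$. Since $n \ge 1$, the elementary inequality $x^k \le C_{\eta,k} \exp(\eta x^{1/n})$ valid for all $x \ge 0$ does exactly this.

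The whole argument is really just careful bookkeeping; no new analytic input is required beyond what is already in Propositions~\ref{prop:boundRD}--\ref{prop:RDJ2}. The only slightly subtle point is the first one, namely the realisation that the $L^\infty$ norm of $u$ forgets its initial condition immediately (this is the dissipative strength of the nonlinearity encoded in Assumption~\ref{ass:Lyap}), which allows bounds~1 and 2 to be completely uniform in $u_0$. Everything else is mechanical application of the previously established \textit{a priori} estimates, Sobolev embedding, H\"older's inequality, and the elementary domination of polynomials by stretched exponentials.
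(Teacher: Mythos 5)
Your proposal is correct and follows essentially the same route as the paper: the first two bounds come from the uniform-in-$u_0$ control of $\|u(t)\|_{L^\infty}$ for $t$ bounded away from zero (equation \eref{e:bounddV}) combined with the Markov property and Propositions~\ref{prop:aprioriRD}, \ref{prop:RDJ} and Fernique's theorem, while the last two come from the polynomial-in-$\|u_0\|_{H^2}$ bounds of Propositions~\ref{prop:RDJ} and \ref{prop:RDJ2} dominated by the stretched exponential $\exp(\eta\|u_0\|_{H^2}^{1/n})$. The paper's proof is just a compressed version of yours, citing Proposition~\ref{prop:RDUniformV} for what you re-derive via the time-shift argument.
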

  \begin{proof}
  The first two bounds are a consequence of Propositions~\ref{prop:RDUniformV},
  \ref{prop:RDJ} and \ref{prop:boundRD}.
  
In order to get the second two bounds, note that 
\begin{equs}
 \sup_{0\leq s < t \leq 1}\E \|J_{s,t}^{(2)}\|_{H^2\otimes H^2 \rightarrow H^2}^p &\le C_p (1+\|u_0\|_{H^2})^{(17n+5)p} \\
& = \exp \Bigl((17n+5)p \log(1+\|u_0\|_{H^2}) + \log C_p\Bigr)\;,
\end{equs}
as a consequence of Propositions~\ref{prop:RDJ2}, \ref{prop:RDUniformV} and \ref{prop:boundRD}.
A similar bound holds for $J$.
Since, for any positive $q,r,\eta, K$ there exists a
    $C_{q,r,\eta,K}$ so that $q\log(1+x) + \log(K) \leq \eta x^r +
    C_{q,r,\eta,K}$ for all $x \geq0$, the quoted bound holds.
  \end{proof}

  We assume from now on that the $g_k$ used in the definition of the forcing all
  belong to $H^4$. We now construct a particular subset of the $\AA_n$
  defined in Section~\ref{sec:Hormander} using on the highest degree
  nonlinear term. By doing so we obtain only constant vector fields,
  thus trivializing Assumption~\ref{ass:Hormander} in light of Lemma~\ref{lem:dense}. Setting
  $\AAt_1=\{g_1,\cdots,g_d\}$, we define recursively
  $\AAt_{k+1}=\AAt_k \cup \{ F_n(h_1,\cdots,h_n):h_j \in
  \AAt_k\}$ and $\AAt_\infty = \bigcup \AAt_k$. Notice
  that since $g_k \in H^4$, we know that all of the $\AAt_n \subset
  H^4$ since $H^4$  is a multiplicative algebra in our setting.

  \begin{proposition} If $\SPAN(\AAt_\infty)$ is dense in $H^2$ then
    given any $H^2$-orthogonal projection $\Pi$ onto a finite
    dimensional subspace, there exists $\theta>0$ such that Assumption~\ref{ass:Malliavin} holds with
    $U(u)=\Psi_0^\theta$.
  \end{proposition}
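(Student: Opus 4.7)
The proof consists of verifying both clauses of Assumption~\ref{ass:Malliavin}. The plan is to derive the probabilistic lower bound on $\MM_1$ from Theorem~\ref{theo:Malliavin} and the uniform Lyapunov bound from the strong \textit{a priori} estimate in Proposition~\ref{prop:RDUniformV}, then to shrink the exponent $\theta$ so that both bounds hold with the same $U = \Psi_0^\theta$.

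For the Malliavin bound, the prerequisites of Theorem~\ref{theo:Malliavin} must first be checked. Assumption~\ref{ass:basic} holds with $a = 0$ and $\gamma_\star = \beta_\star = \infty$, because for $m \le 3$ the spaces $\CH_\alpha = H^{2+2\alpha}$ are multiplicative algebras and hence the polynomial nonlinearity $N$ preserves each $\CH_\alpha$. Assumption~\ref{ass:global} is precisely the content of the first two displays of Proposition~\ref{prop:RDass}, with $\Psi_0(u) = \exp(\delta V(u))$ for $V(u) = \|u\|_{H^2}^{1/n}$ and $\delta > 0$ small. The heart of the argument is verifying Assumption~\ref{ass:Hormander}, and this is where the density hypothesis on $\AAt_\infty$ enters. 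Every element $Q \in \AAt_\infty$ lies in $H^4 \subset \CH_\infty$ and is therefore a constant vector field in $\Poly^0(\CH_\gamma, \CH_\beta) \subset \Poly(\gamma,\beta)$. Constancy forces $Q_\alpha = 0$ for $|\alpha| \ge 1$, while $[Q, F_\sigma] = DF_\sigma(\cdot)\, Q$ is a lower-degree polynomial whose $H^4$-valued multiplier keeps it in $\Poly(\gamma,\beta)$ by the algebra property; hence every element of $\AAt_\infty$ is admissible in the sense of Section~\ref{sec:Hormander}. Moreover, as explained in Section~\ref{subsec:SatHor}, each iterated application of $F_n$ to constants can be realised as an iterated Lie bracket of $F$ with constant vector fields, so $\AAt_\infty \subset \AA_\infty$; consequently $\CQ_N$ dominates $\sum_{Q \in \AAt_N} \scal{Q,\phi}^2$. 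Since $\SPAN(\AAt_\infty)$ is dense in $H^2$, Lemma~\ref{lem:dense} then supplies an integer $N$ and constants $c_\alpha > 0$ such that $\scal{\phi,\CQ_N(u)\phi} \ge c_\alpha \|\Pi\phi\|^2$ on $\CS_\alpha$. Because $\CQ_N$ is independent of $u$ here, one may take $\Lambda_\alpha = \sqrt{c_\alpha}$ constant and the moment condition in Assumption~\ref{ass:Hormander} is trivially satisfied. Theorem~\ref{theo:Malliavin} then produces $\theta_0 > 0$ such that
\begin{equ}
\P\Bigl(\inf_{\phi \in \CS_\alpha} \frac{\scal{\phi,\MM_1\phi}}{\|\phi\|^2} \le \eps\Bigr) \le C(\alpha,p)\,\Psi^{\theta_0 p}(u_0)\,\eps^p\;,
\end{equ}
and since $\Psi = \Psi_0^q$ by Proposition~\ref{prop:apriori}, this is precisely the first clause of Assumption~\ref{ass:Malliavin} with $U = \Psi_0^\theta$ for $\theta = q\theta_0$.

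For the Lyapunov clause, Proposition~\ref{prop:RDUniformV} supplies the very strong unconditional estimate $\E \exp(V(u_1)) \le C$, valid uniformly over $u_0 \in H^2$. Iterating this via the Markov property gives a constant $\tilde C$ with $\E\exp(V(u_n)) \le \tilde C$ for every $n \ge 1$. Shrinking $\theta$ if necessary so that $\bar q\delta\theta \le 1$, Jensen's inequality yields $\E\,U^{\bar q}(u_n) = \E\exp(\bar q\delta\theta V(u_n)) \le \tilde C$, and since $U \ge 1$ this is trivially dominated by $C_U^{\bar q} U^{\bar q}(u_0)$ for any $C_U \ge \tilde C^{1/\bar q}$.

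The only step that requires genuine attention is the verification of admissibility and the identification $\AAt_\infty \subset \AA_\infty$; everything else amounts to invoking already-established results.
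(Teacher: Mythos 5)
Your proof is correct and follows the same route as the paper: Proposition~\ref{prop:RDass} supplies the prerequisites of Theorem~\ref{theo:Malliavin}, the constancy (and admissibility) of the vector fields in $\AAt_\infty$ together with Lemma~\ref{lem:dense} gives Assumption~\ref{ass:Hormander} with constant $\Lambda_\alpha$, and Theorem~\ref{theo:Malliavin} then yields the required bound. You are in fact slightly more complete than the paper's one-line proof in that you also verify the moment clause $\E U^{\bar q}(u_n) \le C_U^{\bar q}U^{\bar q}(u_0)$ via Proposition~\ref{prop:RDUniformV}; the only (harmless) imprecision is that the free parameter to shrink there is $\delta$ in $\Psi_0=\exp(\delta V)$ rather than $\theta$ itself, since decreasing $\theta$ below the exponent produced by Theorem~\ref{theo:Malliavin} would strengthen, not weaken, the first clause and hence is not allowed.
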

  \begin{proof}
    Proposition~\ref{prop:RDass} guarantees that all of the
    assumptions of Theorem \ref{theo:Malliavin} hold except
    Assumption~\ref{ass:Hormander}. However since by construction all
    of the vector fields in $\AAt_n$ are constant
    Assumption~\ref{ass:Hormander} clearly holds with $\Lambda$ a
    constant if $\Pi$ is an orthogonal projection onto a subspace of
    $\SPAN(\AAt_n)$. Lemma~\ref{lem:dense} furthermore shows that it actually holds for
    any finite rank orthogonal projection.
  \end{proof}

  Let $\Pi_M$ be the projection on the eigenfunctions of the Laplacian
  with eigenvalues smaller than $M^2$. We will now restrict ourselves to
  such a projection since it allows for easy verification of the pathwise
  smoothing/contracting properties needed for
  Assumption~\ref{ass:smoothing}. We have indeed the following bound:
  
  \begin{proposition}\label{prop:smoothingRD}
  Given any positive $\eta$, $r$ and $p$, there exists a $C_{\eta,r,p}$ so that the bound
    \begin{align*}
      \E \|J_{0,1}\Pi_M^\perp\|_{H^2 \rightarrow H^2}^p \leq   \exp(p\eta \|u_0\|_{H^2}^r - p\log(M)+pC_{\eta,r,p})
    \end{align*}
   holds  for all  $u_0 \in H^2$,  and all $M \in\N$.
  \end{proposition}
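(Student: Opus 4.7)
The plan exploits the fact that $\Pi_M^\perp$ is the spectral projection onto eigenfunctions of $-\Delta$ with eigenvalue larger than $M^2$. Since the $H^2$ norm is equivalent to $\|(1-\Delta)\cdot\|_{L^2}$ and this is simultaneously diagonalised with $\Pi_M^\perp$, one has the deterministic inequality
\begin{equ}[e:spectralbound]
\|\Pi_M^\perp \xi\|_{L^2} \le C M^{-2}\|\xi\|_{H^2}\;,\qquad \xi \in H^2\;.
\end{equ}
The idea is then to split $J_{0,1}$ as $J_{0,1} = J_{3/4,1}\circ J_{1/2,3/4}\circ J_{0,1/2}$ and use the last factor, which acts in $L^2$ where \eref{e:spectralbound} is effective, to gain the factor $M^{-2}$, while the first two factors are used to regain two derivatives through the smoothing bounds of Proposition~\ref{prop:RDJ}.

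Concretely, for a unit vector $\xi \in H^2$ combining \eref{e:spectralbound} with the almost-sure bound $\|J_{0,1/2}\|_{L^2 \to L^2} \le C$ from Proposition~\ref{prop:RDJ} (which follows from the energy estimate \eref{e:jacdetRD}) yields $\|J_{0,1/2}\Pi_M^\perp\xi\|_{L^2} \le C/M^2$ almost surely. For the smoothing factors, Proposition~\ref{prop:RDJ} gives $\|J_{1/2,3/4}\|_{L^2\to H^1} \le C(1 + \Sup_{1,\infty}^n(u))$ and $\|J_{3/4,1}\|_{H^1\to H^2} \le C\Sup_{1,\infty}^{4n}(u)(\|u_0\|_{H^2} + \Sup_{1,2}(W_\Delta)) + C$. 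Composing yields
\begin{equ}[e:plancomposed]
\|J_{0,1}\Pi_M^\perp\|_{H^2\to H^2} \le \frac{C}{M^2}\,\Sup_{1,\infty}^{5n}(u)\bigl(1 + \|u_0\|_{H^2} + \Sup_{1,2}(W_\Delta)\bigr)\;,
\end{equ}
almost surely.

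The passage to moments proceeds as follows. Raising \eref{e:plancomposed} to the $p$-th power and taking expectations, Proposition~\ref{prop:boundRD} together with the Sobolev embedding $H^2 \hookrightarrow L^\infty$ (valid since $m\le 3$) shows that $\Sup_{1,\infty}(u) \le C(1 + \|u_0\|_{H^2} + \Sup_{1,\infty}(W_\Delta))$ pathwise, while Fernique's theorem ensures that all moments of $\Sup_{1,\infty}(W_\Delta)$ and $\Sup_{1,2}(W_\Delta)$ are finite and independent of $u_0$. This produces a polynomial bound
\begin{equ}
\E\|J_{0,1}\Pi_M^\perp\|^p \le \frac{C_p}{M^{2p}}\bigl(1 + \|u_0\|_{H^2}\bigr)^{Np}\;,
\end{equ}
for some $N$ depending only on $n$. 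Finally, $M^{-2p} \le e^{-p\log M}$ and the elementary inequality $N\log(1+x) \le \eta x^r + K_{\eta,r,N}$ (valid for all $x\ge 0$ whenever $\eta,r>0$) allow us to write $(1 + \|u_0\|_{H^2})^{Np} = \exp\bigl(Np\log(1+\|u_0\|_{H^2})\bigr) \le \exp(p\eta\|u_0\|_{H^2}^r + pK)$, which absorbed into $C_{\eta,r,p}$ yields the claimed exponential form.

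There is no serious obstacle here — the only point requiring care is bookkeeping the pathwise bounds versus moment bounds, in particular making sure that the uniform-in-$u_0$ bounds of Proposition~\ref{prop:boundRD} at times bounded away from $0$ are used only for $t \in [1/2,1]$, where they apply, so that all factors of $\|u_0\|_{H^2}$ ultimately come from the single pathwise factor $\|u_0\|_{H^2}$ in the $H^1\to H^2$ estimate of Proposition~\ref{prop:RDJ}.
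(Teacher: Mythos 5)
Your proof is correct and follows essentially the same strategy as the paper's: trade the spectral decay of $\Pi_M^\perp$ in a weaker norm against the parabolic smoothing of the Jacobian from Proposition~\ref{prop:RDJ}, then take moments via Proposition~\ref{prop:boundRD} and Fernique, and convert the resulting polynomial in $\|u_0\|_{H^2}$ into $\exp\bigl(\eta\|u_0\|_{H^2}^r\bigr)$ with the elementary logarithm inequality. The only difference is cosmetic: the paper does the first step in one shot, writing $\|J_{0,1}\Pi_M^\perp\|_{H^2\to H^2}\le M^{-1}\|J_{0,1}\|_{H^1\to H^2}$, whereas your three-fold splitting drops all the way to $L^2$ and yields the (unneeded but harmless) stronger factor $M^{-2}$.
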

  \begin{proof}
    First observe that
    \begin{align*}
      \|J_{0,1}\Pi_M^\perp\|_{H^2 \rightarrow H^2}&\leq  \|J_{0,1}\|_{H^1 \rightarrow H^2}
      \|\Pi_M^\perp\|_{H^2 \rightarrow H^1} \leq M^{-1}\|J_{0,1}\|_{H^1 \rightarrow H^2}  \\
      &\leq C M^{-1}\Sup_{1,\infty}^{4n}(\|u_0\|_{H^2}  + \Sup_{1,2}(W_\Delta))\\
      &\leq C M^{-1} \Sup_{1,2}(W_\Delta)^{4n+1}(\|u_0\|_{H^2}  + 1)^{4n+1}
    \end{align*}
    Raising both sides to the power $p$, taking expectations, and using the fact that the law of $\Sup_{1,2}(W_\Delta)$
    has Gaussian tails,  we obtain
\begin{equ}
      \E \|J_{0,1}\Pi_M^\perp\|_{H^2 \rightarrow H^2}^p  \leq \exp(p(4n+1)\log(1+\|u_0\|) - p\log(M)+pC_p)\;.
\end{equ}
The claim now follows from the fact that, for any $\eta >0$ and $r>0$, there exist a $C_{\eta,r}$ with
    $(4n+1)\log(1+x) \leq \eta x^r + C_{\eta,r}$ for all $x \geq 0$.
 \end{proof}

\begin{theorem}\label{theo:MalliavinRD}
Let $\CP_t$ be the Markov semigroup on $H^2$
    generated by \eqref{e:RD}. If the linear span of $\AAt_\infty$ is dense
    in $H^2$ then, for every orthogonal finite rank projection $\Pi \colon H^2 \to H^2$,
    for every $p>0$, and for every $\alpha > 0$, there exists a constant $C(\alpha,p,\Pi)$ 
    such that the bound \eref{e:Malliavin} on the Malliavin matrix 
    holds with $U=1$.
\end{theorem}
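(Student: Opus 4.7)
The strategy is to combine Theorem~\ref{theo:Malliavin} with the Markov property, exploiting the uniform-in-initial-condition bound from Proposition~\ref{prop:RDUniformV}. Since $\MM_t = \int_0^t J_{r,t}GG^*J_{r,t}^*\,dr$ is the integral of positive selfadjoint operators, one has the operator inequality $\MM_1 \ge \MM_{1/2,1}$, so it suffices to bound $\P(\inf_{\phi \in \CS_\alpha} \scal{\phi, \MM_{1/2,1}\phi}/\|\phi\|^2 \le \eps)$.

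I would next verify that Theorem~\ref{theo:Malliavin} applies to \eref{e:RD} with base space $\CH = H^2$. Assumption~\ref{ass:basic} was already established at the beginning of this subsection (with $a=0$ and $\gamma_\star = \beta_\star = \infty$, using that $H^k$ is a multiplicative algebra for $k > m/2$ with $m \le 3$, together with $g_j \in \CH_\infty$). For Assumption~\ref{ass:global}, I would take $\Psi_0(u) = C(1 + \|u\|_{H^2})^{m_0}$ with $m_0$ sufficiently large; the required bounds on $\sup_{T \le t \le 2T}\|u_t\|_{H^2}$ and $\sup_{T \le s < t \le 2T}\|J_{s,t}\|_{H^2 \to H^2}$ then follow from Propositions~\ref{prop:aprioriRD} and \ref{prop:RDJ} combined with the Gaussian tail of $\Sup_{t,2}(W_\Delta)$. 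For Assumption~\ref{ass:Hormander}, since every element of $\AAt_\infty$ is a constant vector field in $H^4$, density of $\SPAN(\AAt_\infty)$ in $H^2$ together with Lemma~\ref{lem:dense} furnishes some $N > 0$ and a constant $c_\alpha > 0$ so that $\scal{\phi, \CQ_N(u)\phi} \ge c_\alpha\|\Pi\phi\|^2$ for every $\phi \in \CS_\alpha$ and every $u \in \CH$; hence $\Lambda_\alpha \equiv \sqrt{c_\alpha}$ is constant and the moment condition on $\Lambda_\alpha^{-p}$ is trivial.

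By the Markov property, conditional on $\CF_{1/2}$ the process $\{u_{t+1/2}\}_{t \ge 0}$ solves \eref{e:RD} with initial condition $u_{1/2}$ driven by the shifted increments $\{W(t+1/2) - W(1/2)\}_{t \ge 0}$, and the random operator $\MM_{1/2,1}$ agrees almost surely with the Malliavin matrix at time $1/2$ of this shifted equation. Applying Theorem~\ref{theo:Malliavin} to the shifted equation with terminal time $t = 1/2$ produces $\theta > 0$ and $C > 0$ (depending on $\alpha$ and $p$) such that, almost surely,
\begin{equ}
  \P\Bigl(\inf_{\phi \in \CS_\alpha} \frac{\scal{\phi,\MM_{1/2,1}\phi}}{\|\phi\|^2} \le \eps \,\Big|\, \CF_{1/2}\Bigr)
  \le C\,\Psi^{\theta p}(u_{1/2})\,\eps^p\;,
\end{equ}
where $\Psi(u) \le C'(1 + \|u\|_{H^2})^{q}$ is polynomial in $\|u\|_{H^2}$, with $q$ depending on $m_0$ and $\theta$.

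Taking expectations and invoking Proposition~\ref{prop:RDUniformV}, which asserts that $\E\,\exp(\|u_{1/2}\|_{H^2}^{1/n}) \le C$ uniformly in $u_0 \in H^2$, I obtain $\E\,\Psi^{\theta p}(u_{1/2}) \le C_{\alpha,p}$ uniformly in $u_0$, since polynomial growth in $\|u\|_{H^2}$ is dominated by $\exp(\|u\|_{H^2}^{1/n})$ up to a multiplicative constant depending only on the exponents. Combined with the operator inequality from the first paragraph, this delivers the claim with $U \equiv 1$. No substantial obstacle arises here; the main point is simply the recognition that the polynomial dependence on the initial condition that is inherent to Theorem~\ref{theo:Malliavin} can be absorbed by the exponential moment of Proposition~\ref{prop:RDUniformV}, which is however available only after waiting a strictly positive amount of time---whence the need for the Markov-property shift to time $1/2$.
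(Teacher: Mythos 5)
Your proof is correct, but it takes a different (and longer) route than the paper's. The paper's proof is essentially a one-liner once one notices that Assumption~\ref{ass:global} is deliberately formulated so that the suprema over the solution and the Jacobian are taken only over $t \in [T,2T]$ with $T>0$, i.e.\ \emph{away from the initial time}: Proposition~\ref{prop:RDass} (itself a consequence of the strong dissipativity encoded in Proposition~\ref{prop:boundRD}) then shows that these moments are bounded by constants independent of $u_0$, so Assumption~\ref{ass:global} holds with $\Psi_0 = 1$, and Theorem~\ref{theo:Malliavin} immediately yields the bound with $U=1$ --- no conditioning or restarting is needed. (The paper also spends a moment verifying part~\ref{ass:DN} of Assumption~\ref{ass:basic}, namely that $DN^*(u)v = \Delta^{-2} f'(u)\Delta^2 v$ is bounded on the relevant spaces because $\CH = H^2$ rather than $L^2$; you cite this as already established, which is acceptable since the text preceding the theorem asserts it.) What you do instead is run Theorem~\ref{theo:Malliavin} with a polynomial $\Psi_0$, then eliminate the resulting polynomial dependence on the initial condition by the operator monotonicity $\MM_1 \ge \MM_{1/2,1}$, the Markov property at time $1/2$, and the uniform exponential moment of Proposition~\ref{prop:RDUniformV}. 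Every step of this is sound --- the same conditioning device is used by the paper itself in the proof of the density statement of Theorem~\ref{theo:general} --- and it has the virtue of working even when the a priori bounds at positive times are not uniform in $u_0$ but merely have uniform exponential moments. It is, however, redundant here: the uniformity you recover by restarting at time $1/2$ is exactly what Proposition~\ref{prop:RDass} already supplies as input to Assumption~\ref{ass:global}.
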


  \begin{proof}
The result follows from Theorem~\ref{theo:Malliavin}. One can check that Assumption~\ref{ass:basic}
    holds with $\CH=H^2$, $a=0$, $\gamma_\star = \beta_\star = \infty$ since $H^\ell$ is a multiplicative algebra for
    every $\ell \ge 2$ (this is true because we restricted ourselves to dimension $m \le 3$). Since the most involved
 part is the assumption on the adjoint, part \ref{ass:DN}, we give the details for that one. One can verify that
 the adjoint of $DN(u)$ in $\CH$ acts on elements $v$ in $\CH^\infty$ as
\begin{equ}
DN^*(u) v = \Delta^{-2} f'(u) \Delta^2 v\;.
\end{equ}
(This is because $\CH$ is the Sobolev space $H^2$ and not the space $L^2$.)
The claim then follows from the fact that the multiplication by a smooth enough function is a bounded
operator in every Sobolev space $H^{\ell}$ with $\ell \in \R$.
    
Since Assumption~\ref{ass:Hormander} (with $\Lambda_\alpha$ a constant depending on $\Pi$) can be verified by using Lemma~\ref{lem:dense}, it remains to verify Assumption~\ref{ass:global} with $\Psi_0 = 1$.
This in turn is an immediate consequence of Proposition~\ref{prop:RDass}.
\end{proof}

Combining all of these results, we finally obtain the following result on the asymptotic strong Feller
property of a general reaction-diffusion equation:

  \begin{theorem}\label{theo:ASFRD} Let $\CP_t$ be the Markov semigroup on $H^2$
    generated by \eqref{e:RD} and let Assumptions~\ref{ass:Lyap} and
    \ref{ass:dim3} hold. If the linear span of $\AAt_\infty$ is dense
    in $H^2$ then, for any $\zeta > 0$, there exists a positive
    constant $C$ so that for every $u \in H^2$, and $\phi:H^2
    \rightarrow \R$ on has
    \begin{equ}[e:DPtRD]
      \|\DF \CP_{t}\phi(u)\|_{L^2 \rightarrow \R} \leq C \bigl( \|
      \phi\|_{L^\infty} + e^{-\zeta t} \sup_{v\in H^2} \|\DF\phi(v)\|_{H^2\to \R}\bigr)\;.
    \end{equ}
In particular, $\CP_t$ has the asymptotic strong Feller property in
    $H^2$.
  \end{theorem}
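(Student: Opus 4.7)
The plan is to apply Theorem~\ref{thm:asfEstimate} with $\CH = H^2$ to obtain the estimate at even integer times with the $H^2$--cotangent norm on the left, and then use a one-step smoothing of the Jacobian to upgrade the cotangent norm from $H^2$ down to $L^2$.

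First, I would verify the standing assumptions of Theorem~\ref{thm:asfEstimate}. Assumption~\ref{ass:basic} was already checked in the proof of Theorem~\ref{theo:MalliavinRD}. For Assumption~\ref{ass:Lyapunov} I set $V(u) = c_0 \|u\|_{H^2}^{1/n}$; Proposition~\ref{prop:RDUniformV} yields $\E e^{V(u_1)} \le C$ uniformly in $u_0$, so \eqref{e:Lyap} holds with $\eta' = 0$ (and a finite $C_L$) provided $c_0$ is small enough. Assumption~\ref{ass:Jacobian} is then satisfied thanks to Proposition~\ref{prop:RDass}: one picks $\bar p$ and $\bar q$ large, then chooses $\eta$ so small that $\bar p\eta < 1$ and $2/\bar q + 10/\bar p \le 1$; the constants $C_J$ and $C_J^{(2)}$ are determined accordingly. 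Assumption~\ref{ass:Malliavin} follows from Theorem~\ref{theo:MalliavinRD} with $U \equiv 1$, so $\bar q$ can be chosen freely.

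The crucial step is the verification of Assumption~\ref{ass:smoothing}. Given the constants $C_J$, $C_L$, $\kappa = \eta/(1-\eta') = \eta$ obtained above, and given any target rate $\zeta > 0$, I take $\Pi = \Pi_M$ the projection onto eigenmodes of $-\Delta$ with eigenvalue $\le M^2$. Proposition~\ref{prop:smoothingRD} gives, for any $r,\eta_1 > 0$,
\begin{equ}
\E\|J_{0,1}\Pi_M^\perp\|^{\bar p} \le \exp\bigl(\bar p\eta_1 \|u_0\|_{H^2}^r - \bar p\log M + \bar p C\bigr)\,.
\end{equ}
Choosing $r = 1/n$ and $\eta_1 \le \eta/c_0$, this is of the form $\exp(\bar p\eta V(u_0) - \bar p C_\Pi)$ with $C_\Pi = \log M - C$, which can be made larger than $C_J + 2\kappa C_L + 2\zeta$ by taking $M$ sufficiently large. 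Theorem~\ref{thm:asfEstimate} then yields
\begin{equ}[e:H2bound]
\|\DF(\CP_{2n}\phi)(u)\|_{H^2 \to \R} \le C e^{4\kappa V(u)}\Bigl(\sqrt{\CP_{2n}\phi^2(u)} + e^{-2\zeta n}\sqrt{\CP_{2n}\|\DF\phi\|_{H^2\to\R}^2(u)}\Bigr)\,.
\end{equ}
Since $\eta' = 0$, Proposition~\ref{prop:RDUniformV} bounds $\sqrt{\CP_{2n}\phi^2} \le \|\phi\|_\infty$ and $\sqrt{\CP_{2n}\|\DF\phi\|_{H^2}^2}\le \sup_v\|\DF\phi(v)\|_{H^2\to\R}$, while $\E e^{4\kappa V(u_1)}$ is uniformly bounded by the same proposition.

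Finally, I would upgrade from the $H^2$ cotangent norm to $L^2$. Writing $\CP_{t} = \CP_1\CP_{t-1}$ and differentiating at $u$ yields
\begin{equ}
\DF \CP_{t}\phi(u)\,\xi = \E_u\bigl[(\DF\CP_{t-1}\phi)(u_1)\,J_{0,1}\xi\bigr]\,,
\end{equ}
so by Cauchy--Schwarz
\begin{equ}
\|\DF\CP_{t}\phi(u)\|_{L^2\to\R} \le \sqrt{\E_u\|(\DF\CP_{t-1}\phi)(u_1)\|_{H^2\to\R}^2}\,\sqrt{\E\|J_{0,1}\|_{L^2\to H^2}^2}\,.
\end{equ}
The second factor is finite because, using the flow cocycle property $J_{0,1}=J_{1/2,1}\,J_{0,1/2}$, Proposition~\ref{prop:RDJ} gives $\|J_{0,1/2}\|_{L^2\to H^1}$ and $\|J_{1/2,1}\|_{H^1\to H^2}$ with moments of all orders (thanks to Proposition~\ref{prop:RDUniformV}). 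Inserting \eqref{e:H2bound} at time $t-1 = 2n$ and using uniform control of exponential moments of $V(u_1)$, the right-hand side is bounded by $C(\|\phi\|_\infty + e^{-\zeta(t-1)}\sup_v\|\DF\phi(v)\|_{H^2\to\R})$, which is the desired estimate up to relabelling the constant. The extension from times of the form $2n+1$ to arbitrary $t\ge 1$ is immediate by composing with one more bounded step of the semigroup (and adjusting $C$); for $t \in [0,1]$ the bound is trivial, as the $H^2$--term dominates the left-hand side.

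The main obstacle is purely the coordination of constants in the verification of Assumption~\ref{ass:smoothing}: one must make sure that the Lyapunov exponent $\eta$ required in Assumption~\ref{ass:Jacobian} (and fixing $C_J$, $\kappa$) is chosen \emph{before} the projection $\Pi_M$, so that $M$ can be tuned to overcome $C_J + 2\kappa C_L + 2\zeta$. All remaining steps are routine applications of the estimates compiled in Propositions~\ref{prop:boundRD}--\ref{prop:RDass} and \ref{prop:smoothingRD}.
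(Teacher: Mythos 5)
Your overall strategy coincides with the paper's: verify the hypotheses of Theorem~\ref{thm:asfEstimate} on $\CH=H^2$ with $V(u)=\|u\|_{H^2}^{1/n}$ and $\eta'=0$ (so that the prefactors become uniform in $u$), take $\Pi=\Pi_M$ with $M$ tuned after $\eta$, $C_J$, $C_L$ are fixed so that $(C_\Pi-C_J)/2-\kappa C_L>\zeta$, and then downgrade the cotangent norm from $H^2$ to $L^2$ by composing with an initial smoothing step of the semigroup. All of that part is fine (up to the harmless slip $\eta_1\le \eta/c_0$ where you mean $\eta_1\le \eta c_0$).

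There is, however, a genuine gap in the final upgrade step. The theorem asserts a constant $C$ \emph{independent of $u$}, so you need $\E\|J_{0,1}\|_{L^2\to H^2}^2$ (together with the weight $e^{c V(u_1)}$) to be bounded \emph{uniformly over $u_0\in H^2$}, not merely finite. Your splitting $J_{0,1}=J_{1/2,1}J_{0,1/2}$ with $\|J_{0,1/2}\|_{L^2\to H^1}$ does not deliver this: by Proposition~\ref{prop:RDJ} that factor is controlled by $\Sup_{1/2,\infty}^{n}(u)$, and $\Sup_{t,\infty}(u)=1+\sup_{s\le t}\|u(s)\|_{L^\infty}$ includes $s=0$, hence is at least $\|u_0\|_{L^\infty}^{n}$ --- unbounded over $u_0\in H^2$. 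Proposition~\ref{prop:RDUniformV} cannot rescue this, since it only controls the solution at a strictly positive time. The paper's proof is arranged precisely to avoid this: it writes $\CP_t=\CP_2\CP_{t-2}$ and factors $J_{0,2}=J_{1,2}J_{0,1}$, using that $\|J_{0,1}\|_{L^2\to L^2}$ is bounded by a \emph{deterministic} constant (the one-sided Lipschitz estimate \eref{e:jacdetRD}, which costs no smoothing), while the $L^2\to H^2$ smoothing is performed by $J_{1,2}$, whose law depends on $u_0$ only through $u_1$; by Proposition~\ref{prop:boundRD} the solution is bounded after time $1$ uniformly in the initial condition, whence $\E\|J_{1,2}\|_{L^2\to H^2}^2\le C$ uniformly. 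Your argument can be repaired along the same lines (e.g.\ use the crude $L^2\to L^2$ bound on $[0,1/2]$ and perform all the smoothing on $[1/2,1]$, where the uniform a priori bounds apply), but as written the key expectation is not uniform in $u_0$. A further minor point: the claim that the bound is ``trivial'' for $t\in[0,1]$ is not correct, since $\|\DF\CP_t\phi(u)\|_{L^2\to\R}$ involves $\|J_{0,t}\|_{L^2\to H^2}$, which blows up as $t\to 0$; but this is immaterial for the asymptotic strong Feller property, which only requires the estimate along a sequence of times tending to infinity.
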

  
\begin{remark}
It is easy to infer from the \textit{a priori} bounds given in Propositions~\ref{prop:aprioriRD}, \ref{prop:RDUniformV}, 
\ref{prop:RDJ} and \ref{prop:RDJ2} that the assumptions  of our `all purpose' Theorem~\ref{theo:general} hold with 
$V(u) = \|u\|^\alpha$ for a sufficiently small exponent $\alpha$. However, the bound \eref{e:boundGeneral} is slightly weaker
than the bound \eref{e:DPtRD}. This shows that it may be worth under some circumstances to make the effort 
to apply  the more general Theorem~\ref{thm:asfEstimate}.
\end{remark}
  
\begin{remark}
As a corollary, we see that for the semigroup on $\CE$, one has
\begin{equ}
      \|\DF \CP_{t}\phi(u)\| \leq C \bigl( \|
      \phi\|_{L^\infty} + e^{-\zeta t} \|\DF\phi\|_{L^\infty}\bigr)\;,
\end{equ}
where all the derivatives a Fr\'echet derivatives of functions from $\CE$ to $\R$.

In particular, in space dimension $m=1$, the same bound is obtained in the space $H^1$ since
one then has $H^1\subset \CE$.
\end{remark}

\begin{proof}
  The result follows from Theorem~\ref{thm:asfEstimate}. Fix $\Pi =
  \Pi_M$, the projection onto the eigenfunctions of $\Delta$ with
  eigenvalues of modulus less than $M^2$. The constant $M$ is going to
  be determined later on.  Assumption~\ref{ass:Lyapunov} with $V(u) =
  \|u\|_{\CH}^{1/n}$ and $\eta' = 0$ follows immediately from
  Proposition~\ref{prop:RDUniformV}. Fix any $\bar p > 10$ and any
  positive $\eta < 1/\bar p$.  Assumption~\ref{ass:Jacobian} then
  follows from Proposition~\ref{prop:RDass}. It then follows from
  Proposition~\ref{prop:smoothingRD} that we can choose the value of
  $M$ in the definition of $\Pi$ sufficiently large so that
  Assumption~\ref{ass:smoothing} holds and such that $(C_\Pi - C_J)/2
  - \eta C_L > \zeta $.  Since, in view of
  Theorem~\ref{theo:MalliavinRD}, Assumption~\ref{ass:Malliavin} holds
  with $U=1$, we thus obtain from Theorem~\ref{thm:asfEstimate} the bound
    \begin{equ}[e:interboundRD]
      \|\DF \CP_{t}\phi(u)\|_{H^2 \rightarrow \R} \leq C e^{\eta \|u\|^{1/n}}\bigl( \| \phi\|_{L^\infty} + e^{-\zeta t} \sup_{v\in H^2}\|\DF\phi(v)\|_{H^2\to \R}\bigr)\;.
    \end{equ}
In order to obtain \eref{e:DPtRD}, we note that one has
\begin{equ}[e:uniformJac]
  \E \|J_{0,2}\|_{L^2\to H^2}^2 \le \E \|J_{0,1}\|_{L^2\to L^2}^2
  \|J_{1,2}\|_{L^2\to H^2}^2 \le C \E \|J_{1,2}\|_{L^2\to H^2}^2 \le
  C\;,
\end{equ}
where $C$ is a universal constant independent of the initial condition.
Here, we combined the bounds of Proposition~\ref{prop:RDJ} with Proposition~\ref{prop:boundRD} 
in order to obtain the last bound. We thus have
\begin{equs}
  \|\DF \CP_{t}\phi(u)&\|_{L^2 \rightarrow \R}  = \|\DF \CP_2 \CP_{t-2}\phi(u)\|_{L^2 \rightarrow H^2} \\
  &\le  \E \|\DF \CP_{t-2}\phi(u_2)\|_{H^2 \rightarrow \R} \|J_{0,2}\|_{L^2 \to H^2} \\
  &\le C \bigl( \|\phi\|_{L^\infty} + e^{-\zeta t}
\sup_v  \|\DF\phi(v)\|_{H^2\to \R}\bigr) \E \bigl(e^{2\eta \|u_2\|^{1/n}}
  \|J_{0,2}\|_{L^2 \to H^2} \bigr)\;,
\end{equs}
where we made use of \eref{e:interboundRD} to obtain the last inequality. The requested bound now follows
from \eref{e:uniformJac} and Proposition~\ref{prop:RDUniformV}.
\end{proof}

\subsection{Unique ergodicity of the stochastic Ginzburg-Landau equation}
\label{sec:accessibleGL}

In this section, we show under very weak conditions on the driving
noise that the stochastic real Ginzburg-Landau equation has a unique
invariant measure. Recall that this equation is given by
\begin{equ}[e:SGL]
  du(x,t) = \nu\d_x^2u(x,t)\, dt + \eta u(x,t)\, dt - u^3(x,t)\, dt +
  \sum_{j=1}^d g_j(x)\, dW_j(t)\;,
\end{equ}
where the spatial variable $x$ takes values on the circle $x \in S^1$
and the driving functions $g_j$ belong to $\CC^\infty(S^1,\R)$. The
two positive parameters $\nu$ and $\eta$ are assumed to be fixed
throughout this section.  This is a particularly simple case of the
type of equation considered above, so that Theorem~\ref{theo:ASFRD}
applies. The aim of this section is to show one possible technique for
obtaining the uniqueness of the invariant measure for such a parabolic
SPDE. It relies on Corollary~\ref{cor:uniqueIM} and yields:
\begin{theorem}\label{theo:mainGL}
Consider \eref{e:SGL} and suppose that 
\begin{claim}
\item[1.] there exists a linear combination $g$ of the $g_j$ that has
  only finitely many simple zeroes,
\item[2.] the smallest vector space containing all the $g_j$ and
  closed under the operation $(f,g,h) \mapsto fgh$ is dense in
  $H^1(S^1)$.
\end{claim}
Then \eref{e:SGL} has exactly one invariant probability measure.
\end{theorem}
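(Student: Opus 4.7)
My plan is to apply Corollary~\ref{cor:uniqueIM}, which reduces the uniqueness of the invariant measure to two ingredients: a quantitative asymptotic strong Feller estimate, together with the existence, for every $R_0$, of $R$ and $T$ such that for every $\eps > 0$ some point $v$ with $\|v\|_{H^2} \le R$ is charged by $\CP_T(u,\cdot)$ for every $\|u\|_{H^2}\le R_0$. The first ingredient is granted by Theorem~\ref{theo:ASFRD}: the leading order part of the nonlinearity $\eta u - u^3$ is the symmetric trilinear form $N_3(f,g,h) = -fgh$, so hypothesis~2 is precisely the statement that $\SPAN(\AAt_\infty)$ is dense in $H^1(S^1)$, and condition 2 then upgrades to density in $H^2$ by standard interpolation since the $g_j$ are smooth. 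This gives the bound \eref{e:DPtRD}, hence the asymptotic strong Feller property on $H^2$.

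For the second ingredient, I would use the Stroock--Varadhan support theorem to identify $\supp\CP_T(u_0,\cdot)$ with the closure in $H^2$ of the set of endpoints at time $T$ of the deterministic controlled equation
\begin{equ}[e:control]
\d_t u = \nu \d_x^2 u + \eta u - u^3 + \sum_{j=1}^d g_j h_j(t)\;,\qquad u(0) = u_0\;,
\end{equ}
over smooth controls $h\in \CC^\infty([0,T],\R^d)$. Consequently it suffices to exhibit, for every $R_0 > 0$, a time $T$, a radius $R$, and for each $\eps > 0$ a target $v_\eps$ with $\|v_\eps\|_{H^2}\le R$ such that every $u_0$ with $\|u_0\|_{H^2}\le R_0$ can be steered to within $\eps$ of $v_\eps$ in $H^2$ by some smooth control.

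I would construct the control in two phases. In the first phase, on $[0,T_1]$, set $h\equiv 0$ and use the strong dissipativity of the cubic term: the unforced flow admits a bounded absorbing set $\mathcal{B}\subset H^2$, so after $T_1 = T_1(R_0)$ the trajectory lies in $\mathcal{B}$ regardless of $u_0$. (The relevant \emph{a priori} bounds are already provided by Propositions~\ref{prop:boundRD} and \ref{prop:aprioriRD}.) In the second phase, on $[T_1,T_1+T_2]$, I would use condition~1 to steer $\mathcal{B}$ approximately to a common target close to $0$. Concretely, choose coefficients $\lambda_j$ so that $g = \sum_j \lambda_j g_j$ has only simple zeros, pick a smooth cut-off $\phi\colon [0,T_2]\to[0,1]$ and apply the control $h_j(t) = M\phi(t)\lambda_j$. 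The resulting ``skeleton'' solution tracks a function of order $Mg$ for a short time; on the complement of the (finitely many) zeros of $g$ the cubic damping acts like a strong stiffening term of size $\CO(M^2)$, forcing the transient $v = u - M\phi g$ to decay to $\CO(M^{-1})$ on an $\CO(M^{-2})$ time scale uniformly in the starting point in $\mathcal{B}$. Once $M\phi$ has been smoothly turned off, a further relaxation phase with $h\equiv 0$ drives the solution close to a fixed $H^2$ function $v_\eps$ determined by $g$ and the design of the control, with error $\eps$ that can be made arbitrarily small by first choosing $M$ large and then $T_2$ appropriately.

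The main obstacle is this second phase, and in particular showing that the $H^2$ norm of $v_\eps$ remains bounded as $\eps\to 0$. Away from the zeros of $g$ the estimate is straightforward because the cubic provides uniform decay of $v = u - M\phi g$. The delicate part is the neighbourhood of the (finitely many) zeros of $g$: here the effective stiffness $3(M\phi g)^2$ degenerates, and one needs to control the boundary layers that form there. The assumption that the zeros are \emph{simple} is precisely what makes these layers of width $\CO(M^{-1/2})$ with bounded profiles in $H^2$, so that $v_\eps$ stays in a bounded set of $H^2$ independently of $M$. Once this technical estimate is in place, Corollary~\ref{cor:uniqueIM} closes the proof.
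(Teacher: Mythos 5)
Your overall architecture matches the paper's: asymptotic strong Feller via Theorem~\ref{theo:ASFRD} from the bracket condition, plus the irreducibility hypothesis of Corollary~\ref{cor:uniqueIM} obtained by a control that first lets the dissipation absorb an arbitrary initial condition, then applies a large forcing along a combination $g$ with simple zeroes, then relaxes. Two steps, however, do not go through as written.

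First, the function space. Hypothesis~2 gives density of $\SPAN(\AAt_\infty)$ only in $H^1(S^1)$, and this does \emph{not} upgrade to density in $H^2$ by interpolation: a linear subspace of smooth functions can be dense in $H^1$ without being dense in $H^2$ (take the kernel of a linear functional that is $H^2$-continuous but $H^1$-unbounded). The correct move, and the one the paper makes, is to run the whole argument with base space $\CH = H^1$ rather than $H^2$; this is possible precisely because in space dimension one $H^1(S^1)$ is already a multiplicative algebra, so the proof of Theorem~\ref{theo:ASFRD} can be retraced verbatim in $H^1$.

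Second, the forced phase of the control. Your quantitative picture is off: with forcing of amplitude $M$ along $g$, the solution does not track $Mg$ but the stationary profile $v_0$ solving $\nu\d_x^2 v_0 + \eta v_0 - v_0^3 + Mg = 0$, which is of size $\CO(M^{1/3})$ (cf.\ Lemma~\ref{lem:boundv0}); the effective stiffness is therefore $\CO(M^{2/3})$, not $\CO(M^2)$, and the ansatz $v = u - M\phi g$ leaves a residual of order $M$ rather than a small transient. More importantly, the step you yourself flag as the ``main obstacle'' --- uniform convergence to a common \emph{bounded} target despite the degeneracy of the cubic damping at the zeroes of $g$ --- is exactly the content that must be proved, and the boundary-layer heuristic does not supply it. The paper closes this gap with two devices: (i) Lemma~\ref{lem:bounddissip}, which uses the simple zeroes only to show that the set where $|v_0|\le \eps^{-\beta}$ has Lebesgue measure $\CO(\eps^\alpha)$, and combines the resulting contraction on its complement with the quartic term on the exceptional set to obtain an explicit $L^2$ contraction towards $v_0$ during the forced phase; and (ii) taking as target not $v_0$ itself (which blows up as $\eps\to 0$) but its image under the \emph{unforced} flow for one unit of time, whose norm is bounded independently of $\eps$ by \eref{e:bounddV}, followed by the interpolation of the $H^1$ error between the $L^2$ error and the a priori $H^2$ bound. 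Without (i) and (ii), or substitutes for them, the irreducibility required by Corollary~\ref{cor:uniqueIM} is not established.
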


\begin{remark}
The second assumption is satisfied for example if $d \ge 3$ and $g_1(x) = 1$, $g_2(x) = \sin x$ and
$g_3(x) = \cos x$.
\end{remark}

\begin{remark}
  We believe that the first condition in Theorem~\ref{theo:mainGL} is
  not needed, since in finite dimensions  such a Lie bracket
  condition implies global controllability for polynomial systems of
  odd degree. See for example  \cite{Jurd97}.
\end{remark}

\begin{remark}
Actually, we could have relaxed the regularity assumption on the $g_j$'s.
If we choose $\CH = H^1$, $\gamma_\star = 2\eps - 1$, $a = 1-\eps$,
and $\beta_\star = 1$, we can check that Assumption~\ref{ass:basic}
is satisfied as soon as $g_j \in H^{1+4\eps}$. Furthermore, in this case,
all the relevant Lie brackets for assumption~2 in
Theorem~\ref{theo:mainGL} are admissible, so that its conclusion still holds.
\end{remark}

Looking at Corollary~\ref{cor:uniqueIM}, the two main ingredients
needed to prove Theorem~\ref{theo:mainGL} are the establishment of the
estimate in \eref{e:SF} and the needed form of irreducibility. The
first will follows almost instantly from the second assumption of
Theorem~\ref{theo:mainGL} which ensures that $\SPAN(\AAt_\infty)$ is
dense in $H^1$. The irreducibility is given by the following proposition
whose proof is postponed to the end of this section.

\begin{proposition}\label{prop:RDIred} Consider  \eqref{e:SGL}
  under the second condition in Theorem~\ref{theo:mainGL}.  Then there
  exists a positive $K$ so that for any $\epsilon >0$ there is a $v$
  with $\|v\|_{H^1} \leq K$ and a $T>0$ so that
  $\CP_T(u_0,\CB_\epsilon(v)) >0$ for all $u_0 \in H^1$. Here
  $\CB_\epsilon(v)$ is the $\epsilon$ ball in the $H^1$--norm.
\end{proposition}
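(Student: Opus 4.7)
The plan is to combine the uniform dissipativity of the unforced flow with Agrachev-Sarychev style approximate controllability, then invoke the Stroock-Varadhan support theorem. Concretely, for a suitably chosen fixed target $v \in H^1$ with $\|v\|_{H^1} \le K$, I would seek a time $T > 0$ such that, for every $u_0 \in H^1$, a deterministic control $h_{u_0} \in L^2([0,T], \R^d)$ exists with $\|\Phi_T^{u_0}(h_{u_0}) - v\|_{H^1} < \epsilon$, where $\Phi$ denotes the solution map of the controlled PDE. The Stroock-Varadhan support theorem then gives $\CP_T(u_0, \CB_\epsilon(v)) > 0$ uniformly in $u_0$.

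First I would handle the dissipative half. By the pointwise bound of Proposition~\ref{prop:boundRD} applied pathwise with $W \equiv 0$, one has $\|\Phi_t^{u_0}(0)\|_{L^\infty} \le C\|u_0\|_{L^\infty}(1 + t\|u_0\|_{L^\infty}^n)^{-1/n}$, and in particular $\|\Phi_{T_0}^{u_0}(0)\|_{L^\infty} \le C T_0^{-1/n}$ \emph{uniformly} in $u_0 \in H^1$ for any fixed $T_0 > 0$. Bootstrapping via Proposition~\ref{prop:aprioriRD} on the subinterval $[T_0/2, T_0]$ upgrades this to a uniform $H^2$ bound, so that the set $\cK := \{\Phi_{T_0}^{u_0}(0) : u_0 \in H^1\}$ is contained in a bounded ball of $H^2$, which is compact in $H^1$. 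In other words, the unforced flow absorbs every initial condition into a single common compact set $\cK$ in fixed time $T_0$, and it remains to steer from any point of $\cK$ to within $\epsilon$ of $v$ in some additional fixed time $T_1$.

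For the second half I would invoke Agrachev-Sarychev style approximate controllability. The key algebraic observation is that iterated Lie brackets of the drift $\nu \Delta + \eta - (\,\cdot\,)^3$ with three constant vector fields $g_i, g_j, g_k$ produce, up to a universal constant, the triple product $g_i g_j g_k$, which is exactly the top-degree trilinear part of the nonlinearity. Consequently the smallest subspace containing the $g_j$ and closed under $(f,g,h) \mapsto fgh$ coincides with the set of realisable directions in the sense of Agrachev-Sarychev, and the density hypothesis of Theorem~\ref{theo:mainGL} is precisely the saturation condition. The standard saturation scheme --- realising each bracketed direction as a limit of rapidly oscillating physical controls --- can be executed within any prescribed time window, yielding for each $u_1 \in \cK$ a control $h^{u_1}$ on $[0, T_1]$ with $\|\Phi_{T_1}^{u_1}(h^{u_1}) - v\|_{H^1} < \epsilon$. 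Concatenating the null control on $[0, T_0]$ with the shifted $h^{\Phi_{T_0}^{u_0}(0)}$ on $[T_0, T_0 + T_1]$ then produces the required $h_{u_0}$, and $T := T_0 + T_1$ works uniformly in $u_0$.

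The main obstacle is making the time $T_1$ uniform over the compact set $\cK$. A concrete execution is to first prove exact controllability in fixed time of a sufficiently large Galerkin truncation of the controlled equation --- a finite-dimensional weak-H\"ormander problem whose rank condition follows from the density hypothesis --- and then absorb the high-mode tail using the exponential contraction of the heat semigroup on the spectral complement, with tail errors made uniformly small over $\cK$ by choosing the truncation sufficiently large. No measurable selection $u_1 \mapsto h^{u_1}$ is required, since the support theorem only asks for pointwise existence of a deterministic control for each individual $u_0$.
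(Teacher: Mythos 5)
Your first half (run the uncontrolled equation for a fixed time to absorb every initial condition into a common bounded set, then reduce to steering from that set) is sound and matches the opening move of the paper's proof, which uses \eref{e:bounddV} to get $\|u(1)\|_{L^2}\le C$ uniformly in $u_0$. The second half, however, carries the real content of the proposition and is where your argument has a genuine gap. You invoke ``Agrachev--Sarychev style approximate controllability'' of \eref{e:SGL} under the bracket/density hypothesis as if it were available off the shelf, but it is not: the remark following Theorem~\ref{theo:mainGL} states explicitly that the authors only \emph{believe} the bracket condition alone should suffice for reachability, citing finite-dimensional analogues for odd-degree polynomial drifts --- precisely because executing the saturation scheme (realising the directions $g_ig_jg_k$ as limits of rapidly oscillating controls, with errors controlled in $H^1$ for a cubic nonlinearity, uniformly over a compact set of initial data) is a substantial piece of analysis rather than a citation. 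Your proposed concrete execution does not close this gap: a weak-H\"ormander/Lie-rank condition for a finite-dimensional system \emph{with drift} yields accessibility (attainable sets with nonempty interior), not exact controllability in fixed time; and the claim that the high-mode tail is ``absorbed using the exponential contraction of the heat semigroup on the spectral complement'' ignores the nonlinear coupling of low and high modes through $u^3$, which is exactly the obstruction the Agrachev--Sarychev machinery exists to handle.

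The paper avoids controllability theory altogether and instead uses the \emph{first} condition of Theorem~\ref{theo:mainGL} (a linear combination $g$ of the $g_j$ with finitely many simple zeroes); the wording of the proposition appears to be a typo on this point, since the proof of Theorem~\ref{theo:mainGL} assigns condition~1 to irreducibility and condition~2 only to the gradient bound \eref{e:SF}. The control used is simply $f=\eps^{-\gamma}g$ on the time interval $[1,2]$: the stationary profile $v_0$ of the strongly forced equation is large except near the finitely many zeroes of $g$ (Lemma~\ref{lem:boundv0} together with the measure estimate in the proof of Lemma~\ref{lem:bounddissip}), so the cubic term makes $v_0$ an essentially globally attracting state at rate $\eps^{-2\beta}$, forcing $\|u(2)-v_0\|_{L^2}\le C\eps^{\gamma/6}$ for \emph{every} initial condition; the control is then switched off for one more unit of time so that the target lands in a ball of radius $K$ independent of $\eps$, and interpolation between the $L^2$ estimate and the uniform $H^2$ bound converts $L^2$-closeness into $H^1$-closeness. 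If you wish to pursue your route, you would need to actually prove approximate controllability of \eref{e:SGL} in $H^1$ under the saturation hypothesis, with time and error uniform over a compact set; as written, that step is asserted rather than established.
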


\begin{proof}[of Theorem~\ref{theo:mainGL}]
   The existence of an invariant probability measure for \eref{e:SGL}
  is standard, see for example \cite{CerrRD}. Furthermore, since we
  are working in space dimension $1$, $H^1$ is already a
  multiplicative algebra and one can retrace the proof of
  Theorem~\ref{theo:ASFRD} for $\CH = H^1$.  This shows that
  assumption~2. implies that the semigroup generated by \eref{e:SGL}
  satisfies \eref{e:SF} on the Hilbert space $\CH = H^1(S^1)$. It
  therefore remains to show that assumption~1. implies the assumption
  of Corollary~\ref{cor:uniqueIM}.
\end{proof}

In fact we have established much more than just uniqueness of the
invariant measure. We now use the results from \cite{HaiMat08:??} to
establish a spectral gap. For any Fr\'echet differentiable functions
from $\phi:H^1 \rightarrow \R$ define the norm $\|\phi\|_{\text{Lip}}=
\sup_u \bigl(|\phi(u)| + \|\DF\phi(u)\|_{H^1 \to \R}\bigr)$. In turn we
define a metric on probability measures $\mu, \nu$ on $H^1$ by
$d(\mu,\nu) = \sup\{ \int \phi d\mu - \int \phi d\nu :
\|\phi\|_{\text{Lip}} \leq 1 \}$. Combining \eqref{e:DPtRD},
Proposition~\ref{prop:RDIred} and \cite[Theorem~2.5]{HaiMat08:??}
yields the following corollary to Theorem~\ref{theo:mainGL}.
  \begin{corollary}
     Under the assumption of Theorem~\ref{theo:mainGL}, there exist
     positive constants $C$ and $\gamma$ so that  $d(\CP_t^* \mu
     ,\CP_t^* \nu) \leq C e^{-\gamma t} d(\mu,\nu)$ for any two
     probability measures $\mu$ and $\nu$ on $H^1$ and $t \geq 1$.
  \end{corollary}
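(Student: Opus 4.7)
The plan is to invoke \cite[Theorem~2.5]{HaiMat08:??} directly. That theorem produces exponential contraction in a Wasserstein-type distance provided one has (i) an asymptotic strong Feller bound with exponentially decaying weight on the derivative term, (ii) accessibility of a common point in phase space, and (iii) a Lyapunov structure ensuring uniform-in-initial-condition tightness of the dynamics. All three are in hand.

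First I would fix the Lyapunov structure. Because the cubic nonlinearity is globally dissipative, Propositions~\ref{prop:boundRD} and \ref{prop:RDUniformV}, specialised to $m=1$ and $\CH=H^1$, yield $\E\exp(\alpha\|u_1\|_{H^1}^\beta)\le K$ uniformly in $u_0\in H^1$ for suitable $\alpha,\beta>0$. In particular $V(u)=\|u\|_{H^1}^2$ satisfies $\CP_1 V\le K'$ independently of $u_0$, which is a very strong form of the usual Lyapunov condition: every sublevel $\{V\le R\}$ is absorbing at time $1$ for some fixed $R_\star$ determined by the equation alone.

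Next, the gradient bound \eref{e:DPtRD} of Theorem~\ref{theo:ASFRD}, applied with any $\zeta>0$, reads
\begin{equ}
\|\DF\CP_t\phi(u)\|_{H^1\to\R}\le C\bigl(\|\phi\|_\infty + e^{-\zeta t}\sup_v\|\DF\phi(v)\|_{H^1\to\R}\bigr)\;.
\end{equ}
This is precisely the asymptotic strong Feller hypothesis of \cite{HaiMat08:??} with an exponential rate and, crucially, with prefactor $C$ independent of the initial condition. Combined with Proposition~\ref{prop:RDIred}, which supplies a point $v_\star$ of norm $\le K$ such that for every $\eps>0$ there is $T=T(\eps)$ with $\inf_{u_0\in H^1}\CP_T(u_0,\CB_\eps(v_\star))>0$ once this infimum is restricted to the bounded absorbing set $\{V\le R_\star\}$ (the infimum is positive by the Feller property combined with compactness arguments and the uniform accessibility from Proposition~\ref{prop:RDIred}), one obtains a $d$-small set in the sense of \cite{HaiMat08:??}: any two points in $\{V\le R_\star\}$ can be coupled at time $T$ with a uniformly positive probability of arriving in a common small $H^1$-ball around $v_\star$, while nearby initial conditions can be coupled via the ASF estimate.

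Applying \cite[Theorem~2.5]{HaiMat08:??} then yields $\tilde d(\CP_t^*\mu,\CP_t^*\nu)\le Ce^{-\gamma t}\tilde d(\mu,\nu)$ in a weighted Wasserstein distance $\tilde d$ of the form $\tilde d(\mu,\nu)=\sup\{\int\phi\,d\mu-\int\phi\,d\nu:\|\phi\|_\infty+\sup_u(1+V(u))^{-1}\|\DF\phi(u)\|_{H^1\to\R}\le 1\}$. Finally, because every measure $\CP_1^*\mu$ has uniformly bounded $V$-moment by the Lyapunov step above (again independently of $\mu$), for $t\ge 1$ the weighted distance $\tilde d(\CP_t^*\mu,\CP_t^*\nu)$ and the unweighted distance $d(\CP_t^*\mu,\CP_t^*\nu)$ are comparable up to a universal constant, producing the claimed bound after absorbing the constants into $C$. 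The main obstacle in writing this out carefully is precisely this last bookkeeping: matching the metric $d$ used in the statement with the weighted metric appearing naturally in \cite{HaiMat08:??}. Fortunately, the uniform-in-initial-condition $H^1$-bounds coming from the strongly dissipative cubic term make the Lyapunov weight essentially constant after one unit of time, which is why the conclusion is stated for $t\ge 1$ rather than all $t>0$.
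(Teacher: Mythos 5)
Your proposal is correct and follows essentially the same route as the paper, which derives the corollary in one line by combining the uniform gradient estimate \eref{e:DPtRD}, the irreducibility statement of Proposition~\ref{prop:RDIred}, and \cite[Theorem~2.5]{HaiMat08:??}; your extra work verifying the Lyapunov hypothesis via Propositions~\ref{prop:boundRD} and \ref{prop:RDUniformV} is exactly the implicit content of that one line. The only simplification worth noting is that the final weighted-versus-unweighted metric comparison is essentially unnecessary here, since the prefactor $C$ in \eref{e:DPtRD} is already uniform in $u$, so the theorem can be applied directly in the unweighted Lipschitz metric $d$ used in the statement.
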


\begin{proof}[of Proposition \ref{prop:RDIred}]
  Fix an arbitrary initial condition $u_0$ and some $\eps > 0$.  Our
  aim is to find a target $v$, bounded controls $V_j(t)$, and a
  terminal time $T>0$, so that the solution to the controlled problem
\begin{equ}[e:controlGL]
  \d_t u(x,t) = \nu\d_x^2u(x,t) + \eta u(x,t) - u^3(x,t) +
  f(x,t)\;,\quad f(x,t) \eqdef \sum_{j=1}^d g_j(x)\, V_j(t)\;,
\end{equ}
satisfies $\|u(T) - v\|_{H^1} \le \eps$.  Furthermore, we want to be
able to choose $v$ such that $\|v\|_{H^1} \le K$ for some constant $K$
independent of $\eps$.  The claim on the topological supports of
transition probabilities then follows immediately from the fact that
the It\^o map $(u_0,W) \mapsto u_t$ is continuous in the second
argument in our case.
 
 The idea is to choose $f$ of the form
\begin{equ}
f(x,t) = \left\{\begin{array}{cl} \eps^{-\gamma} g(x) & \text{for $1 \le t \le 2$,} \\ 0 & \text{otherwise,} \end{array}\right.
\end{equ}
and to set $T = 3$. We furthermore set $v$ to be the solution at time $1$ for the uncontrolled
equation (that is \eref{e:controlGL}  with $f = 0$) with an initial condition $v_0$ satisfying 
\begin{equ}[e:stationaryv0]
 \nu\d_x^2 v_0(x) + \eta v_0(x) - v_0^3(x) + \eps^{-\gamma}g(x) = 0\;,
\end{equ}
for some exponent $\gamma > 0$ to be determined. Such a $v_0$ always exists since the coercive ``energy functional''
\begin{equ}
E(v) = \int_{S^1} \Bigl({\nu\over 2}|\d_xv(x)|^2 - {\eta \over 2}|v(x)|^2 + {1\over 4}|v(x)|^4 - \eps^{-\gamma}g(x) v(x)\Bigr)\,dx
\end{equ}
has at least one critical point.
Even though $v_0$ is in general very large (see however Lemma~\ref{lem:boundv0} below), 
it follows from \eref{e:bounddV} that the target 
$v$ constructed in this way is bounded independently of $\eps$. 

The remaining ingredient of the proof are Lemmas~\ref{lem:bounddissip} and \ref{lem:boundv0} below. 
To show that this is sufficient, note first that \eref{e:bounddV} implies the existence of a constant $C$
such that $\|u(1)\|_{L^2} \le C$ independently of $u_0$. It then follows from
Lemmas~\ref{lem:bounddissip} and \ref{lem:boundv0} that (choosing for example $\beta = {\gamma/14}$) there exists
a constant $C$ such that one has the bound
\begin{equ}
\|u(2) - v_0\|_{L^2} \le C \eps^{\gamma \over 6}\;.
\end{equ}
Since the uncontrolled equation expands at rate at most $\eta$, this immediately yields $\|u(T) - v\|_{L^2} \le C \eps^{\gamma \over 6}$. On the other hand, we know from Proposition~\ref{prop:aprioriRD}
that there exists a constant $C$ such that $\|u(T) - v\|_{H^2} \le C$, so that
\begin{equ}
\|u(T) - v\|_{H^1} \le \bigl(\|u(T) - v\|_{L^2}\|u(T) - v\|_{H^2}\bigr)^{1/2} \le C \eps^{\gamma /12}\;,
\end{equ}
and the claim follows by choosing $\gamma > 12$.
\end{proof}

\begin{lemma}\label{lem:boundv0}
There exists a constant $C_v$ independent of $\eps < 1$ such that the bound
$\|v_0(x)\|_{L^\infty} \le C_v \eps^{-\gamma/3}$ holds.
\end{lemma}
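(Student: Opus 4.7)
My plan is to apply the maximum principle directly to the stationary equation \eref{e:stationaryv0}. Since $v_0$ is continuous on the compact circle $S^1$, it attains its maximum at some point $x_0$ and its minimum at some point $x_1$. The strategy is to derive a cubic inequality for $v_0(x_0)$ and $v_0(x_1)$ and solve it.

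First, I would treat the maximum. At $x_0$ one has $\partial_x^2 v_0(x_0) \le 0$, so \eref{e:stationaryv0} implies
\begin{equ}
v_0^3(x_0) = \nu \partial_x^2 v_0(x_0) + \eta v_0(x_0) + \eps^{-\gamma} g(x_0) \le \eta v_0(x_0) + \eps^{-\gamma}\|g\|_{L^\infty}\;.
\end{equ}
If $v_0(x_0) \le 0$ there is nothing to show on that side. Otherwise we split into two cases. Either $\eta v_0(x_0) \le \eps^{-\gamma}\|g\|_{L^\infty}$, in which case
\begin{equ}
v_0^3(x_0) \le 2\eps^{-\gamma}\|g\|_{L^\infty}\;,\qquad\text{so}\qquad v_0(x_0) \le \bigl(2\|g\|_{L^\infty}\bigr)^{1/3}\eps^{-\gamma/3}\;,
\end{equ}
or $\eta v_0(x_0) \ge \eps^{-\gamma}\|g\|_{L^\infty}$, in which case $v_0^3(x_0) \le 2\eta v_0(x_0)$ and hence $v_0(x_0) \le \sqrt{2\eta}$. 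In either case $v_0(x_0) \le C \eps^{-\gamma/3}$ for $\eps \le 1$ and some constant $C$ depending only on $\eta$ and $\|g\|_{L^\infty}$.

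The minimum is handled symmetrically: at $x_1$ we have $\partial_x^2 v_0(x_1) \ge 0$, so
\begin{equ}
v_0^3(x_1) \ge \eta v_0(x_1) - \eps^{-\gamma}\|g\|_{L^\infty}\;,
\end{equ}
and substituting $u = -v_0(x_1)$ (the interesting case being $u > 0$) reduces this to $u^3 \le \eta u + \eps^{-\gamma}\|g\|_{L^\infty}$, which by the same dichotomy yields $u \le C \eps^{-\gamma/3}$. Combining the two bounds gives $\|v_0\|_{L^\infty} \le C_v \eps^{-\gamma/3}$.

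Since the only tool needed is the elementary fact that $\partial_x^2 v_0$ has the appropriate sign at the extrema of $v_0$ on $S^1$, there is no real obstacle; the only thing one has to be a bit careful about is the existence of a classical maximum, which is guaranteed here because $v_0$ is a critical point of a coercive $\CC^2$ energy functional and hence automatically smooth by elliptic regularity applied to \eref{e:stationaryv0}.
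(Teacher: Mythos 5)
Your proof is correct and is exactly the argument the paper has in mind: the paper's one-line proof also evaluates the stationary equation at the maximum and minimum of $v_0$, using the sign of $\d_x^2 v_0$ there. You have simply written out the resulting cubic dichotomy explicitly, which is fine.
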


\begin{proof}
It follows immediately from \eref{e:stationaryv0}, using the fact that $\d_x^2 v_0 \le 0$ at the maximum 
and $\d_x^2 v_0 \ge 0$ at the minimum.
\end{proof}

\begin{lemma}\label{lem:bounddissip}
For every exponent $\beta \in [0,\gamma/4]$ there exists a constant $C$ such that the bound
\begin{equ}
\int_{S^1} (u-v_0)(u^3 - v_0^3)\, dx \ge C\eps^{-2\beta}\int_{S^1} (u-v_0)^2\,dx - C\eps^{{\gamma - 13\beta\over 3}}
\end{equ}
holds for every $\eps \le 1$ and every $u \in L^2(S^1)$.
\end{lemma}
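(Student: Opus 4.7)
Writing $w = u - v_0$, I would start from the pointwise identity
\begin{equation*}
(u-v_0)(u^3-v_0^3) \;=\; w^4 + 3w^3 v_0 + 3w^2 v_0^2,
\end{equation*}
and use the weighted Young inequality $|w|^3|v_0| \le \tfrac{1}{2t}w^4 + \tfrac{t}{2} w^2 v_0^2$ with $t = 7/4$ to dominate the cross term, obtaining
\begin{equation*}
(u-v_0)(u^3-v_0^3) \ge c_0\, (w^4 + w^2 v_0^2)
\end{equation*}
pointwise for an absolute constant $c_0 > 0$. The plan is then to use the $w^2 v_0^2$ term on the region where $|v_0|$ is of order $\eps^{-\beta}$ or larger and to fall back on the $w^4$ term on the complement, where the smallness of $|v_0|$ must be compensated by the smallness of the region itself.

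Concretely, I split $S^1 = A_\eps \sqcup A_\eps^c$ with $A_\eps = \{|v_0| \ge c_1 \eps^{-\beta}\}$ for a suitable constant $c_1 > 0$. On $A_\eps$ one has $v_0^2 \ge c_1^2 \eps^{-2\beta}$, and on $A_\eps^c$ Cauchy--Schwarz gives $\int_{A_\eps^c} w^4\,dx \ge X^2/|A_\eps^c|$, where $X \eqdef \int_{A_\eps^c}w^2\,dx$. Combining these and using $\int_{S^1}w^2\,dx = X + \int_{A_\eps}w^2\,dx$,
\begin{equation*}
\int_{S^1}(u-v_0)(u^3-v_0^3)\,dx \;\ge\; c_0 c_1^2 \eps^{-2\beta}\int_{S^1}w^2\,dx \;+\; \Bigl(\tfrac{c_0 X^2}{|A_\eps^c|} - c_0 c_1^2 \eps^{-2\beta}X\Bigr).
\end{equation*}
Minimising the quadratic in $X \ge 0$ leaves a loss of at most $c_0 c_1^4 \eps^{-4\beta}|A_\eps^c|/4$, which is absorbed by the target $C\eps^{(\gamma - 13\beta)/3}$ as soon as $|A_\eps^c| \le C'\eps^{(\gamma-\beta)/3}$. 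A quick arithmetic check shows $(\gamma-\beta)/3 \le \gamma - 3\beta$ exactly when $\beta \le \gamma/4$---which is precisely the range of $\beta$ allowed in the statement---so it suffices to establish the stronger measure bound
\begin{equation*}
\bigl|\{x \in S^1 : |v_0(x)| < c_1 \eps^{-\beta}\}\bigr| \;\le\; C\, \eps^{\gamma - 3\beta}.
\end{equation*}

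Proving this measure estimate is the main obstacle. Heuristically, for small $\eps$ the function $v_0$ closely tracks the ``outer'' algebraic solution $f_\eps = \eps^{-\gamma/3}\operatorname{sgn}(g)|g|^{1/3}$ of the leading-order equation $v_0^3 = \eps^{-\gamma}g$, the viscous and linear terms in the ODE being subdominant away from the zeros of $g$. Since by hypothesis $g$ has only finitely many simple zeros, one has $|\{|g| \le \delta\}| \le C\delta$ for small $\delta$, and the level set $\{|v_0| < c_1 \eps^{-\beta}\}$ sits essentially inside $\{|g| \le c_1^3 \eps^{\gamma - 3\beta}\}$, giving the claim. To make this rigorous I would combine Lemma~\ref{lem:boundv0} (i.e.\ $\|v_0\|_{L^\infty} \le C\eps^{-\gamma/3}$) with the ODE $\nu v_0'' = v_0^3 - \eta v_0 - \eps^{-\gamma}g$: on $A_\eps^c$ the cubic and linear terms contribute only $O(\eps^{-3\beta})$, much smaller than $\eps^{-\gamma}$, so $\nu v_0''$ must essentially cancel $\eps^{-\gamma}g$. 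To convert this into a measure bound, I would control the length of each connected component of $A_\eps^c$ by the Lipschitz estimate $\|v_0'\|_{L^\infty} \le C\eps^{-5\gamma/6}$---obtained from Gagliardo--Nirenberg together with the standard energy bounds $\|v_0'\|_{L^2} \le C\eps^{-2\gamma/3}$ (multiplying the ODE by $v_0$) and $\|v_0''\|_{L^2} \le C\eps^{-\gamma}$ (reading off the equation)---and bound their number by that of the zeros of $g$, using that $v_0$ inherits the sign pattern of $g$ outside the transition layer.
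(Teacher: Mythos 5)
Your pointwise inequality, the decomposition of $S^1$ according to whether $|v_0|$ exceeds $c_1\eps^{-\beta}$, the Cauchy--Schwarz step on the quartic term, and the reduction of the lemma to a measure bound on $A_\eps^c=\{|v_0|<c_1\eps^{-\beta}\}$ with exponent $(\gamma-\beta)/3$ all coincide with the paper's argument (the paper writes the pointwise step as $2(u-v_0)(u^3-v_0^3)\ge (u-v_0)^2(u^2+v_0^2)$, which is equivalent to your Young-inequality version). The gap is entirely in the final measure estimate, which is the heart of the lemma, and it is twofold. First, the ``stronger'' target $|A_\eps^c|\le C\eps^{\gamma-3\beta}$ is the wrong thing to aim for: near each simple zero of $g$ the outer approximation $v_0\approx\eps^{-\gamma/3}\mathrm{sgn}(g)|g|^{1/3}$ that motivates it breaks down over a viscous layer of width of order $\eps^{\gamma/4}$ (balance $\nu v_0''$ against $v_0^3$), inside which $|v_0|$ drops below $\eps^{-\beta}$ on a set of measure of order $\eps^{\gamma/2-\beta}$; since $\gamma/2-\beta<\gamma-3\beta$ exactly when $\beta<\gamma/4$, your target fails (at least at the level of this matched-asymptotics picture) throughout the open range of the lemma. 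Only the exponent $(\gamma-\beta)/3$ survives the layers --- note $\gamma/2-\beta\ge(\gamma-\beta)/3$ precisely for $\beta\le\gamma/4$, which is why the hypothesis of the lemma is what it is --- and that is all you need, so this first issue is in principle reparable by lowering the target.

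Second, and more seriously, the mechanism you propose for bounding component lengths --- the global Lipschitz bound $\|v_0'\|_{L^\infty}\le C\eps^{-5\gamma/6}$ --- cannot deliver even the weaker bound. A global bound on $v_0'$ gives a \emph{lower} bound on the length $v_0$ needs to traverse the strip $\{|v_0|<c_1\eps^{-\beta}\}$, not an upper bound on how long it may linger there. The quantitative version of your idea is: on a component $J$ of $A_\eps^c$ contained in $\{|g|>\delta\}$ one has $|v_0''|\ge c\eps^{-\gamma}\delta$ with a fixed sign, so the oscillation of $v_0'$ over $J$ is at least $c\eps^{-\gamma}\delta|J|$, whence $|J|\le C\eps^{\gamma}\|v_0'\|_{L^\infty}/\delta\le C\eps^{\gamma/6}/\delta$; balancing against $|\{|g|\le\delta\}|\le C\delta$ (simple zeroes) yields only $|A_\eps^c|\le C\eps^{\gamma/12}$, whereas you need exponent $(\gamma-\beta)/3\ge\gamma/4$. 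The missing idea --- and the one the paper uses --- is to play the one-signed bound on $v_0''$ against the smallness of $v_0$ \emph{on the component itself} rather than against a global bound on $v_0$ or $v_0'$: since $|v_0|\le c_1\eps^{-\beta}$ on $J$ and, say, $v_0''\le -c\eps^{-\gamma}\delta$ there, comparison of $v_0$ at the midpoint of $J$ with the chord gives $\eps^{-\gamma}\delta\,|J|^2\le C\eps^{-\beta}$, i.e.\ $|J|\le C\eps^{(\gamma-\beta)/2}\delta^{-1/2}$; optimising $\delta+\eps^{(\gamma-\beta)/2}\delta^{-1/2}$ at $\delta=\eps^{(\gamma-\beta)/3}$ gives exactly the required $|A_\eps^c|\le C\eps^{(\gamma-\beta)/3}$. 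This needs neither Gagliardo--Nirenberg nor any bound on $v_0'$; the only structural inputs are the ODE for $v_0$ and the simple zeroes of $g$ (the latter also controls the number of components, for which your appeal to ``$v_0$ inherits the sign pattern of $g$'' would itself require an argument).
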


\begin{proof}
The proof is based on the fact that since $g$ has only isolated zeroes,
the function $v_0$ necessarily has the property that
it is large at most points. More precisely, consider some exponent $\beta \in [0,\gamma/3]$ and
define the set $A = \{x \in S^1\,:\, |v_0(x)| > \eps^{-\beta}\}$. We claim that there  then exists a constant
$C$ such that the Lebesgue measure of $A$ is bounded by $|A| \le C \eps^\alpha$
for $\alpha = \min\{\gamma - 3\beta, {\gamma - \beta \over 3}\}$.
Indeed, consider the set $\tilde A$ of points such that $|g(x)| \le 2\eps^\alpha$. Since $g$ is assumed
to be smooth and have simple zeroes, $|\tilde A| \le C \eps^\alpha$ and the complement of $\tilde A$ consists
of finitely many intervals on which $g$ has a definite sign. 

Consider one such interval $I$ on which $g(x) > 9\eps^\alpha$,
so that the definition of $v_0$ yields the estimate $v_0'' < -9\eps^{\alpha - \gamma} - v_0 + v_0^3$.
It follows that, for every $x \in I$, one either has $v_0''(x) < - \eps^{\alpha - \gamma}$, or one has
$v_0(x) > 2\eps^{\alpha - \gamma \over 3}\ge 2\eps^{-\beta}$ (since we set $\alpha \le \gamma - 3\beta$).
We conclude that $I\cap A$ consists of at most two intervals and that $v_0(x) > \eps^{\alpha - \gamma \over 3}$ 
for every $x \in I\cap A$, so that $|I \cap A| \le C\eps^{\gamma - \alpha - \beta \over 2}$ and the bound follows.
(The same reasoning but with opposite signs applies to those intervals on which $g(x) < -9\eps^\alpha$.)

This yields the sequence of bounds
\begin{equs}
2\int_{S^1} (u-v_0)&(u^3 - v_0^3)\, dx \ge  \int_{S^1} (u-v_0)^2 (u^2 + v_0^2)\,dx \\
&\ge {\eps^{-2\beta}} \int_{A} (u-v_0)^2\, dx
+ \int_{A^c} (u-v_0)^2 (u^2 + v_0^2)\,dx \\
& \ge {\eps^{-2\beta}} \int_{A} (u-v_0)^2\, dx
+ {1\over 4} \int_{A^c} (u-v_0)^4\,dx \\
& \ge \eps^{-2\beta} \int_{A} (u-v_0)^2\, dx
+ {1\over 4|A^c|} \Bigl(\int_{A^c} (u-v_0)^2\,dx\Bigr)^2 \\
&\ge \eps^{-2\beta} \int_{A} (u-v_0)^2\, dx
+ {C\over \eps^\alpha} \Bigl(\eps^{\alpha-2\beta} \int_{A^c} (u-v_0)^2\,dx - \eps^{2\alpha-4\beta}\Bigr) \\
&\ge C\eps^{-2\beta}\int_{S^1} (u-v_0)^2\,dx - C\eps^{\alpha-4\beta}\;,
\end{equs}
which is the required estimate.
\end{proof}

  \bibliographystyle{Martin}

\bibliography{./refs}

\def\Rom#1{\uppercase\expandafter{\romannumeral #1}}\def\u#1{{\accent"15 #1}}
\begin{thebibliography}{MSVE07}
\expandafter\ifx\csname url\endcsname\relax
  \def\url#1{\texttt{#1}}\fi
\expandafter\ifx\csname urlprefix\endcsname\relax\def\urlprefix{URL }\fi

\bibitem[AKSS07]{AKSS07}
\textsc{A.~Agrachev}, \textsc{S.~Kuksin}, \textsc{A.~Sarychev}, and
  \textsc{A.~Shirikyan}.
\newblock On finite-dimensional projections of distributions for solutions of
  randomly forced 2{D} {N}avier-{S}tokes equations.
\newblock \emph{Ann. Inst. H. Poincar\'e Probab. Statist.} \textbf{43}, no.~4,
  (2007), 399--415.

\bibitem[AS04]{AgrSac04}
\textsc{A.~A. Agrachev} and \textsc{Y.~L. Sachkov}.
\newblock \emph{Control theory from the geometric viewpoint}, vol.~87 of
  \emph{Encyclopaedia of Mathematical Sciences}.
\newblock Springer-Verlag, Berlin, 2004.
\newblock Control Theory and Optimization, II.

\bibitem[AS05]{AgrachevSarychev05}
\textsc{A.~A. Agrachev} and \textsc{A.~V. Sarychev}.
\newblock Navier-{S}tokes equations: controllability by means of low modes
  forcing.
\newblock \emph{J. Math. Fluid Mech.} \textbf{7}, no.~1, (2005), 108--152.

\bibitem[AS08]{agrachev_sarychev_2007}
\textsc{A.~Agrachev} and \textsc{A.~Sarychev}.
\newblock Solid controllability in fluid dynamics.
\newblock In \emph{Instability in models connected with fluid flows. {I}},
  vol.~6 of \emph{Int. Math. Ser. (N. Y.)},  1--35. Springer, New York, 2008.

\bibitem[BH07]{BauHai07:373}
\textsc{F.~Baudoin} and \textsc{M.~Hairer}.
\newblock A version of {H}\"ormander's theorem for the fractional {B}rownian
  motion.
\newblock \emph{Probab. Theory Related Fields} \textbf{139}, no. 3-4, (2007),
  373--395.

\bibitem[BKL01]{BC}
\textsc{J.~Bricmont}, \textsc{A.~Kupiainen}, and \textsc{R.~Lefevere}.
\newblock Ergodicity of the 2{D} {N}avier-{S}tokes equations with random
  forcing.
\newblock \emph{Comm. Math. Phys.} \textbf{224}, no.~1, (2001), 65--81.
\newblock Dedicated to Joel L. Lebowitz.

\bibitem[BM05]{MatMemory}
\textsc{Y.~Bakhtin} and \textsc{J.~C. Mattingly}.
\newblock Stationary solutions of stochastic differential equations with memory
  and stochastic partial differential equations.
\newblock \emph{Commun. Contemp. Math.} \textbf{7}, no.~5, (2005), 553--582.

\bibitem[BM07]{Yuri}
\textsc{Y.~Bakhtin} and \textsc{J.~C. Mattingly}.
\newblock Malliavin calculus for infinite-dimensional systems with additive
  noise.
\newblock \emph{J. Funct. Anal.} \textbf{249}, no.~2, (2007), 307--353.

\bibitem[BT05]{BauTei05:1765}
\textsc{F.~Baudoin} and \textsc{J.~Teichmann}.
\newblock Hypoellipticity in infinite dimensions and an application in interest
  rate theory.
\newblock \emph{Ann. Appl. Probab.} \textbf{15}, no.~3, (2005), 1765--1777.

\bibitem[Cer99]{CerrRD}
\textsc{S.~Cerrai}.
\newblock Ergodicity for stochastic reaction-diffusion systems with polynomial
  coefficients.
\newblock \emph{Stochastics Stochastics Rep.} \textbf{67}, no. 1-2, (1999),
  17--51.

\bibitem[CF88]{ConstFoi}
\textsc{P.~Constantin} and \textsc{C.~Foias}.
\newblock \emph{Navier-{S}tokes equations}.
\newblock Chicago Lectures in Mathematics. University of Chicago Press,
  Chicago, IL, 1988.

\bibitem[DL92]{DauLio5}
\textsc{R.~Dautray} and \textsc{J.-L. Lions}.
\newblock \emph{Mathematical analysis and numerical methods for science and
  technology. {V}ol. 5}.
\newblock Springer-Verlag, Berlin, 1992.
\newblock Evolution problems. I, With the collaboration of Michel Artola,
  Michel Cessenat and H\'el\`ene Lanchon, Translated from the French by Alan
  Craig.

\bibitem[DPEZ95]{DaPElwZab95:35}
\textsc{G.~Da~Prato}, \textsc{K.~D. Elworthy}, and \textsc{J.~Zabczyk}.
\newblock Strong {F}eller property for stochastic semilinear equations.
\newblock \emph{Stochastic Anal. Appl.} \textbf{13}, no.~1, (1995), 35--45.

\bibitem[DPZ92]{ZDP1}
\textsc{G.~Da~Prato} and \textsc{J.~Zabczyk}.
\newblock \emph{Stochastic Equations in Infinite Dimensions}.
\newblock {University Press}, {Cambridge}, 1992.

\bibitem[DPZ96]{ZDP}
\textsc{G.~Da~Prato} and \textsc{J.~Zabczyk}.
\newblock \emph{Ergodicity for Infinite Dimensional Systems}, vol. 229 of
  \emph{London Mathematical Society Lecture Note Series}.
\newblock {University Press}, {Cambridge}, 1996.

\bibitem[EH01]{EH3}
\textsc{J.-P. Eckmann} and \textsc{M.~Hairer}.
\newblock Uniqueness of the invariant measure for a stochastic {P}{D}{E} driven
  by degenerate noise.
\newblock \emph{{Commun.} {Math.} {Phys.}} \textbf{219}, no.~3, (2001),
  523--565.

\bibitem[EMS01]{EMS}
\textsc{W.~E}, \textsc{J.~C. Mattingly}, and \textsc{Y.~Sinai}.
\newblock Gibbsian dynamics and ergodicity for the stochastically forced
  {N}avier-{S}tokes equation.
\newblock \emph{Comm. Math. Phys.} \textbf{224}, no.~1, (2001), 83--106.
\newblock Dedicated to Joel L. Lebowitz.

\bibitem[Fla95]{Flandoli95}
\textsc{F.~Flandoli}.
\newblock \emph{Regularity theory and stochastic flows for parabolic {SPDE}s},
  vol.~9 of \emph{Stochastics Monographs}.
\newblock Gordon and Breach Science Publishers, Yverdon, 1995.

\bibitem[FM95]{FM}
\textsc{F.~Flandoli} and \textsc{B.~Maslowski}.
\newblock Ergodicity of the $2$-d {N}avier-{S}tokes equation under random
  perturbations.
\newblock \emph{{Commun.} {Math.} {Phys.}} \textbf{172}, no.~1, (1995),
  119--141.

\bibitem[GM06]{b:GoldysMaslowski06}
\textsc{B.~Goldys} and \textsc{B.~Maslowski}.
\newblock Lower estimates of transition densities and bounds on exponential
  ergodicity for stochastic {PDE}'s.
\newblock \emph{Ann. Probab.} \textbf{34}, no.~4, (2006), 1451--1496.

\bibitem[Hai02]{HExp02}
\textsc{M.~Hairer}.
\newblock Exponential mixing properties of stochastic {PDE}s through asymptotic
  coupling.
\newblock \emph{Probab. Theory Related Fields} \textbf{124}, no.~3, (2002),
  345--380.

\bibitem[Hai08]{SPDEnotes}
\textsc{M.~Hairer}.
\newblock An introduction to stochastic {P}{D}{E}s, 2008.
\newblock \urlprefix\url{http://www.hairer.org/Teaching.html}.
\newblock Unpublished lecture notes.

\bibitem[HM06]{HairerMattingly06AOM}
\textsc{M.~Hairer} and \textsc{J.~C. Mattingly}.
\newblock Ergodicity of the 2{D} {N}avier-{S}tokes equations with degenerate
  stochastic forcing.
\newblock \emph{Ann. of Math. (2)} \textbf{164}, no.~3, (2006), 993--1032.

\bibitem[HM08]{HaiMat08:??}
\textsc{M.~Hairer} and \textsc{J.~C. Mattingly}.
\newblock Spectral gaps in {W}asserstein distances and the 2{D} stochastic
  {N}avier-{S}tokes equations.
\newblock \emph{Ann. Probab.} \textbf{36}, no.~6, (2008), 2050--2091.

\bibitem[HM10]{HaiMaj10}
\textsc{M.~Hairer} and \textsc{A.~J. Majda}.
\newblock A simple framework to justify linear response theory.
\newblock \emph{Nonlinearity} \textbf{23}, (2010), 909--922.

\bibitem[HMS10]{HaiMatScheu}
\textsc{M.~Hairer}, \textsc{J.~Mattingly}, and \textsc{M.~Scheutzow}.
\newblock Asymptotic coupling and a general form of harrisÕ theorem with
  applications to stochastic delay equations.
\newblock \emph{Probab. Theory Related Fields} \textbf{{??}}, no.~??, (2010),
  ???

\bibitem[H{\"o}r67]{H1}
\textsc{L.~H{\"o}rmander}.
\newblock Hypoelliptic second order differential equations.
\newblock \emph{{Acta Math.}} \textbf{119}, (1967), 147--171.

\bibitem[H{\"o}r85]{Ho}
\textsc{L.~H{\"o}rmander}.
\newblock \emph{The Analysis of Linear Partial Differential Operators {\Rom
  1--\Rom 4}}.
\newblock {Springer}, {New York}, 1985.

\bibitem[ITM50]{IonMar52TEP}
\textsc{C.~T. Ionescu~Tulcea} and \textsc{G.~Marinescu}.
\newblock Th\'eorie ergodique pour des classes d'op\'erations non
  compl\`etement continues.
\newblock \emph{Ann. of Math. (2)} \textbf{52}, (1950), 140--147.

\bibitem[Jur97]{Jurd97}
\textsc{V.~Jurdjevic}.
\newblock \emph{Geometric control theory}, vol.~52 of \emph{Cambridge Studies
  in Advanced Mathematics}.
\newblock Cambridge University Press, Cambridge, 1997.

\bibitem[Kat80]{Ka}
\textsc{T.~Kato}.
\newblock \emph{Perturbation Theory for Linear Operators}.
\newblock {Springer}, {New York}, 1980.

\bibitem[KS84]{KSAMI}
\textsc{S.~Kusuoka} and \textsc{D.~Stroock}.
\newblock Applications of the {M}alliavin calculus. {I}.
\newblock In \emph{Stochastic analysis (Katata/Kyoto, 1982)}, vol.~32 of
  \emph{North-Holland Math. Library},  271--306. North-Holland, Amsterdam,
  1984.

\bibitem[KS85a]{KSAMII}
\textsc{S.~Kusuoka} and \textsc{D.~Stroock}.
\newblock Applications of the {M}alliavin calculus. {II}.
\newblock \emph{J. Fac. Sci. Univ. Tokyo Sect. IA Math.} \textbf{32}, no.~1,
  (1985), 1--76.

\bibitem[KS85b]{KusuokaStroockII}
\textsc{S.~Kusuoka} and \textsc{D.~Stroock}.
\newblock Applications of the {M}alliavin calculus. {II}.
\newblock \emph{J. Fac. Sci. Univ. Tokyo Sect. IA Math.} \textbf{32}, no.~1,
  (1985), 1--76.

\bibitem[KS00]{KS00}
\textsc{S.~B. Kuksin} and \textsc{A.~Shirikyan}.
\newblock Stochastic dissipative {PDE}'s and {G}ibbs measures.
\newblock \emph{{Commun.} {Math.} {Phys.}} \textbf{213}, (2000), 291--330.

\bibitem[Liv03]{Liv03IMA}
\textsc{C.~Liverani}.
\newblock Invariant measures and their properties. {A} functional analytic
  point of view.
\newblock In \emph{Dynamical systems. Part II}, Pubbl. Cent. Ric. Mat. Ennio
  Giorgi,  185--237. Scuola Norm. Sup., Pisa, 2003.

\bibitem[LY73]{LY73OTE}
\textsc{A.~Lasota} and \textsc{J.~A. Yorke}.
\newblock On the existence of invariant measures for piecewise monotonic
  transformations.
\newblock \emph{Trans. Amer. Math. Soc.} \textbf{186}, (1973), 481--488 (1974).

\bibitem[Mal78]{Malliavin}
\textsc{P.~Malliavin}.
\newblock Stochastic calculus of variation and hypoelliptic operators.
\newblock In \emph{Proceedings of the International Symposium on Stochastic
  Differential Equations (Res. Inst. Math. Sci., Kyoto Univ., Kyoto, 1976)},
  195--263. Wiley, New York, 1978.

\bibitem[Mal97]{MalliavinBook}
\textsc{P.~Malliavin}.
\newblock \emph{Stochastic analysis}, vol. 313 of \emph{Grundlehren der
  Mathematischen Wissenschaften [Fundamental Principles of Mathematical
  Sciences]}.
\newblock Springer-Verlag, Berlin, 1997.

\bibitem[Mas89]{Mas89:210}
\textsc{B.~Maslowski}.
\newblock Strong {F}eller property for semilinear stochastic evolution
  equations and applications.
\newblock In \emph{Stochastic systems and optimization (Warsaw, 1988)}, vol.
  136 of \emph{Lecture Notes in Control and Inform. Sci.},  210--224. Springer,
  Berlin, 1989.

\bibitem[Mat02]{MatNS}
\textsc{J.~C. Mattingly}.
\newblock Exponential convergence for the stochastically forced
  {N}avier-{S}tokes equations and other partially dissipative dynamics.
\newblock \emph{{Commun.} {Math.} {Phys.}} \textbf{230}, no.~3, (2002),
  421--462.

\bibitem[Mat08]{saintFlourJCM08}
\textsc{J.~C. Mattingly}.
\newblock \emph{Saint Flour Lectures}.
\newblock Springer, 2008.
\newblock In preparation.

\bibitem[MP06]{MatPar06:1742}
\textsc{J.~C. Mattingly} and \textsc{{\'E}.~Pardoux}.
\newblock Malliavin calculus for the stochastic 2{D} {N}avier-{S}tokes
  equation.
\newblock \emph{Comm. Pure Appl. Math.} \textbf{59}, no.~12, (2006),
  1742--1790.

\bibitem[MSVE07]{MatSuiVand07}
\textsc{J.~C. Mattingly}, \textsc{T.~Suidan}, and \textsc{E.~Vanden-Eijnden}.
\newblock Simple systems with anomalous dissipation and energy cascade.
\newblock \emph{Comm. Math. Phys.} \textbf{276}, no.~1, (2007), 189--220.

\bibitem[MT93]{b:MeTw93}
\textsc{S.~P. Meyn} and \textsc{R.~L. Tweedie}.
\newblock \emph{Markov Chains and Stochastic Stability}.
\newblock Springer-Verlag, 1993.

\bibitem[MY02]{MY}
\textsc{N.~Masmoudi} and \textsc{L.-S. Young}.
\newblock Ergodic theory of infinite dimensional systems with applications to
  dissipative parabolic {PDE}s.
\newblock \emph{Comm. Math. Phys.} \textbf{227}, no.~3, (2002), 461--481.

\bibitem[Nag97]{Naga97}
\textsc{T.~Nagasawa}.
\newblock Navier-{S}tokes flow on {R}iemannian manifolds.
\newblock In \emph{Proceedings of the Second World Congress of Nonlinear
  Analysts, Part 2 (Athens, 1996)}, vol.~30,  825--832. 1997.

\bibitem[Nor86]{Norr}
\textsc{J.~Norris}.
\newblock \emph{Simplified Malliavin Calculus}, vol. 1204 of \emph{Lecture
  Notes in Mathematics}.
\newblock {Springer}, {New York}, 1986.

\bibitem[Nua95]{Nualart}
\textsc{D.~Nualart}.
\newblock \emph{The {M}alliavin calculus and related topics}.
\newblock Probability and its Applications (New York). Springer-Verlag, New
  York, 1995.

\bibitem[Oco88]{Oco88:288}
\textsc{D.~Ocone}.
\newblock Stochastic calculus of variations for stochastic partial differential
  equations.
\newblock \emph{J. Funct. Anal.} \textbf{79}, no.~2, (1988), 288--331.

\bibitem[RR04]{RenRog}
\textsc{M.~Renardy} and \textsc{R.~C. Rogers}.
\newblock \emph{An introduction to partial differential equations}, vol.~13 of
  \emph{Texts in Applied Mathematics}.
\newblock Springer-Verlag, New York, second ed., 2004.

\bibitem[RS80]{RS}
\textsc{M.~Reed} and \textsc{B.~Simon}.
\newblock \emph{Methods of Modern Mathematical Physics {\Rom 1--\Rom 4}}.
\newblock {Aca\-demic Press}, {San Diego, California}, 1980.

\bibitem[Str83]{StroockSaintFlour}
\textsc{D.~W. Stroock}.
\newblock Some applications of stochastic calculus to partial differential
  equations.
\newblock In \emph{Eleventh Saint Flour probability summer school---1981 (Saint
  Flour, 1981)}, vol. 976 of \emph{Lecture Notes in Math.},  267--382.
  Springer, Berlin, 1983.

\bibitem[Tay92]{Taylor92}
\textsc{M.~E. Taylor}.
\newblock Analysis on {M}orrey spaces and applications to {N}avier-{S}tokes and
  other evolution equations.
\newblock \emph{Comm. Partial Differential Equations} \textbf{17}, no. 9-10,
  (1992), 1407--1456.

\bibitem[Tri86]{Triebel}
\textsc{H.~Triebel}.
\newblock Spaces of {B}esov-{H}ardy-{S}obolev type on complete {R}iemannian
  manifolds.
\newblock \emph{Ark. Mat.} \textbf{24}, no.~2, (1986), 299--337.

\bibitem[Tri92]{Trie92}
\textsc{H.~Triebel}.
\newblock \emph{Theory of function spaces. {II}}, vol.~84 of \emph{Monographs
  in Mathematics}.
\newblock Birkh\"auser Verlag, Basel, 1992.

\bibitem[TW93]{TemWang}
\textsc{R.~Temam} and \textsc{S.~H. Wang}.
\newblock Inertial forms of {N}avier-{S}tokes equations on the sphere.
\newblock \emph{J. Funct. Anal.} \textbf{117}, no.~1, (1993), 215--242.

\end{thebibliography}

\end{document}